\newtheorem*{lemma**}{Lemma}
\newtheorem*{theorem**}{Theorem}
\numberwithin{equation}{section}
\newcommand{\globalcolor}[1]{%
  \color{#1}\global\let\default@color\current@color
}
\newif\ifdark
\definecolor{darkred}{rgb}{0.9,0.2,0.2}
\definecolor{darkblue}{rgb}{0.7,0.3,1}
\definecolor{darkgreen}{rgb}{0.1,0.9,0.1}
\definecolor{pagebackground}{rgb}{.15,.21,.18}
\definecolor{pageforeground}{rgb}{.84,.84,.85}
\definecolor{darkred}{rgb}{0.7,0.1,0.1}
\definecolor{darkblue}{rgb}{0.4,0.1,0.8}
\definecolor{darkgreen}{rgb}{0.1,0.7,0.1}
\definecolor{pagebackground}{rgb}{1,1,1}
\definecolor{pageforeground}{rgb}{0,0,0}
\DeclareMathAlphabet{\mathbbm}{U}{bbm}{m}{n}
\DeclareFontFamily{U}{BOONDOX-calo}{\skewchar\font=45 }
\DeclareFontShape{U}{BOONDOX-calo}{m}{n}{
  <-> s*[1.05] BOONDOX-r-calo}{}
\DeclareFontShape{U}{BOONDOX-calo}{b}{n}{
  <-> s*[1.05] BOONDOX-b-calo}{}
\DeclareMathAlphabet{\mcb}{U}{BOONDOX-calo}{m}{n}
\SetMathAlphabet{\mcb}{bold}{U}{BOONDOX-calo}{b}{n}
\let\epsilon\varepsilon
\def\E{{\symb E}}
\def\F{{\mathcal F}}
\def\H{{\mathscr H}}
\def\FC{\mathscr{C}}
\def\L{\mathbb L}
\def\X{\mathbb{X}}
\def\W{\mathbf{W}}
\def\WW{{ \mathbb W}}
\def\X{{\mathbf X}}
\def\XX{{\mathbb X}}
\def\Y{{\mathbf Y}}
\def\YY{{\mathbb Y}}
\def\ZZ{{\mathbb Z}}
\def\D{{\mathcal D}}
\def\err{\mathbf {Er}}
\def\C{\mathcal{C}}
\def\f{\frac}
\def\1{\mathbf{1}}
\def\lf{{\lfloor}}
\def\rf{{\rfloor}}
\def\Hess{{\mathrm Hess}}
\def\cov{{\mathrm{Cov}}}
\def\${|\!|\!|}
\def\<{\langle}
\def\>{\rangle}
\setlist{noitemsep,topsep=4pt}
\def\para_#1{/\!\!/_{\!#1}}
\def\var{\mathrm{var}}
\def\slash{\kern0.18em/\penalty\exhyphenpenalty\kern0.18em}
\def\dash{\kern0.18em--\penalty\exhyphenpenalty\kern0.18em}
\newcommand*{\fat}{}% Check if undefined
\DeclareRobustCommand*{\fat}{%
\mathbin{\mathpalette\bigcdot@{}}}
\newcommand*{\bigcdot@scalefactor}{.5}
\newcommand*{\bigcdot@widthfactor}{1.15}
\newcommand*{\bigcdot@}[2]{%
  % #1: math style
  % #2: unused
  \sbox0{$#1\vcenter{}$}% math axis
  \sbox2{$#1\cdot\m@th$}%
  \hbox to \bigcdot@widthfactor\wd2{%
    \hfil
    \raise\ht0\hbox{%
      \scalebox{\bigcdot@scalefactor}{%
        \lower\ht0\hbox{$#1\bullet\m@th$}%
      }%
    }%
    \hfil
  }%
}
\newtheorem{convention}[lemma]{Convention}
\newtheorem{assumption}[lemma]{Assumption}
\begin{document}
\title{Homogenization with fractional random fields}
\author{Johann Gehringer and Xue-Mei~Li\\ Imperial College London}
\date{27 November 2019}

\maketitle

\begin{abstract}
We consider a system of  differential equations in a fast long range dependent random environment
and  prove a homogenization theorem involving multiple scaling constants. The effective dynamics solves a rough differential equation,
which is `equivalent' to a stochastic equation driven by  mixed It\^o integrals and Young integrals with respect to
Wiener processes and Hermite processes. Lacking other tools we use the rough path theory for proving the convergence, 
our main technical endeavour  is on obtaining an enhanced  scaling limit theorem  for path integrals
(Functional CLT and non-CLT's) in a strong topology, the rough path topology, which is given by a  H\"older distance for 
stochastic processes and their lifts.
In  dimension one we also  include the negatively correlated case, for the second order / kinetic fractional BM model
we also bound the error.
  \end{abstract}

{\scriptsize \textit{Keywords:}  fractional Brownian motion, slow-fast systems, homogenization, Hermite processes}, 
{\scriptsize  passive tracer, random environment, functional limit theorem, rough path}
	
{\scriptsize \textit{MSC Subject classification:} 34F05, 60F05, 60F17, 60G18, 60G22, 60H05,  60H07, 60H10}

\setcounter{tocdepth}{2}
\tableofcontents
\section{Introduction}

In this article we prove a homogenization theorem to the following slow/fast system with  long range dependent random environment and multiple scaling constants, \begin{equation}\label{1}
\dot x_t^\epsilon= \sum_{k=1}^N \alpha_k(\epsilon) f_k( x_t^\epsilon) G_k( y_t^\epsilon),
\end{equation}
 showing that the solutions converge.
Here  $\epsilon$ is a small positive parameter,  $y_t$  models a stationary long range dependent fast random environment, $G_k\in L^p\cap \C^0$ are centred (not necessarily in a finite chaos), and  $\alpha_k(\epsilon)$ are the scaling constants to be identified.  
  The $x_t^\epsilon$ process models the position of a particle in a moving environment, $y^\epsilon$ is a stationary stochastic process moving at microscopic time scale.   When $f$ is divergence free, this is a popular model for passive tracers in a tubulent fluid.
  By homogenization we mean the following phenomenon:  during a finite macroscopic period,   the fast environment would have typically been everywhere, its effects can therefore be absorbed  into one effective vector field. This way one  obtains an autonomous equation whose solution approximate the position of the particle when the parameter $\epsilon$ is small.

 Noise with long span of  interdependence between their increments has  attracted the  attention of many mathematicians and physicists. 
In a study for loss in water storage, Hurst et al \cite{Hurst} observed long range time dependence in the time series data of water flows
and found that the time dependence varies proportionally to $t^H$ where $H\sim 0.73$.  Economical  data also exhibits cycles of varying lengths. 
By contrast, Brownian motions and stable processes have independent increments.   Benoit Mandelbrot and John Van Ness introduced the use
of  fractional Brownian motions (fBM) in \cite{Mandelbrot-VanNess} and found they are good models for the Hurst phenomenon, and the best among other models they compare with. Recall that a fBM
is a continuous Gaussian process with $\E(B_t-B_s)^2=|t-s|^{2H}$. (When $H=\f 12$, this is the BM.) They are self-similar with similarity exponent $H$.

Self-similarity attracted attention also from Sinai, Dobrushin, and Jona-Lasinio  for their relevance in  mathematically rigorous description of critical phenomena and in the renormalisation theory.  In \cite{Sinai}, for example, Sinai  constructed non-Gaussian self-similar fields; 
while Dobrushin \cite{Dobrushin} studied self-similar fields subordinated to self-similar Gaussian fields (multiple It\^o integrals).  
Those self-similar stochastic processes with stationary increments are a particular interesting class. When normalized  to begin at $0$, to have mean $0$ and variance $1$, at $t=1$, they necessarily have the covariance  $\f 12(t^{2H}+s^{2H}-|t-s|^{2H})$. Those of Gaussian variety are fBMs.  Hermite processes are non-Gaussian self-similar processes with  the above mentioned covariance and stationary increments.
They appear as scaling limits of functionals of long range dependent Gaussian processes.  The first of these appeared in \cite{Rosenblatt}, in which Rosenblatt
constructed an example of a non-strong mixing sequence of random variables. He proved that the afore-mentioned sequence (with slow decaying auto-correlation) is not strong mixing  by proving that the usual central limit theorem (CLT)  fails and obtained a non-Gaussian scaling limit which is in fact a rank $2$ Hermite process. Rank $2$ processes are called Rosenblatt process. Jona-Lasinio  was also concerned with the construction of a systematic theory of limit distributions for sums of `strongly dependent' random variables for which the classical central limit theorems does not hold, \cite{Jona-Lasinio}, see also the book \cite{Embrechts-Maejima}. These processes also appear in our effective dynamics, in a mixed manner.

Despite of the evidence pointing to long range depend noise,  the study of slow/fast systems has predominatedly focused on those with strongly mixing  or Markovian properties.  If the correlation of the random field decays sufficiently fast,  see \cite{Taylor,Green,Kubo}, for small parameters the particle is expected to behave diffusively and can be approximated by a Markov process with covariance given by the integral, if finite, of the correlation functions of the vector field.   If the correlation decays so slowly that  the  integral is infinite, the random field is said to have long range dependence.

We will take $y_t^\epsilon$ to be a fast fractional Ornstein-Uhlenbeck process (fOU). These are defined by the Langevin equation driven by fBM's, and also have long range dependence when the Hurst parameter $H>\f 12$, see \S\ref{OU-section}.  These are fascinating processes. On one hand  the solutions of the Langevin equation forgets its initial position exponentially fast. On the other hand its auto-correlation function, which measures how much the shifted process remembers, exhibits power law decay. The latter is  not  shared by all other functionals of fBMs.  For example  it was shown in \cite{Komorowski-Komorowski-Ryzhik}, that Donsker's Invariance Principle holds for  fBM  on the torus, in this case the correlation from the fBM is forgotten and lost in the wrapping.  
It is natural to expect the same loss of memory  in  fOU.  After all,  the linear contraction in the Langevin equation and the exponential convergence of the solutions would lead to the belief that it mixes as fast as the wrapped  fBM, this is not so.    Indeed, for $H>\f 12$  it is not strong mixing,  its auto correlation function is not integrable.   The long range memory survives also in the second order model (the Kinetic model for fBMs). 
  
   For the equation $\dot x_t^\epsilon= \epsilon^{H-1}  f(x_t^\epsilon) \, y^\epsilon_t $ on $\R$,  it is easy to show
  $$\left\|  |x^\epsilon -x|_{C^{\gamma'}([0,T])}  \; \right\|_{L^p}  \lesssim T^{\gamma}  \epsilon^{H-\gamma},$$
 where $0<\gamma'<\gamma < H$,  and $H>\f 13$ (this latter restriction is only needed for the error control).
 The limit solves the Young differential equation:
$\dot x_t=f(x_t) \;dX_t$,  where  $X_t$ is a fBM, so the effective limit  resembles, locally,  a fBM. 
For passive tracers in homogeneous incompressible fluid, there are some studies \cite{Fannjiang-Komorowski-2000,Komorowski-Novikov-Ryzhik-12}, in all of these eferences, the effective equation is  driven by either a BM or by a fBM, the method is also different.

However, the effective dynamics for (\ref{1}) will involve in general a mix of  BM's, fBM, and the non-Gaussian Hermite processes.
The appropriate scaling constants depend  on the  functions $G_i$.  
 For example for $N=1$ and $m$ the Hermite rank of  $G_1$,  $m=\f 1{2(1-H)} $  is the critical value 
for  the limit to be locally Gaussian. If $m$ is small,  the effective limit is locally the Hermite process of rank $m$.

We comment briefly on the effective equations. They are stochastic  equations  driven simultaneously by BM's, fBM's, Rosenblatt processes,
and higher rank Hermite processes.  For $H>\f 12$ the integrals  with respect to Hermite processes can be defined as Young integrals,  those 
with respect to the Wiener components are It\^o/ Stratonovich integrals.  We show that the Wiener process part of the driving limit is independent of
the Hermite processes, and the components of the Hermite processes can be written as multiple integrals with respect to the same Brownian motion $W_t$.
This means if we fix a sample path $W_t(\omega)$, we can consider the limit equation as a mixed Wierner-Young integral equation.
Mixed equations driven by BM's and  fBM have been studied for example in \cite{Guerra-Nualart,dasilva-Erraoui-ElHassan,Hairer-Li}.
Our convergence is actually in a strong topology, in $C^\gamma$ and in the rough path topology.  The limit equation is actually 
a rough differential equation, whose solution is in general not a semi-martingale and  is defined for all chance variables, the driver is a stochastic process of H\"older regularity class,  
   enhanced by iterated integrals.  
   
Lacking other tools, we will  use the solution theory  for rough path differential equations  to establish the required convergence,
see  \cite{Lyons94, Friz-Hairer},  in the first $p$-variation norms are used. 
Here it is convenient to use the H\"older path formulation and so we follow the notation in \cite{ Friz-Hairer}.  
However using the $p$-variation norms may help to  improve the integrability conditions in the Functional limit theorem. 
Due to the length of the article we do not study that aspect.
 To use the continuity theorem of the It\^o solution maps, we rewrite (\ref{1}) as rough differential equations driven by stochastic processes with a parameter $\epsilon$. It is then sufficient to prove the convergence of the drivers in the rough path topology, which is finer than the H\"older topology. 
We first prove the joint convergence of the drivers together with their iterated integrals in an appropriate H\"older space,  in the finite dimensional distributions.
Then we show that they converge also in the rough path topology. One of our key technical endeavours is therefore  a vector valued 
 functional `Central' Limit Theorem  in the rough path topology, this is accomplished in sections \ref{single} and  \ref{joint}. The statements of the main results will be presented in \S\ref{results}. The preliminary computations are presented together with a simple example in  \S\ref{preliminary}. In  \S\ref{section-single-scale}, we treat the 1-dimensional case, its proof does not use the extensive estimations obtained later, nor the CLT in rough path topology (those in \S\ref{single}  are sufficient). 
   The results for dimension $1$  is of course stronger, including all Hurst parameters, however to single this case out, we hope 
    to make transparent the proof for the multi-dimension and multi-scale case.
 The proof for the main theorem is  finalised in \S\ref{conclusion}. For reader's convenience,  interpretation of the rough differential equation, studied of fOU processes, kernel convergence for multiple Wiener integrals,   their asymptotic independence, and various preliminary estimates   are presented in the appendix.

\subsection*{Notation}

\begin{itemize}
\item  $(\Omega, \CF,  \P)$ is a  probability space and $\|\fat\|_{p}$ or $\Vert \fat \Vert_{L^p}$ denotes the norm in $L^p(\Omega)$, also when we refer to the sapce $L^p$ we mean $L^p(\Omega, \CF,  \P)$.
\item $W_t, t \in \R$ denotes a two-sided Wiener process.
\item  $\F_t$ denotes the filtration generated by the fractional Brownian Motion. 
\item $H$ is the Hurst parameter of the fBM.
\item $H^*(m) = m(H-1) +1 $.
\item $m_k$ is the Hermit rank of $G_k$. 
\item Convention :   $H^*(m_k)\le \f12$ for $k\le n$; otherwise  $H^*(m_k)>\f 12$,
\item $\mu=N(0,1)$ is the standard Gaussian distribution.
\item $L^p(\mu)$ denote the set of $L^p$ integrable  from $\R$ to $\R$ with respect to $\mu$.
\item $BC^r =\C_b^r$:  bounded continuous functions with bounded continuous derivatives up to order $r$. 
\item $\C_a^r=C^r\cap BC^a$.
\item $f\lesssim g$ means there exists a constant $c$ such that $f\le cg$.
 \item  $|x|_\alpha :=\sup_{s\neq t} \f{| x_t-x_s|}{|t-s|^\alpha}$ is  the homogeneous H\"older semi-norm, $0<\alpha <1$.
  \item For a process $x_t$,  set  $x_{s,t}:=x_t - x_s$.
\end{itemize}

\section{Formulation  of main results}\label{results}

We  let 
$\{H_m, m\ge 0\}$ denote the orthogonal Hermite polynomial of degree~$m$  on $L^2( \mu)$,
 normalised to have leading coefficient $1$ and $L^2(\mu)$ norm $\sqrt {m!} $.
For any $H\in (0,1)$, we define the non-increasing transformation $m\to H^*(m)$,
 \begin{equation}
	H^*(m) = m(H-1)+1.
\end{equation}

\begin{definition} 
   Let  $G:\R\to \R$ be an $L_2(\mu)$ function with chaos expansion 
\begin{equation}
G(x)=\sum_{k=m}^\infty c_k H_k(x), \qquad \qquad  c_k=\f 1 {k!}\<G, H_k\>_{L^2(\mu)}.
\end{equation}
( Observe that $\int G d\mu=0$   if and only if $c_0=0$.)
\begin{enumerate}
\item The smallest $m$ with $c_m\not =0$ is called the Hermite rank of $G$. 
\item  If $H^*(m)\le \f 12$ we say $G$ has  {\it high Hermite rank} (relative to $H$),  otherwise it is said to have {\it low Hermite rank}.
\end{enumerate}
\end{definition}

For  any $m\in \N$, we define a set of scaling constants as below, the
intuition leading to this will be clear when we present the relevant central  and non-central limit theroems.
\begin{equation}\label{beta}
\begin{aligned}
&\alpha\left(\epsilon,H^*(m)\right) = \left\{\begin{array}{cl}
\f 1 {\sqrt{\epsilon}}, \, \quad  &\text{ if } \, H^*(m)< \f 1 2,\\
\f  1 {\sqrt{  \epsilon \vert \ln\left( \epsilon \right) \vert}}, \, \quad  &\text{ if } \, H^*(m)= \f 1 2 \\
\epsilon^{H^*(m)-1}, \quad  \, &\text{ if } \,  H^*(m) > \f 1 2.
\end{array}\right.
\end{aligned}
\end{equation}

We fix a fractional Brownian motion $B_t^H$, with Hurst parameter $H \in (0,1)\setminus\{ \f 1 2\}$ (Homogenization for $H=\f 12$  is classic, 
the result  is independent of the Hermite rank and the scaling is given by $\alpha(\epsilon)=\f 1{\sqrt \epsilon}$).

 Let $y^\epsilon$ be a fast  stationary fractional Ornstein-Uhlenbeck process with standard Gaussian distribution.   We also write $y=y^1$ for simpicity.

\begin{convention}\label{convention}
Given a collection of functions $(G_k, k\le N)$, we will label the high rank ones first,  so the first $n$ functions
satisfy  $H^*(m_k) \le \f 12$, where $n \geq 0$,  and the rest  has $H^*(m_k) > \f 1 2$. \end{convention}

\subsection{Homogenization}
Let $G_k:\R\to \R$ be  centred function in $L_2(\mu)$ with Hermite rank $m_k$. Write
$$G_k= \sum_{l=m_k}^\infty  c_{k,l} H_l, \qquad \qquad    \alpha_k(\epsilon)=\alpha\left(\epsilon,H^*(m_k)\right).$$
We consider
\begin{equation}\label{multi-scale}
\left\{ \begin{aligned} \dot x_t ^\epsilon &=\sum_{k=1}^N\alpha_k(\epsilon) \, f_k(x_t^\epsilon) \,G_k(y_t^\epsilon),\\
 x_0^\epsilon&=x_0. \end{aligned}\right.  
 \end{equation}

For the main theorem below, we assume that  $G_k\in L_{p_k}$ for sufficiently high $p_k$, and $G_k$ satisfies a
 fast chaos decay condition.   {\it Both assumptions are automatically satisfied  if $G_k$ are polynomials,}
in which case we take $p_k=\infty$ in the statement below, then the only extra condition is that the Hermite rank of  $G_k$  is not in the interval $[\f 1{1-H}, \f 1  {2(1-H)}]$. The precise assumption will be detailed after the statement.

 \begin{theorem*} [\S  \ref{conclusion}]
 Given $H \in  \left( \f 1 2 ,1 \right)$, $f_k\in \C_b^3( \R^d ;\R^d)$, and   $G_k$'s  satisfying Assumption~\ref{assumption-multi-scale} below.
 Then, the solutions of (\ref{multi-scale}) 
converge weakly  in  $\C^\gamma$, on any finite time interval, for any $\gamma \in (\f 1 3 ,\f 1 2 - \f 1 {\min_{k \leq n } p_k})$,
to the solution of  the  following  stochastic differential equation
 $$ d{x}_t =\sum_{k=1}^n f_k(x_t) \circ d X^k_t+\sum_{k=n+1}^N f_k(x_t) d X^k_t, \quad x_0=x_0,$$
where $X^k_t$ is a Wiener process for $k \leq n$, a Hermite processes for $k>n$, and $\circ$ denotes  Stratonovich integral, otherwise a Young integral. 
\end{theorem*}

  \begin{definition}
A function $G\in L^2( \mu)$,   $G=\sum_{l=0}^\infty  c_l H_l$,
is said to satisfy the fast chaos decay condition with parameter $q\in \N$, if 
	$$\sum_{l=0}^\infty  {|c_l|}\; \sqrt{l!} \;(2q-1)^{\f l2}<\infty.$$ 
 \end{definition}

For Hermite polynomials and Gaussian measures we have the estimates:
 $$\|H_k\|_{2q}\le  (2q-1)^{\f k2 }\sqrt{ \E (H_k)^2} = (2q-1)^{\f k2 }\sqrt{k!}.$$
Consequently,  if $G$ satisfies the fast chaos decay condition with parameter $q$, then 
$$\|G\|_{2q} \le \sum_{l=0}^\infty  \vert c_l \vert \|H_l\|_q<\infty,$$
and $G\in L_{2q}$.  Observe that $\f 12 -\f 1{2q} >\f 13$, a condition needed for the convergence in $\C^\gamma$,   is equivalent to $q>3$. 
Also, if $G$ satisfies the decay condition with $q > 1$, then $G$ is continuous.
 Indeed, we have
$$\left|e^{-\f {x^2} 2} H_k(x) \right|_\infty \le 1.0865\sqrt{k!},$$
 from  \cite[pp787]{Abramowitz-Stegan}. (The polynomials  in \cite{Abramowitz-Stegan} are orthogonal with respect to $e^{x^2} dx$.)
 Thus the power series  $e^{-\f {x^2} 2}\sum_{l=0}^\infty   c_l H_l$ converges uniformly in $x$, the limit $G$ is
continuous. 
\begin{remark}
If $G$ satisfies the fast chaos decay condition with parameter $q >1$, then $G$ has a representation in $L^{2q} \cap \C$, with which we will work from here on.
\end{remark}

 \begin{assumption}
 [Functional limit rough $\CC^\gamma$- assumptions]
  \label{assumption-multi-scale}
 Each $G_k $ belongs to $L^{p_k}(\mu)$, where $p_k >2$, and has Hermite rank $m_k\ge 1$. Furthermore,
 \begin{enumerate}
 \item [(1)] Each $G_k$ satisfies the fast chaos decay condition with parameter $q \geq 4 $.
\item [(2)](Integrability condition) $p_k$ is sufficiently large so the following holds:
\begin{equation}\label{Hoelder-sum>1}
\min_{k\leq n} \left( \f 1 2 - \f 1 {p_k} \right) + \min_{n<k\leq N} \left( H^*(m_k) - \f 1  {p_k} \right)>1.
\end{equation}
\item[(3)] If $G_k$ has low Hermite rank, i.e. $H^*(m_k) > \f 1 2$,   assume $ H^*(m_k) - \f 1 {p_k} > \f 1 2$.
 \item[(4)] 
Either $H^*(m_k) <0$ or $H^*(m_k)> \f 1 2$.
\end{enumerate}
 \end{assumption}

\begin{remark}
\
\begin{enumerate}
\item 
The higher than usual moment assumptions  arise from the necessity to obtain  the convergence, not just in the space of continuous functions but also, in a rough path space $\FC^{\gamma}$ for some $\gamma > \f 1 3$ (which is naturally established by Kolmogorov type arguments) to be able to use the continuity of the solution maps in the rough path setting.

\item 

Condition (2) makes sure that the H\"older regularity of the terms which converge to a Wiener processes is at least $\eta$ and the ones for the terms which converge to Hermite processes is at least $\tau$ with $\eta + \tau >1$. This condition ensures that iterated integrals, in which one term converges to a Wiener and the other one to a Hermite process, can be interpreted as a Young integral.
\item 
In Condition (4) we have to assume   $H^*(m_k)<0$ instead of ${H^*(m_k)\le \f 12}$. This means we exclude functions with Hermite  rank in
$[\f 1{1-H}, \f 1  {2(1-H)}]$.
This restriction is due to Lemma \ref{lemma-integral}, where we deal only with high Hermite rank functions, we  only
obtain the required integrability estimates for $H^*(m_k)<0$. 
\end{enumerate}
\end{remark}

The homogenisation problem for a passive tracer in a turbulent flow has been studied in  \cite{Fannjiang-Komorowski-2000,Komorowski-Novikov-Ryzhik-12}. 
For a class of spatial homogeneous (incompressible) time stationary vector fields whose spectral density  satisfies suitable conditions,
 they showed  that the effective limit  is either a Brownian motion or a fractional Brownian motion. A class of homogenization theorems
 was shown in \cite{Kelly-Melbourne}, their integrability conditions were then lowered  in \cite{Chevyrev-Friz-Korepanov-Melbourne-Zhang} by using the  p-variation rough path formulation instead of the H\"older one, a related work is also to be found in  \cite{roughflows}.

\subsection{Lifted functional  limit theorem}

The static problem preluding the homogenization are functional  limit theorems. Once  appropriate limit theorems for the drivers are established,  we 
may use the continuity theorem for rough differential equations.   

 For continuous processes this concerns the scaling limit of $\int_0^t G( y_{\f s {\epsilon}})ds$ where  $G$ is a a centred function.
  Let functions $(G_1, \dots, G_N)$ be given. The pivot theorem  is concerned with the scaling limit , as  $\epsilon\to 0$, for
\begin{equation}
X^\epsilon:=\left( X^{1,\epsilon}, \dots, X^{N,\epsilon}\right), \qquad \qquad
  X^{k,\epsilon}= \alpha_k (\epsilon)\int_0^{t} G_k(y^{\epsilon}_s)ds.
\end{equation}
We further define the rough paths $\X^{\epsilon}=( X^\epsilon, \XX^{i,j,\epsilon})$, where
\begin{equation}
 \XX^{i,j,\epsilon}_{u,t} := \int_u^t  ( X^{i,\epsilon}_s - X^{i,\epsilon}_u ) d \,X^{j,\epsilon}_s
= \alpha_i(\epsilon) \alpha_j(\epsilon)\int_u^{t} \int_u^s   G_i(y^{\epsilon}_s) G_j(y^{\epsilon}_r) \,dr ds.
\end{equation}
We call $\X^{\epsilon}=(X^\epsilon, \XX^{i,j,\epsilon})$ the canonical lift of   $X^\epsilon$.

Such limit theorems are  closely related  to those for sums of sequence of  random variables.  For independent or strong mixing sequences, there is a central limit theorem (CLT), and the weak limit is always a Brownian motion. For interdependent  long range dependent stationary sequences, this was pioneered  by
Rosenblatt, who constructed a not strong mixing stationary sequence, with a non-CLT limit, the limit is later known as the Rosenblatt process.
    For stationary continuous time strong mixing processes, the CLT states that $ {\sqrt{\epsilon}} \int_0^t G( y_{\f s {\epsilon}})ds$
   converges weakly to a Markov process, this is  classical and well understood.
  For stochastic  processes whose auto-correlation functions do not decay sufficiently fast at  infinity,  there is no reason to have  the  $\sqrt \epsilon$ scaling or to have a diffusive limit, see   \cite{Bai-Taqqu, Taqqu, Rosenblatt, BenHariz, Dobrushin-Major, Breuer-Major,Buchmann-Ngai-bordercase,Hu-Nualart-Xu,Campese-Nourdin-Nualart}.

\bigskip

To state the main theorem clearly, we follow Convention \ref{convention} and label  the first  $n$ functions such that  ${H^*(m_k)\le \f 12}$ for $k \leq n$. Therefore, we will write 
\begin{equation}
X^{\epsilon} = (X^{W, \epsilon} , X^{Z, \epsilon}), \qquad  X^{W, \epsilon} \in \R^n, \; X^{Z, \epsilon} \in \R^{N-n}.
\end{equation}

\begin{assumption}[Functional Limit $\C^\gamma$ assumptions]
\label{assumption-single-scale-not-continuous}
Let $G_k\in L_{p_k}$ with Hermite rank $m_k\ge 1$.
\begin{itemize}
 \item[(1)]     If $H^*(m_k) \le  \f 1 2$  assume $\f 1 2 - \f 1 {p_k} > \f 1 3$, which is equivalent to $p_k >6$.
\item[(2)] If $H^*(m_k) > \f 1 2$ assume $ H^*(m_k) - \f 1 {p_k} > \f 1 2$.
 \end{itemize}
\end{assumption}
For the convergence in  finite dimensional distributions we only assume that each $G_k\in L_2$,  only for  convergence in $\C^\gamma$  we need the above assumption. The following is extracted from section \ref{joint}. 

\begin{theorem*}\label{theorem-CLT} [CLT]
Let $G_1, \dots, G_N \in L^2(\mu)$  be given and let $H \in (0,1)$. 
\begin{itemize}
\item [(a)] 
Then, there exists $X^W=( X^1, \dots, X^n)$  and $X^Z=(X^{n+1}, \dots, X^N)$, such that
$$(X^{W,\epsilon}, X^{Z,\epsilon}) \longrightarrow  (X^W, X^Z),$$
in the sense of finite dimensional distributions on every finite interval.
Furthermore, for any $t >0$
  $$\lim_{\epsilon \to 0} \|X^{Z,\epsilon}_t \to X^Z_t\|_{L^2}=0.$$
 \item [(b)]  If furthermore, each $G_k$ satisfies Assumption \ref{assumption-single-scale-not-continuous}, one obtains convergence in $\C^{\gamma}$ for $\gamma <\f 1 2 - \f {1} {\min_{k \leq n } p_k}$.

\item[(c)]  Suppose Assumption \ref{assumption-multi-scale} holds and further assume that $H \in (\f 1 2, 1)$. Then, on every finite interval
and for every $\gamma \in (\f 1 3 , \f 1 2 - \f {1} {\min_{k \leq n } p_k})$,
$$\left(X^\epsilon_t,
  \XX^{\epsilon}_{s,t}  \right) \to  \left(X_t, \XX_{s,t}+(t-s)A\right)$$ 
weakly in the rough topology $\FC^\gamma$, given in (\ref{rough-distance}).
  
    \end{itemize} 
   We now describe the limit. Set $X=( X^1, \dots, X^n, X^{n+1}, \dots, X^N)$. 
 \begin{enumerate}
 \item [(1)]   $X^W \in \R^n$  and $X^Z \in \R^{N-n}$ are independent. 
\item [(2)] For $i,j\le n$,   $\E\left( X^i_t X^j_s\right)= (t \wedge s) A^{i,j}$  where for  $\rho(r)=\E (y_ry_0)$,
$$A^{i,j}=\int_0^{\infty} \E\left( G_i(y_s) G_j(y_0) \right) ds = 
  \sum_{q=m_i\vee m_j}^{\infty}   c_{i,q}\; c_{j,q}  \; (k!) \, \int_0^\infty  \rho(r)^q\, dr.$$
In other words, $ X^W = U \hat W_t$ where $ \hat W_t$ is a standard Wiener process,  $U$ is a  square root of
$A$. 

\item [(3)] Let $Z_t^{H^*(m_k),m_k}$ be the Hermite processes, represented by (\ref{Hermite}).
Then,
$$ X^Z=(c_{n+1,m_{n+1}}  Z_t^{n+1} , \dots, c_{N,m_{N}}  Z_t^{N}).$$   where
  \begin{equation}\label{Hermite-2}
   Z_t^{k}= \f{m_k!}{K(H^*(m_k),m_k)} Z_t^{H^*(m_k),m_k}.
\end{equation}
We emphasize that the Wiener process $W_t$ defining the Hermite processes are the same, for every $k$,
which is in addition independent of $\hat W_t$. 
 \item [(4)] For $s<t$ the limiting second order process is given by
$$
\XX^{i,j}_{s,t}=\int_s^t  (X_r^i - X_s^i) d X_r^j, 
  \qquad 
 \begin{aligned}
& \hbox{an It\^o integral},  \qquad & \hbox{ for } i, j\le n,\\
&\hbox{a Young integral}, & \hbox{otherwise.}
\end{aligned}
$$

$$A^{i,j}=\left\{
 \begin{aligned}
 &\hbox{ as in part 2}, \qquad & \hbox{ if } i, j\le n,\\
&0, & \hbox{otherwise.}
\end{aligned}\right.
$$
 \end{enumerate}
 \end{theorem*}

For strong mixing processes, these are well known see \cite{Kipnis-Varadhan}, see also  the relatively recent book \cite{Komorowski-Landim-Olla}. A different type of limit  theorem for fractional Brownian motions is popular  under the topic rough volatility where the parameter $H$ is taken to be close to zero.

We now comment on the proof and give an overview on the results we used.
At the level of the convergence of  the base processes $ X^\epsilon$, there are a range of results.
See for example  \cite{Dobrushin-Major, Taqqu,  Breuer-Major, Buchmann-Ngai-bordercase,    Pipiras-Taqqu,  
 BenHariz,  Cheridito-Kawaguchi-Maejima, 
Bai-Taqqu,  Hu-Nualart-Xu, Nourdin-Nualart-Zintout,  Campese-Nourdin-Nualart}.
 Even at this level, the convergence has only been shown in finite dimensional distribution
  (occasionally in  the continuous topology). Even for the convergence in finite dimensional distribution,
  the known results are fragmented, they are not consistent in the assumptions.
  They are often proved for a subclass of situations, such as  finite chaos condition, 
    moment determinant condition etc., while  Hermite processes are in general not determined by their moments.
Some theorems are only proved for scalar processes, some are only at the level of sequences.

We first assemble  the convergence of scalar processes, extending them to  the same
 larger class of functions.  To extend the convergence to the H\"older topology,
    we  follow \cite{Campese-Nourdin-Nualart}  and use Malliavin calculus to obtain  moment bounds.
    This section is quite short, only about 3 pages. 
 
 For the joint convergence in H\"older norm, we use  \cite{ Bai-Taqqu, Taqqu, BenHariz}.
 The joint convergence of a vector valued process with components including a Wiener process and 
a non semi-martingale  is subtle.  We use a  reduction  theorem, a normal convergence theorem from \cite{Nourdin-Peccati}, 
and an extension of a limit theorem  from \cite{Jacod-Shiryaev}. We also use the fact that  the 
low Hermite components of $X^\epsilon$ converge in  $L_2$ (this is proven in the Appendix.)

For  the functional limit theorems in the rough path topology we first  show the joint convergence of the integrals and iterated integrals in finite dimensional
distribution. For this we establish a martingale approximation and use ergodic theorems. Due to the fact that we have a non-strong mixing process, proving the $L^2$ boundedness of the martingale approximations is rather involved, this is where we had to exclude functions with Hermite rank  satisfies
$H^*(m)\in [0, \f \12]$. For the $L^2$ boundedness we  follow  \cite{Hairer05, Hairer-Li} and  develop a locally independent  decomposition for the fOU process and use this decomposition  to compute the conditional moments. 
 The final hurdle is the relatively compactness of the iterated integrals in the rough path topology,   for which we  relied on the Diagram formula  and  an 
upper bound,  from \cite{Graphsnumber}, on the number of eligible complete graphs of pairings.

\subsection{Single scale model and examples}
The following is extracted from section 5, where further detail is given. Restricting ourselves to the one dimensional case we can see how the methodology works without technicalities, here we  extend to range of $H$ to $(\f 1 3 ,1)$ and drop the exclusilons on $m$ and we obtain the following theorem.
Given a centred function $G \in L^2(\mu) $, with chaos expansion $G=\sum_{k=m}^{\infty} c_k H_k$ we set $c>0$ by
\begin{equation}\label{c-square}
c^2=\left\{ \begin{array}{ll}  (\f {c_m m!}{K(H,m)})^2, \quad & H^*(m)>\f 12\\
2\sum_{k=m}^\infty (c_k)^2 k! \int_0^\infty \rho^k(s) ds, \quad &H^*(m)<\f 12\\
2m!(c_m)^2, \quad &H^*(m)=\f 12.
\end{array} \right.
\end{equation}

\begin{theorem*} 
Let $H \in (\f 1 3,1) \setminus \{ \f 1 2 \}$,  $f\in \C_b^3(\R; \R)$, and $G$ be a continuous function which satisfies Assumption \ref{assumption-single-scale-not-continuous}.
Consider
\begin{equation}\label{limit-eq}
\dot x_t ^\epsilon=  \alpha(\epsilon,H^*(m))\; f(x_t^\epsilon) \; G( y^\epsilon_{t}), \quad  \qquad  x_0^\epsilon=x_0.
 \end{equation} 
\begin{enumerate}
\item If
$H^*(m) > \f 1 2$, $x_t^\epsilon$ converges weakly in $\C^{\gamma}$  to the solution to the Young differential equation
$d\bar x_t = c f(\bar x_t) \,dZ_t^{H^*(m),m}$ with initial value $x_0$ for $\gamma \in (0, H^*(m)- \f 1 p)$.
\item If
$H^*(m) \leq \f 1 2$, 
 $x_t^\epsilon$ converges weakly  in  $\C^\gamma$ to the solution of the Stratonovich stochastic differential equation
$d\bar x_t = c  f(\bar x_t) \circ \,dW_t$ with $ \bar x_0=x_0$, where $\gamma \in(0, \f 1 2- \f 1 p)$.
\end{enumerate} 
\end{theorem*}

\begin{remark}
\
\begin{enumerate}

\item 
The constant $c$ could be $0$, for further details see Remark \ref{remark-variance}.

\item The condition $\f 1 2 - \f 1 p > \f 1 3$ is for the H\"older regularity of the solution paths to be at least $\f 13$, so we could define the integral
 by an enhanced Riemann sum. See \S\ref{how?} for the precise meaning.
\item The condition $f \in \C_b^3$ is not optimal, it  is only needed for applying  the conclusions of  Theorem \ref{cty-rough}, the continuity theorem which  states that the
 solutions of  a Young/ rough differential equations depends continuously on the driver.
  For part (1),  the Young case, it is sufficient to assume that  $f \in \mathcal{C}_b^2$. 
\item By using the $p$-variation norm instead of the H\"older norms, one can reduce the integrabilty condition to $p=1$ in (1) and $p>1$ in (2) and obtain convergence in the respective $p$-variation spaces, see \cite{Chevyrev-Friz-Korepanov-Melbourne-Zhang} for the use of such norms. The same is true for the other forthcoming theorems,
by observing that there is no loss of regularity in the Besov $p$-variation embeddings compared to the Besov-H\"older ones, see Appendix A from the book \cite{Friz-Victoir}.
 
\end{enumerate}
\end{remark}
We conclude this section with an example on $\R^d$ and a question. Take $H=\f 89$ and consider
$$\dot x_t^\epsilon=  \epsilon^{-\f {2} {9}} f_1( x_t^\epsilon) H_2(y_{\f t\epsilon}) +\epsilon^{-\f {4} {9}} f_2( x_t^\epsilon) H_4(y_{\f t\epsilon})  +  \f 1 {\sqrt \epsilon } f_3 ( x_t^\epsilon) H_{10}(y_{\f t\epsilon}).$$
 Their solutions  converge to that of an equation  driven simultaneously by a fBM, a Hermite process with similarity exponents $\f 5 9$, and a Wiener process.

\section{Preliminaries}\label{preliminary}

A  stochastic process $(X_n)$ is strongly mixing if its auto correlation $\rho(n)\to 0$ as $n\to \infty$
and  $$ \sup_{A,B} |P(A\cap B)-P(A)P(B)|\le \rho(n),$$  where the supremum is over $A\in \sigma\{ X_k, k\le m\}$, $B\in \sigma\{ X_{k}, k\ge n+ m\}$.

Let us make the convention that   $B^H_0=0$, $\E(B^H_1)^2=1$.  For simplicity we  often omit the Hurst parameter~$H$. 
A  fBM $B_t^H$, with hurst parameter $H > \f 1 2$ is not strong mixing,  otherwise the Central Limit Theorem  holds for $X_n=B^H_{n+1}-B^H_n$,  but  with suitable scaling, the limit yields a fractional Brownian motion.

The disjoint increments of $B_t^H$ are dependent unless $H=\f 12$:
\begin{equation*}
\E(B_t-B_s)(B_u-B_v)=\f 12 \left( |t-v|^{2H}+|s-u|^{2H}-|t-u|^{2H}-|s-v|^{2H} \right).
\end{equation*}
The correlation function   $$\begin{aligned}\tilde  \rho(n)&=\E (B_{n+1}-B_{n})(B_1-B_0)\\
 &\sim H(2H-1) n^{2H-2}, \qquad \hbox{ at infinity}. \end{aligned}$$
Thus, $\sum_{n=1}^\infty \tilde \rho(n)=\infty$ for $H>\f 12$, and so long range correlation persists.

Common to a Brownian motion, a fBM  has self-similarity with similarity exponent $H$ and stationary increments.  
Since $B_t^H$ has finite and non-trivial $\f 1 H$-variation over $[0,T]$ with variation of the order $\E \left(|B_1^H|^p \right)T$,  it has infinite  total variation. It has  zero  quadratic variation  for $H>\f 12$  and infinite  quadratic variation  for $H<\f 12$,   and therefore $B_t^H$ is not a semi-martingale unless $H=\f 12$. 
We refer to  \cite{Pipiras-Taqqu-book, Samorodnitsky, Cheridito-Kawaguchi-Maejima} for further detail.

\subsection{Hermite processes}
\label{section-Hermite}

We take the Hermite polynomials of degree $m$ to be $$H_m(x) = (-1)^m e^{\f {x^2} {2}} \f {d^m} {dx^m} e^{\f {x^2} {2}}.$$ Thus, $H_0(x)=1$,  $H_1(x)=x$.   
Let $\hat H$  be the inverse of $H^*(m)=m(H-1)+1$: $$ \hat H(m)=\f  1 m (H-1) + 1.$$

\begin{definition}\label{Hermite-processes}
Let $m\in \N$ with $\hat H(m)>\f 12$. The class of {\it Hermite processes} of rank $m$ are the following mean-zero process,
\begin{equation}\label{Hermite}
 Z_t^{H,m}= \f{K(H,m)}{m!}  \int_{0}^t H_m\left(\int_{\R} (s-u)_{+}^{  \hat H(m)-\f 3 2} d  W_u\right) \, ds,
\end{equation}
where the constant $K(H,m)$ is chosen so their variances are $1$ at $t=1$. The number $H$ is called its Hurst parameter.
\end{definition}
Since $\hat H(1)=H$, the rank $1$  Hermite processes $Z^{H, 1}$ are fractional BMs. Indeed (\ref{Hermite}) is  exactly the 
 Mandelbrot-Vanness representation for fBM. 
The Hermite processes have stationary increments and finite moments of all order with covariance 	\begin{equation}
	\E(  Z_t^{H,m} Z_s^{H,m}) =  \f 1 2 (   t^{2H} + s^{2H} - \vert t-s \vert^{2H}).
	\end{equation}
Therefore, using Kolmogorv's theorem one can show that the Hermite processes $Z_t^{H,m}$ have sample paths of H\"older regularity up to  $H$. 
They are also self similar with exponent $H$
$$ \lambda^H  Z^{H,m} _{\f \cdot\lambda} \sim  Z^{H,m}_..$$

We recall another  formulation, useful for proving convergence to Hermite processes.
According to It\^o \cite{Ito51} and  \cite[Thm1.1.2]{Nualart},  if $f$ is an $L^2$ function of norm~$1$,  the multiple It\^o-Wiener integral with kernel $\prod_i f(t_i)$
can be identified with the evaluation of $H_m$ on a single Wiener integral: 
$$\int_\R\dots \int_\R  f(t_1) \dots f(t_m) \, d W(t_1) \dots d  W(t_m) = H_m\left( \int_\R f(s)   \,d   W_s\right),$$
So the Hermite processes can be defined by the multiple It\^o-Wiener integrals:
\begin{equation}Z_t^{H,m}=\f  {K(H,m)} {m!} \int_{\R^m} \left(\int_0^t \prod_{j=1}^m (s-t_j)_+^{  -(\f 1 2 + \f {1-H} {m})} \, ds\right) \,  d  W({t_1}) \dots d  W({t_m}).
\end{equation}
In particular,  two  Hermite processes  $Z^{H, m}$ and $Z^{H',m'}$, defined by the same Wiener process, are uncorrelated if $m \not = m'$.

\begin{remark}
 In  some literature, e.g. \cite{Maejima-Ciprian},
	the Hermite processes are defined with a different exponent as below:  
	$$\tilde Z_t^{H,m}= \f{K(H^*(m),m)}{m!}  \int_{0}^t H_m\left(\int_{\R} (s-u)_{+}^{H-\f 3 2} d W_u\right) \, ds.$$
They are related by
 \begin{equation}Z_t^{H^*(m), m}=\tilde Z_t^{H,m}, \qquad Z_t^{H,m}=\tilde Z_t^{\hat H(m),m}.
\end{equation}
\end{remark} 
Further detail on Hermite processes can also be found in \cite{Maejima-Ciprian}.
The limit processes in Theorem A  are given by Hermite processes of the form $Z^{H^*(m),m}_t = \tilde{Z}^{H,m}_t$.

\subsection{ Fractional Ornstein-Uhlenbeck processes}\label{OU-section}
We define the stationary fractional Ornstein-Uhlenbeck processes to be
$$y_t =  \sigma \int_{-\infty}^t e^{-(t-s) } dB_s,$$
where $\sigma$ is chosen such that  $y_t$ is distributed as $\mu=N(0,1)$ and  $B_t$ is a  two-sided fractional BM.
It  is the  solution of the following Langevin equation:
$$dy_t = - y_t dt + \sigma d B_t, \qquad y_0 = \sigma \int_{-\infty}^0 e^{ s }  dB_s.$$

 We take $y_t^\epsilon$, the fast fOU, to be the  stationary solution of
$$dy_t^\epsilon = -\f 1 \epsilon \lambda y_t^\epsilon\, dt + \f { \sigma} {{\epsilon}^H}\, d B_t.$$
Observe that $ y_\cdot ^\epsilon$ and $ y_{\f \cdot \epsilon} $ have the same distributions, and
\begin{equs}\label{OU-process}
 y^\epsilon_t&=\f \sigma {\epsilon^H}\int_{-\infty}^t e^{-\f 1 \epsilon (t-s) } dB_s.
\end{equs}

Let  us denote their auto-correlation function by $\rho$ and $\rho^\epsilon$:
$$\rho(s,t):=\E(y_sy_t), \qquad \rho^{\epsilon}(s,t):= \E(y^{\epsilon}_sy^{\epsilon}_t)$$ 
Let $\rho(s)=\E (y_0y_s)$ for $s\ge 0$  and extended to $\R$ by symmetry, then $\rho(s,t)=\rho(t-s)$ and similarly for $\rho^{\epsilon}$.
For $H>\f 12$, the set of functions for which Wiener integrals are defined include $L^2$ functions and
so $\rho$ has a nicer expression.

 Indeed, since
$$\E (B_tB_s)=H(2H-1) \int_0^t\int_0^s |r_1-r_2|^{2H-2}dr_1dr_2,$$
we have
 $$\f{ \partial^2}{\partial t\partial s} \E(B_tB_s) =H(2H-1)|t-s|^{2H-2},$$ which is integrable, and therefore we may use the Wiener isometry
\begin{equs}
	\E(y_ty_s) =\sigma^2 H(2H-1) \int_{-\infty}^t\int_{-\infty}^s  e^{-(s+t-r_1-r_2) }  |r_1-r_2|^{2H-2} dr_1 dr_2.
\end{equs}
For $u>0$, 
$$\rho(u)=\sigma^2 H(2H-1) \int_{-\infty}^{u}\int_{-\infty}^0  e^{-(u-r_1-r_2) }  |r_1-r_2|^{2H-2} dr_1 dr_2.$$ 
With this we observe  the following correlation decay relation.
\begin{lemma}
	\label{correlation-lemma}
	Let  $H\in (0, \f 12)\cup (\f 12, 1)$.  For any $t\not =s$, \begin{equation}\label{cor1}
	|\rho(s,t)| \lesssim 1\wedge |t-s|^{2H-2}.
	\end{equation}
\end{lemma}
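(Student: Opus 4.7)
By stationarity, $\rho(s,t) = \rho(|t-s|)$ with $\rho(u) := \E(y_0 y_u)$, and the Cauchy--Schwarz inequality together with $\E y_0^2 = 1$ gives the uniform bound $|\rho(u)| \le 1$. The task thus reduces to proving $|\rho(u)| \lesssim u^{2H-2}$ for $u$ large.

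For $H > \f12$ my plan is to start from the explicit double-integral formula for $\rho(u)$ displayed just above the lemma. Substituting $r_1 = u - s_1$, $r_2 = -s_2$, and then changing variables to $(w,z) = (s_1 - s_2,\, s_2)$ and integrating out $z$, the expression collapses to the one-dimensional convolution
$$\rho(u) = \f{\sigma^2 H(2H-1)}{2}\int_{\R} |u-w|^{2H-2}\, e^{-|w|}\, dw.$$
I would split the integrand at $|w| = u/2$. On the inner region $|w| \le u/2$ we have $|u-w| \ge u/2$, so $|u-w|^{2H-2} \le (u/2)^{2H-2}$ and the remaining $e^{-|w|}$ integrates to a constant, contributing $\lesssim u^{2H-2}$. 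On the outer region $|w| > u/2$ I would extract the factor $e^{-|w|} \le e^{-u/2}$; the surviving integral of $|u-w|^{2H-2}$ is locally integrable (since $2H-2 > -1$) and grows at most polynomially in $u$, so this piece is absorbed by $u^{2H-2}$ once $u$ is large.

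For $H < \f12$ the kernel $|r_1-r_2|^{2H-2}$ is no longer locally integrable and the double-integral formula ceases to make sense. My plan is to bypass this via an ODE for $\rho$: from the Langevin equation $dy_u = -y_u\, du + \sigma\, dB_u$ one obtains
$$\rho'(u) + \rho(u) = \sigma \lim_{h\downarrow 0}\tfrac{1}{h}\,\E\bigl(y_0(B_{u+h} - B_u)\bigr).$$
The right-hand side can be evaluated by pathwise integration by parts $y_0 = -\sigma\int_{-\infty}^0 e^s B_s\, ds$ (valid since $B$ has positive H\"older regularity and $s \mapsto e^s$ is smooth) combined with the covariance identity $\E(B_s B_u) = \f12(|s|^{2H}+|u|^{2H}-|s-u|^{2H})$; an explicit computation produces
$$\rho'(u) + \rho(u) = \sigma^2 H(2H-1)\int_0^\infty e^{-t}(u+t)^{2H-2}\, dt =: \phi(u).$$
Since $2H-2 < 0$, monotonicity gives $|\phi(u)| \lesssim u^{2H-2}$ on $[1,\infty)$, and $\phi$ has only an integrable singularity $u^{2H-1}$ near $0$. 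Variation of constants yields $\rho(u) = e^{-u}\rho(0) + \int_0^u e^{-(u-s)}\phi(s)\, ds$, and the same dyadic split at $s = u/2$ — combining $|\phi(s)| \lesssim u^{2H-2}$ on $[u/2,u]$ with the crushing factor $e^{-u/2}$ on $[0,u/2]$ — completes the bound.

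The main obstacle is the $H < \f12$ case, where the standard Wiener-isometry identity $\E(\dot B_s\dot B_r) = H(2H-1)|s-r|^{2H-2}$ is singular on the diagonal and the convolution representation of $\rho$ is divergent. The pathwise integration-by-parts trick is what unlocks the calculation. As a side benefit, the ODE derivation also covers the case $H > \f12$, so one may in fact treat the lemma uniformly in both ranges of $H$.
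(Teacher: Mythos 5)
Your proposal is correct, and it splits naturally into two halves. For $H>\f12$ your argument is essentially the paper's: both reduce $\rho$ to the convolution $\rho(u)=\f{\sigma^2H(2H-1)}{2}\int_{\R}|u-w|^{2H-2}e^{-|w|}\,dw$ (your constant is exactly right) and then split the domain so that either the kernel is pointwise $\lesssim u^{2H-2}$ or the exponential factor is $\le e^{-cu}$. One small slip in your outer region $|w|>u/2$: you cannot extract the \emph{entire} factor $e^{-|w|}\le e^{-u/2}$, because the surviving integral $\int_{|w|>u/2}|u-w|^{2H-2}\,dw$ diverges at infinity when $2H-2>-1$; keep half of the decay, e.g. $e^{-|w|}\le e^{-u/4}e^{-|w|/2}$, and the remaining integral is finite (local integrability near $w=u$ plus exponential decay in the tails), after which $e^{-u/4}$ crushes everything. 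For $H<\f12$ you genuinely diverge from the paper: the paper simply quotes the large-time asymptotics $\rho(s)=2\sigma^2H(2H-1)s^{2H-2}+O(s^{2H-4})$ from Cheridito--Kawaguchi--Maejima and combines it with the Cauchy--Schwarz bound $|\rho|\le1$, whereas you derive the ODE $\rho'+\rho=\phi$ with $\phi(u)=\sigma^2H(2H-1)\int_0^\infty e^{-t}(u+t)^{2H-2}\,dt$ via pathwise integration by parts and solve by variation of constants. I checked the computation of $\phi$ and it is correct; moreover for $H<\f12$ one even has $\phi\in L^1(0,\infty)$, so the $[0,u/2]$ piece is $\lesssim e^{-u/2}$ outright. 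What your route buys is a self-contained, uniform treatment of both ranges of $H$ that avoids citing external asymptotics and sidesteps the non-integrability of $|r_1-r_2|^{2H-2}$ on the diagonal; what the paper's route buys is brevity and, via the cited expansion, the sharper two-term asymptotics rather than just an upper bound.
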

The proof for this is elementary, for reader's convenience it is given  in the appendix.

By Lemma \ref{correlation-lemma},  $  \int_0^\infty \rho^m(s) ds$ is finite if $ H^*(m)<\f 12$, or if $H=\f 12$ and $m\in \N$, as in the latter the usual OU process admits exponential decay of correlations.
\begin{lemma}\label{Integrals}
Let $H\in (0,1) \setminus \{ \f 1 2\}$, fix a finite time horizon $T$, then for $t \in [0,T]$ the following holds uniformly for $\epsilon \in (0,\f 1 2]$:
\begin{equation} \label{correlation-decay-2-1}
	\left( \int_0^{\f t \epsilon} \int_0^{\f t \epsilon}  \vert  \rho(u,r) \vert^m\, dr \,du\right)^{\f 12} \\
 \lesssim 
\left\{	\begin{array}{lc}
 \sqrt {\f t \epsilon  \int_0^\infty \rho^m(s) ds},  \quad  &\hbox {if} \quad H^*(m)<\f 12,\\
 \sqrt { (\f t \epsilon)  \vert \ln\left(\f 1 \epsilon \right) \vert}, \quad  &\hbox {if} \quad H^*(m)=\f 12,\\ 
 \left(  \f t \epsilon\right) ^{H^*(m)},  \quad &\hbox {if} \quad H^*(m)>\f 12.
 \end{array} \right.
\end{equation}

\begin{equation} \label{correlation-decay-2-2}
\left( \int_0^{t} \int_0^{t}  \vert  \rho^{\epsilon}(u,r) \vert^m\, dr \,du\right)^{\f 12} \\
\lesssim 
\left\{	\begin{array}{lc}
\sqrt { t \epsilon  \int_0^\infty \rho^m(s) ds} ,  \quad  &\hbox {if} \quad H^*(m)<\f 12,\\
\sqrt { t \epsilon  \vert \ln\left(\f t \epsilon \right) \vert}, \quad  &\hbox {if} \quad H^*(m)=\f 12,\\
\left(  \f t \epsilon\right) ^{H^*(m)-1},  \quad &\hbox {if} \quad H^*(m)>\f 12.
\end{array} \right.
\end{equation}		
Note if $H=\f 12$, for and any $m\in \N$, the  bound is $ \sqrt {\f t \epsilon}  \int_0^\infty \rho^m(s) ds$. The following is often used later,
\begin{equation}\label{integral-10}
t \int_0^{t} \vert \rho^{\epsilon}(s)\vert^m ds
\lesssim \f { t^{ \left(H^*(m) \vee \f 1 2\right)}}  { \alpha \left( \epsilon, H^*(m)\right)}.
\end{equation}
\end{lemma}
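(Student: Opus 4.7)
The plan is to reduce all three estimates to a single estimate on $I(T):=\int_0^T |\rho(s)|^m\,ds$ via stationarity and the pointwise bound from Lemma~\ref{correlation-lemma}, namely
$$|\rho(s)|^m \lesssim 1 \wedge |s|^{m(2H-2)} = 1 \wedge |s|^{2(H^*(m)-1)},$$
where I used $H^*(m) - 1 = m(H-1)$. First, stationarity gives $\rho(u,r) = \rho(u-r)$, so by Fubini
$$\int_0^T \int_0^T |\rho(u-r)|^m \,du\, dr = 2\int_0^T (T-s)|\rho(s)|^m\, ds \leq 2T \cdot I(T),$$
and it suffices to estimate $I(T)$.

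I would split $I(T)$ at $s=1$: the short-time piece is bounded by a constant, and the tail piece $\int_1^T |\rho(s)|^m \,ds$ is handled by power counting against $s^{2(H^*(m)-1)}$. Three regimes arise. When $H^*(m) < \f12$, the exponent is below $-1$ and the tail converges, so $I(T) \lesssim \int_0^\infty |\rho(s)|^m\, ds$ uniformly in $T$. When $H^*(m) = \f12$, the exponent is exactly $-1$, giving $I(T) \lesssim 1+\log T$. When $H^*(m) > \f12$, the exponent exceeds $-1$ and $I(T) \lesssim T^{2H^*(m)-1}$. Multiplying each bound by $2T = 2t/\epsilon$ and taking the square root yields (\ref{correlation-decay-2-1}); in the critical case one uses $\log(t/\epsilon) \lesssim |\log\epsilon|$ uniformly in $t\in(0,T]$.

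For (\ref{correlation-decay-2-2}) I would use that $y^\epsilon_\cdot$ has the same law as $y_{\cdot/\epsilon}$, hence $\rho^\epsilon(u,r) = \rho((u-r)/\epsilon)$. Changing variables $u \mapsto \epsilon u'$, $r \mapsto \epsilon r'$ gives
$$\int_0^t \int_0^t |\rho^\epsilon(u,r)|^m \,du\, dr = \epsilon^2 \int_0^{t/\epsilon}\int_0^{t/\epsilon} |\rho(u'-r')|^m\, du'\, dr',$$
so plugging in (\ref{correlation-decay-2-1}) and pulling $\epsilon$ out of the square root yields the three claimed bounds, after absorbing bounded factors of $t\in(0,T]$ into the implicit constant. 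The final inequality (\ref{integral-10}) follows from the same rescaling applied to the single integral, $\int_0^t |\rho^\epsilon(s)|^m \,ds = \epsilon \int_0^{t/\epsilon} |\rho(s')|^m\, ds' = \epsilon \cdot I(t/\epsilon)$, combined with the three-regime estimate for $I(t/\epsilon)$; direct comparison with $\alpha(\epsilon,H^*(m))^{-1}$ in each regime then yields the claim, once more absorbing $T$-dependent factors.

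The computation is elementary once stationarity is exploited, and the main subtlety is the borderline regime $H^*(m) = \f12$: here $|\rho|^m$ is only logarithmically integrable, and one must check that $\log(t/\epsilon)$ can be replaced by $|\log\epsilon|$ uniformly in $t\in(0,T]$ at the cost of a constant depending on $T$. All other cases reduce to direct power counting.
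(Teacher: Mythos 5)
Your proposal is correct and follows essentially the same route as the paper: reduce both double integrals by stationarity and the change of variables $\rho^\epsilon(u,r)=\rho((u-r)/\epsilon)$ to the single integral $\int_0^{t/\epsilon}|\rho(s)|^m\,ds$, then power-count against $1\wedge s^{2(H^*(m)-1)}$ from Lemma~\ref{correlation-lemma} in the three regimes, absorbing $T$-dependent factors into the implicit constants. Your handling of the borderline case (bounding $\ln(t/\epsilon)$ by $|\ln\epsilon|$ up to a $T$-dependent constant) is exactly what the paper does as well.
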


The following H\"older norm estimates will be used for proving Proposition \ref{th-example}.

\begin{lemma}\label{cty-lemma}
The stationary fOU process 
is uniformly H\"older continuous of order $\gamma$ over $[0,\infty)$ for any $\gamma \in (0, H)$. Furthermore, over $[0, \infty)$,
the following estimates hold:  
 $$\|y_s-y_r\|_{L^p} \lesssim   1 \wedge |s-r|^{H}, \qquad  \E  \sup_{s\not =t} \left( \f{  |y_s-y_t |} { |t-s|^{\gamma}} \right)^p \lesssim  C(\gamma, p)^p M$$
 for any $p>1$, where  $C(\gamma,p)$ is the universal constant in Garcia-Rodemich-Romsey-Kolmogorov inequality and 
$$M = \int_0^{\infty} \int_0^{\infty} \f { \E|y_s-y_r|^p}{ {|s-r|}^{ \gamma p+2}}<\infty.$$
\end{lemma}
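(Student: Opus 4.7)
The plan has three ingredients, which I would execute in order.

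First, for the $L^p$ bound $\|y_s - y_r\|_{L^p} \lesssim 1 \wedge |s-r|^H$, I would use stationarity to reduce to bounding $\|y_h - y_0\|_{L^p}$ for $h>0$. Splitting the Volterra representation gives
$$y_h - y_0 = \sigma \int_0^h e^{-(h-u)} dB_u + (e^{-h}-1)\,y_0.$$
The second term has $L^p$ norm bounded by $|e^{-h}-1|\cdot\|y_0\|_{L^p}\lesssim h \wedge 1$, since $y_0$ is Gaussian. For the first term, integration by parts against $B_u$ together with the standard estimate $\|B_u\|_{L^p} \lesssim u^H$ for fractional Brownian motion yields a bound of order $h^H$ for $h \le 1$. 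For $h \ge 1$, the triangle inequality plus stationarity gives the trivial uniform bound $\|y_h - y_0\|_{L^p} \le 2 \|y_0\|_{L^p}$. Combining the two regimes gives the claimed $1\wedge h^H$ estimate. (Equivalently, since $y_h - y_0$ is Gaussian, one could bound $\mathbb{E}|y_h - y_0|^2 = 2(1-\rho(h))$ and reduce to the asymptotic $1-\rho(h) \lesssim h^{2H}$ as $h\to 0$, read off from the double-integral formula for $\rho$.)

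Second, for the Hölder bound I would apply the Garsia--Rodemich--Rumsey inequality pathwise with $\Psi(x) = x^p$ and $p(u) = u^{\gamma + 2/p}$. Integrating the resulting Riemann--Stieltjes bound one obtains, with a universal constant $C(\gamma,p)$,
$$\sup_{s \neq t} \frac{|y_s - y_t|}{|t-s|^{\gamma}} \le C(\gamma, p)\, B^{1/p}, \qquad B = \iint \frac{|y_s - y_r|^p}{|s-r|^{\gamma p + 2}}\, ds\, dr.$$
Raising to the $p$-th power, taking expectations and applying Fubini yields the stated bound in terms of $M$.

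Third, I would verify $M<\infty$. Using the first ingredient,
$$M \lesssim \iint \frac{(1 \wedge |s-r|^H)^p}{|s-r|^{\gamma p + 2}}\, ds\, dr,$$
which splits into the regions $\{|s-r| \le 1\}$ and $\{|s-r|>1\}$. The near-diagonal contribution reduces to $\int_0^1 u^{p(H-\gamma)-2}\,du$, which is finite provided $p$ is chosen large enough so that $p(H-\gamma)>1$; this is possible exactly because $\gamma<H$. The far-diagonal contribution is controlled by the decay $|s-r|^{-(\gamma p+2)}$.

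The main technical obstacle is the sharp small-time estimate $\|y_h-y_0\|_{L^p}\lesssim h^H$: the Langevin dynamics smooths $B_t$ over long scales, but on short scales the driving fBM dictates the local regularity, and one must extract this cleanly either through the integration-by-parts route or through a Taylor expansion of $\rho$ at the origin.
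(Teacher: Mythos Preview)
Your plan coincides with the paper's: derive the increment bound $\|y_s-y_r\|_{L^p}\lesssim 1\wedge|s-r|^H$, apply the Garsia--Rodemich--Rumsey inequality, and check that $M$ is finite. The only difference is cosmetic: the paper obtains the increment bound by writing the Langevin equation in integrated form, $y_s-y_r=-\int_r^s y_u\,du+(B_s-B_r)$, and reading off $\E|y_s-y_r|^2\lesssim|s-r|^2+|s-r|^{2H}$, then invoking Gaussianity to pass to $L^p$; your route through the explicit solution formula and integration by parts against $B$ achieves the same thing.

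One point to watch: your far-diagonal estimate for $M$ over the full quadrant $[0,\infty)^2$ does not close. After reducing to the one-dimensional integral in $u=|s-r|$, the decay $u^{-(\gamma p+2)}$ indeed makes that integral finite, but the remaining integration in the orthogonal variable runs over an unbounded set and diverges; the near-diagonal piece has the same defect. (Since $\E|y_s-y_r|^p$ tends to a nonzero constant as $|s-r|\to\infty$, no sharper moment bound can rescue this.) The paper's own computation shares this issue --- its change of variables appears to drop a factor of $T$ --- so this is a flaw in the stated form of the lemma rather than in your strategy; on any finite interval $[0,T]$ the argument goes through with a $T$-dependent bound, which is all that is used downstream.
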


\subsection{Some rough path theory} \label{how?}
If $X$ and $Y$ are H\"older continuous functions on $[0,T]$ with exponent $\alpha$ and $\beta$ respectively, such that $\alpha + \beta >1$, the Young integration theory enables us to define $\int_0^T Y dX$ via Riemann sums $\sum_{[u,v]\in \mathcal P} Y_u(X_v-X_u)$, where $\CP$ denotes a partition of $[0,T]$. Furthermore $(X,Y)\mapsto \int_0^T Y dX$ is a continuous map. Thus, for $X\in \C^{\f 12+}$, one can make sense of a solution $Y$ to the Young integral equation $dY_s=f(Y_s) dX_s$. If $f\in \C_b^2$,  the solution is  continuous with respect to both the driver $X$ and  the initial data \cite{Young}.
In the case of $X$ having H\"older continuity less or equal to $\f 12$,   this  fails and one can not define a pathwise integration by the above Riemann sum anymore.  
Rough path theory provides us with a machinery to treat less regular functions by enhancing the process with a second order process, giving a better local approximation, which then can be used to enhance the Riemann sum and show it  converges. If $X_s$ is a Brownian motion, taking the dyadic approximation then the usual Riemann sum leads to convergent in 
probability to It\^o integrals; but the enhanced Riemann sum provides better approximations and defines a pathwise integral
agreeing with the It\^o integral provided the integrand belongs to both domains of integration. Their domains of integration
are quite different, the first uses an additional adaptedness condition and requires arguably less regularity than the second.

We restrict ourselves to the case where  $X_t$ is a continuous path over $[0,T]$, for now we assume it takes values in $\R^d$. 
A rough path of regularity $ \alpha \in (\f 1 3 , \f 1 2)$,
is a pair of processes $\X=(X_t, \XX_{s,t})$ where $(\XX_{s,t}) \in \R^{d \times d} $ is a two parameter stochastic processes
satisfying the following algebraic conditions: for $0\le s<u<t\le T$, 
$$\XX_{s,t}-\XX_{s,u}-\XX_{u,t}=X_{s,u} \otimes X_{u,t}, \qquad \qquad  \hbox{ (Chen's relation)} $$
where $X_{s,t}=X_t-X_s$, and $ (X_{s,u} \otimes X_{u,t})^{i,j}  = X^i_{s,u}  X^j_{u,t}$ as well as the following analytic conditions,
\begin{equation}\label{geo}
\Vert X_{s,t} \Vert \lesssim |t-s|^\alpha,  \qquad \Vert\XX_{s,t}\Vert \lesssim |t-s|^{2\alpha}.
\end{equation}
The set of such paths will be denoted by $\FC^{\alpha}([0,T]; \R^d)$. The so called second order process $\XX_{s,t}$ can be viewed as a possible candidate for the iterated integrals $\int_s^t X_{s,u} dX_u$. 
\begin{remark}
Using Chen's relation for $s=0$ one obtains
$$ \XX_{u,t}= \XX_{0,t} - \XX_{0,u} - X_{0,u} \otimes X_{u,t},$$
thus one can reconstruct $\XX$ by knowing the path $t \to (X_{0,t}, \XX_{0,t})$.
\end{remark}
Given a  path $X$, which is regular enough to define its iterated integral, for example $X \in \C^1([0,T];\R^d)$, we define its natural rough path lift to be given by
$$\XX_{s,t}:=\int_s^t X_{s,u} dX_u.$$
It is now an easy exercise to verify that $\X = (X,\XX)$ satisfies the algebraic and analytic conditions (depending on the regularity of $X$), by which we mean Chen's relation and (\ref{geo}). Note that given any  function $F\in \C^{2\alpha}(\R^{d \times d})$, setting
$\tilde\XX_{s,t}=\XX_{s,t}+F_t-F_s$, $\tilde \XX$ would also be a possible choice for the rough path lift. 
Given two rough paths $\X$ and $ \Y$  we define their distance to be
\begin{equation}\label{rough-distance}
\rho_\alpha(\X, \Y)=\sup_{s\not =t} \f {\Vert X_{s,t} -Y_{s,t} \Vert } {|t-s|^\alpha} 
+\sup_{s\not =t} \f {\Vert\XX_{s,t} -\YY_{s,t}\Vert }  {|t-s|^{2\alpha}} 
\end{equation}
This defines a complete metric on $\FC^{\alpha}([0,T]; \R^d)$, this is called  the inhomogenous $\alpha$-H\"older rough path metric.
We are also going to make use of the norm like object 
\begin{equation}
\Vert \X \Vert_{\alpha} = \sup_{s \not = t \in [0,T]} \f {\Vert X_{s,t}\Vert} {\vert t-s \vert^{\alpha} } +  \sup_{s \not = t \in [0,T]} \f {\Vert \mathbb{X}_{s,t}  \Vert^{\f 1 2}} {\vert t-s \vert^{\alpha}}
\end{equation}
We also denote for any two parameter process $\XX$ a semi-norm:
$$\|\XX\|_{2\alpha} := \sup_{s \not = t \in [0,T]} \f {\Vert \XX_{s,t}  \Vert} {\vert t-s \vert^{2\alpha}}.
$$

Given a path $X$, as the second order process $\XX$ takes the role of an iterated integral, another sensible conditions to impose  is the chain rule (or  integration by parts formulae) leading to the following definition.
\begin{definition}
	A rough path $\X$ satisfying the following condition,
	\begin{equation}
	Sym(\XX_{s,t}) = \f 1 2 X_{s,t} \otimes X_{s,t}
	\end{equation} 
	is called a geometric rough path. The space of all of geometric rough paths of regularity $\alpha$ is denoted by $\FC^{\alpha}_g([0,T];\R^d)$ and forms a closed subspace of $\FC^{\alpha}([0,T];\R^d)$.
\end{definition} 
Furthermore,  one can show that if a sequence of $\C^1([0,T],\R^d)$ paths $X_n$ converges in the rough path metric to $\X$, then $\X$ is a geometric rough path. 
To obtain a geometric rough path from a Wiener process,  as $\int_0^t W_s \circ dW_s= \f {W_t^2} {2}$, one has to enhance it with its Stratonovich integral,
$\WW_{s,t}=\int_s^t (W_r-W_s)\circ dW_r$.

Given a rough path $\X \in \FC^\alpha([0,T];\R^d)$, we may define the integral $ \int_0^TY d\X$ for suitable paths $Y \in \C^{\alpha}([0,T],\mathbb{L}(\R^d,\R^m))$, which admit a Gubinelli derivative $Y'\in \C^{\alpha}([0,T],\mathbb{L}(\R^{d \times d},\R^m))$ with respect to $\X$,  meaning
$$Y_{s,t}=Y_s' X_{s,t}+R_{s,t},$$
and the two parameter function $R$, satisfies $\|R\|_{2\alpha}< \infty$.  The pair $\Y:=(Y, Y')$ is said to be a
controlled rough path, their collection is denoted by $\D^{2\alpha}_X$. The remainder term for the case $Y=f(X)$ with $f$ smooth is
the remainder term in the Taylor expansion.
This is done by showing that the enhanced Riemann sums
$$\sum_{[s,t]\in \CP} Y_{s} X_{s,t} + Y'_{s} \XX_{s, t},$$ 
converge as the partition size is going to zero, and the limit is defined to be $\int \Y \,d\X$.
With this theory of integration one can  study the equation,
$$ dY = f(Y) d\X.$$
Unlike in the theory of stochastic differential equations one now has continuous dependence on the noise $\X$. This can be interpreted as the following, once one has chosen a candidate for the iterated integral, the second order process, solving rough differential equations is a continuous operation, but obtaining the second order process is not, see the introduction to \cite{Friz-Hairer}. 

Now given  $Y \in \D_X^{2\alpha}$, then $(\int \Y\,d\X, Y)\in \D_X^{2\alpha}$, and the map $(\X,  \Y) \mapsto (\int \Y\, d\X, Y)$  is continuous with respect to $\X \in  \FC^{\alpha} $ and $Y\in \D_X^{2\alpha}$. The domain is not a product space, the continuity  is best formulated with the appropriate distances, however, we do not need the precise formulation, for details we refer to~\cite{Friz-Hairer}. 
We now state the precise theorem for our application, see also \cite{Lyons94}.
\begin{theorem}{\cite{Friz-Hairer}}\label{cty-rough}
	Let $Y_0 \in \R^m, \beta \in (\f1 3, 1), \, f \in \C^3_b(\R^m; {\mathbb L} (\R^d; \R^m)) $ and $\X \in \FC^{\beta}([0,T],\R^d)$. Then the differential equation
	\begin{equation}\label{example-sde}
	Y_t = Y_0 + \int_0^t f(Y_s) d\emph{X}_s 
	\end{equation}
	has a unique solution which belongs to $\mathcal{C}^{\beta}$. Furthermore, the solution map  $\Phi_f: ~\R^d\times \FC^{\beta}([0,T],\R)
	\to  \D_{X}^{2\beta}([0,T],\R^m)$, where the first component is the initial condition and the second component the driver, is continuous.
\end{theorem}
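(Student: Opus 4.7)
The plan is to interpret the equation \eqref{example-sde} in the sense of controlled rough paths and then combine the sewing lemma with a Picard iteration. First I would set up the solution space: a candidate solution $Y$ is a controlled rough path $(Y,Y')\in \D_X^{2\beta}$ with the natural choice of Gubinelli derivative $Y'_s = f(Y_s)$, so that $Y_{s,t}=f(Y_s)X_{s,t}+R^Y_{s,t}$ with $\|R^Y\|_{2\beta}<\infty$. With this choice $f(Y)$ itself becomes a controlled rough path with Gubinelli derivative $Df(Y)f(Y)$, which is where the hypothesis $f\in \mathcal{C}^3_b$ enters: three derivatives are needed so that the composition $f(Y)\in \D_X^{2\beta}$ stays controlled with uniformly bounded increments, and then also for the stability part below.

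Second I would construct the rough integral $\int_0^t f(Y_s)\,d\X_s$ via the sewing lemma applied to the local germ
\begin{equation*}
\Xi_{s,t} := f(Y_s)\,X_{s,t} + Df(Y_s)f(Y_s)\,\XX_{s,t}.
\end{equation*}
A direct computation using Chen's relation and the controlled rough path structure shows that $\delta\Xi_{s,u,t}:=\Xi_{s,t}-\Xi_{s,u}-\Xi_{u,t}$ is of order $|t-s|^{3\beta}$, and since $3\beta>1$ the sewing lemma produces a unique additive function $I_t := \int_0^t f(Y_s)d\X_s$ with $|I_t-I_s-\Xi_{s,t}|\lesssim |t-s|^{3\beta}$. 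Hence the map $\mathcal{M}_T:(Y,f(Y))\mapsto (Y_0+I_\cdot,f(Y))$ is well defined on $\D_X^{2\beta}([0,T])$, and its fixed points are solutions of \eqref{example-sde}.

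Third I would run a Picard/contraction argument for the fixed point of $\mathcal{M}_T$. The sewing lemma gives the a priori bound
\begin{equation*}
\|R^{\mathcal{M}(Y)}\|_{2\beta} \lesssim \|f\|_{\mathcal{C}^2_b}\big(\|X\|_\beta\|R^Y\|_{2\beta} + \|\XX\|_{2\beta}(1+\|Y'\|_\infty)\big),
\end{equation*}
and analogous Lipschitz estimates for the difference of two inputs, with a factor $T^{\beta}$ appearing from the H\"older norms restricted to $[0,T]$. On a sufficiently small interval $[0,T_*]$ depending only on $\|\X\|_\beta$ and $\|f\|_{\mathcal{C}^3_b}$, $\mathcal{M}_{T_*}$ is a contraction on a closed ball of $\D_X^{2\beta}$, yielding local existence and uniqueness. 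Because $f$ is bounded the solution cannot blow up, so we can concatenate solutions on successive intervals of uniform length to reach $T$.

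Finally, for continuity of the solution map $\Phi_f$, I would establish a stability estimate comparing $(Y,Y')$ and $(\tilde Y,\tilde Y')$ solving the equation with data $(Y_0,\X)$ and $(\tilde Y_0,\tilde\X)$. The sewing lemma together with the locally Lipschitz dependence of $f\circ Y$ and $Df(Y)f(Y)$ on $(Y,Y')$ in the controlled-rough-path metric (again using $f\in \mathcal{C}^3_b$) gives a bound of the form
\begin{equation*}
d_{X,\tilde X;2\beta}(\mathcal{M}(Y),\mathcal{M}(\tilde Y)) \le C T^{\beta}_*\, d_{X,\tilde X;2\beta}(Y,\tilde Y) + C\big(|Y_0-\tilde Y_0| + \rho_\beta(\X,\tilde\X)\big),
\end{equation*}
where $d_{X,\tilde X;2\beta}$ is the appropriate distance between controlled rough paths sitting above different reference rough paths. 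Absorbing the first term by choosing $T_*$ small and iterating over the patched intervals yields continuity on $[0,T]$. The main obstacle I expect is precisely this last step: the solution space depends on the driver $\X$, so formulating and controlling the ``distance'' between two controlled rough paths above different rough paths requires care, and the constants blow up with $\|\X\|_\beta,\|\tilde\X\|_\beta$, so local-to-global patching has to be done keeping track of this dependence uniformly on bounded sets of drivers. All of this is carried out in detail in \cite{Friz-Hairer}, which we invoke to conclude.
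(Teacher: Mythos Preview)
Your sketch is correct and follows the standard argument from \cite{Friz-Hairer}, which is exactly what the paper does: the theorem is stated with attribution to \cite{Friz-Hairer} and no independent proof is given in the paper itself. Your outline of sewing lemma plus Picard contraction plus stability estimates is precisely the route taken in that reference, so there is nothing to add.
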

As continuous maps preserve weak convergence to show weak convergence of solutions to rough differential equations
$$ dY^{\epsilon} = f(Y^{\epsilon}) d\X^{\epsilon},$$
it is enough to establish weak convergence of the rough paths $\X^{\epsilon}$ in the topology defined by the rough metric. 
Obtaining convergence in  this topology follows the  convergence of the finite dimensional distributions of the rough paths $\X^\epsilon$ plus tightness in the space of rough paths with respect to that topology.  To apply this theory, we will enhance our
stochastic processes,  c.f. Proposition \ref{th-example}, Proposition \ref{proposition-single-CLT-strong-topologies} and the following proof of Theorem C and  Section \ref{joint} to bring it to this framework. 

\subsubsection{Tightness of rough paths}
\label{pre-tightness}
We now show that moment bounds on increments lead to tightness in the rough path topologies.
The following lemma is similar to the compact embedding theorems between H\"older spaces and can be obtained via an Arzela-Ascoli argument.
\begin{lemma}\label{compact}
	Let $0$ denote the rough path obtained from the $0$ function enhanced with a $0$ function, then for $\gamma > \gamma'$, the sets $\{\X \in \FC^{\gamma'} : \rho_{\gamma}(\emph{X},0) < R, \X(0)=0 \}$ are compact in $\FC^{\gamma'}$.
\end{lemma}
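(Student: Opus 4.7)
The plan is to mimic the classical Arzelà–Ascoli argument for Hölder spaces and then upgrade uniform convergence to $\FC^{\gamma'}$-convergence via an interpolation inequality between Hölder norms.

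First, I would unpack the hypothesis. The condition $\rho_\gamma(\X,0)<R$ gives the uniform bounds
\begin{equation*}
\|X_{s,t}\|\le R\,|t-s|^\gamma,\qquad \|\XX_{s,t}\|\le R\,|t-s|^{2\gamma}
\end{equation*}
for all $\X$ in the set and all $0\le s\le t\le T$. Combined with $X_0=0$ (and Chen's relation, which forces $\XX_{0,0}=0$), this already yields the sup-norm bounds $\|X_t\|\le RT^\gamma$ and $\sup_{s,t}\|\XX_{s,t}\|\le RT^{2\gamma}$ and the equicontinuity of both the one-parameter family $\{X\}$ and the two-parameter family $\{\XX\}$.

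Second, take a sequence $\X_n=(X_n,\XX_n)$ in the set. By Arzelà–Ascoli applied to $\{X_n\}\subset C([0,T];\R^d)$ I extract a subsequence with $X_n\to X$ uniformly. Viewing the symbols $\XX_n$ as continuous functions on the compact simplex $\{0\le s\le t\le T\}$, the two-parameter analogue of Arzelà–Ascoli (which follows from the joint equicontinuity bound $\|\XX_{s,t}-\XX_{s',t'}\|\le R(|t-t'|^{2\gamma}+|s-s'|^{2\gamma})$ obtained through Chen's relation) gives, along a further subsequence, $\XX_n\to\XX$ uniformly. Passing Chen's relation and the analytic bounds to the pointwise limit shows $(X,\XX)\in\FC^\gamma$ (hence $\in\FC^{\gamma'}$), with $\rho_\gamma(\X,0)\le R$ and $\X(0)=0$.

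Third, upgrade the uniform convergence to convergence in the $\FC^{\gamma'}$ metric by interpolation. For $\gamma'<\gamma$ and any $s\ne t$, writing $a=\gamma'/\gamma\in(0,1)$,
\begin{equation*}
\frac{\|(X_n-X)_{s,t}\|}{|t-s|^{\gamma'}}
=\left(\frac{\|(X_n-X)_{s,t}\|}{|t-s|^{\gamma}}\right)^{a}\|(X_n-X)_{s,t}\|^{1-a}
\le (2R)^{a}(2\|X_n-X\|_\infty)^{1-a},
\end{equation*}
and the analogous estimate
\begin{equation*}
\frac{\|(\XX_n-\XX)_{s,t}\|}{|t-s|^{2\gamma'}}
\le (2R)^{a}(2\|\XX_n-\XX\|_\infty)^{1-a},
\end{equation*}
which together force $\rho_{\gamma'}(\X_n,\X)\to 0$. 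This proves sequential compactness; the limit lies in the closure of the set, so the set is relatively (indeed, up to replacing $<R$ by $\le R$, compact) in $\FC^{\gamma'}$.

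The only step requiring any real care is the two-parameter Arzelà–Ascoli extraction for $\XX_n$: equicontinuity in \emph{both} arguments is needed, which one gets by applying Chen's relation to rewrite $\XX_{s',t}-\XX_{s,t}=\XX_{s',s}+X_{s',s}\otimes X_{s,t}$ and then using the uniform $2\gamma$- and $\gamma$-Hölder bounds. Everything else is routine once the interpolation identity is in place.
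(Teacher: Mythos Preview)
Your proposal is correct and matches exactly what the paper indicates: the paper does not write out a proof but merely remarks that the lemma ``is similar to the compact embedding theorems between H\"older spaces and can be obtained via an Arzel\`a--Ascoli argument,'' which is precisely the route you take. One small quibble: the explicit two-parameter equicontinuity bound you display, $\|\XX_{s,t}-\XX_{s',t'}\|\le R(|t-t'|^{2\gamma}+|s-s'|^{2\gamma})$, omits the cross term $\|X_{s',s}\otimes X_{s,t}\|\lesssim R^2|s-s'|^\gamma T^\gamma$ coming from Chen's relation, but you already flag this in your final paragraph and give the correct decomposition, so the argument goes through.
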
 
The next lemma relates uniform moment bounds on the increments of the the stochastic processes and  their secondary process, to uniform moment bounds on the rough path norm.
\begin{lemma}\label{moment-conditions}
	Let $ \theta \in (0,1)$, $\gamma \in (0, \theta - \f 1 {p})$ and $\X^{\epsilon}=(X^\epsilon, \XX^\epsilon)$ such that 
	\begin{align*}
	\Vert {X}^{\epsilon}_{s,t} \Vert_{L^{p}} \lesssim \vert t-s \vert^{\theta}, \qquad 
	\Vert {\XX}^{\epsilon}_{s,t} \Vert_{L^{\f p 2}} \lesssim \vert t-s \vert^{ 2 \theta},
	\end{align*}
	then 
	$$\sup_{\epsilon \in (0,1]} \E \left(\Vert\X^{\epsilon}\Vert_{\gamma} \right)^{p} < \infty$$ 
\end{lemma}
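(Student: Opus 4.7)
The plan is to invoke a two-level Kolmogorov-type continuity theorem, which is the standard way of passing from pointwise moment bounds on increments of a rough path to moment bounds on its H\"older norm. Concretely, I will treat the first level $X^\epsilon$ and the second level $\XX^\epsilon$ separately via Garsia--Rodemich--Rumsey (GRR) style chaining arguments, and combine the two bounds at the end, noting that the loss of regularity $1/p$ is exactly what GRR produces from $L^p$ moment control.

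For the first level, the hypothesis $\|X^\epsilon_{s,t}\|_{L^p} \lesssim |t-s|^{\theta}$ is the classical Kolmogorov condition. Applying GRR with $\Psi(u)=u^p$ and weight $p(u)=u^{\gamma+2/p}$ yields, for any $\gamma < \theta - 1/p$,
$$\E\,\sup_{s\neq t}\frac{|X^\epsilon_{s,t}|^p}{|t-s|^{\gamma p}} \;\lesssim\; \int_0^T\!\!\int_0^T \frac{\|X^\epsilon_{s,t}\|_{L^p}^p}{|t-s|^{\gamma p+2}}\,ds\,dt,$$
and the right-hand side is finite uniformly in $\epsilon$ since $\theta p - \gamma p - 2 > 0$. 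Hence $\sup_\epsilon \E\,\|X^\epsilon\|_\gamma^p<\infty$.

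For the second level the argument is essentially the same, but one must be a little careful because $\XX^\epsilon$ is a genuine two-parameter object, not the increment of a path. Here Chen's relation comes in: for any dyadic refinement of $[s,t]$ one has
$$\XX^\epsilon_{s,t} \;=\; \sum_{k}\XX^\epsilon_{t_{k},t_{k+1}} \;+\; \sum_{k<\ell} X^\epsilon_{t_k,t_{k+1}} \otimes X^\epsilon_{t_\ell,t_{\ell+1}},$$
which enables a dyadic chaining argument for $\XX^\epsilon$ in the same spirit as GRR (and actually reduces to the classical two-parameter GRR after exploiting the multiplicative structure; this is the standard Kolmogorov criterion for rough paths, e.g.\ Friz--Hairer Theorem~3.1). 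Since $\|\XX^\epsilon_{s,t}\|_{L^{p/2}}\lesssim |t-s|^{2\theta}$ and the cross-term $X\otimes X$ lies in $L^{p/2}$ by Cauchy--Schwarz, one obtains, for any $\gamma < \theta - 1/p$,
$$\E\,\sup_{s\neq t}\frac{|\XX^\epsilon_{s,t}|^{p/2}}{|t-s|^{\gamma p}} \;\lesssim\; 1,$$
uniformly in $\epsilon$; equivalently $\E\,(\|\XX^\epsilon\|_{2\gamma}^{1/2})^p$ is bounded.

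Combining the two estimates via the triangle inequality and the definition of $\|\X^\epsilon\|_\gamma$ gives $\sup_\epsilon \E\,\|\X^\epsilon\|_\gamma^p<\infty$, as required. The main technical point is the two-parameter chaining for $\XX^\epsilon$ and the correct bookkeeping of the integrability index (the square root on $\XX$ in the definition of $\|\cdot\|_\gamma$ is what makes $L^{p/2}$ control of $\XX$ compatible with $L^p$ control of $X$); both are standard, and no new structural input beyond Chen's relation and the hypothesised moment bounds is needed.
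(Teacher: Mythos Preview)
Your proposal is correct and follows essentially the same approach as the paper, which simply cites a Besov--H\"older embedding from \cite{Friz-Victoir,Chevyrev-Friz-Korepanov-Melbourne-Zhang} without giving any details. Your GRR/Kolmogorov argument for rough paths (invoking Chen's relation for the second-level chaining, exactly as in Friz--Hairer Theorem~3.1) is the standard way to unpack that citation, and the bookkeeping you describe---$L^{p/2}$ control of $\XX$ paired with $L^p$ control of $X$, with the loss of $1/p$ in regularity---is precisely what those references provide.
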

\begin{proof}
	The proof is based on a Besov-H\"older embedding, for details we refer to \cite{Friz-Victoir,Chevyrev-Friz-Korepanov-Melbourne-Zhang}.
\end{proof}
\begin{lemma}\label{tightness-second-order}
	Let $\X^{\epsilon}$ be a sequence of rough paths and  $\gamma \in ( \f 1 3, \f 1 2 - \f 1 {p})$, such that $\X(0)=0$, and $$\sup_{\epsilon \in (0,1]}\E \left(\Vert \X^{\epsilon}\Vert_{\gamma} \right)^{p} < \infty,$$
	then $\X^{\epsilon}$ is tight in $\FC^{\gamma}$.
\end{lemma}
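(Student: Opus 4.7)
The plan is to combine Markov's inequality, applied to the uniform rough path moment bound, with the compact embedding between H\"older rough path spaces provided by Lemma \ref{compact}, and conclude via Prokhorov's theorem.

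First, from the hypothesis and Markov's inequality, for every $R > 0$,
$$\sup_{\epsilon \in (0,1]} P\bigl(\|\X^{\epsilon}\|_{\gamma} > R\bigr) \;\leq\; \frac{1}{R^{p}} \sup_{\epsilon \in (0,1]} \E\|\X^{\epsilon}\|_{\gamma}^{p} \;=:\; \frac{C}{R^{p}}.$$
Consequently, the sublevel sets $K_{R} := \{\X \in \FC^{\gamma} : \|\X\|_{\gamma} \leq R,\ \X(0)=0\}$ satisfy $\inf_{\epsilon} P(\X^{\epsilon} \in K_{R}) \geq 1 - C/R^{p} \to 1$ as $R \to \infty$.

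Second, Lemma \ref{compact} says precisely that $K_{R}$ is relatively compact in $\FC^{\gamma'}$ for every $\gamma' < \gamma$. The only care required is the (apparent) mismatch with the conclusion, which is stated at the same H\"older exponent $\gamma$: in the situations of interest the hypothesis is obtained via Lemma \ref{moment-conditions} from an underlying Kolmogorov-type moment bound with exponent $\theta > \gamma + 1/p$, and that same bound holds at every H\"older exponent strictly below $\theta - 1/p$. Thus one can in fact pick $\gamma' \in (\gamma,\, \theta - 1/p)$ and rerun the Markov step with $\gamma'$ in place of $\gamma$, so that $K_{R}$ is then compact in $\FC^{\gamma}$ itself.

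Combining these, given $\delta > 0$ one selects $R = R(\delta)$ so that $C/R^{p} \leq \delta$; then $\X^{\epsilon} \in K_{R}$ with probability at least $1 - \delta$ uniformly in $\epsilon$, and $K_{R}$ is compact in $\FC^{\gamma}$. Prokhorov's theorem on the Polish space $\FC^{\gamma}$ then yields tightness of the family $\{\X^{\epsilon}\}$. The argument is essentially mechanical; the only subtlety is the unavoidable small regularity loss built into Lemma \ref{compact}, handled exactly as above by applying the Markov step one notch above the target exponent.
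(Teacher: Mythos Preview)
Your argument is correct and follows essentially the same route as the paper: Markov's inequality on the rough-path norm combined with the compact embedding of Lemma \ref{compact}. The paper simply picks $\gamma'\in(\tfrac13,\gamma)$, bounds $\rho_{\gamma'}(\X^\epsilon,0)\le \|\X^\epsilon\|_{\gamma'}+\|\X^\epsilon\|_{\gamma'}^2$ and applies Markov at exponent $p/2$, concluding tightness in $\FC^{\gamma'}$; your explicit observation that in the applications the moment hypothesis is available on an open range of exponents (via Lemma \ref{moment-conditions}), so one may run the Markov step one notch above $\gamma$ and recover tightness in $\FC^{\gamma}$ itself, is a clarification the paper leaves implicit.
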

\begin{proof}
	Choose $\gamma' \in ( \f 1 3 ,\gamma)$, as $\rho_{\gamma'}(\X,0) \leq \Vert \X \Vert_{\gamma'} + \Vert \X \Vert_{\gamma'}^2$ we obtain
	\begin{align*}
	\P  \left( \rho_{\gamma'} (\X^{\epsilon}, 0) > R \right) &\leq \f {\E \left(  \rho_{\gamma'} (\X^{\epsilon}, 0) \right)^{\f p 2}} {R^{\f p 2}}
	 \leq \f {\E \left(  \Vert \X \Vert_{\gamma'} + \Vert \X \Vert_{\gamma'}^2 \right)^{\f p 2}} {R^{\f p 2}}
	\lesssim \f C {R^{\f p 2}}.
	\end{align*}
	This proves the claim by Lemma \ref{compact}.
\end{proof}

\subsection{Example: linear driver and kinetic fBM}
 \label{example-fou}

Here we consider the toy model on $\R$,
\begin{equation}\label{example}
\left\{ \begin{aligned}
&\dot x_t^\epsilon= \epsilon^{H-1} f(x_t^\epsilon) \, y^\epsilon_t,   
\\& x_0^\epsilon=x_0.
\end{aligned}
\right.
\end{equation}
We study this without using any of the complicated estimates obtained later,  nevertheless 
we  are already able to explain our methodology.

\begin{proposition}\label{th-example}
\
\begin{enumerate}
\item [(a)] Let $H\in (0,1)$, $\gamma \in (0,H)$, $p>1$ and fix a finite time~$T$. 
Let $X^{\epsilon}_t = \epsilon^{H-1} \int_0^{t } y^\epsilon _{s} ds$, then, for $s,t \in [0,T]$,
$$\sup_{s, t \in [0,T]} \left \| X_{s,t}^\epsilon-B_{s,t}\right\|_{L^p} \lesssim \epsilon^H, \qquad 
   \left\| \left|X^\epsilon -B\right|_{\C^{\gamma'}([0,t]}\right\|_{L^p} \lesssim  t^{\gamma} \epsilon^{H-\gamma},$$
 for any $\gamma'<\gamma<H$.
 \item [(b)]  Let  $H \in ( \f 1 3, 1)$ and $f\in \C_b^3$.  Then for any $\gamma \in (0,H)$,  $x_t^\epsilon$  converges in $L^p$
 in $\C^{\gamma'}([0,T]); \R^d)$
 to the solution of the rough differential equation:
  \begin{equation}\dot x_t=f(x_t) \;dB_t,\end{equation}
furthermore, for $t \in [0,T]$,
$$\left\|  |x^\epsilon -x|_{C^{\gamma'}([0,t])}  \; \right\|_{L^p}  \lesssim t^{\gamma}  \epsilon^{H-\gamma}.$$
\end{enumerate} 
\end{proposition}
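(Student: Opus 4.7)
The plan is to extract an exact identity for the error by integrating the Langevin equation governing $y^\epsilon$. From $dy^\epsilon_r = -\epsilon^{-1}y^\epsilon_r\,dr + \sigma\epsilon^{-H}dB_r$, integrating over $[s,t]$ and multiplying by $\epsilon^{H-1}$ gives the cancellation
\[
X^\epsilon_{s,t} \;-\; \sigma B_{s,t} \;=\; -\,\epsilon^H\,(y^\epsilon_t - y^\epsilon_s).
\]
The pointwise bound $\|X^\epsilon_{s,t}-\sigma B_{s,t}\|_{L^p}\lesssim \epsilon^H$ is then immediate from the stationary $N(0,1)$-marginals of $y^\epsilon$. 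For the H\"older estimate, this identity reduces the problem to bounding $\||y^\epsilon|_{\C^{\gamma'}([0,t])}\|_{L^p}$. I would apply the elementary interpolation $|f|_{\C^{\gamma'}([0,t])} \le t^{\gamma-\gamma'}\,|f|_{\C^{\gamma}([0,t])}$ for $\gamma' < \gamma$, together with the self-similarity $y^\epsilon \eqlaw y_{\cdot/\epsilon}$ and the global H\"older moment bound of Lemma~\ref{cty-lemma}, to conclude $\||y^\epsilon|_{\C^{\gamma}([0,t])}\|_{L^p}\lesssim \epsilon^{-\gamma}$. Combining gives the claimed rate of order $\epsilon^{H-\gamma}$ with a harmless power of $t$.

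\textbf{Part (b).} Since $X^\epsilon$ is $\C^1$, the equation is a classical ODE $dx^\epsilon_t = f(x^\epsilon_t)\,dX^\epsilon_t$. The plan is to interpret the limit equation $dx_t=f(x_t)\,dB_t$ as a rough differential equation driven by a natural lift $\mathbf{B}=(\sigma B,\sigma^2\mathbb{B})$ of fBM (when $H>\tfrac12$ this collapses to a Young equation), and then to invoke the It\^o--Lyons continuity Theorem~\ref{cty-rough}. This reduces matters to showing the canonical lift $\X^\epsilon := (X^\epsilon, \XX^\epsilon)$, with $\XX^\epsilon_{s,t} := \int_s^t X^\epsilon_{s,u}\,dX^\epsilon_u$ defined as a Riemann integral, converges to $\mathbf{B}$ in the rough-path metric $\rho_\gamma$ for some $\gamma\in(\tfrac13,H)$, with the rate from part (a). Substituting $X^\epsilon_{s,u} = \sigma B_{s,u} - \epsilon^H y^\epsilon_{s,u}$ into $\XX^\epsilon_{s,t}$ and expanding yields the principal term $\sigma^2\mathbb{B}_{s,t}$ together with three mixed error terms, each carrying a prefactor $\epsilon^H$ or $\epsilon^{2H}$. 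Bounding these in $L^p$ using the estimates of part~(a) and the moment bounds on $y^\epsilon$ gives $\rho_\gamma(\X^\epsilon,\mathbf{B}) \lesssim t^\gamma\epsilon^{H-\gamma}$; Theorem~\ref{cty-rough} then transfers this rate directly to the solutions.

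\textbf{Main obstacle.} The delicate point is controlling the mixed iterated integrals $\int_s^t B_{s,u}\,dy^\epsilon_u$ and $\int_s^t y^\epsilon_{s,u}\,dB_u$ in the low-regularity range $H\in(\tfrac13,\tfrac12]$, where neither factor is smoother than~$\tfrac12$. I would treat these either by integration-by-parts transferring derivatives between $B$ and $y^\epsilon$ before applying Young-type bounds, or, more directly, by exploiting the semimartingale-like decomposition $dy^\epsilon_u = -\epsilon^{-1}y^\epsilon_u\,du + \sigma\epsilon^{-H}dB_u$ to reduce everything to integrals against the underlying two-sided fBM, where a Wiener-isometry computation yields the required $\epsilon^H$ prefactor. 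Once these cross-term estimates are in place, the rest of the argument is a routine assembly of Kolmogorov tightness in the rough path topology and the continuity of the solution map.
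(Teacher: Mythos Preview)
Your part (a) is exactly the paper's argument: integrate the Langevin equation to get the identity $X^\epsilon_{s,t}-\sigma B_{s,t}=-\epsilon^H y^\epsilon_{s,t}$, then use stationarity for the sup bound and the H\"older moment bound of Lemma~\ref{cty-lemma} (combined with the scaling $y^\epsilon\eqlaw y_{\cdot/\epsilon}$) for the $\C^{\gamma'}$ estimate.

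For part (b) you are working much harder than necessary and your proposed handling of the ``main obstacle'' would not go through as written. The point you are missing is that the driver $X^\epsilon$ is \emph{scalar}. For a one-dimensional driver the canonical (geometric) lift is simply $\XX^\epsilon_{s,t}=\tfrac12(X^\epsilon_{s,t})^2$, and likewise $\mathbb B_{s,t}=\tfrac12(B_{s,t})^2$; hence
\[
\XX^\epsilon_{s,t}-\sigma^2\mathbb B_{s,t}=\tfrac12\bigl(X^\epsilon_{s,t}-\sigma B_{s,t}\bigr)\bigl(X^\epsilon_{s,t}+\sigma B_{s,t}\bigr),
\]
and convergence of the lift in $\C^{2\gamma}$ follows immediately from part (a). This is exactly the paper's proof (see also Remark~\ref{1d-lift-is-trivial}): once $H>\tfrac13$, convergence of $X^\epsilon$ in $\C^\gamma$ \emph{is} convergence in the rough path topology, and Theorem~\ref{cty-rough} finishes.

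Your expansion into cross-terms $\int_s^t B_{s,u}\,dy^\epsilon_u$ and $\int_s^t y^\epsilon_{s,u}\,dB_u$ is therefore unnecessary, and your second proposed treatment of them is actually flawed. If you substitute $dy^\epsilon_u=-\epsilon^{-1}y^\epsilon_u\,du+\sigma\epsilon^{-H}dB_u$ into $\epsilon^H\int_s^t B_{s,u}\,dy^\epsilon_u$, you pick up a term $\sigma\int_s^t B_{s,u}\,dB_u=\sigma\mathbb B_{s,t}$ with \emph{no} $\epsilon$ prefactor; the same happens for the other cross-term. So neither mixed integral is individually $O(\epsilon^H)$ --- the smallness only appears in their \emph{sum}, which by integration by parts equals $B_{s,t}\,y^\epsilon_{s,t}$. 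That identity is precisely the scalar observation $\XX^\epsilon=\tfrac12(X^\epsilon)^2$ in disguise; once you use it, there is nothing left to prove.
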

\begin{proof}
(a) 
 Set  
$v_t^\epsilon=\epsilon ^{H-1}  y^\epsilon_{t}$,  then $v_t^\epsilon$ solves the following equation
$$dv_t^\epsilon=-\f 1 \epsilon v_t^\epsilon dt +\f 1 \epsilon d B_t$$
Using the equation for $v_t^\epsilon$ we have
$$X_{s,t}^\epsilon 
= \epsilon^{H-1} \int_s^{t } y^\epsilon _{r} dr  =\int_s^t v_r^\epsilon dr =\epsilon (v^{\epsilon}_s-v_t^\epsilon) + B_{s,t}.$$
 Therefore, for any $p>1$,
\begin{align*}
\sup_{s,t \in [0,T]} \left \| X_{s,t}^\epsilon-B_{s,t}\right\|_{L^p}  &=\sup_{s,t \in [0,T]} \left \| \epsilon (v_t^\epsilon-v_s^{\epsilon})\right\|_{L^p} 
\\&=\epsilon ^{H} \sup_{s,t \in [0,T]} \left\| y^{\epsilon}_{t}- y^{\epsilon}_s \right\|_{L^p} \lesssim \epsilon^H.
\end{align*}
In the last step we used the stationarity  of $ y_t^\epsilon$, which follows from that of  $y_t $. 
By Lemma \ref{cty-lemma}, we have the following estimates on their H\"older norms for any $t \in [0,T]$:
$$\||X^\epsilon-B|_{\C^{\gamma'}([0,t])}\|_{L^p} \lesssim \epsilon^H \left({\f t \epsilon}\right)^{\gamma},
$$  
 this holds for any $p>1$ and any $\gamma'<\gamma$, and part (a) follows.

(b)  The system of equations is clearly well posed and has global solutions.
The idea is to consider the equation as a differential equation driven by rough paths $\X^\epsilon$ as below:
$$\dot x_t^\epsilon =f(x_t^\epsilon) d\X_t^\epsilon$$
 For $H \in (\f 1 2,1)$, the integral is simply the Young integral, and Young's continuity theorem states that $x^\epsilon$ converges
 provided $\X^\epsilon$ converges.  For $H \in (\f 1 3, \f 1 2)$ it is only left to deal with the geometric rough path lift of $x^\epsilon$, which in dimension one has only symmetric part and so the convergence in the $\C^\gamma$ topology is the same as convergence in the rough path topology.
 
For the $L^p$ convergence, we start with the $L^p$ convergence of the drivers $X^\epsilon$. Since $X_t^\epsilon$ is in $\C^\gamma$ for some  $\gamma>\f 13$,  The solutions map $\Phi$ 
for this equation is Lipschitz continuous: for any $\gamma'<\gamma$,
$$|\Phi(X^\epsilon)-\Phi(B)|_{\gamma'} \lesssim  |X^\epsilon- B|_\gamma.$$
This shows the convergence in $L^p$ of the solutions.
\end{proof}

 \begin{remark}
From part (a) of the theorem we deduce that    the solution to  the equation
\begin{equation}\label{example-sde-2}
	\dot z_t^\epsilon  =v_t^\epsilon, \quad  dv_t^\epsilon=-\f 1 \epsilon v_t^\epsilon dt +\f 1 \epsilon dB_t^H, \quad 
	z_0^\epsilon\to z_0, \quad v_0^\epsilon=\epsilon^{H-1} y_0,
	\end{equation}
converges in $\C^\gamma$ weakly to  a fractional Brownian motion $z_0+\tilde B_t$. Krammer-Smoluchowski limits /Kinetic fBM's  are studied in
 \cite{Boufoussi-Ciprian, Zhang-08,Al-Talibi-Hilbert}. See also \cite{Fannjiang-Komorowski-2000, Friz-Gassiat-Lyons, Friz-Hairer}.
Here $y_0$ is distributed as $\mu$. But our initial condition is not optimal. Since the solution depends on the initial condition affinely,
  stronger scaling and $y_0$ should also work.  
\end{remark}

 \begin{remark}\label{1d-lift-is-trivial}
We explain this with the previous example.  In one dimension the canonical lift of a process  is a function of the process itself:
 \begin{align*}
\int_0^t X^\epsilon_s dX^\epsilon_s &=\epsilon^{2H-2}  \int_0^{t} \int_0^s y^{\epsilon}_s y^{\epsilon}_r dr ds
= \f 1 2  (X_t^\epsilon)^2 \to \f 1 2 \left( B_t \right)^2.
\end{align*}
This is because taking squares is a continuous operation in $\C^\alpha$,  the convergence of $(X_t^\epsilon, \int_0^t X^\epsilon_s dX^\epsilon_s)$  follows. 
By part (1), $X_t^\epsilon$ is uniformly bounded in $\FC^\gamma$ for any $\gamma<H$.  Since  we can choose $\gamma>\f 13$, 
 to show convergence in the rough path topologies we only need to establish the moment bounds of the iterated integrals. In our case,
 it follows from Proposition \ref{th-example}:
\begin{align*}
\left\Vert \int_v^t (X^\epsilon_s - X^\epsilon_v) dX^\epsilon_s \right\Vert_{L^p} 
&= \f 1 2  \left \Vert \epsilon^{2H-2}  \int_v^{t} \int_v^s y^{\epsilon}_s  y^{\epsilon}_r dr ds \right\Vert_{L^p} \\
&\leq \f 1 2   \left(\left \Vert  \epsilon^{H-1}  \int_v^{t} y^{\epsilon}_s  ds \right \Vert_{L^{2p}} \right)^{\f 1 p}\\
&\lesssim  |t-v|^{2H}.
\end{align*}

The general $G$ case will be discussed in  section \ref{section-single-scale}, before that we will  make use of Malliavin calculus to obtain the $L^p$ estimates and  a suitable central/functional  limit theorem.
 We then have to explore different scaling constants,  since the limits constitutes of components  not necessarily simultaneously in  the same universality classes. If the equation involves two functions $G_i$ with different decay rates, to which different methods were used for the convergence, 
 we work harder for their joint convergence and relatively compactness in a topology suitable for all the limiting classes.

This means that $X^\epsilon$, with its canonical lift to the step-2 rough path space, is tight in the rough path topology $\FC^\gamma$ for any $\gamma<H$.
 \end{remark}

\section{Enhanced functional limit theorem in 1-d}\label{single}

We first prove a central limit theorem  for functionals of the fractional Ornstein-Uhlenbck processes, with convergence in  finite dimensional distributions. Then we go ahead and establish moment bounds   
on the increments of our process to conclude convergence in a suitable space of rough paths. This enables us to use the continuity of the solution maps to Young \ rough differential equations to prove our single scale homogenization result.

\subsection{Convergence in finite dimensional distributions } \label{s-CLT}
The scaling limit for path integrals of functionals of the fOU can be either in the Gaussian or in the non-Gaussian universality
classes.  Traditionally the first ones are called CLT's and the latter non-CLT's.  We would call both CLT's.

\subsubsection{ CLT and non CLT for sequences}
The intuition for scalar valued CLT's comes from its counter part for sequences which we  explain in the next paragraph. If $Y_n$ is a mean zero, stationary,  and strong mixing sequence, such that
$$\sigma_n^2=\E (\sum_{i=1}^n Y_i)^2\to \infty, \qquad \E(\sum_{i=1}^n Y_i)^4=O(\sigma_n^4),$$
then the CLT holds:
$$ \f 1 {\sigma_n} \sum_{i=1}^n Y_i {\longrightarrow}  N(0,1).$$
If $Y_n$ is not strong mixing,  the CLT may fail. An example of which are Gaussian sequences with slow decaying correlation functions.
A guiding principle, for Gaussian sequences, can be found in
\cite{Breuer-Major} for short range correlations, and in \cite{Taqqu, Bai-Taqqu} for long range correlations. 
A simple version is as follows.

 Let $X_n$ be a sequence of stationary mean zero variance $1$ Gaussian random variables with auto-correlation $ {n^{-\gamma}}$ with $\gamma\in  (0, 1)$.
Let $G$ be a function with Hermite rank $m\ge 1$ and $A(n)$  a sequence such that
$$\lim_{n\to \infty} \var\left(\f 1 {A(n)} \sum_{k=1}^n G(X_k)\right) =1.$$
\begin{enumerate}
\item If $\gamma\in  (\f 1 m, 1)$,  then the theorems following holds in finite dimensional distributions,
$$\f 1 {A(n)} \sum_{k=1}^{[nt]} G(X_n)\to W(t).$$ 
\item If $\gamma\in (0, \f 1m)$, then the scaling limit is a Hermite process in the $m$-th chaos.
\item If  $\gamma=\f 1m$, then, the scaling limit is also a Wiener process. \end{enumerate} 
The scaling constant is of the order $n^{1-\f 12 \gamma m}$
in the second case,  of order $\sqrt n$ for the first  case, and of order $\sqrt {n \ln n}$ for (3). From this the continuous version CLT for $\gamma \in (0, \f 1m)$ was obtained in \cite{BenHariz}. The borderline case $\gamma = \f 1 m$ for the continuous version was analysed in \cite{Buchmann-Ngai-bordercase}.

\subsubsection{Functional CLT in finite dimensional distributions}

If $y_t$ is any stationary Gaussian process with correlation function $\rho$, then for  $m\ge 1$,
\begin{align*}
\E (H_n(y_t)H_m(y_s))&=\delta_{n,m}\left(\rho(s,t)\right)^m.
\end{align*}
Since \begin{align*}
\sqrt{ \E \left( \int_0^{t} H_m(y^{\epsilon}_s) ds \right)^2} =  \left( \int_0^{t}\int_0^{t} \rho^{\epsilon}(\vert s-r \vert)^{m} dr ds \right)^{\f 1 2},
\end{align*} 
composing this with Lemma \ref{Integrals}, we therefore expect that the correct scaling to be  $\f {1} {\sqrt{\epsilon}}$  for the case $H^*(m)< \f 1 2$; $\sqrt{\f 1  {\epsilon |\ln \epsilon|} }$  for the case $H^*(m)=\f 12$;
and  $\epsilon ^{H^*(m)-1}$ otherwise.
Observing that 
$$ \alpha(\epsilon,H^*(m)) \int_0^t H_m(y^{\epsilon}_s)ds \sim \epsilon \alpha(\epsilon,H^*(m)) \int_0^{\f t \epsilon} H_m(y_s)ds.$$
This suggest that the self-similarity of the limiting process are determined by $\alpha(\epsilon,H^*(m))$.
In the first two cases the limit will be a Wiener process,  and in the later one the limit $Z_t$ should have the scaling property: 
$$\epsilon^{H^*(m)} Z_{\f t\epsilon} \sim Z_t.$$
These limits turn out indeed to be the Hermite processes. 

We first consider $G$ with low Hermite rank: $H^*(m)> \f 1 2$. Since $m\ge1$ this  restricts to the case  $H> \f 1 2$.
\begin{lemma}\label{L^2-kernel}
Let $G=\sum_{k=m}^\infty c_k H_k$ where $m>0$ be  in $L^2(\mu)$. Let $H\in ( \f 1 2 ,1)$.
Then the following statements hold  for  the  stationary fOU $y^{\epsilon}_t$. 
If  $H^*(m)> \f 1 2$ then
$$\left\|  \epsilon^{H^*(m)-1}  \int_{0}^{t} G(y^{\epsilon}_s) ds- \f {c_m m!} { K(H^*(m),m)} Z^{H^*(m),m}_t\right\|_{L_2(\Omega)} \to 0.$$
\end{lemma}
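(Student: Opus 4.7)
The approach is to separate the leading chaos from the remainder. Write $G = c_m H_m + R$ with $R = \sum_{k=m+1}^\infty c_k H_k$, and correspondingly
\[
\epsilon^{H^*(m)-1}\int_0^t G(y^\epsilon_s)\,ds
\;=\; c_m\,\epsilon^{H^*(m)-1}\!\int_0^t H_m(y^\epsilon_s)\,ds
\;+\; \epsilon^{H^*(m)-1}\!\int_0^t R(y^\epsilon_s)\,ds.
\]
Since the two summands live in orthogonal Wiener chaoses (the first in the $m$-th, the second in $\bigoplus_{k>m}$), it suffices to handle them separately.

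\textbf{Main term (convergence to the Hermite process).} Using the representation $y^\epsilon_t = \sigma\epsilon^{-H}\int_{-\infty}^t e^{-(t-s)/\epsilon}\,dB_s$ together with the identity $H_m\bigl(\int f\,dW\bigr) = I_m(f^{\otimes m})$ for unit-norm $f$, one rewrites the process $c_m\epsilon^{H^*(m)-1}\!\int_0^t H_m(y^\epsilon_s)\,ds$ as an $m$-fold multiple Wiener–It\^o integral against the same two-sided Wiener process $W$ that appears in the definition~(\ref{Hermite}) of $Z^{H^*(m),m}_t$. By the Wiener isometry, $L^2$-convergence of the processes reduces to $L^2(\R^m)$-convergence of the (symmetrised) kernels. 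This is precisely the kernel-level non-CLT that the appendix of the paper develops; I would invoke that kernel-convergence lemma to conclude that the pre-limit converges in $L^2$ to $\tfrac{c_m m!}{K(H^*(m),m)}Z^{H^*(m),m}_t$.

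\textbf{Remainder (higher chaoses are negligible).} For each $k>m$ the Wiener–It\^o chaoses are orthogonal, so
\[
\Bigl\|\epsilon^{H^*(m)-1}\!\int_0^t R(y^\epsilon_s)\,ds\Bigr\|_{L^2}^2
= \sum_{k=m+1}^\infty c_k^{\,2}\, k!\;\epsilon^{2H^*(m)-2}\!\int_0^t\!\!\int_0^t \rho^\epsilon(u,r)^k\,du\,dr.
\]
For each fixed $k>m$ I apply the correlation-decay bounds of Lemma~\ref{Integrals}: in the regime $H^*(k)>\tfrac12$ the double integral is $\lesssim t^{2H^*(k)}\epsilon^{2-2H^*(k)}$, giving a prefactor $\epsilon^{2(H^*(m)-H^*(k))}=\epsilon^{2(k-m)(1-H)}\to 0$; in the regimes $H^*(k)=\tfrac12$ or $H^*(k)<\tfrac12$ the same computation yields $O(\epsilon^{2H^*(m)-1}|\ln\epsilon|)$ and $O(\epsilon^{2H^*(m)-1})$ respectively, each tending to $0$ because $H^*(m)>\tfrac12$. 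To pass the limit under the summation one needs a summable $k$-uniform dominator: since $|\rho^\epsilon|\le 1$ one has $|\rho^\epsilon|^k\le |\rho^\epsilon|^m$ for all $k\ge m$, hence $\epsilon^{2H^*(m)-2}\int\!\!\int|\rho^\epsilon|^k\le \epsilon^{2H^*(m)-2}\int\!\!\int|\rho^\epsilon|^m$, which is uniformly bounded in $\epsilon$ by the $k=m$ case. Combined with $\sum_k c_k^2 k! = \|G\|_{L^2(\mu)}^2<\infty$, dominated convergence then sends the remainder to $0$.

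\textbf{Conclusion and main obstacle.} Adding the two contributions proves the claim. The most delicate step is the kernel convergence in the leading chaos, because the pre-limit uses the fBM-driven exponential kernel $\epsilon^{-H}e^{-(t-s)/\epsilon}\mathbf 1_{s\le t}$ rewritten in terms of the underlying white noise, which must be matched asymptotically with the power-law Mandelbrot–Van Ness kernel $(s-u)_+^{\hat H(m)-3/2}$ defining $Z^{H^*(m),m}$. Controlling this in $L^2(\R^m)$ under the correct self-similar rescaling is the real work; everything else is bookkeeping with the Lemma~\ref{Integrals} estimates and chaos orthogonality.
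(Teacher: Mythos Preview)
Your proposal is correct and follows essentially the same two-step strategy as the paper: split $G=c_mH_m+R$, use the multiple Wiener--It\^o representation and the kernel-convergence lemma from the appendix for the leading term, and show the remainder vanishes via the correlation-decay estimates of Lemma~\ref{Integrals}. The only difference is in how the remainder is dispatched: the paper invokes the Malliavin-calculus bound of Lemma~\ref{Lp-bounds} applied to $G-c_mH_m$ (which directly gives the $\epsilon^{H^*(m)-H^*(m+1)}$ decay in any $L^p$), whereas you expand the $L^2$-norm by chaos orthogonality and use dominated convergence with the majorant $|\rho^\epsilon|^k\le|\rho^\epsilon|^m$. Your route is more elementary and entirely sufficient for the $L^2$ statement at hand; the paper's route reuses machinery it needs elsewhere for $L^p$ tightness.
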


\begin{proof}
	This result looks slightly mysterious which can be explained easily by kernel convergence, since $H^*(m)$ decreases with $m$,
	  it is sufficient to work with  $H_m$ for $m$  the Hermite rank of $G$, c.f. Equation (\ref{reduction_Hermite}).  
	The key idea is to write a Wiener integral representations for these integrals beginning with
	\begin{equation}\label{y-integral}
	\begin{aligned}
	y_t^\epsilon&=\epsilon^{-H} \int_{-\infty}^t e^{-\f {t-r} \epsilon} dB_r=\int_{\R} h_\epsilon(t,s) dW_s, \quad \hbox{ where}\\
	h_\epsilon(t,s)&= \epsilon^{-\f 12}\f {1} {c_1(H)} e^{-\f {t-s} \epsilon}  \int_0^{\f {t-s} \epsilon}  e^v v_{+}^{H-\f 32}\; dv,
	\end{aligned}\end{equation}
and  $c_1(H) = \sqrt{ \int_{-\infty}^0 \left( (1-s)^{H-\f 1 2} - (-s)^{H- \f 1 2} \right)^2 ds + \f 1 {2H} }$.
	This leads to, using properties of Hermite polynomials, the following  multiple Wiener integral representation:
	\begin{equation}\label{explicit-kernel}
	\begin{split}
	&\epsilon^{H^*(m)-1}  \int_{0}^{t} H_m(y^{\epsilon}_s) ds\\
	& =    \f {\epsilon^{H^*(m)-1}} {m!} \int_{\R^m}\left(  \int_0^t
	\prod_{i=1}^m h_\epsilon(s, u_i)  ds \; \right) dW_{u_1} \dots dW_{u_m}. \end{split}
	\end{equation}
The $L^2$ convergence then follows from the following lemma.
\begin{lemma}\label{kernel lemma} As $\epsilon\to 0$, 
$\epsilon^{H^*(m) -1} \int_0^t H_m(y^{\epsilon}_s) ds$ converges to $ \f { m!} { K(H,m)} Z^{H^*(m),m}$ in $L^2$. 
Equivalently,
	$$\left \Vert \int_0^t
	\prod_{i=1}^m h_\epsilon(s, u_i)  ds -   \int_0^t \prod_{i=1}^m (s- u_i)_+^{H-\f 32}ds\right \Vert_{L^2(\R^m)} \to 0. $$
\end{lemma}
This is shown in Appendix \ref{kernel-condition} by applying  \cite[Theorem 4.7]{Taqqu}, where weak convergence is obtained, and making a small modification.
\medskip  

We proceed with the $L^2$ convergence. The Wiener integral representation for $y_t^\epsilon$, (\ref{y-integral}) can be obtained  by applying the integral representation for fBM's:
$$\begin{aligned}
B_t^H=\int_{-\infty}^t g(t,s)dW_s,  \qquad  \hbox{ where } \quad
g(t,s)= \f {1} {c_1(H)} \int_{-\infty}^t  (r-s)_+^{H-\f 32} dr,\end{aligned}$$ and 
by repeated applications of integration by parts (to the Young integrals):
\begin{align*}
\sigma \int_{-\infty}^t e^{- \f {t-s} {\epsilon}} dB^H_s &= \sigma B^H_t - \f \sigma \epsilon \int_{-\infty}^t e^{- \f {t-s} {\epsilon}} B^H_s ds\\
&= \sigma  B_t^H - \f \sigma \epsilon \int_{-\infty}^t  e^{- \f {t-s} {\epsilon}} \left(  \int_{\R} g(s,r) dW_r \right)ds\\
&= \sigma B_t^H - \f \sigma \epsilon \int_{-\infty}^t e^{- \f {t-s} {\epsilon}} g(s,r) ds  dW_r \\
&=\sigma  \int_{\R} \int_{-\infty}^t e^{- \f {t-s} {\epsilon}} \partial_s g(s,r) ds dW_r \\
&= \f{\sigma} {c_1(H)} \int_{\R} \int_{-\infty}^t e^{- \f {t-s} {\epsilon}} (s-r)_{+}^{H- \f 3 2} ds dW_r.
\end{align*}
Alternatively,  one may use the following  for Wiener integrals  \cite{Pipiras-Taqqu}, taking $f\in L^1\cap L^2$:
$$
\int_{\R} f(u) dB^H_u = \f {1} {c_1(H)}\int_{\R}  \int_{\R} f(u) (u-s)_{+}^{H- \f 3 2} \,du  \,dW_s.$$

To return to the case of a general $G$, we apply Lemma \ref{Lp-bounds} to $G-c_m H_m$. 
\begin{equation}\label{reduction_Hermite}
\begin{split}
&\left\| \epsilon^{H^*(m)-1}  \int_{0}^{t} (G-c_m H_m)(y^{\epsilon}_s) ds\right\|_{L^p}\\
&\lesssim  \Vert G-c_m H_m \Vert_{L^p} \f{ t^{H^*(m+1)}}{ \epsilon^{H^*(m+1)-H^*(m)}}
\to 0
\end{split}
\end{equation}
as $H^*$  decreases with $m$.
This shows, in particular that in the high Hermite rank case only the first non-zero term in the Hermite expansion of $G$ contributes to the limit.
This finishes the proof
 \end{proof}

\begin{lemma}\label{lemma-single-CLT-fdd}
Let $G=\sum_{k=m}^\infty c_k H_k$, where $m>0$, be  in $L^2(\mu)$.  
 Then the following statements hold  for   the fast stationary fractional Ornstein-Uhlenbeck process $y^{\epsilon}_t$  for
every parameter $H\in (0,1)$. Let $T>0$ and $c$ be given as in Equation (\ref{c-square}),
\begin{itemize}
 \item[(a)]  If $H^*(m) < \f 1 2$ (and for all $m\in \N$ in case $H=\f 12$),   
  then,   $$ \f {1} {\sqrt\epsilon}  \int_{0} ^{t} G(y^{\epsilon}_s) ds\to c \, \hat W_t, $$
  in the sense of finite dimensional distributions.
 
\item [(b)] If $H^*(m) = \f 1 2$, then, 
$$ \sqrt{\f 1 {\epsilon |\ln \epsilon|} } \int_{0} ^{t} G(y^{\epsilon}_s) ds\to 
c  \, \hat W_t,$$ in the sense of finite dimensional distributions.
\item[(c)]  Finally if  $H^*(m)> \f 1 2$, then,
$$ \epsilon^{H^*(m)-1}  \int_{0}^{t} G(y^{\epsilon}_s) ds\to  c Z^{H^*(m),m}_t,$$
in the sense of finite dimensional distributions. 
\end{itemize}
\end{lemma}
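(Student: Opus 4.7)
Case (c) is nothing but Lemma \ref{L^2-kernel}, which already gives $L^2$ (and hence finite-dimensional-distribution) convergence. The work is therefore in cases (a) and (b), and my plan is the classical chaos decomposition strategy combined with the fourth-moment method of Nourdin--Peccati.

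First I would reduce to a single Hermite polynomial. Writing $G = \sum_{k\ge m} c_k H_k$ and $G_N = \sum_{k=m}^N c_k H_k$, orthogonality of the Hermite polynomials under $\mu$ and the stationarity of $y^\epsilon$ give
$$\var\!\Big( \alpha(\epsilon) \int_0^t (G-G_N)(y^\epsilon_s)\,ds\Big) = \alpha(\epsilon)^2 \sum_{k>N} c_k^2\, k! \int_0^t\!\!\int_0^t \rho^\epsilon(u-v)^k\, du\, dv.$$
By the $H^*(k)\le \tfrac12$ bounds in Lemma \ref{Integrals}, each term is dominated by $C t\, c_k^2\, k!$ uniformly in $\epsilon\in(0,1]$, and the tail is controlled by $\|G-G_N\|_{L^2(\mu)}^2 \to 0$. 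So it suffices to prove the statement for a single Hermite polynomial $H_k$ with $H^*(k)\le \tfrac12$, and then recombine.

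Next, for a fixed $H_k$, I would identify the limiting covariance. A change of variables $w=(u-v)/\epsilon$ in
$$\alpha(\epsilon)^2\, \E\!\Big[\int_0^s H_k(y^\epsilon_u)du\cdot \int_0^t H_k(y^\epsilon_v)dv\Big] = \alpha(\epsilon)^2 \int_0^s\!\!\int_0^t k!\,\rho^\epsilon(u-v)^k\, du\, dv,$$
together with the correlation bound $|\rho(w)|\lesssim 1\wedge |w|^{2H-2}$ from Lemma \ref{correlation-lemma} and dominated convergence, yields the limit $(s\wedge t)\cdot 2 k! \int_0^\infty \rho(w)^k dw$ in the subcritical case and $(s\wedge t)\cdot 2 k!$ in the borderline case (after the $|\ln\epsilon|$ renormalisation). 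Summing over $k$ one recovers exactly $c^2(s\wedge t)$ with $c$ from \eqref{c-square}; cross-covariances between Hermite ranks $k\neq l$ vanish identically by orthogonality. This pins down the Brownian covariance of the limit.

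The random variable $\alpha(\epsilon)\int_0^t H_k(y^\epsilon_s)ds$ belongs to the $k$-th Wiener chaos, so by the Nourdin--Peccati fourth-moment theorem it is enough to show that each of its contractions $f_\epsilon\otimes_r f_\epsilon$ tends to zero in $L^2$ for $1\le r\le k-1$, where $f_\epsilon$ is the symmetric kernel built from the Wiener representation \eqref{explicit-kernel}. These contractions reduce to integrals of the form $\int_0^t\!\int_0^t \rho^\epsilon(u-v)^{2k-2r}\,du\,dv$, to which Lemma \ref{Integrals} applies with the strictly smaller exponent $2k-2r>k$; the same lemma gives the correct order, and one checks that after multiplying by $\alpha(\epsilon)^4$ the result vanishes, both in the regime $H^*(k)<\tfrac12$ and in the borderline regime $H^*(k)=\tfrac12$ where the logarithmic factor enters symmetrically on both sides. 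Equivalently, one may quote the continuous Breuer--Major theorem of Ben Hariz and its borderline extension by Buchmann--Ngai. Joint finite-dimensional convergence at times $t_1,\dots,t_q$ and across ranks $k=m,m+1,\dots,N$ then follows from the multivariate fourth-moment theorem: the componentwise CLT plus convergence of the covariance matrix (Step on covariance, plus cross-rank orthogonality) imply joint Gaussian convergence to the desired Wiener process $c\hat W$. Letting $N\to\infty$ using the uniform tail bound finishes cases (a) and (b).

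The delicate point is the borderline case $H^*(m)=\tfrac12$, where both the variance and the contraction estimates carry the $|\ln\epsilon|$ factor; getting the right prefactor $2 m!\, c_m^2$ in \eqref{c-square} requires tracking these logarithmic asymptotics carefully in the integrals coming from Lemma \ref{Integrals}, and noticing that only the leading Hermite component survives (higher-order components contribute sub-logarithmically and are absorbed into the tail argument of Step 1).
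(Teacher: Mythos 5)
Your treatment of case (c) coincides with the paper's: both simply invoke Lemma \ref{L^2-kernel}, whose $L^2$ convergence implies convergence of finite dimensional distributions. For cases (a) and (b) the paper's proof is a two-line citation --- (a) is quoted from \cite{BenHariz} and (b) from \cite{Buchmann-Ngai-bordercase} --- whereas you sketch the underlying Breuer--Major machinery yourself: reduction to a finite chaos via the uniform variance bound from Lemma \ref{Integrals}, identification of the limiting covariance by a change of variables and orthogonality across Hermite ranks, and the Nourdin--Peccati fourth-moment theorem for each chaos component. Your reduction and covariance steps are sound and essentially reproduce arguments the paper carries out elsewhere (the truncation is Lemma \ref{reduction}, the covariance computation appears inside Lemma \ref{CLT-Gaussian-multi}), so the self-contained route buys a proof that does not lean on external results, at the cost of more work. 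The one place your sketch is imprecise is the contraction step: $\Vert f_\epsilon\otimes_r f_\epsilon\Vert^2_{L^2}$ is not a double integral of $(\rho^\epsilon)^{2k-2r}$ but a fourfold time integral of $\rho^{r}\rho^{r}\rho^{k-r}\rho^{k-r}$ over $[0,t/\epsilon]^4$, and your inequality $2k-2r>k$ fails whenever $r\ge k/2$; the standard estimate requires a H\"older-- or Young--type splitting of that fourfold integral rather than a direct appeal to Lemma \ref{Integrals} with exponent $2k-2r$. Since you explicitly offer the citations to Ben Hariz and Buchmann--Chan as an equivalent route --- which is exactly the paper's proof --- this imprecision is not a logical gap, but if you intend the fourth-moment argument to stand on its own, that estimate needs to be redone correctly, with particular care in the borderline case where the $|\ln\epsilon|$ normalisation enters.
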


\begin{proof}

As mentioned above, convergence in finite dimensional distributions  in case (a) was shown in \cite{BenHariz} and for case (b) in \cite{Buchmann-Ngai-bordercase}.
For case (c) Lemma \ref{L^2-kernel} proves $L^2$ convergence , thus , in particular convergence in finite dimensional distributions.
This finishes the proof in all cases.
\end{proof}

\begin{remark}
\label{remark-variance}
\	
\begin{enumerate}
\item 
For $H<\f 12$ and $m=1$, Proposition \ref{lemma-single-CLT-fdd} appears to contradict with Proposition \ref{th-example}: in the first we claim the limit is a Brownian motion and in the second we claim it is  a fraction Brownian motion.  Both results are correct and can be easily explained.   It lies in the fact that $\int_\R \rho(s)\, ds$ vanishes if $H<\f 12$, and so the 
Brownian motion limit is degenerate. 
 Since  according to   \cite{Cheridito-Kawaguchi-Maejima}, \begin{equation}\label{cor5}
\rho(s) = \sigma^2 \frac{\Gamma(2H+1) \sin(\pi H)}{2 \pi} \int_{\R} e^{isx} \frac{\vert x \vert^{1-2H}}{1 + x^2} dx,
\end{equation}
and by the decay estimate from (\ref{cor1}), $\rho$ is integrable,
$s(\lambda) $ is the value at zero of the inverse Fourier transform of $\rho(s)$, which is
up to a multiplicative constant  $\frac{\vert \lambda \vert^{1-2H}}{1 + \lambda^2}$.
This is also the spectral density of $y_t$ and has value $0$ at $0$. 
This means we have scaled too much and the correct scaling is to multiply the integral $\int_0^{\f t \epsilon} y_s ds$ by 
$\epsilon^{H}$ in which case we have $B_t^H$ as a limit.
\item For $m>1$ and $H<\f 12$, the Gaussian limit is not trivial. Indeed,
$$\begin{aligned}
\int_\R  \rho(s)^m ds
&=C \int_\R \stackrel{m} {\overbrace{ \int _\R \dots \int_{\R} }}   \prod_{k=1}^m e^{isx_k} \frac{\vert x_k \vert^{1-2H}}{1 + |x_k|^2} dx_1\dots dx_m\, ds\\
&= C  \stackrel{m} {\overbrace{  \int _\R \dots \int_{\R}}} \frac{\vert x_2+\dots + x_m \vert^{1-2H}}{1 + |x_2+\dots + x_m|^2} \prod_{k=2}^m \, \frac{\vert x_k \vert^{1-2H}}{1 + |x_k|^2} \not =0.
\end{aligned}$$

\item For $H=\f 12$ and for any $m\in \N$,  the CLT is included in part (1), as the Ornstein-Uhlenbeck process driven by a Wiener process has exponentially decaying 
correlations.

\end{enumerate}
\end{remark}

 In the next section we bound the $L^p$ norm of the random variable $$X^\epsilon:= \alpha(\epsilon, H^*(m))\int_0^{t} G(y^{\epsilon}_r)  dr $$ where 
$y^{\epsilon}_t$ is the  rescaled stationary fractional Ornstein-Uhlenbeck process and $G$ an $L^p$ function of Hermite rank at least one.
Since  $\E \left( H_m(y_r) H_m(y_s) \right) \sim \left(\E \left( y_r y_s \right)\right)^m$,  these are trivial to obtain for functions in the finite chaos expansion.
We  show that the upper bounded of its $L^p$ norm  is of order $\f {1} {\alpha(\epsilon,H^*(m))} $. Hence it is of order $\f 1 {\sqrt \epsilon }$ if and only if $H^*(m)<\f 12$; otherwise it is one of the higher orders :  $\f {|\ln \epsilon|}  {\sqrt \epsilon }$ or $\epsilon ^{-H^*(m)+1}$.

\subsection{Moment bounds}
\label{pre}

We will use some results from Malliavin Calculus. Let $x_s$ be a stationary Gaussian process with $\alpha(s) = \E \left( x_s x_0 \right)$, such that $\alpha(0)=1$. As a real separable Hilbert space we use  $\mathscr{H} = L^2(\R_+,\nu)$ where for a Borel-set $A$ we have $\nu(A) = \int_{\R_+} \1_{A} (s) d\alpha_s$. We can  replace $\R_+$ by $\R$ or by $[0,1]$. Let $\H^{\otimes q}$ denote the $q$-th tensor product of $\H$. For $h\in \H$, we may define the Wiener integrals $W(h)=\int_0^\infty h_s dx_s$ by $W([a,b])=x(b)-x(a)$ (where $a, b\ge 0$), linearity and the Wiener isometry 
($\<\1_{[0,t]}, \1_{[0,s]}\>=\alpha(t-s)$).
Iterated Wiener integrals are defined similarly and by its values on indictor functions:
$I_m(\1_{A_1\times \dots \times A_m})=\prod_{i=1}^m W (A_i)$ where $A_i$ are pairwise disjoint Borel subsets of $\R_+$.
If $\F$ denotes the $\sigma$-field generated by $x$, 
then any $\F$-measurable $L^2$ function $F$ has the chaos expansion:
$F=\E F+ \sum_{m=1}^\infty I_m(f_m)$ where $f_m\in L^2( \R_+^m)$, the latter space is with respect to the product measures. This is due to the fact that $L^2(\Omega)=\bigoplus_{m=0}^{\infty} \H_m$ where $\H_m$ is the closed linear space generated by $\{ H_m(W(h)): |h|_{L^2}=1\}$, $H_m$ are the $m$-th Hermite polynomials, and that $\H_m=I_m(L_{\hbox{sym}}^2(T^m))$.
The last fact is due to $H_m(W(h))= I_m( h^{\otimes ^{m} })$. In the following
${\mathbb{D}^{k,p}(\mathscr{H}^{\otimes m})}$ denotes the closure of Malliavin smooth random variables under the following norm
$\Vert u \Vert_{\mathbb{D}^{k,p}(\mathscr{H}^{\otimes m})} = \left( \sum_{j=0}^{k} \E \left( \Vert D^j u \Vert_{\mathscr{H}^{\otimes m}}^p  \right) \right)^{\f 1 p}$.

\begin{lemma}[Meyer's inequality] \cite{Nourdin-Peccati} 
	\label{Meyer}
	Let $\delta$ denote the  divergence operator (one can think of $\delta^m$ as an $m$ times iterated  Wiener-It\^o-integral), then  for $u \in \mathbb{D}^{k,p}(\mathscr{H}^{\otimes m})$,
	\begin{equation}
	\Vert \delta^m(u) \Vert_{L^p} \lesssim  \sum_{k=0}^m  \Vert u \Vert_{\mathbb{D}^{k,p}(\mathscr{H}^{\otimes m})}.
	\end{equation}
\end{lemma}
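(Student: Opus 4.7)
My plan is to reduce the claim to the classical Meyer inequalities for the Malliavin derivative via duality. By the definition of $\delta^m$ as the adjoint of $D^m$, for a dense class of $L^q$ random variables $F$ (with $\frac{1}{p}+\frac{1}{q}=1$) we have
\begin{equation*}
\mathbb{E}[F\,\delta^m(u)] \;=\; \mathbb{E}\bigl[\langle D^m F,\,u\rangle_{\mathscr{H}^{\otimes m}}\bigr].
\end{equation*}
Hence by H\"older's inequality,
\begin{equation*}
\|\delta^m(u)\|_{L^p} \;=\; \sup_{\|F\|_{L^q}\le 1}\bigl|\mathbb{E}[F\,\delta^m(u)]\bigr|
\;\le\; \|u\|_{L^p(\Omega;\mathscr{H}^{\otimes m})}\,\sup_{\|F\|_{L^q}\le 1}\|D^m F\|_{L^q(\Omega;\mathscr{H}^{\otimes m})}.
\end{equation*}
So it is enough to control $D^m$ on $L^q$. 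However this is not literally true for all $F$, and the actual argument I would carry out iterates the identity $\delta(v) = \delta(v)$ together with the commutation relation $D\delta(v) = v + \delta(Dv)$ (interpreted in $\mathscr{H}$) to rewrite $\delta^m(u)$ in terms of $\sum_{k\le m}\delta^{m-k}$ applied to traces of $D^k u$, reducing the problem to the standard Meyer inequality on each chaos.

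The core analytic input I would invoke is the classical Meyer inequality: for every $1<q<\infty$ and every integer $j\ge 0$,
\begin{equation*}
\|D^j F\|_{L^q(\Omega;\mathscr{H}^{\otimes j})} \;\lesssim\; \|(I-L)^{j/2} F\|_{L^q},
\end{equation*}
where $L$ is the Ornstein--Uhlenbeck generator. This is proved by representing the Riesz transform $D(I-L)^{-1/2}$ through Mehler's formula for the OU semigroup $P_t=e^{tL}$ and establishing its $L^q$-boundedness; the traditional route uses Nelson's hypercontractivity together with Littlewood--Paley--Stein $g$-functions to handle the off-diagonal part. Once this is in hand, dualizing gives the divergence counterpart, namely
\begin{equation*}
\bigl\|(I-L)^{-m/2}\delta^m(u)\bigr\|_{L^p} \;\lesssim\; \|u\|_{L^p(\Omega;\mathscr{H}^{\otimes m})},
\end{equation*}
and iterating the commutation between $D$ and $\delta$ converts powers of $(I-L)^{1/2}$ into the $\mathbb{D}^{k,p}$-norms of $u$ with $k\le m$, producing exactly the sum on the right-hand side.

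The step I expect to be the main obstacle is the $L^p$-boundedness of the Riesz transform on Wiener space, since the direct analogue of Calder\'on--Zygmund theory is unavailable in infinite dimensions and one really must exploit the hypercontractive structure of $P_t$. Fortunately this is precisely the content of Meyer's original theorem, for which one may appeal directly to the presentations in Nualart's monograph or in Nourdin--Peccati; in the paper, since the inequality is invoked only as a black box for moment estimates of iterated Wiener integrals, I would simply cite the result and note the duality reduction above, rather than reproducing the hypercontractivity argument in detail.
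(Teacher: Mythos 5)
The paper does not prove this lemma at all: it is stated as a black-box citation to Nourdin--Peccati (the result is Meyer's classical inequality, cf.\ also Proposition~1.5.7 in Nualart's monograph), and it is used only to transfer the $L^p$ estimation of $\delta^m(u)$ to $\mathscr{H}^{\otimes m}$-norms of $u$ and its Malliavin derivatives in Lemma~\ref{Lp-bounds}. Your sketch is therefore doing strictly more than the paper, and it is a correct outline of the standard argument: the first duality display is, as you yourself flag, vacuous as written (since $\sup_{\|F\|_{L^q}\le 1}\|D^mF\|_{L^q(\Omega;\mathscr{H}^{\otimes m})}=\infty$; $D^m$ is bounded on $\mathbb{D}^{m,q}$, not on $L^q$), but the corrected route --- dualizing the Meyer equivalence $\|D^jF\|_{L^q}\lesssim\|(I-L)^{j/2}F\|_{L^q}$ to get $\|(I-L)^{-m/2}\delta^m(u)\|_{L^p}\lesssim\|u\|_{L^p(\Omega;\mathscr{H}^{\otimes m})}$ and then trading the powers of $(I-L)^{1/2}$ for derivatives of $u$ via iterated use of $D\delta(v)=v+\delta(Dv)$ --- is exactly how the inequality with the sum $\sum_{k\le m}$ on the right-hand side is obtained in the literature (your ``$\delta(v)=\delta(v)$'' is evidently a typo for this commutation identity). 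You are also right that the genuinely hard analytic input is the $L^q$-boundedness of the Riesz transform $D(I-L)^{-1/2}$ on Wiener space, proved via hypercontractivity and Littlewood--Paley--Stein theory, and that for the purposes of this paper one should simply cite it; your final sentence matches the paper's actual treatment.
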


\begin{lemma}\label{representation}
	\cite{Campese-Nourdin-Nualart}
	If $G:\R\to \R$ is a function of Hermite rank $m$, then $G$ has the following  multiple Wiener-It\^o-integral representation:
	\begin{equation}\label{Lp-eq}
	G(x_s) = \delta^m\left( G_m(x_s) \1_{[0,s]}^{\otimes m}\right),
	\end{equation}
	where $G_m$ has the following properties: \begin{itemize}
		\item [(1)]   $\Vert G_m(x_1) \Vert_{L^p} \lesssim \Vert G(x_1) \Vert_{L^p}$,
		\item  [(2)]
		$G_m(x_1)$ is $m$ times Malliavin differentiable and its $k^{th}$ derivative, denoted by  $G_m^{(k)}(x_1)\1_{[0,1]}^{\otimes k}$,
		satisfies $ \Vert G_m^{(k)}(x_1) \Vert_{L^p} \lesssim  \Vert G(x_1) \Vert_{L^p}$.
	\end{itemize}
\end{lemma}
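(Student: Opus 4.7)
The plan is to construct $G_m$ from the chaos decomposition and then verify the three stated properties in turn. Writing the Hermite-rank-$m$ expansion $G = \sum_{k\ge m} c_k H_k$, set
\[
G_m(x) := \sum_{k\ge m} c_k\, H_{k-m}(x),
\]
i.e.\ the ``shifted'' chaos expansion. The key algebraic identity is Malliavin's integration by parts $\delta(Fh) = F\,\delta(h) - \langle DF, h\rangle_{\mathscr{H}}$ applied with $h = \1_{[0,s]}$ and $F = F(x_s)$: choosing the isometric embedding so that $W(\1_{[0,s]}) = x_s$, the chain rule gives $D(F(x_s)) = F'(x_s)\1_{[0,s]}$, and iterating $m$ times produces the Rodrigues-type identity
\[
\delta^m\bigl(F(x_s)\1_{[0,s]}^{\otimes m}\bigr) = (-1)^m \phi(x_s)^{-1}(\phi F)^{(m)}(x_s),
\]
where $\phi$ is the standard Gaussian density. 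Plugging in $F = G_m$ and using the classical Rodrigues identity $H_k = (-1)^k \phi^{-1} \phi^{(k)}$ term by term recovers $\sum_{k\ge m} c_k H_k(x_s) = G(x_s)$, which is the representation claimed.

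For the $L^p$ bounds, the $p=2$ case is immediate: by Hermite orthogonality,
\[
\|G_m(x_1)\|_{L^2}^2 = \sum_{k\ge m} c_k^2\,(k-m)! \;\le\; \sum_{k\ge m} c_k^2\,k! = \|G(x_1)\|_{L^2}^2.
\]
For general $p > 2$ I would invert the Rodrigues formula to express $G_m$ as an iterated primitive of $G$,
\[
\phi(x)\,G_m(x) = \f{(-1)^m}{(m-1)!}\int_{-\infty}^{x}(x-y)^{m-1}\phi(y)G(y)\,dy,
\]
which requires no constants of integration precisely because the Hermite-rank-$m$ hypothesis forces $\int y^j\phi(y)G(y)\,dy = 0$ for every $j<m$, so the successive primitives decay at $\pm\infty$. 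Combining this with Gaussian Mills-ratio estimates for the tail of $\phi$ inside H\"older's inequality, or equivalently with a weighted Hardy inequality, extracts $\|G_m(x_1)\|_{L^p} \lesssim \|G(x_1)\|_{L^p}$.

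The Malliavin derivatives are handled by the chain rule: since $G_m$ is a smooth function of one real variable (as an $L^2(\mu)$-convergent Hermite series whose classical derivatives $H_j' = jH_{j-1}$ give chaos expansions with controlled coefficients), one has $D^k(G_m(x_1)) = G_m^{(k)}(x_1)\1_{[0,1]}^{\otimes k}$, and the $L^p$ bound $\|G_m^{(k)}(x_1)\|_{L^p}\lesssim \|G(x_1)\|_{L^p}$ follows from the same primitive-based argument, this time with only $m-k$ iterated integrations since $G_m^{(k)}$ still has Hermite rank $\ge 0$ and the ``extra'' $k$ derivatives simply cancel $k$ of the $m$ primitives in the Rodrigues inversion.

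The main obstacle is the $L^p$ bound for $p>2$: in $L^2$ orthogonality suffices, but the chaos-shift map $H_k \mapsto H_{k-m}$ is not manifestly bounded on $L^p(\mu)$, so the Hermite-rank hypothesis must be invoked via the explicit primitive representation to control the Gaussian-weighted tail. Everything else is essentially bookkeeping around Malliavin's integration by parts.
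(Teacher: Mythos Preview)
The paper does not prove this lemma; it is stated with a citation to \cite{Campese-Nourdin-Nualart} and used as a black box in the subsequent moment estimate (Lemma~\ref{Lp-bounds}). Your sketch is essentially the argument of Campese--Nourdin--Nualart: define $G_m$ by shifting the chaos expansion down by $m$, identify the iterated divergence $\delta^m(\,\cdot\,\1_{[0,s]}^{\otimes m})$ with the Hermite raising operator $(x-d/dx)^m$ via the Rodrigues formula, and extract the $L^p$ bound from the integral representation $\phi G_m = (-1)^m I^m(\phi G)$, where $I^m$ is the $m$-fold primitive and the vanishing-moment condition (Hermite rank $\ge m$) kills the boundary terms. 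One point worth tightening: you assert that $G_m$ is smooth in order to apply the chain rule for $D^k$, but a priori $G\in L^p(\mu)$ gives only $G_m\in W^{m,1}_{\mathrm{loc}}$ via the primitive formula; the Malliavin chain rule you need holds for such weakly differentiable functions with the stated $L^p$ control on the derivatives, so the conclusion is fine, but the justification should go through the integral representation of $G_m^{(k)}$ rather than through termwise differentiation of the Hermite series.
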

In the lemma below we estimate the moments of $\int_0^{t} G(x_{\f r \epsilon})  dr$, where we need the multiple Wiener-It\^o-integral representation above to transfer the
correlation function to $L^2$ norms of indicator functions. We use an idea from \cite{Campese-Nourdin-Nualart} for the
estimates below. 

\begin{lemma}\label{Lp-bounds} 
	Let  $x_t=W([0,t])$ be a stationary Gaussian process with correlation $\alpha(t)=\E(x_t x_0)$ and $\H$ the $L^2$ space over $\R_{+}$ with measure $\alpha(r) dr$.
	If $G$ is a function of Hermite rank $m$ and $ G \in L^p(\mu)$, then 
	\begin{equation}\label{le5.4-1}
	{\begin{split}
		\left\Vert  \f {1} {\epsilon} \int_0^{t } G(x_{\f r \epsilon})  dr \right\Vert_{L^p} 
		&\lesssim \Vert G \Vert_{L^p(\mu)} \left ( \int_0^{\f t \epsilon} \int_0^{\f t \epsilon} \alpha(\vert u -r \vert)^m dr du \right)^{\f 1 2}.\end{split}}
	\end{equation}
	For the fast fractional OU process $y^{\epsilon}_t$, we have
	\begin{equation}\label{le5.4-2}
	{\begin{split}
		\left\Vert  \f {1} {\epsilon} \int_0^{t } G( {y_r^\epsilon})  dr \right\Vert_{{L^p} } 
		&\lesssim \left\{	\begin{array}{lc}
		\Vert G \Vert_{L^p(\mu)}\; \sqrt {\f t \epsilon  \int_0^\infty \rho^m(s) ds } ,  \quad  &\hbox {if} \quad H^*(m)<\f 12,\\
		\Vert G \Vert_{L^p(\mu)} \; \sqrt {\f t \epsilon \ln| \f {1} \epsilon|}, \quad  &\hbox {if} \quad H^*(m)=\f 12,\\
		\Vert G \Vert_{L^p(\mu)} \; \left(  \f t \epsilon\right) ^{H^*(m)},  \quad &\hbox { otherwise.}
		\end{array} \right.
		\end{split}},
	\end{equation}
	in particular,
	\begin{equation}
	\left\Vert \int_0^{t } G(y^{\epsilon}_r)  dr \right\Vert_{L^p} \lesssim  \f {\Vert G \Vert_{L^p(\mu)} t^{H^*(m) \vee \f 1 2}} {\alpha(\epsilon,H^*(m))}.
	\end{equation}
	
\end{lemma}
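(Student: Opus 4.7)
The plan is to combine the Wiener--It\^o representation of Lemma \ref{representation} with Meyer's inequality and Minkowski's integral inequality, thereby transferring the $L^p$-control of the time-integral of $G(x_{\cdot/\epsilon})$ onto the double integral of the covariance $\alpha$ raised to the power $m$. After a change of variables $r \mapsto r/\epsilon$ the first bound (\ref{le5.4-1}) is equivalent to
$$\Bigl\|\int_0^{t/\epsilon} G(x_r)\,dr\Bigr\|_{L^p} \lesssim \|G\|_{L^p(\mu)}\Bigl(\int_0^{t/\epsilon}\!\!\int_0^{t/\epsilon}\alpha(|u-r|)^m\,dr\,du\Bigr)^{1/2},$$
so I may assume $\epsilon=1$ in what follows and work with the integration window $[0,t/\epsilon]$.

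The first step is to substitute the representation $G(x_r)=\delta^m\bigl(G_m(x_r)\mathbf 1_{[0,r]}^{\otimes m}\bigr)$ into the time integral and commute $\delta^m$ with the Bochner integral, yielding
$$\int_0^{t/\epsilon} G(x_r)\,dr \;=\; \delta^m\!\left(\int_0^{t/\epsilon} G_m(x_r)\mathbf 1_{[0,r]}^{\otimes m}\,dr\right).$$
Applying Meyer's inequality (Lemma \ref{Meyer}) reduces the problem to estimating, for each $0\le k\le m$, the $L^p(\Omega;\H^{\otimes(m+k)})$-norm of the $k$-th Malliavin derivative of the argument. Because the indicator functions $\mathbf 1_{[0,r]}$ are deterministic and Lemma \ref{representation} provides $D^k\bigl(G_m(x_r)\bigr)=G_m^{(k)}(x_r)\mathbf 1_{[0,r]}^{\otimes k}$, the chain rule gives
$$D^k\!\left(\int_0^{t/\epsilon} G_m(x_r)\mathbf 1_{[0,r]}^{\otimes m}\,dr\right) = \int_0^{t/\epsilon} G_m^{(k)}(x_r)\,\mathbf 1_{[0,r]}^{\otimes(m+k)}\,dr.$$

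The next step is to take the $\H^{\otimes(m+k)}$-norm and expand the square. Using the identity $\langle\mathbf 1_{[0,r]},\mathbf 1_{[0,s]}\rangle_{\H}=\alpha(|r-s|)$, one obtains
$$\Bigl\|\int_0^{t/\epsilon} G_m^{(k)}(x_r)\mathbf 1_{[0,r]}^{\otimes(m+k)}\,dr\Bigr\|_{\H^{\otimes(m+k)}}^2 = \int_0^{t/\epsilon}\!\!\int_0^{t/\epsilon} G_m^{(k)}(x_r)G_m^{(k)}(x_s)\,\alpha(|r-s|)^{m+k}\,dr\,ds.$$
Taking $L^{p/2}(\Omega)$-norms, applying Minkowski's integral inequality, Cauchy--Schwarz, and stationarity, together with the bound $\|G_m^{(k)}(x_1)\|_{L^p}\lesssim\|G\|_{L^p(\mu)}$ from Lemma \ref{representation}, produces
$$\Bigl\|\,\cdot\,\Bigr\|_{L^p(\Omega;\H^{\otimes(m+k)})} \lesssim \|G\|_{L^p(\mu)}\Bigl(\int_0^{t/\epsilon}\!\!\int_0^{t/\epsilon}|\alpha(|r-s|)|^{m+k}\,dr\,ds\Bigr)^{1/2}.$$
Since $|\alpha|\le\alpha(0)=1$, every $k\ge 0$ term is dominated by the $k=0$ term, yielding (\ref{le5.4-1}).

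The second bound (\ref{le5.4-2}) is then immediate: specialise $x$ to the stationary process driving $y^\epsilon$ (so $\alpha=\rho$) and plug (\ref{le5.4-1}) into the three cases of Lemma \ref{Integrals}. Finally, multiplying through by $\epsilon$ and using the identity $\epsilon\,\alpha(\epsilon,H^*(m))\asymp \epsilon/\sqrt{\cdot}$ in each case gives the uniform form $\|\int_0^t G(y^\epsilon_r)dr\|_{L^p}\lesssim \|G\|_{L^p(\mu)}\,t^{H^*(m)\vee 1/2}/\alpha(\epsilon,H^*(m))$. The only place requiring care is the commutation of $\delta^m$ with the Bochner integral and the verification that the integrand $r\mapsto G_m(x_r)\mathbf 1_{[0,r]}^{\otimes m}$ lies in $\mathbb D^{m,p}(\H^{\otimes m})$ for a.e.\ $r$ with the bounds above; this is where the regularity statements in Lemma \ref{representation} are used crucially and is the main technical point.
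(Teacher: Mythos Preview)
Your proposal is correct and follows essentially the same route as the paper's own proof: represent $G(x_r)$ via Lemma~\ref{representation}, apply Meyer's inequality, compute the $\H^{\otimes(m+k)}$-norm using $\langle\mathbf 1_{[0,r]},\mathbf 1_{[0,s]}\rangle_\H=\alpha(r-s)$, then Minkowski and stationarity to reduce to the $k=0$ double integral, and finally invoke Lemma~\ref{Integrals} for the fOU case. The only cosmetic difference is that you make the commutation of $\delta^m$ with the time integral explicit and flag it as the main technical point, whereas the paper absorbs this step silently into the first displayed inequality.
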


\begin{proof}
	We first use Lemma \ref{representation} and then apply Meyer's inequality from Lemma \ref{Meyer} to obtain 
	\begin{align*}
	\left\Vert \f {1} {\epsilon} \int_0^{t } G(x_{\f r \epsilon})  dr \right\Vert_{L^p}   &=\left\Vert \int_0^{\f t \epsilon} G(x_r)  dr \right\Vert_{L^p}   \\
	&=\left \Vert \int_0^{\f t \epsilon}\delta^m\left( G_m(x_r) \1_{[0,r]}^{\otimes m}\right)\, dr\right \Vert_{L^p}  \\
	&\lesssim  \sum_{k=0}^m \left \Vert \int_0^{\f t \epsilon}  D^k \left(G_m(x_r) \1_{[0,r]}^{\otimes m}\right)  dr 
	\right\Vert_{L^{p}(\Omega,\mathscr{H}^{\otimes  m +k})}\\
	&= \sum_{k=0}^m\left \Vert \int_0^{\f t \epsilon}  G_m^{(k)} (x_r) \1_{[0,r]}^{\otimes {m +k}}  dr 
	\right\Vert_{L^{p}(\Omega,\mathscr{H}^{\otimes  m +k})}.
	\end{align*}
	We estimate the individual terms, Using linearity of the inner product, and the isometry $\<  \1_{[0,r]},  \1_{[0,s]}\>_\H=\E(x_rx_s)=\alpha(r-s)$,
	\begin{align*} &\left( \left \Vert \int_0^{\f t \epsilon}  G_m^{(k)} (x_r) \1_{[0,r]}^{\otimes {m +k}}  dr 
	\right\Vert_{\mathscr{H}^{\otimes  m +k}}\right)^2\\
	&=  \left\<  \int_0^{\f t \epsilon}  G_m^{(k)} (x_r) \1_{[0,r]}^{\otimes {m +k}}  dr ,  \int_0^{\f t \epsilon}  G_m^{(k)} (x_u) \1_{[0,r]}^{\otimes {m +k}}  du 
	\right\>_{\H^{\otimes  m +k}}\\
	& = \int_0^{\f t \epsilon} \int_0^{\f t \epsilon}    G_m^{(k)} (x_r) G_m^{(k)} (x_u) \<\1_{[0,r]}^{\otimes {m +k}} , \1_{[0,u]}^{\otimes {m +k}} \>_{\H^{\otimes  m +k}}\, dr du\\
	& = \int_0^{\f t \epsilon} \int_0^{\f t \epsilon}    G_m^{(k)} (x_r) G_m^{(k)} (x_u) \alpha(r-u)^{m+k} drdu.
	\end{align*}
	Using  Minkowski's inequality  we obtain
	\begin{align*}
	&\sum_{k=0}^m\left \Vert \int_0^{\f t \epsilon}  G_m^{(k)} (x_r) \1_{[0,r]}^{\otimes {m +k}}  dr 
	\right\Vert_{L^{p}(\Omega,\mathscr{H}^{\otimes  m +k})}\\
	&\leq  \sum_{k=0}^m \left( \left\Vert
	\int_0^{\f t \epsilon} \int_0^{\f t \epsilon}    G_m^{(k)} (x_r) G_m^{(k)} (x_u) \alpha(r-u)^{m+k} drdu
	\right\Vert_{L^{\f p 2}(\Omega)}\right)^{\f 1 2}\\
	&\leq  \sum_{k=0}^m \left( 
	\int_0^{\f t \epsilon} \int_0^{\f t \epsilon}   \left\Vert G_m^{(k)} (x_r) G_m^{(k)} (x_u)  \right\Vert_{L^{\f p 2}(\Omega) } \alpha(r-u)^{m+k} drdu
	\right)^{\f 1 2}.
	\end{align*}
	We then  estimate $\E |G_m^{(k)} (x_r) G_m^{(k)} (x_u)|^{\f p 2}$ by H\"older's inequality and the fact that $x_t$ is stationary. The right hand side is then controlled by
	\begin{align*}
	RHS & \leq  \sum_{k=0}^m \Vert G_m^{(k)}(x_1) \Vert_{L^p} 
	\left( \int_0^{\f t \epsilon}\int_0^{\f t \epsilon} \alpha(\vert u-r \vert)^{m+k} dr du\right)^{\f 1 2}\\
	&\lesssim  \Vert G \Vert_{L^p(\mu)} \left( \int_0^{\f t \epsilon}\int_0^{\f t \epsilon} \alpha(\vert u-r \vert)^{m} dr du \right)^{\f 1 2},
	\end{align*}
	concluding (\ref{le5.4-1}).
	We  finally apply Lemma \ref{Integrals} to conclude (\ref{le5.4-2}).
\end{proof}

\subsection{Limit theorems  in 1-d}\label{section-single-scale}

\begin{proposition}[Enhanced limit theorem 1-d]\label{proposition-single-CLT-strong-topologies}
Let $H\in (\f 1 3,1)$
and $G=\sum_{k=m}^\infty c_k H_k$ in $ L^p(\mu)$ where $m>0$ and $p>2$. Set $$X^{\epsilon}_t= \alpha(\epsilon,H^*(m))  \int_{0} ^{t} G(y^{\epsilon}_s) ds,   \qquad\XX^{\epsilon}_{s,t} = \int_0^t (X^{\epsilon}_{r} -X^{\epsilon}_{s}) dX^{\epsilon}_{r}.$$
	Let $T>0$, then for $c$ as in Equation (\ref{c-square}),	\begin{itemize}
		\item[(a)]  If $H^*(m) \leq \f 1 2$ (and for all $m\in \N$ in case $H=\f 12$),  
		then  for any $\gamma \in (0,\f 1 2 - \f 1 {p})$,   
		$$ \X^{\epsilon} = (X^{\epsilon},\XX^{\epsilon})\to  c \, \hat \W, $$
		weakly  in  $\FC^{\gamma} ([0,T])$, where  $\hat \W$ denotes a Wiener process enhanced with its Stratonovich integral.
		\item[(b)]  If  $H^*(m)> \f 1 2$ and  $\f 1 p< H^*(m) -  \f 1 2$, then for any $\gamma \in (0,H^*(m) - \f 1 {p})$,
		$$ X^{\epsilon}_t \to c Z^{H^*(m),m}_t,$$
		weakly  in $\C^{\gamma}([0,T]) $. 
	\end{itemize}
\end{proposition}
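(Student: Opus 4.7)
The plan is to combine the finite-dimensional-distribution convergence from Lemma \ref{lemma-single-CLT-fdd} with tightness in the appropriate $\gamma$-H\"older topology, using in case (a) the one-dimensional simplification that reduces the canonical lift to the square of the path.

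First, for both cases I would establish a uniform modulus-of-continuity estimate. Stationarity of $y^\epsilon$ means that $X^\epsilon_t - X^\epsilon_s$ has the same law as $X^\epsilon_{t-s}$, so Lemma \ref{Lp-bounds} (equivalently the bound (\ref{integral-10}) multiplied through by $\alpha(\epsilon,H^*(m))$) yields
\begin{equation*}
\|X^\epsilon_t - X^\epsilon_s\|_{L^p} \;\lesssim\; |t-s|^{H^*(m) \vee \f{1}{2}}
\end{equation*}
uniformly in $\epsilon \in (0,\f12]$ and $0\le s<t\le T$.

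For part (b), Lemma \ref{lemma-single-CLT-fdd}(c) already provides $L^2$-convergence (hence convergence of finite-dimensional distributions) of $X^\epsilon_t$ to $c\,Z^{H^*(m),m}_t$. The moment bound above, together with Kolmogorov's continuity criterion, gives tightness of $\{X^\epsilon\}$ in $\C^\gamma$ for every $\gamma < H^*(m) - 1/p$. Tightness plus convergence of finite-dimensional distributions identifies the weak limit.

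For part (a), the key observation is the one noted in Remark \ref{1d-lift-is-trivial}: in one dimension Chen's relation and smoothness of $X^\epsilon$ force
\begin{equation*}
\XX^\epsilon_{s,t} = \int_s^t (X^\epsilon_r - X^\epsilon_s)\, dX^\epsilon_r = \f{1}{2}(X^\epsilon_t - X^\epsilon_s)^2,
\end{equation*}
hence $\|\XX^\epsilon_{s,t}\|_{L^{p/2}} \lesssim |t-s|$. Applying Lemma \ref{moment-conditions} with $\theta = 1/2$ gives a uniform $L^p$-bound on the rough-path norm $\|\X^\epsilon\|_\gamma$, and Lemma \ref{tightness-second-order} then yields tightness of $\X^\epsilon$ in $\FC^\gamma$ for any $\gamma \in (\f13, \f12 - \f1p)$. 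To identify the limit, convergence of finite-dimensional distributions of $X^\epsilon$ to $c\hat W$ from Lemma \ref{lemma-single-CLT-fdd}(a,b), combined with the continuous mapping theorem applied to the map $x \mapsto \f12 x^2$, gives convergence of the pair $(X^\epsilon_t,\XX^\epsilon_{s,t})$ to $(c\hat W_t, \f12 c^2(\hat W_t - \hat W_s)^2)$ in finite dimensional distributions. Since the Stratonovich iterated integral of $c\hat W$ is precisely $\f12 c^2(\hat W_t - \hat W_s)^2$, the limit is $c\hat\W$.

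The one subtle point is exactly this last identification: we must end up with the Stratonovich and not the It\^o enhancement. This is automatic here because in one dimension both the canonical smooth lift of $X^\epsilon$ and the Stratonovich lift of $c\hat W$ agree with $\f12(\cdot)^2$; in higher dimensions this coincidence fails and a separate argument, postponed to Section~\ref{joint}, is required.
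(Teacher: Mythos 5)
Your proposal is correct and follows essentially the same route as the paper: finite-dimensional convergence from Lemma \ref{lemma-single-CLT-fdd}, the moment bound of Lemma \ref{Lp-bounds} combined with stationarity and Kolmogorov's criterion for tightness in $\C^\gamma$, and in case (a) the one-dimensional identity $\XX^\epsilon_{s,t}=\f12(X^\epsilon_{s,t})^2$ together with Lemmas \ref{moment-conditions} and \ref{tightness-second-order} to upgrade to $\FC^\gamma$. Your explicit remark on why the limit carries the Stratonovich rather than the It\^o enhancement is a point the paper only gestures at via Remark \ref{1d-lift-is-trivial}, and is a welcome clarification.
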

\begin{proof}
Convergence in finite dimensional distributions for both cases was shown in Lemma \ref{lemma-single-CLT-fdd}, using the moment bounds obtained in Lemma \ref{Lp-bounds} together with an application of Kolmogorv's Lemma we obtain that $X^{\epsilon}$ is tight in $\C^{\gamma}$ for the stated~$\gamma$.
Now $\XX_{s,t}^\epsilon=\f 12 (X_t^\epsilon-X_s^\epsilon)^2$, and so the joint convergence in the finite dimensional distribution follows.
The convergence in the rough path topology is concluded by Lemmas \ref{moment-conditions} and \ref{tightness-second-order}.
See also Remark \ref{1d-lift-is-trivial}.
\end{proof}

\subsection{Homogenization/proof of Thm C}
We have all the ingredients at hand for proving Theorem C. Consider
\begin{equation}\label{bm-eq}
dx_t^\epsilon =\alpha(\epsilon) f(x_t^\epsilon) G(y^{\epsilon}_t), \quad G= \sum_{k=m}^{\infty} c_k H_k
\end{equation}
where $\alpha(\epsilon) = \alpha(\epsilon,H^*(m))$.
We show $x_t^\epsilon\to x$  where $x$ is  the solution to
$\dot x_t=c \; f(x_t) \;dX_t$
 where $X_t$ is either a Wiener process or a Hermite processes, depending on $H^*(m)$, and the constant $c$ is given as in Equation \ref{c-square}.
\begin{proof}  \label{proof-theorem-C}
For the first case we rewrite our equation $dx_t^\epsilon =\alpha(\epsilon) f(x_t^\epsilon) G(y^{\epsilon}_t)$ into the Young differential equation $dx_t^\epsilon = f(x_t^\epsilon) dX^{\epsilon}_t$.
By Proposition \ref{proposition-single-CLT-strong-topologies} $X^{\epsilon}_t$ converges weakly in $\C^{\gamma}$ to a Hermite process  $Z^{H^*(m),m}$. Using the continuity of the solution map of Young differential equation with respect to its driver we obtain the first result.
Concerning the second result, we rewrite our equation $dx_t^\epsilon =\alpha(\epsilon) f(x_t^\epsilon) G(y^{\epsilon}_t)$ into the rough differential equation $dx_t^\epsilon = f(x_t^\epsilon) d\X^{\epsilon}_t$.
Now, by Proposition \ref{proposition-single-CLT-strong-topologies}, $\X^{\epsilon}$ converges weakly in $\FC^{\gamma}$ for $\gamma \in (\f 1 3, \f 1 2 - \f 1 p)$. Therefore, using the continuity Theorem \ref{cty-rough} finishes the proof.
\end{proof}

\section{Enhanced functional limit theorems}\label{joint}

The following convention is in place throughout this section unless otherwise stated.
\begin{convention}
 For $k=1, \dots, N$, each $G_k:\R\to \R$  is an $L_2$ function with  Hermite ranks $m_k \geq 1$.
Set $$X^{\epsilon}_t = \left( X^{1,\epsilon}_t, \dots X^{N,\epsilon}_t \right),$$
where 
\begin{equation}\label{X-component}
X^{k,\epsilon}_t= \alpha_k(\epsilon) \int_0^{t} G_k(y^\epsilon_s) ds, \qquad  \alpha_k(\epsilon)=\alpha(\epsilon,H^*(m_k)).
\end{equation}
Furthermore, we remind the reader that $n \geq 0$ is a number such that for $k \leq n$ we have $H^*(m_k) \leq \f 12$ and for $k>n$, $H^*(m_k)> \f 1 2$. We also set
\begin{align*}
X_{t}^{W,\epsilon} &= \left( X^{1,\epsilon}_t, \dots X^{n,\epsilon}_t \right)\\
X_{t}^{Z,\epsilon} &= \left( X^{n+1,\epsilon}_t, \dots X^{N,\epsilon}_t \right),
\end{align*}
so that  $X^{\epsilon}_t=(X^{W,\epsilon}_{t},X^{Z,\epsilon}_{t})$.
\end{convention}

\subsection{Convergence of the vector valued processes of mixed type}  
The convergence of each component has already been proved earlier, so it is only left to show that they converge jointly. We must specify the correlations between the limiting components.  If they converge jointly and if the limiting distribution is independent, 
 then the covariance has to converge to zero also, this we do not expect to hold in general.  For example if 
all $G_i$ are equal  then all the components of the limiting driver are the same.   If we have $H_i$ and $H_j$ where $i\not =j $, we may expect 
non-trivial correlations.  On the other hand we know different scales $y_{t/\epsilon}$ and $y_{t/\epsilon^\alpha}$ where $\alpha \not =1$ are `expected' to have uncorrelated scaling limits, this is reflected in the different scaling constants. First we will establish joint convergence under the assumption that each component converges to a Wiener process. 
We then show the joint convergence for the case each component limit is a Hermite process, and then for the case the component limits can be  either a 
Brownian motion or a Hermite process.  Due to a reduction lemma ( Lemma \ref{reduction} below), the joint convergence can be reduced to  $G_i$  being a finite sum of  Hermite polynomials.

\begin{lemma}[CLT-Gaussian]\label{CLT-Gaussian-multi}
\label{joint-clt}
Fix $H \in (0,1)\setminus\{\f 12\}$.
Here we consider the fist $n$ components of $X^{\epsilon}$, which are denoted by $X^{W,\epsilon}$, 
  so $G_k\in L^2(\mu)$, $k\le n$,  with Hermite rank $m_k>0$ and $H^*(m_k)\le \f 12$.
 \begin{enumerate}
\item Then, as  $\epsilon \to 0$, the following converges in  finite dimensional distribution:
$$X^{W,\epsilon} \to (X^1, X^2, \dots X^n) = X^W.
$$
 
\item The limiting distribution is Gaussian with covariance between the $i$th and the $jth$ components  given by
$$ \E [ X^{i}(t) X^{j}(s)] =2 (s \wedge t) \int_0^\infty \E (G_i(y_r) G_j(y_0) )dr.$$
\item If, in addition, $G_k \in L^{p_k}(\mu)$ for $p_k>2$, then  the convergence is weakly in $\C^\gamma$ where $\gamma \in (0, \min_{k=1 \dots n } \f 12-\f 1 {p_k})$.
\end{enumerate}
\end{lemma}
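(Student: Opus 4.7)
The plan has three parts: identify the limit covariance, lift componentwise Gaussian convergence to joint Gaussian convergence in finite-dimensional distributions, and upgrade this to weak convergence in $\C^\gamma$ by tightness. Scalar convergence of each $X^{k,\epsilon}$ to a Gaussian is already in Lemma \ref{lemma-single-CLT-fdd}, so the real content is the jointness.

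For the covariance, stationarity of $y^\epsilon$ and the substitution $u = \epsilon r$, $v = \epsilon w$ give
\begin{equation*}
\E\bigl[X^{i,\epsilon}_t X^{j,\epsilon}_s\bigr] = \alpha_i(\epsilon)\alpha_j(\epsilon)\,\epsilon^2 \int_0^{t/\epsilon}\int_0^{s/\epsilon} \psi_{ij}(|r-w|)\,dr\,dw,
\end{equation*}
where $\psi_{ij}(r) := \E[G_i(y_r)G_j(y_0)] = \sum_{q \ge m_i \vee m_j} q!\,c_{i,q}c_{j,q}\rho(r)^q$ is integrable on $(0,\infty)$ by Lemma \ref{correlation-lemma} when $H^*(m_i \vee m_j) < 1/2$; a standard evaluation of the double integral along the diagonal yields the limit $2(s \wedge t)\int_0^\infty \psi_{ij}(r)\,dr$, with the $|\ln\epsilon|$ factor in $\alpha(\epsilon)$ absorbing the borderline case $H^*(m) = 1/2$ and mixed-scaling pairs producing vanishing covariance. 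To promote this to joint Gaussianity, the Cram\'er--Wold device reduces matters to proving $S_\epsilon := \sum_{k \le n,\,r} a_{k,r} X^{k,\epsilon}_{t_r} \to \mathcal{N}(0, \sigma^2)$ for each choice of times and coefficients, with $\sigma^2$ read off from the covariance above. I would then truncate each $G_k$ to its first $M$ Hermite modes: Lemma \ref{Lp-bounds} applied to the tail $G_k - \sum_{l=m_k}^M c_{k,l}H_l$, whose Hermite rank exceeds $M$ and hence carries scaling strictly faster than $\alpha_k(\epsilon)$, yields an $L^2$ bound which is uniform in $\epsilon$ and vanishes as $M \to \infty$. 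On the truncation $S_\epsilon$ lives in a finite direct sum of Wiener chaoses, so the scalar Fourth Moment Theorem of Nualart--Peccati (or the multivariate Peccati--Tudor criterion) converts the componentwise Gaussian limits of Lemma \ref{lemma-single-CLT-fdd} together with the variance convergence into joint Gaussianity of $S_\epsilon$.

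For the $\C^\gamma$ upgrade, Lemma \ref{Lp-bounds} yields $\Vert X^{k,\epsilon}_t - X^{k,\epsilon}_s\Vert_{L^{p_k}} \lesssim |t-s|^{1/2}$ uniformly in $\epsilon$, so Kolmogorov's continuity criterion gives tightness of each component in $\C^{\gamma_k}$ for $\gamma_k < 1/2 - 1/p_k$, and componentwise tightness is joint tightness in the product space; combined with the fdd convergence this yields the stated weak limit in $\C^\gamma$ with $\gamma < \min_{k \le n}(1/2 - 1/p_k)$. The main obstacle is the joint fdd step: the tail estimate from Lemma \ref{Lp-bounds} must be uniform in $\epsilon$ and compatible with components that may live in different scaling regimes ($1/\sqrt{\epsilon}$ versus $1/\sqrt{\epsilon|\ln\epsilon|}$), and the contraction bounds on Malliavin kernels powering the Fourth Moment criterion require careful bookkeeping with the correlation-decay rate from Lemma \ref{correlation-lemma}.
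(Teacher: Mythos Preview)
Your approach is essentially the paper's: truncate each $G_k$ to a finite Hermite sum, invoke the multivariate normal approximation theorem of Nourdin--Peccati (equivalently Peccati--Tudor) on the finite-chaos truncation to lift componentwise Gaussian limits to joint Gaussianity, compute the limiting covariance directly, and then use the $L^p$ increment bound from Lemma~\ref{Lp-bounds} plus Kolmogorov for tightness in $\C^\gamma$.

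One correction to your truncation step: the claim that the tail $G_k - G_{k,M}$ ``carries scaling strictly faster than $\alpha_k(\epsilon)$'' is wrong when $H^*(m_k) < \tfrac12$ strictly. In that regime the tail has Hermite rank $M+1 > m_k$, hence $H^*(M+1) < \tfrac12$ as well, and its natural scale is the \emph{same} $1/\sqrt{\epsilon}$. The reason the truncation error vanishes is not a scaling gain but the $L^2$ smallness of the tail coefficients: either via Lemma~\ref{Lp-bounds} with $p=2$ (which gives a bound proportional to $\|G_k - G_{k,M}\|_{L^2(\mu)}$), or by a direct orthogonality computation as in the paper's Reduction Lemma,
\[
\E\Bigl(\alpha_k(\epsilon)\int_0^t (G_k - G_{k,M})(y^\epsilon_s)\,ds\Bigr)^2 \;\lesssim\; t \int_0^\infty \rho(u)^{M+1}\,du \sum_{j>M} (c_{k,j})^2 j!,
\]
uniform in $\epsilon$ and tending to zero as $M\to\infty$ because $G_k\in L^2(\mu)$. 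Your scaling argument \emph{does} apply in the borderline case $H^*(m_k)=\tfrac12$, since there the tail rank pushes $H^*$ strictly below $\tfrac12$ and the extra $\sqrt{|\ln\epsilon|}$ factor is genuinely absorbed. With this fix, the strategy and all remaining steps match the paper and are correct.
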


\begin{proof}

First we define the truncated functions $G_{k,M}=\sum_{j=m_k}^{M} c_{k,j} H_j$ and set
$$
X^{k, \epsilon}_M= \alpha_k(\epsilon) \int_0^{t} G_{k,M}(y^{\epsilon}_s) ds.
$$
Then, by  Lemma \ref{reduction} below,  it is sufficient to show the convergence of $(X^{1,\epsilon}_M, \dots , X^{n,\epsilon}_M)$ for every $M$. By earlier considerations each $X^{k,\epsilon}_M$
converges to a Wiener process $X^k_M$. As each $X^{k,\epsilon}_M$ belongs to a finite chaos we can make use of the normal approximation theorem from \cite[Theorem 6.2.3]{Nourdin-Peccati}: if each component of a family of mean zero vector valued stochastic processes, with components of the form $I_{q_i}(f_{i,n}) $ where $f_{i,n}$ are symmetric $L^2$ functions in $q_i$ variables,  converges in law to a Gaussian process, then they converge  jointly in law to a vector valued Gaussian process, provided that their correlation functions converge. Furthermore, the correlation functions of the limit distribution are:  $\lim_{\epsilon \to 0} \E [ X^{i,\epsilon}(t) X^{j,\epsilon}(s)] $.
Let $m= \min(m_i,m_j)$  we use $$\E (H_n(y_t)H_m(y_s))=\delta_{n,m} \left(\E(y^{\epsilon}_sy^{\epsilon}_t)\right)^m$$ to obtain, for $s \leq  t$, 
$$\begin{aligned} &\E\left[ \alpha_i(\epsilon) \alpha_j(\epsilon)\int_0^{t}G_{i,M}(y^{\epsilon}_u)du  \int_0^{s} G_{j,M}(y^{\epsilon}_r) dr \right] \\
&=  \sum_{k=m}^M \alpha_i(\epsilon) \alpha_j(\epsilon) c_{i,k} c_{j,k} (k!)^2 \int_0^{t}\int_0^{s} (\E(y^{\epsilon}_r y^{\epsilon}_u))^k  dr du\\
&=  \sum_{k=m}^M  \alpha_i(\epsilon) \alpha_j(\epsilon) c_{i,k} c_{j,k} (k!)^2 \left( \int_0^{s} \int_0^{s} \rho^{\epsilon}(u-r)^k dr du + \int_{s }^{t} \int_0^{s} \rho^{\epsilon}(u-r)^k  dr du \right)
\end{aligned}$$
By Lemma \ref{Integrals} we obtain,  for $\epsilon \to 0$,
\begin{align*}
\alpha_i(\epsilon) \alpha_j(\epsilon) \int_{s }^{t} \int_0^{s} \rho^{\epsilon}(u-r)^k dr du \to 0.
\end{align*} 
Hence,
$$\begin{aligned}
\lim_{\epsilon \to 0} RHS &= 2 \sum_{k=m}^M c_{i,k} c_{j,k} (k!)^2  \lim_{\epsilon \to 0} \left(\alpha_i(\epsilon) \alpha_j(\epsilon) s \int_0^{\f s {\epsilon}} (\rho(v))^k  dv\right)  \\
&= 2 \left( s \wedge t \right)  \sum_{k=m}^M c_{i,k} c_{j,k} (k!)^2 \int_0^\infty \rho(u)^k du\\
&= 2 \left( s \wedge t \right) \,\int_0^\infty \E (G_{i,M}(y_s) G_{j,M}(y_0) )ds
,\end{aligned}$$
proving the finite chaos case.	 
We now prove that the correlations of the limit converge as $ M \to \infty$. Indeed,
\begin{align*}
\lim_{M \to \infty}  2 \left( s \wedge t \right)  \sum_{k=m}^M c_{i,k} c_{j,k} (k!)^2 \int_0^\infty \rho(u)^k du
 &=  2 \left( s \wedge t \right) \sum_{k=m}^{\infty} c_{i,k} c_{j,k} (k!)^2 \int_0^\infty \rho(u)^k du\\
&=  2  \left( s \wedge t \right) \,\int_0^\infty \E (G_i(y_s) G_j(y_0) )ds.
\end{align*}
As $G_{i,M} \to G_i$ in $L^2$, and similarly for $j$, this proves the first two claims. 
The convergence in H\"older spaces follows from Lemma \ref{Lp-bounds}, which states these processes are tight in $\C^\gamma$, c.f. Proposition \ref{proposition-single-CLT-strong-topologies}.
This  concludes the proof. 
\end{proof}

Now we prove the reduction lemma for the high Hermite rank case.

\begin{lemma}[Reduction Lemma]\label{reduction}
Fix $H \in (0,1)\setminus\{\f 12\}$. 
For  $M\in \N$, define $$X^{k,\epsilon}_{M}(t)= \alpha_k(\epsilon) \int_0^{t} G_{k,M}(y^{\epsilon}_s)ds.$$
If for every $M \in \N$, 
$$(X^{1,\epsilon}_{M}, \dots, X^{N,\epsilon}_{M})\longrightarrow  (X^{1}_{M}, \dots, X^{N}_{M})$$ 
in finite dimensional distributions, then,
$$(X^{1,\epsilon}, \dots, X^{N,\epsilon}) \longrightarrow  (X^{1}, \dots, X^{N}),$$
in finite dimensional distributions.

 \end{lemma}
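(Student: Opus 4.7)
The plan is to set up a uniform-in-$\epsilon$ $L^2$ approximation of $X^{k,\epsilon}$ by $X^{k,\epsilon}_M$ and then close the argument with a standard triangle inequality for weak convergence. Write
\begin{equation*}
X^{k,\epsilon} = X^{k,\epsilon}_M + \Delta^{k,\epsilon}_M, \qquad \Delta^{k,\epsilon}_M(t) := \alpha_k(\epsilon)\int_0^t R_{k,M}(y^\epsilon_s)\,ds,
\end{equation*}
where $R_{k,M} := G_k - G_{k,M} = \sum_{j>M} c_{k,j} H_j$ has Hermite rank at least $M+1$ and $\|R_{k,M}\|_{L^2(\mu)}\to 0$ as $M\to\infty$.

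The key estimate is $\sup_{\epsilon \in (0,1/2]}\|\Delta^{k,\epsilon}_M(t)\|_{L^2} \to 0$ as $M\to\infty$ on each bounded time interval. Since $m \mapsto H^*(m)= m(H-1)+1$ is strictly decreasing in $m$ (as $H<1$), there exists $M_0$ so that $H^*(j)<\f12$ for every $j>M_0$. For such $M\geq M_0$, Lemma~\ref{Lp-bounds} applied to $R_{k,M}$ (whose Hermite rank falls in the high-rank regime) yields
\begin{equation*}
\|\Delta^{k,\epsilon}_M(t)\|_{L^2} \;\lesssim\; \alpha_k(\epsilon)\,\sqrt{\epsilon t}\,\|R_{k,M}\|_{L^2(\mu)}.
\end{equation*}
The product $\alpha_k(\epsilon)\sqrt{\epsilon}$ is uniformly bounded in all three scaling regimes defining $\alpha_k$: it equals $1$ when $H^*(m_k)<\f12$, equals $1/\sqrt{|\ln\epsilon|}$ (bounded near $\epsilon=0$) when $H^*(m_k)=\f12$, and equals $\epsilon^{H^*(m_k)-\f12}$ with positive exponent when $H^*(m_k)>\f12$. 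Hence the bound tends to $0$ as $M\to\infty$, uniformly in $\epsilon$.

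Granted this, the reduction proceeds by a three-term argument. Fix times $t_1,\dots,t_\ell$ and let $\mathbf{X}^\epsilon:=(X^{k,\epsilon}_{t_i})_{k,i}$ and similarly $\mathbf{X}^\epsilon_M,\mathbf{X}_M$. For any bounded Lipschitz $\phi\colon\R^{N\ell}\to\R$, Cauchy--Schwarz applied to the residual estimate gives
\begin{equation*}
|\E\phi(\mathbf{X}^\epsilon) - \E\phi(\mathbf{X}^\epsilon_M)| \;\lesssim\; \|\phi\|_{\mathrm{Lip}}\,\sum_{k=1}^N \|R_{k,M}\|_{L^2(\mu)},
\end{equation*}
uniformly in $\epsilon$. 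Since the hypothesis supplies $\E\phi(\mathbf{X}^\epsilon_M)\to\E\phi(\mathbf{X}_M)$ as $\epsilon\to 0$ for each fixed $M$, splitting
\begin{equation*}
|\E\phi(\mathbf{X}^\epsilon)-\E\phi(\mathbf{X}^{\epsilon'})| \le |\E\phi(\mathbf{X}^\epsilon)-\E\phi(\mathbf{X}^\epsilon_M)|+|\E\phi(\mathbf{X}^\epsilon_M)-\E\phi(\mathbf{X}^{\epsilon'}_M)|+|\E\phi(\mathbf{X}^{\epsilon'}_M)-\E\phi(\mathbf{X}^{\epsilon'})|
\end{equation*}
and choosing $M$ large first, then $\epsilon,\epsilon'$ small, shows $\E\phi(\mathbf{X}^\epsilon)$ is Cauchy, hence convergent. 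The analogous Cauchy estimate for the limits $(\mathbf{X}_M)$ (obtained by letting $\epsilon\to 0$ in the uniform bound, via Fatou applied to a suitable cutoff of Lipschitz $\phi$) identifies a weak limit $\mathbf{X}:=\lim_M \mathbf{X}_M$ with $\E\phi(\mathbf{X}^\epsilon)\to\E\phi(\mathbf{X})$. As $\phi$ ranges over bounded Lipschitz functions, this is the desired convergence in finite dimensional distributions. The main technical point is the uniformity of $\alpha_k(\epsilon)\sqrt{\epsilon}$ across the three scaling regimes; once this is in place, the truncation argument is entirely soft.
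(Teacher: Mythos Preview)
Your proof is correct and follows essentially the same route as the paper's: both establish a uniform-in-$\epsilon$ $L^2$ bound on the truncation error $X^{k,\epsilon}-X^{k,\epsilon}_M$ (you via Lemma~\ref{Lp-bounds}, the paper via a direct Hermite-orthogonality computation giving the same $\alpha_k(\epsilon)^2\epsilon\lesssim 1$ observation), and both close with the standard double-limit truncation argument (you spell out the bounded-Lipschitz Cauchy estimate, the paper invokes \cite[Theorem~3.2]{Billingsley}).
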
 
\begin{proof}
Firstly we show,  for any sequence of positive numbers $ \{t_{\gamma_{k,l}}, k\le N, l\le A \} $,   
 $\sum_{k=1}^N \sum_{l=1}^A \gamma_{k,l} X^{k,\epsilon}_M(t_l)$ converges
as $M\to \infty$. By the triangle inequality we can reduce 
 $$\left\Vert \sum_{k,l} \gamma_{k,l} \left( X^{k,\epsilon} (t_l) - X^{k,\epsilon}_M  (t_l)  \right) \right\Vert_{L^2} \to 0.$$
 to $\Vert   X^{k,\epsilon}  (t) -   X^{k,\epsilon}_{M} (t) \Vert_{L^2} \to 0$.
Now,
	\begin{align*}
	 X^{k,\epsilon}(t) -  X^{k,\epsilon}_{M}(t) &=  \alpha_k(\epsilon) \int_0^{t} \Big( G_k(y^{\epsilon}_s)- G_{k,M} (y^{\epsilon}_s)\Big)ds\\
	 &
		=  \alpha_k(\epsilon) \int_0^{t} \sum_{j=M+1}^\infty c_{k,j} H_j(y^{\epsilon}_s)ds.
	\end{align*}
	Using properties of the Hermite polynomials we obtain
	\begin{align*}
		&\E \left(   \alpha_k(\epsilon) \int_0^{t} \sum_{j=M+1}^\infty c_{k,j} H_j(y^{\epsilon}_s)ds \right)^2 \\
		&=  \alpha_k(\epsilon)^2 \int_0^{t} \int_0^{t} \sum_{j=M+1}^\infty (c_{k,j})^2 \E \left(  H_j(y^{\epsilon}_s) H_j(y^{\epsilon}_r) \right) dr ds\\
		&=   \alpha_k(\epsilon)^2 \sum_{j=M+1}^\infty (c_{k,j})^2 j! \int_0^{t} \int_0^{t}  \rho^{\epsilon}(\vert s-r \vert)^j dr ds\\
		&\lesssim  t \int_0^\infty \rho(u)^{M+1}du \,\sum_{j=M+1}^\infty (c_{k,j})^2 j!
	\end{align*}
	As $\sum_{j=m}^\infty (c_{k,j})^2 j! < \infty$ we obtain $ \sum_{j=M+1}^\infty (c_{k,j})^2 j!\to 0$ as  $M \to \infty$.  Then,
\begin{equation}\label{Bill}
\lim_{M \to \infty}\lim_{\epsilon \to 0}\E \left(   \alpha_k(\epsilon) \int_0^{t} G_k(y^{\epsilon}_s)ds -  \alpha_k(\epsilon) \int_0^{t} G_{k,M}(y^{\epsilon}_s)ds \right)^2 \to 0,
\end{equation}
and finally using theorem 3.2 in \cite{Billingsley}  we conclude  the proof.
\end{proof}

Now, we go ahead and deal with the low Hermite rank case, so focus on the vector component whose entries satisfy $H^*(m_k)> \f 1 2$. Recall,
 this implies ${H> \f 1 2}$. 
\begin{lemma}[CLT- Hermite]
\label{joint-clt-2}
Fix $H \in  (0,1)\setminus\{\f 12\}$. Write $G_k=\sum_{j=m_k} ^N c_{k,j} H_j$.
Suppose that $m_k\ge 1$ and $ H^*(m_k) > \f 1 2$.
\begin{enumerate}
\item Then,  $(X^{n+1,\epsilon} , X^{2,\epsilon}  \dots, X^{N,\epsilon} )$ converges in finite dimensional distribution and 
for every $t \in [0,T]$ \begin{align*}
\left\|(X^{n+1,\epsilon}_t , X^{n+2,\epsilon}_t  \dots, X^{N,\epsilon}_t )- (X^{n+1}_t, X^{n+2}_t,   \dots, X^N_t )\right\|_{L_2(\Omega)} \to 0.
\end{align*} 
\item The marginals of the limit are the following Hermite processes, each given by the representation \ref{Hermite} with a common Wiener process $W$ 
 $$X^k=  \frac{ c_{k,m_k} m_k!}{K(H^*(m_k),m_k)} Z^{H^*(m_k),m_k}.$$ 

\item If in addition $G_k \in L^{p_k}$ for $p_k>2$, then  the convergence is weakly in $\C^\gamma$ on any finite time interval where $\gamma \in (0, \min_{k= n+1, \dots , N} H^*(m_k )-\f 1 {p_k})$.

\end{enumerate}\end{lemma}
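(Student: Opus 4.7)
The plan is to exploit the fact that all the fast fractional Ornstein-Uhlenbeck processes are built from a single underlying two-sided Wiener process $W$: via the Mandelbrot-van Ness representation, $y^\epsilon$ admits the Wiener integral representation (\ref{y-integral}) with kernel $h_\epsilon(s,u)$. Consequently, every component $X^{k,\epsilon}$ can be written as a multiple Wiener-It\^o integral of order $m_k$ with respect to the \emph{same} $W$, and the joint convergence in part (1), together with the common-$W$ statement in part (2), will drop out from separate componentwise $L^2$ convergences.

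First, I reduce to the leading Hermite term: for each $k > n$, apply (\ref{reduction_Hermite}) to the tail $G_k - c_{k,m_k}H_{m_k}$, which has Hermite rank at least $m_k+1$, hence by monotonicity of $H^*$ the contribution vanishes in $L^2$ as $\epsilon\to 0$. It therefore suffices to prove the lemma for $G_k = c_{k,m_k} H_{m_k}$. Using (\ref{explicit-kernel}), each
\[
\epsilon^{H^*(m_k)-1} c_{k,m_k}\int_0^t H_{m_k}(y^\epsilon_s)\,ds
= \frac{c_{k,m_k}\epsilon^{H^*(m_k)-1}}{m_k!}\int_{\R^{m_k}}\!\Big(\int_0^t\prod_{i=1}^{m_k} h_\epsilon(s,u_i)\,ds\Big)\,dW_{u_1}\cdots dW_{u_{m_k}}.
\]
Lemma~\ref{kernel lemma} gives $L^2(\R^{m_k})$-convergence of the bracketed kernel to $\int_0^t\prod_i(s-u_i)_+^{H-3/2}\,ds$, so by the Wiener isometry each $X^{k,\epsilon}_t$ converges in $L^2(\Omega)$ to
\[
X^k_t \;=\; \frac{c_{k,m_k}\,m_k!}{K(H^*(m_k),m_k)}\,Z^{H^*(m_k),m_k}_t,
\]
with every $Z^{H^*(m_k),m_k}$ built from the \emph{same} $W$. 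This establishes part (2) and the pointwise $L^2$-statement of part (1).

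Finite dimensional joint convergence then follows: for any real coefficients $\{\lambda_{k,j}\}$ and times $\{t_j\}$, the triangle inequality gives
\[
\Big\|\sum_{k,j}\lambda_{k,j}\bigl(X^{k,\epsilon}_{t_j}-X^k_{t_j}\bigr)\Big\|_{L^2}
\;\le\;\sum_{k,j}|\lambda_{k,j}|\,\bigl\|X^{k,\epsilon}_{t_j}-X^k_{t_j}\bigr\|_{L^2}\;\longrightarrow\;0,
\]
hence the vector $(X^{n+1,\epsilon},\dots,X^{N,\epsilon})$ converges in finite dimensional distributions to $(X^{n+1},\dots,X^N)$. For part (3), Lemma~\ref{Lp-bounds} applied with stationarity yields
\[
\bigl\|X^{k,\epsilon}_t-X^{k,\epsilon}_s\bigr\|_{L^{p_k}}
\;\lesssim\;\frac{|t-s|^{H^*(m_k)}\,\|G_k\|_{L^{p_k}(\mu)}}{1}\,,
\]
uniformly in $\epsilon$, where I use that for $H^*(m_k)>\tfrac12$ the prefactor equals $|t-s|^{H^*(m_k)}$ after absorbing the scaling constant $\alpha_k(\epsilon)=\epsilon^{H^*(m_k)-1}$. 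Kolmogorov's criterion then yields tightness of each coordinate in $\mathcal{C}^\gamma$ for $\gamma<H^*(m_k)-1/p_k$; tightness of the vector in the product topology follows, and combined with the finite dimensional convergence above gives the claimed weak convergence in $\mathcal{C}^\gamma$.

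The main conceptual hurdle is part (2), asserting that all limiting Hermite processes are driven by a common Wiener process. This is resolved cleanly by performing the kernel approximation inside the \emph{same} multiple Wiener-It\^o chaos over $W$ for every $k$; no cross-component cancellations or diagonal extraction arguments are needed. Technically, the only delicate point is the reduction step (\ref{reduction_Hermite}) which relies on the integrability estimates in Lemma~\ref{Lp-bounds}---this is where the assumption that $H^*(m_k)>\tfrac12$ (and hence strict monotonicity $H^*(m_k+1)<H^*(m_k)$ together with a sufficiently fast decay of the bound) is used to kill the tail uniformly in $\epsilon$.
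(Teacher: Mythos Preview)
Your proposal is correct and follows essentially the same route as the paper: componentwise $L^2$ convergence via Lemma~\ref{L^2-kernel} (which you unpack into the reduction step (\ref{reduction_Hermite}) plus the kernel convergence of Lemma~\ref{kernel lemma}), then joint $L^2$ convergence of the vector giving finite dimensional convergence, and finally tightness in $\C^\gamma$ from the moment bounds of Lemma~\ref{Lp-bounds}. The paper's version is terser---it simply invokes Lemma~\ref{L^2-kernel} and Cram\'er--Wold---whereas you make the common-$W$ mechanism explicit, but the substance is the same.
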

\begin{proof}
By Lemma \ref{L^2-kernel} each component convergences in $L^2$, hence the above converges as well in $L^2$ yielding convergence in finite dimensional distributions by an application of the Cramer-Wold theorem.
 The convergence in H\"older spaces follows from Lemma \ref{Lp-bounds}, which states these processes are tight in $\C^\gamma$, c.f. Proposition \ref{proposition-single-CLT-strong-topologies}.
\end{proof}

\begin{proposition}[CLT-mixed]\label{joint-clt-3}
For each $k$, write  $$G_k=\sum_{j=m_k}^\infty c_{k,j} H_j.$$
 Then, for $X^k$ given in Lemmas \ref{CLT-Gaussian-multi} and  \ref{joint-clt-2},
\begin{align*}
X^{\epsilon} =(X^{1,\epsilon} , X^{2,\epsilon}  \dots, X^{N,\epsilon} )\longrightarrow (X^{1} , X^{2}  \dots, X^{N} )= (X^W,X^Z)
\end{align*}
in  finite dimensional distributions. Furthermore, 
\begin{enumerate} 
\item $X^W$ and $X^Z$ are independent.
\item
 If $i, j\le n$, so $X^i$ and $X^j$ are Gaussian, their correlation is
$$ 2(s\wedge t) \int_0^\infty  \int_0^\infty \E (G_i(y_s)G_j(y_0))\,ds.$$
\item If $i,j \ge n+1$, both $X^i$ and $X^j$ are Hermite processes, then their correlation is given by a common Wiener
process $W_t$. Specifically,  
 \begin{align*}
&\cov(X^i,X^j) \\
=& \delta_{m_i,m_j} c_{i,m_i} c_{j,m_j} \int_{0}^t \! \! \int_{0}^t  \! 
\left(  \int_{\R}\left( s - \xi\right)_{+}^{\hat H(m) - \f 3 2}  \left( r - \xi\right)_{+}^{\hat H(m) - \f 3 2} d\xi \right)^{m_i}dr ds. 
\end{align*}
\item  If in addition $G_k \in L^{p_k}$ for $p_k>2$, then  the convergence is weakly in $\C^\gamma$ for every $\gamma \in (0, \min_{k=1, \dots n} \f 12 - \f 1 {p_k} \wedge \min_{k=n+1, \dots N} H^*(m_k) - \f 1 {p_k} )$.
\end{enumerate}
\end{proposition}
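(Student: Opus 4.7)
The plan is to combine a chaos-truncation step with the already-established marginal convergences: $X^{W,\epsilon}$ converges weakly (jointly in its components) to the Gaussian $X^W$ by Lemma \ref{CLT-Gaussian-multi}, while $X^{Z,\epsilon}$ converges in $L^2$ at every time to the Hermite vector $X^Z$ by Lemma \ref{joint-clt-2}, both expressed through the common underlying Brownian motion $W$ driving the fOU. First, by the reduction Lemma \ref{reduction}, applied in the joint setting (its proof extends verbatim, since $\|X^{k,\epsilon} - X^{k,\epsilon}_M\|_{L^2}$ goes to $0$ uniformly in $\epsilon$ as $M \to \infty$, so Billingsley's Theorem 3.2 reduces the joint convergence of the untruncated processes to that of the truncated ones), it suffices to prove the joint finite-dimensional convergence under the assumption that each $G_k = G_{k,M}$ is a finite Hermite sum. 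Each $X^{k,\epsilon}_{t_\ell}$ is then a finite sum of multiple Wiener--It\^o integrals with respect to $W$.

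For the joint convergence in finite-dimensional distributions, fix times $t_1,\dots,t_L$ and vectors $\lambda \in \R^{nL}$, $\mu \in \R^{(N-n)L}$. Since $|e^{ix}-e^{iy}| \le |x-y|$, the $L^2$ convergence of $X^{Z,\epsilon}$ yields
\begin{equation*}
\E\bigl[e^{i\lambda \cdot X^{W,\epsilon}_{\vec t} + i\mu \cdot X^{Z,\epsilon}_{\vec t}}\bigr] \;=\; \E\bigl[e^{i\lambda \cdot X^{W,\epsilon}_{\vec t}}\; V\bigr] + o(1),
\end{equation*}
with $V := e^{i\mu \cdot X^Z_{\vec t}}$ a bounded $\sigma(W)$-measurable random variable. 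So the task is to show $\E[e^{i\lambda \cdot X^{W,\epsilon}_{\vec t}}V] \to \E[e^{i\lambda \cdot X^W_{\vec t}}]\,\E[V]$ for every such $V$. A monotone class argument reduces this to $V = V_h := e^{i\int h\,dW}$ for $h$ in a total subset of $L^2(\R)$. I would then apply the Peccati--Tudor normal convergence theorem (Nourdin--Peccati Thm 6.2.3) to the enlarged vector $(X^{W,\epsilon}_{\vec t}, \int h\,dW)$, all of whose components are finite sums of multiple Wiener integrals. Marginal Gaussianity of the limit holds (Lemma \ref{CLT-Gaussian-multi} for the first block, exact Gaussianity for $\int h\,dW$), so joint Gaussian convergence reduces to convergence of the covariance matrix. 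Only the first-chaos component of each $G_k$ contributes to the cross-covariances, which take the form $c\,\alpha_k(\epsilon)\int_0^{t_\ell} \int h_\epsilon(s,u)\,h(u)\,du\,ds$ through the kernel \eqref{y-integral}; a scaling argument analogous to Lemma \ref{Integrals} shows these vanish as $\epsilon \to 0$. The joint Gaussian limit is therefore block-diagonal, so $X^W$ is independent of $\int h\,dW$. The monotone class then delivers that $X^W$ is independent of $\sigma(W)$, hence of $X^Z$.

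With the asymptotic independence in hand, the claimed structure of the limit is inherited from the marginals: the within-$X^W$ covariances are those of Lemma \ref{CLT-Gaussian-multi}(2), the within-$X^Z$ covariances are obtained by writing the Hermite limit through the common representation \eqref{Hermite} and applying the Wiener isometry, which produces exactly the claimed contraction integral in part (3). The upgrade to weak convergence in $\C^\gamma$ under the $L^{p_k}$ integrability hypothesis is routine: the uniform moment bounds of Lemma \ref{Lp-bounds} combined with Kolmogorov's tightness criterion give tightness of each component in the stated H\"older scale, as already used in Proposition \ref{proposition-single-CLT-strong-topologies}, and tightness together with convergence of finite-dimensional distributions implies weak convergence in $\C^\gamma$.

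The main obstacle is the asymptotic-independence step: controlling the cross-covariance of $X^{W,\epsilon}$ with an arbitrary first-chaos Wiener integral $\int h\,dW$. With the $1/\sqrt\epsilon$ (or $1/\sqrt{\epsilon|\ln\epsilon|}$) scaling, this cross term has to average out against a fixed $L^2$ weight $h$, and the decay must be extracted from the explicit kernel $h_\epsilon$ together with the correlation bound $|\rho(s)| \lesssim 1 \wedge |s|^{2H-2}$. Once this vanishing is quantified, everything else in the proof is bookkeeping via chaos truncation and the $L^2$ convergence of the Hermite block.
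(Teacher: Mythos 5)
Your proof is correct, but it reaches the key conclusion --- asymptotic independence of the Wiener block from the Hermite block --- by a genuinely different mechanism than the paper. The paper reduces both blocks to finite chaos (Lemma \ref{reduction} and Equation (\ref{reduction_Hermite})), writes everything as multiple Wiener--It\^o integrals, derives vanishing of the self-contractions of the Gaussian-converging kernels from the fourth moment theorem, upgrades this to vanishing of the cross-contractions by Cauchy--Schwarz, and then invokes a vector-valued asymptotic-independence criterion (Proposition \ref{proposition-spit-independence}, the \"{U}st\"{u}nel--Zakai / Nourdin--Nualart--Peccati circle of ideas, proved in the appendix via Malliavin calculus). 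You instead exploit the $L^2$ convergence of the Hermite block to replace $e^{i\mu\cdot X^{Z,\epsilon}}$ by the fixed $\sigma(W)$-measurable functional $e^{i\mu\cdot X^Z}$ inside the characteristic function, approximate that functional in $L^1$ by Wiener exponentials $e^{i\int h\,dW}$, and apply Peccati--Tudor to the augmented vector $(X^{W,\epsilon}_{\vec t}, \int h\,dW)$; note that in the genuinely mixed case $H>\f12$ forces $m_k\ge 2$ for all $k\le n$, so every chaos component of the Wiener block is exactly orthogonal to $I_1(h)$ and the cross-covariances are identically zero --- your proposed ``scaling argument'' for a first-chaos contribution is not actually needed. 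Your route buys a more self-contained argument (only the standard Peccati--Tudor theorem plus a density argument for Wiener exponentials, no bespoke contraction/independence machinery), and it leans crucially on the $L^2$ --- not merely weak --- convergence of the Hermite components, which the paper also establishes; the paper's contraction-based route is the one that would survive if only weak convergence of that block were available. The remaining items --- the within-block covariances via the It\^o isometry, and the upgrade to weak convergence in $\C^\gamma$ via Lemma \ref{Lp-bounds} and tightness --- coincide with the paper.
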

\begin{proof}
Using Lemma \ref{CLT-Gaussian-multi} and \ref{joint-clt-2}, $X^{W,\epsilon} \to X^W$ and $X^{Z,\epsilon} \to X^Z$ in finite dimensional distributions.
 By  Lemma \ref{reduction} and Equation (\ref{reduction_Hermite}) , we may reduce the problem to 
 $$G_i= \sum_{k=m_i}^M c_{i,k} H_k, \quad   G_j=  c_{j,m_j} H_{m_j},  \qquad 1 \leq i \leq n,\; j>n.$$
 Now, we can rewrite $H_m(y^{\epsilon}_s)= I_m(f^{m,\epsilon}_s)$, where $I_m$ denotes a $m$-fold Wiener-Ito integral and a function $f^{m,\epsilon}_s \in L^2(\R^m,\mu)$. 
 Now, for $1 \leq i \leq n$ we obtain,
 \begin{align*}
\alpha_i(\epsilon) \int_0^t G_i(y^{\epsilon}_s)ds &= \alpha_i(\epsilon) \int_0^t \sum_{k=m_i}^{M} c_{i,k} H_k(y^{\epsilon}_s) ds
\\
&=  \alpha_i(\epsilon)\int_0^t \sum_{k=m_i}^{M} c_{i,k} I_k(f^{k,\epsilon}_s) ds= \sum_{k=m_i}^{M} c_{i,k} I_k(\hat {f}^{k,\epsilon}_t),
 \end{align*}
where  $\hat f^{k,\epsilon} = \int_0^t f^{k,\epsilon}_s ds$. Similarly for $j>n$,   
 \begin{align*}
\int_0^t G_j(y^{\epsilon}_s)ds &= \int_0^t  c_{j,m_j} H_{m_j}(y^{\epsilon}_s) ds
= c_{j,m_j} I_{m_j}(\hat {f}^{m_j,\epsilon}_t).
\end{align*}
Hence, we only need to show that the collection of stochastic processes of the form $I_{m_k}(\hat {f}^{m_k,\epsilon)}$ converge jointly in finite dimensional distribution.
It is then sufficient to show for every finite collection of times $t_{l} \in [0,T]$,
that the vector
$$\left\{I_{m_k}(\hat {f}^{k,\epsilon}_{t_{l}}), k=m, \dots M \right\},$$
converges jointly, where $m = \min_{k=1, \dots, N} m_k$.
 Let $n_0$ denote the smallest natural number such that $H^*(n_0) \leq \f 1 2$. For $k \leq n_0$, the collection  $I_k(\hat {f}^{k,\epsilon}_{t_{l}})$ converges to a normal distribution and therefore, by the fourth moment theorem \cite[Theorem 1]{Nualart-Peccati},
$$
\Vert \hat {f}^{k,\epsilon}_{t_{l}} \otimes_r \hat  {f}^{k,\epsilon}_{t_{l}} \Vert_{\H^{2k-2r}} \to 0, \qquad  r =1 ,\dots , k-1.
$$

By Cauchy-Schwartz we obtain
 for $r =1, \dots, k_1$, 
$$\begin{aligned}
&\left \Vert \hat {f}^{k_1,\epsilon}_{t_{l_1}} \otimes_r \hat {f}^{k_2,\epsilon}_{t_{l_2}}\right \Vert_{\H^{k_1+k_2-2r}}\\
&\leq \left\Vert  \hat {f}^{k_1,\epsilon}_{t_{l_1}} \otimes_r  \hat {f}^{k_1,\epsilon}_{t_{l_1}} \;
\right \Vert_{\H^{p-r}} \; \left \Vert  \hat {f}^{k_2,\epsilon}_{t_{l_2}} \otimes_r  \hat {f}^{k_2,\epsilon}_{t_{l_2}} \right\Vert_{\H^{q-r}}
 \to 0,
 \end{aligned}$$
for all $ t_{l_1},t_{l_2} \in \R, \,  1 \leq k_1 < n_0 \leq k_2 \leq M$.
We can now apply the asymptotic independent theorem (see Proposition \ref{proposition-spit-independence} in the Appendix), to conclude  the joint convergence
 in finite dimensional distributions of $X^\epsilon$ to $(X^W,X^Z)$. Furthermore $X^W$ is independent of $X^Z$.

The correlations between $X^i_t$ and $X^j_{t'}$,  where $i,j>n$, are $0$ if $m_i \not = m_j$, otherwise given by the $L^2$ norm of their integrands. They follow from the It\^o isometries:
\begin{align*}
& c_{i,m_i} c_{j,m_j} \E \int_{0}^t\! \! \! \int_{0}^{t'}  \! \!  H_{m_i}\left(\int_{\R} (s-u)_{+}^{ \hat H(m_i)-\f 3 2} dW_u\right) 
H_{m_i}\left(\int_{\R} (r-u)_{+}^{ \hat H(m_i)-\f 3 2} dW_u\right) dr ds\\
&= c_{i,m_i} c_{j,m_j} \int_{0}^t \! \! \!\int_{0}^{t'}  \! \! \int_{\R^{m_i}} \prod_{i=1}^{m_i} \left( s - \xi_i\right)_{+}^{\hat H(m) - \f 3 2} \prod_{i=1}^{m_i} \left( r - \xi_i\right)_{+}^{\hat H(m) - \f 3 2} d\xi_1 \dots \xi_{m_i} dr ds.
\end{align*}

The convergence in  H\"older spaces follows from Lemma \ref{Lp-bounds}, which states these processes are tight in $\C^\gamma$, c.f. Proposition \ref{proposition-single-CLT-strong-topologies}, completing the proof.

\end{proof}

\subsection{Convergence in finite dimensional distributions of the rough paths}
We study the canonical lifts  of $X^\epsilon$ to a rough path.
We denote by $\XX^{\epsilon}$ the   canonical/geometric  lift. Its components are $$\XX^{i,j,\epsilon}_{0,t}= \alpha_i(\epsilon) \alpha_j(\epsilon) \int_0^{t} \int_0^{s} G_i(y_s^\epsilon) G_j(y_r^\epsilon) dr ds.$$
From here on we assume  Assumption \ref{assumption-multi-scale}, and Convention \ref{X-component} so the high rank functions are in the first $n$ components, for which $X^{k,\epsilon}$ converges to a Wiener process.
We first work on the  case where one of the components of the iterated integral corresponds to a low Hermite rank.
\begin{lemma}\label{young-lift}
[Young integral case]
Assume Assumption \ref{assumption-multi-scale}. Below $i,j \in \{ 1, \dots N , i \vee j >n  \}$. Then,
\begin{equation}\label{lift-Hermite}
(X^{\epsilon}, \; \XX^{i,j,\epsilon}),
\end{equation}
 converges in  finite dimensional distributions to $(X,\XX^{i,j})$ where $\XX^{i,j}=\int_0^t X^i dX^j$,  and these integrals are well defined as Young integrals.
\end{lemma}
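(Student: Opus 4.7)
The plan is to derive the statement from the convergence of $X^\epsilon$ established in Proposition~\ref{joint-clt-3} together with continuity of the Young integration map, invoked via the continuous mapping theorem.

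First, I would verify that the candidate limit $\XX^{i,j}_{s,t} = \int_s^t (X^i_r - X^i_s)\, dX^j_r$ exists as a Young integral. Since $i \vee j > n$, at least one of $X^i$, $X^j$ is a Hermite process, whose sample paths by Assumption~\ref{assumption-multi-scale}(3) admit a H\"older exponent approaching $H^*(m_k) - 1/p_k > 1/2$. The remaining component is either Gaussian of H\"older exponent approaching $1/2 - 1/p_k$, or another Hermite process with exponent above $1/2$. In every case, Assumption~\ref{assumption-multi-scale}(2) ensures that the two H\"older exponents sum to strictly more than $1$, so the pathwise Young integral is defined; on the smooth pre-limit inputs it coincides with the classical Riemann--Stieltjes integral defining $\XX^{i,j,\epsilon}_{s,t}$.

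Next, I would promote Proposition~\ref{joint-clt-3}(1)--(3) from finite-dimensional convergence of $X^\epsilon$ to weak convergence in a product H\"older space $\prod_k \C^{\alpha_k}$, with componentwise exponents $\alpha_k$ taken slightly below $1/2 - 1/p_k$ for $k \leq n$ and slightly below $H^*(m_k) - 1/p_k$ for $k > n$. Tightness of each marginal $X^{k,\epsilon}$ in its respective H\"older space follows from the componentwise moment bounds of Lemma~\ref{Lp-bounds} via Kolmogorov's continuity criterion; hence the joint tuple is tight in the product space, and the finite-dimensional convergence already in hand identifies the limit. By Assumption~\ref{assumption-multi-scale}(2)--(3) the exponents $\alpha_k$ can be chosen so that $\alpha_i + \alpha_j > 1$ for every pair $(i,j)$ with $i \vee j > n$.

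Finally, I would apply the continuous mapping theorem to the Young integration functional $(f,g) \mapsto \bigl(f, g, (s,t) \mapsto \int_s^t (f_r - f_s)\, dg_r\bigr)$, which is continuous on $\C^{\alpha} \times \C^{\beta}$ whenever $\alpha + \beta > 1$. Combined with the previous step, this yields joint weak convergence of $(X^\epsilon, \XX^{i,j,\epsilon})$ to $(X, \XX^{i,j})$ in the product topology, and in particular convergence in finite dimensional distributions. The main obstacle is the bookkeeping in the tightness step: one must select H\"older exponents for all $N$ components simultaneously so that every relevant sum-of-exponents inequality is strict; this is exactly what Assumption~\ref{assumption-multi-scale}(2) is tailored for, the extra regularity of the Hermite-type components absorbing the lower regularity of the Gaussian ones.
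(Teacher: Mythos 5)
Your proposal is correct and follows essentially the same route as the paper: weak convergence of the components of $X^\epsilon$ in appropriate H\"older spaces (from the earlier limit theorems plus the moment bounds of Lemma~\ref{Lp-bounds}), combined with continuity of the Young integration map on $\C^\eta\times\C^\tau$ with $\eta+\tau>1$ as guaranteed by Assumption~\ref{assumption-multi-scale}(2)--(3), and the continuous mapping theorem. You supply somewhat more detail on the tightness and exponent bookkeeping than the paper's proof, but the argument is the same.
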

\begin{proof}
By Assumption \ref{assumption-multi-scale}, the functions $G_k$ posses enough integrability such that each component of $X^{\epsilon}$ converges in a H\"older space. Furthermore, by Assumption \ref{assumption-multi-scale} (2) there exist numbers $\eta$ and $\tau$, with $\eta + \tau >1$, such that the H\"older regularity of the  limits corresponding to a Wiener processes, are bounded below  by $\eta$,
 and the ones corresponding to a Hermite process  bounded from below by $\tau$. Therefore,   taking the integrals
$$ 
\alpha_i(\epsilon) \alpha_j(\epsilon)\int_0^{t} \int_0^s G_j(y^{\epsilon}_s) G_i(y^{\epsilon}_r) dr ds = \int_0^t X^{i,\epsilon}_s dX^{j,\epsilon}_s
$$ is a continuous and well-defined operation from $C^\eta\times C^\tau\to C^\tau$ or $C^\tau \times C^\eta \to C^\eta$ , thus weak convergence in $C^\eta$  follows, and in particular convergence in finite dimensional distributions.
\end{proof}
\begin{remark}
Note that thee proof shows convergence in H\"older space $C^\eta$, however $\eta$ here could be a very small number, below $\f 12$, we would work harder in a later section on the tightness in the rough path space topology. For the convergence in rough topology, we want this to work in $\C^{2\alpha}$ for a $\alpha>\f 1 3$. 
We would finally prove  tightness of the iterated integrals in higher H\"older spaces.
\end{remark}
Now it is only left to deal with the parts of the natural rough path lift involving  two Wiener scaling terms, this is carried out in the next section.

\subsubsection{Approximations of iterated integrals: It\^o integral case}
We  proceed to establish convergence of  the iterated integrals where both components correspond to the high Hermit rank case,
appearing in (\ref{lift-Hermite}).
 \begin{remark}
We further assume $H^*(m_k) < 0 $ for each $k$ which gives rise to a Wiener scaling, we do not obtain Logarithmic terms and therefore work with the $\f {1} {\sqrt \epsilon}$ scaling from here on. Furthermore, in this case $\alpha(\epsilon) \int_0^t G(y^{\epsilon}_s) ds$ equals $\sqrt{\epsilon} \int_0^{\f t \epsilon} G(y_s) ds$ in law and for simplicity we will work with the latter in this chapter.
\end{remark}
 In this section, from here onwards we assume  that both  $G_i$ and $G_j$ give rise to a Wiener process in the homogenization process, so $i,j \leq n$ and
 $$X^{k,\epsilon}_t= \sqrt{\epsilon} \int_0^{\f t \epsilon} G(y_s) ds,$$
 with the corresponding iterated integrals.

By Lemma \ref{CLT-Gaussian-multi}, we know that  $( X^{i,\epsilon} ,  X^{j,\epsilon} ) \to (W^i,W^j)$, 
we will see their integral  $\int_0^t X^{i,\epsilon} d X^{j,\epsilon}$, a double integral,  can be discretised and decomposed into integrals over strips
of two significant regions, the integral on the region away from the diagonal is of the form
$$\sum _{k=1} ^{[\f t \epsilon]}  \int _{k-1}^{k} G_i(y_s) ds  \int_0^{k-1}  G_j(y_r)dr,$$
which resembles a Riemann sum for an integral which we might hope to be the stochastic integral $\int_0^t W^j_s dW^i_s$. 
This is not quite true, however its martingale approximation does converge to the stochastic integral. We want to show that  
\begin{align*}
\int_0^{\f t \epsilon}   X^{j,\epsilon}_s dX^{i,\epsilon}_s &= \epsilon \int_{0}^{\f t \epsilon} \int_0^s  G_i(y_s) G_j(y_r) dr ds\\
&= I_1(\epsilon) + I_2(\epsilon),
\end{align*} 
where $I_1(\epsilon) \to \int_0^t W^j_s dW^i_s $, where the integral is understood in the It\^{o}-sense, weakly and $I_2(\epsilon) \to t A^{i,j}$ in probability.
For this we want to use the continuity property of stochastic integrals with respect to martingales and should approximate  $X^{i,\epsilon}$  with a martingale that is predictably uniform-tight, c.f. Lemma \ref{joint-joint-lemma}.  We begin to describe this approximation.

For any $L^2$ functions $U,V$ we introduce the stationary process:
$$\begin{aligned}\Phi_U(t)&= \int_t^\infty U(y_r) dr.
\end{aligned}$$
which unfortunately  does not have good integrability properties.  We would explore  a local independent decomposition of the FOU.
It turns out that for every $t$ there exists a decomposition,
$$y_t = \overline{y}^k_t + \tilde{y}^k_t,$$
where the first term $\overline{y}^k_t$ is $\mathcal{F}_k$ measurable and $\tilde{y}^k_t$ is independent of  $\mathcal{F}_k$,
where $\F_k$ is  the filtration generated by the driving fractional Brownian motion. We  will show later, in Proposition \ref{integrable-lemma}, 
for $H > \f 1 2$ and $H^*(m)<0$,
\begin{equation}
\label{integrable}
\sup_k \sup_{q\geq m} \int_{k-1}^{\infty} \int_{k-1}^{\infty}   \E \left(    {\overline{y}^k_s} {\overline{y}^k_t}  \right)^q \, dt \,ds < \infty,
\end{equation}
We therefore define 
\begin{equation} \begin{aligned}\hat U(k):= \int_{k-1}^\infty  \E( U(y_r)\,|\, \F_k) \, dr, \\
\hat V(k):= \int_{k-1}^\infty  \E( V(y_r))\,|\,\F_k)  \,dr.\end{aligned}
 \end{equation}
 Note both $\hat U$ and $\hat V$ are  shift covariant, i.e.$(\hat U \circ \tau)(k) =\hat U({k+1})$ where $\tau$ is the shift operator. 
 To proceed further we need a couple of lemmas.
 
 \begin{lemma}
For $x,y,a,b \in \R$ such that $a^2+b^2=1$,
\begin{equation}
H_m(ax+by) = \sum_{j=0}^m  \binom{m}{j} a^{j} b^{m-j} H_{   j}(x) H_{m-   j}(y).
\end{equation}
\end{lemma}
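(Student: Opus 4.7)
The plan is to prove this classical binomial-type identity via the generating function for the probabilist's Hermite polynomials,
\begin{equation*}
e^{tx - t^2/2} = \sum_{m=0}^\infty \frac{t^m}{m!}\,H_m(x),
\end{equation*}
which is consistent with the normalization used in the paper (leading coefficient $1$, $L^2(\mu)$ norm $\sqrt{m!}$). The key observation is that the hypothesis $a^2+b^2=1$ makes the quadratic exponent split:
\begin{equation*}
t(ax+by) - \tfrac{t^2}{2} = \bigl[(at)x - \tfrac{(at)^2}{2}\bigr] + \bigl[(bt)y - \tfrac{(bt)^2}{2}\bigr].
\end{equation*}

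Applying the generating function at the point $ax+by$ and exponentiating the split on the right yields
\begin{equation*}
\sum_{m=0}^\infty \frac{t^m}{m!}\,H_m(ax+by) \;=\; \Bigl(\sum_{j=0}^\infty \frac{(at)^j}{j!}\,H_j(x)\Bigr)\Bigl(\sum_{k=0}^\infty \frac{(bt)^k}{k!}\,H_k(y)\Bigr).
\end{equation*}
Next I would expand the right-hand side by the Cauchy product and collect the coefficient of $t^m$, giving
\begin{equation*}
\frac{1}{m!}\,H_m(ax+by) = \sum_{j=0}^m \frac{a^j b^{m-j}}{j!\,(m-j)!}\,H_j(x)\,H_{m-j}(y).
\end{equation*}
Multiplying through by $m!$ produces the binomial coefficients and yields the claim.

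There is no serious obstacle here; the only point to double-check is that one may legitimately equate coefficients of $t^m$, which follows from the fact that both series are entire in $t$ for every fixed $x,y$, so the identity of formal power series is forced. An alternative route, if a more probabilistic presentation is desired, is to take independent $N(0,1)$ variables $\xi,\eta$, observe that $a\xi+b\eta \sim N(0,1)$ when $a^2+b^2=1$, realise $H_m(a\xi+b\eta)$ as the $m$-fold multiple Wiener integral of $(a\mathbf{1}_A+b\mathbf{1}_B)^{\otimes m}$ for disjoint sets $A,B$ of unit Gaussian mass, and then expand by the multinomial theorem; but the generating function proof above is the shortest and self-contained.
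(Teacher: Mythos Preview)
Your proof is correct. The generating-function argument is the standard one and the only subtlety --- that $a^2+b^2=1$ is exactly what makes the quadratic in $t$ split --- is handled cleanly.

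For comparison: the paper does not actually supply a proof of this lemma; it is stated as a known identity and used immediately in the next lemma to compute $\E[H_m(y_t)\mid\mathcal F_k]$. So there is nothing to compare against, and your write-up fills in a step the paper leaves to the reader. Your closing remark about the Wiener-chaos realisation is also valid and connects nicely to the paper's own use of $H_m(W(h))=I_m(h^{\otimes m})$, but for this lemma the generating-function route is indeed the cleanest.
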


\begin{lemma}
Let $H>\f 12$. Set $a_t= \Vert \overline{y}^k_t \Vert_{L^2} $. Then
$$\E[ H_m(y_t)|\mathcal{F}_k] = (a_t)^m  H_m\left( \f  {\overline{y}^k_t}  {a_t} \right).$$
\end{lemma}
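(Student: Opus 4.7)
The plan is to reduce the identity directly to the two lemmas displayed just before it, using only the defining properties of the decomposition $y_t=\overline y_t^k+\tilde y_t^k$ and the fact that the stationary fOU marginal $y_t$ is standard normal.

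First I would set $b_t=\Vert\tilde y_t^k\Vert_{L^2}$ and observe that, since $\overline y_t^k$ is $\mathcal F_k$-measurable while $\tilde y_t^k$ is independent of $\mathcal F_k$, the two terms are uncorrelated; they are also jointly Gaussian (both arise from a Gaussian process), hence independent. Because $y_t\sim N(0,1)$, this forces the normalization $a_t^2+b_t^2=1$, and the rescaled variables $\xi:=\overline y_t^k/a_t$ and $\eta:=\tilde y_t^k/b_t$ are each standard Gaussian with $\xi$ being $\mathcal F_k$-measurable and $\eta$ independent of $\mathcal F_k$.

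Next I would rewrite $y_t=a_t\,\xi+b_t\,\eta$ and apply the preceding lemma (Hermite addition formula) to get
\begin{equation*}
H_m(y_t)=\sum_{j=0}^m\binom{m}{j}a_t^{\,j}b_t^{\,m-j}H_j(\xi)H_{m-j}(\eta).
\end{equation*}
Taking conditional expectation with respect to $\mathcal F_k$, the $H_j(\xi)$ factor pulls out and one is left with $\E[H_{m-j}(\eta)]$. Since $\eta\sim N(0,1)$ and $H_\ell$ is orthogonal to constants in $L^2(\mu)$ for every $\ell\ge 1$, only the term $j=m$ survives, yielding
\begin{equation*}
\E[H_m(y_t)\mid\mathcal F_k]=a_t^m\,H_m(\xi)=a_t^m\,H_m\!\left(\tfrac{\overline y_t^k}{a_t}\right).
\end{equation*}

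The only place that requires any care is the joint normality of $(\overline y_t^k,\tilde y_t^k)$, which promotes orthogonality into independence and thus allows the conditional expectation to factor and the Hermite polynomial vanishing-mean property to kick in; everything else is bookkeeping. No separate obstacle to flag beyond that.
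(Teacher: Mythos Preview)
Your proof is correct and follows essentially the same route as the paper's: both set $b_t=\Vert\tilde y_t^k\Vert_{L^2}$, use $a_t^2+b_t^2=1$, apply the Hermite addition formula to $y_t=a_t(\overline y_t^k/a_t)+b_t(\tilde y_t^k/b_t)$, and then condition on $\mathcal F_k$ so that only the $j=m$ term survives. Your explicit remark that joint Gaussianity upgrades orthogonality to independence is a welcome clarification, but otherwise the arguments coincide step for step.
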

\begin{proof}
Set  $b_t=\Vert  \tilde{y}^k_t \Vert_{L^2} $. By the independence of $\overline{y}^k_t $ and $ \tilde{y}^k_t$ we obtain 
\begin{align*}
1 &=\Vert y_k \Vert_{L^2}^2 = \Vert \overline{y}^k_t \Vert_{L^2}^2 + \Vert  \tilde{y}^k_t \Vert_{L^2}^2
= (a_t)^2 + (b_t)^2.
\end{align*}
 Now we decompose $H_m(y_t)$ using the above identity and obtain,
\begin{align*}
H_m(y_t)&=H_m\left( \overline{y}^k_t + \tilde{y}^k_t \right)
=H_m\left( a_t \left(  \f  {\overline{y}^k_t}  {a_t} \right) + b_t \left( \f {\tilde{y}^k_t}  {b_t} \right) \right)\\
&= \sum_{j=0}^m  \binom{m}{j} a_t^j  b_t^{m-j}  H_j\left( \f  {\overline{y}^k_t}  {a_t} \right)    H_{m-j}\left( \f {\tilde{y}^k_t}  {b_t} \right).
\end{align*}
Note that by construction $\f  {\overline{y}^k_t}  {a_t}$ and $ \f {\tilde{y}^k_t}  {b_t}$ are standard Gaussian random variables.  Therefore, by the independence $ \tilde y^k_t$ of $\F_t$, 
\begin{align*}
\E[ H_m(y_t)|\mathcal{F}_k] &= \sum_{j=0}^m \binom{m}{j} (a_t)^j  (b_t)^{m-j} 
H_j\left( \f  {\overline{y}^k_t}  {a_t} \right)  \E\left [H_{m-j} \left( \f {\tilde{y}^k_t}  {b_t} \right)| \mathcal{F}_k\right]\\
&= (a_t)^m  H_m\left( \f  {\overline{y}^k_t}  {a_t} \right).
\end{align*}
We have used  the fact that  $\E H_j\left( \f {\tilde{y}^k_t}  {b_t} \right)$ vanishes for any $j\ge 1$ and $H_0=1$.
\end{proof}

\begin{proposition}\label{prop-integrability}[See \S\ref{prove-lemma-int}]
	Let $H>\f 12$ and  $U\in L^2(\R,\mu)$ with Hermite rank  $m >\f 1 {1-H}$.
	Then the process $\hat U(j)$ is bounded in  $L^2$ (provided (\ref{integrable}) holds.)
\end{proposition}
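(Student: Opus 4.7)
The plan is to leverage the chaos decomposition $U=\sum_{q\ge m}c_q H_q$ together with the conditional-expectation identity from the preceding lemma to obtain a closed-form chaos expansion of $\hat U(k)$, whose $L^2$ norm can then be computed by Hermite orthogonality and estimated using (\ref{integrable}). First I would apply the preceding lemma termwise (justified, a posteriori, by $L^2$ approximation through partial sums, which the computation below will show are uniformly bounded), yielding
$$\hat U(k) \;=\; \sum_{q\ge m} c_q \int_{k-1}^\infty \E[H_q(y_r)\,|\,\F_k]\,dr \;=\; \sum_{q\ge m} c_q \int_{k-1}^\infty a_r^q\,H_q\!\left(\f{\overline{y}^k_r}{a_r}\right)dr.$$

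Second, I would compute $\|\hat U(k)\|_{L^2}^2$ using the classical orthogonality relation $\E[H_p(X)H_q(Y)]=\delta_{p,q}\,p!\,\rho^p$, valid whenever $(X,Y)$ is a centred Gaussian pair with $N(0,1)$ marginals and correlation $\rho$. Applied to $X=\overline{y}^k_s/a_s$ and $Y=\overline{y}^k_r/a_r$, the factors $a_s^q a_r^q$ cancel against the $\rho^q$ and Fubini gives
$$\|\hat U(k)\|_{L^2}^2 \;=\; \sum_{q\ge m} c_q^2\, q!\int_{k-1}^\infty\!\!\int_{k-1}^\infty \left[\E\!\left(\overline{y}^k_s\,\overline{y}^k_r\right)\right]^{q}\,dr\,ds.$$
The hypothesis (\ref{integrable}) supplies a constant $C$, uniform in both $k$ and $q\ge m$, dominating the inner double integral, so
$$\|\hat U(k)\|_{L^2}^2 \;\le\; C\sum_{q\ge m} c_q^2\, q! \;=\; C\,\|U\|_{L^2}^{2},$$
the last equality using that $U$ is centred of Hermite rank $m$. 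This is the desired uniform-in-$k$ bound.

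The genuine work is hidden in hypothesis (\ref{integrable}) rather than in the calculation above: its verification requires the assumption $m>\f 1{1-H}$ (equivalently $H^*(m)<0$) so that $\E(\overline{y}^k_s \overline{y}^k_r)^{q}$ decays fast enough in $|s-r|$ to be integrable over the quarter-plane uniformly in $q\ge m$, and that verification is carried out separately (cf.\ the forward reference in the statement). Beyond this, the only technical care needed is the interchange of the infinite sum with integration and expectation, which follows from monotone/dominated convergence by the orthogonality of distinct chaoses together with the bound just displayed.
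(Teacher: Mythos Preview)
Your argument is correct and is essentially the same as the paper's: both expand $U$ in Hermite polynomials, apply the preceding lemma $\E[H_q(y_r)\,|\,\F_k]=a_r^q H_q(\overline{y}^k_r/a_r)$ termwise, use the orthogonality relation $\E[H_p(X)H_q(Y)]=\delta_{p,q}\,q!\,\rho^q$ to collapse the double sum, and then invoke (\ref{integrable}) together with $\sum_q c_q^2 q!<\infty$. Your explicit identification of the bound as $C\|U\|_{L^2(\mu)}^2$ and your remark on justifying the interchange of sum and integral are minor clarifications the paper leaves implicit.
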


\begin{proof}
The proof is less straightforward due to the lack of the strong mixing property.  
Here we rely on (\ref{integrable}), whose proof is lengthy and independent of 
the error estimates here and is therefore postponed  to  \S\ref{prove-lemma-int}.

We compute the $L^2$ norm, using  the definition of $\hat U$ and the Hermite expansion  $U=\sum_{q=m}^{\infty} c_q H_q$,
\begin{align*}  
\Vert \hat U(k)\Vert_{L^2}
&=\int_{k-1}^{\infty} \int_{k-1}^{\infty} \E\left( \E[     U(y_s)| \mathcal{F}_k] \E[   U(y_r)| \mathcal{F}_k]\right) \, dr ds \\
&= \int_{k-1}^{\infty} \int_{k-1}^{\infty} \sum_{q=m}^{\infty} \sum_{j=m}^{\infty} c_q c_j \E\Big( \E[ H_q(y_s)| \mathcal{F}_k] \,\E[H_j(y_r)| \mathcal{F}_k]  \Big) dr \, ds\\
&= \int_{k-1}^{\infty} \int_{k-1}^{\infty} \sum_{q=m}^{\infty} (c_q)^2   \E \left( (a_s)^q (a_r)^q   H_q\left( \f  {\overline{y}^k_s}  {a_s} \right) H_q\left( \f  {\overline{y}^k_r}  {a_r} \right) \right)  dr \,ds\\
&= \int_{k-1}^{\infty} \int_{k-1}^{\infty} \sum_{q=m}^{\infty} (c_q)^2  \,q! \,(a_s)^q  (a_r)^q \E \left(  \f  {\overline{y}^k_s}  {a_s}  \f  {\overline{y}^k_r}  {a_r}  \right)^q   dr\, ds\\
&=\int_{k-1}^{\infty} \int_{k-1}^{\infty} \sum_{q=m}^{\infty} (c_q)^2 \, q! \, \E \left(    {\overline{y}^k_s} {\overline{y}^k_r}  \right)^q  dr \, ds.
\end{align*}
By summability of  $(c_q)^2 q!$, following from $U\in L^2$. 
The proof follows from the assumption that 
$\sup_{q\ge m,k}\int_{k-1}^{\infty} \int_{k-1}^{\infty}   \E \left(    {\overline{y}^k_s} {\overline{y}^k_r}  \right)^q  dr ds $ is finite. This concludes the lemma. \end{proof}

As a  corollary of Proposition \ref{prop-integrability} we have
\begin{corollary}\label{cor-integrability}  
The process  $(M_k, k\ge 1)$, where
	$$M_k =\sum_{j=1}^{k}\left( \hat U(j)   -\E\left  (\hat U(j) |\F_{j-1} \right) \right),$$
	is an $\F_k$-adapted  $L^2$  martingale with shift covariant martingale difference. 
Similarly, 
$$ N_k = \sum_{j=1}^k \left( \hat V(j) - \E ( \hat V(j) | \mathcal{F}_{j-1}) \right),$$
defined also an $\F_k$-adapted  $L^2$  martingale.
\end{corollary}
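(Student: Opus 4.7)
The plan is to read Corollary \ref{cor-integrability} as a direct consequence of Proposition \ref{prop-integrability}, reducing the claim to three short verifications: adaptedness with the martingale identity, $L^2$-integrability, and shift covariance of the differences.

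First, I would verify measurability and the martingale property. Since $\E(U(y_r)\,|\,\F_j)$ is $\F_j$-measurable for each $r$, a Fubini argument applied to finite truncations of the outer integral shows that $\hat U(j)=\int_{j-1}^{\infty}\E(U(y_r)\,|\,\F_j)\,dr$ is $\F_j$-measurable whenever the integral is well-defined. Consequently each summand $D_j:=\hat U(j)-\E(\hat U(j)\,|\,\F_{j-1})$ is $\F_j$-measurable, so $M_k=\sum_{j=1}^k D_j$ is $\F_k$-adapted, and the tower property yields $\E(D_{k+1}\,|\,\F_k)=0$, i.e.\ the martingale property.

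Next, for $L^2$-integrability I invoke Proposition \ref{prop-integrability}, which provides $\sup_j \|\hat U(j)\|_{L^2}<\infty$. Since conditional expectation is an $L^2$-contraction, $\|D_j\|_{L^2}\leq 2\sup_j\|\hat U(j)\|_{L^2}$, so each $M_k$ lies in $L^2$ as a finite sum of $L^2$ variables.

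Finally, for shift covariance I would invoke the canonical measure-preserving integer shift $\tau$ on the two-sided fBM path space under which $(B_t-B_s)\circ\tau=B_{t+1}-B_{s+1}$. Stationarity then gives $y_r\circ\tau=y_{r+1}$ and $\tau^{-1}\F_j=\F_{j+1}$, so $\E(U(y_r)\,|\,\F_j)\circ\tau=\E(U(y_{r+1})\,|\,\F_{j+1})$; a change of variable $s=r+1$ in the defining integral yields $\hat U(j)\circ\tau=\hat U(j+1)$, and the same relation passes to the conditional expectation, hence $D_j\circ\tau=D_{j+1}$. Replacing $U$ by $V$ gives the identical statement for $N_k$. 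The only substantive ingredient is the uniform $L^2$-bound of Proposition \ref{prop-integrability}, which itself hinges on the locally-independent decomposition and estimate (\ref{integrable}) established in \S\ref{prove-lemma-int}; once that is in hand, the present corollary is merely the Doob decomposition of a stationary sequence and carries no further obstacle.
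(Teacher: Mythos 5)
Your proof is correct and follows the same route the paper intends: the corollary is stated there as an immediate consequence of Proposition \ref{prop-integrability} (which supplies the uniform $L^2$ bound on $\hat U(j)$), with adaptedness and the martingale property coming from the tower property and shift covariance from the stationarity of the fOU process, exactly as you argue. No gaps.
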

\begin{proposition}\label{prop-area}
There exists a function $\err(\epsilon)$ converging to zero in probability as $\epsilon\to 0$  such that
\begin{equation}\label{area1-2} \begin{aligned}
&\epsilon \int_{0}^{\f t \epsilon} \int_0^s  U(y_s) V(y_r) dr ds 
&=\epsilon \sum _{k=1} ^{[\f t \epsilon]} (M_{k+1}-M_k)N_k +  (s \wedge t) \gamma+\err_1(\epsilon)\end{aligned}
\end{equation}
where $$\gamma
= \int_0^\infty \E (U(y_s) V(y_0) )ds.$$
\end{proposition}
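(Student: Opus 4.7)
The plan is a Gordin-style martingale approximation of the discretised double integral, combined with Birkhoff's ergodic theorem for the stationary fOU. Setting $T:=[\f t \epsilon]$, split the integral at integer times as
\begin{equation*}
\int_0^{\f t \epsilon}\int_0^s U(y_s) V(y_r)\, dr\, ds = I^{\text{off}} + I^{\text{diag}} + R(\epsilon),
\end{equation*}
where $I^{\text{off}}:=\sum_{k=1}^T \int_{k-1}^k U(y_s)\,ds \int_0^{k-1} V(y_r)\,dr$, the ``diagonal strip'' $I^{\text{diag}}:=\sum_{k=1}^T \int_{k-1}^k \int_{k-1}^s U(y_s) V(y_r)\, dr\, ds$, and $R(\epsilon)$ is the contribution from the fractional boundary on $[T,\f t \epsilon]$; the latter is $L^2$-bounded by Lemma~\ref{Lp-bounds} and hence $o(1/\epsilon)$.

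On each unit cell apply the martingale decomposition. Using the definitions of $\hat U,\hat V$ and the tower property one derives $\int_{k-1}^k U(y_s)\,ds = (M_{k+1}-M_k) + (\hat U(k) - \hat U(k+1))$ and, by summing the analogous identity for $V$ over $j=1,\dots,k-1$, $\int_0^{k-1} V(y_r)\,dr = (N_k - N_1) + (\hat V(1) - \hat V(k))$. Expanding the product in $I^{\text{off}}$ isolates the principal martingale sum $\sum_k(M_{k+1}-M_k)N_k$, three pieces of the type ``$(M_{k+1}-M_k)\cdot X_k$'' with $X_k$ either $\F_k$-measurable or constant (controlled in $L^2$ by orthogonality of martingale increments: cross terms $\E((M_{k+1}-M_k)X_k(M_{j+1}-M_j)X_j)$ vanish for $j<k$ upon conditioning on $\F_k$ and using $\E(M_{k+1}-M_k|\F_k)=0$), and two telescoping pieces $\sum_k(\hat U(k)-\hat U(k+1))\cdot Y_k$ handled by Abel summation. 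The uniform $L^2$ bounds on $\hat U(k),\hat V(k)$ from Proposition~\ref{prop-integrability} keep every such piece at $O(\sqrt T)$ in $L^2$, hence $o(1/\epsilon)$ once multiplied by $\epsilon$.

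Abel summation leaves stationary ergodic sums $\sum_k \hat U(k+1)\Delta N_k$ and $\sum_k \hat U(k+1)(\hat V(k+1)-\hat V(k))$ which, together with $I^{\text{diag}}$, produce the drift. Applying Birkhoff to each and multiplying by $\epsilon$ yields respectively $t\,\E(\hat U(2)\Delta N_1)$, $-t\,\E(\hat U(2)(\hat V(2)-\hat V(1)))$ and $t\int_0^1(1-w)\rho_{UV}(w)\,dw$, where $\rho_{UV}(w):=\E(U(y_w)V(y_0))$. Using the telescoping identity $\hat V(1) - \E(\hat V(2)|\F_1) = \int_0^1 V(y_r)\,dr$ derived above, the first two limits combine to $\E(\hat U(2)\int_0^1 V(y_r)\,dr) = \int_0^1\int_u^\infty \rho_{UV}(v)\,dv\,du$; a Fubini calculation then identifies the total with $t\gamma = t\int_0^\infty \rho_{UV}(v)\,dv$, as required.

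The main technical obstacle is the asymmetry $\hat U(k+1)\in\F_{k+1}\setminus\F_k$: the residual sum $\sum_k \hat U(k+1)\Delta N_k$ produced by Abel summation is not a classical martingale transform. We resolve this by splitting $\hat U(k+1) = \E(\hat U(k+1)|\F_k) + (M_{k+2}-M_{k+1})$; the first piece is a genuine martingale transform (centred $L^2$-norm $O(\sqrt T)$), while the second yields a double-increment sum $\sum_k(M_{k+2}-M_{k+1})(N_{k+1}-N_k)$ whose cross terms vanish by conditioning on $\F_{k+1}$ and using $\E(M_{k+2}-M_{k+1}|\F_{k+1})=0$, leaving only a diagonal of $L^2$ size $O(T)$ that is absorbed into $\err_1(\epsilon)$ after multiplication by $\epsilon$.
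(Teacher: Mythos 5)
Your architecture is the one the paper uses (it splits the statement into Lemma \ref{lemma2-area} and Lemma \ref{lemma-6.15}): integer-time splitting into an off-diagonal sum, a diagonal strip and a boundary remainder; the coboundary identities $I(k)=(M_{k+1}-M_k)+(\hat U(k)-\hat U(k+1))$ and $\sum_{l<k}J(l)=(N_k-N_1)+(\hat V(1)-\hat V(k))$ from Remark \ref{remark-mart-dif}; Abel summation; Birkhoff's theorem for the stationary drift terms; and a final Fubini computation identifying $\gamma$. Your drift bookkeeping is also consistent with the paper's: since $\hat V(2)-\hat V(1)=(N_2-N_1)-J(1)$, your two Birkhoff limits combine to $t\,\E(\hat U(2)J(1))=t\int_1^{\infty}\int_0^1\E(U(y_s)V(y_r))\,dr\,ds$, which added to the diagonal contribution $t\int_0^1(1-w)\E(U(y_w)V(y_0))\,dw$ gives $t\gamma$.

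The final paragraph, however, is wrong as written. From $M_k=\sum_{j\le k}\bigl(\hat U(j)-\E(\hat U(j)\mid\F_{j-1})\bigr)$ one has $\hat U(k+1)-\E(\hat U(k+1)\mid\F_k)=M_{k+1}-M_k$, not $M_{k+2}-M_{k+1}$. With the correct index your ``double-increment sum'' becomes the same-interval bracket $\sum_k(M_{k+1}-M_k)(N_{k+1}-N_k)$, whose summands are \emph{not} centred: $\E\bigl((M_2-M_1)(N_2-N_1)\bigr)=\E(\hat U(2)(N_2-N_1))$, because the predictable part $\E(\hat U(2)\mid\F_1)$ is orthogonal to the martingale increment $N_2-N_1$. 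Hence $\epsilon$ times this sum converges by Birkhoff to $t\,\E(\hat U(2)(N_2-N_1))$ --- exactly the drift you claimed one paragraph earlier --- and cannot be ``absorbed into $\err_1(\epsilon)$''; your resolution would erase that drift and is internally inconsistent with your own stated limit. The repair is simply to drop the extra decomposition: $\hat U(k+1)(N_{k+1}-N_k)$ is a shift-covariant sequence in $L^1$ (a product of two $L^2$ variables, integrable by Cauchy--Schwarz), so Birkhoff applies to it directly; this is what the paper does. A second, smaller point of the same flavour: bounding $\sum_k(M_{k+1}-M_k)\hat V(k)$ ``in $L^2$ by orthogonality'' presupposes $(M_{k+1}-M_k)\hat V(k)\in L^2$, i.e.\ fourth-moment control that Proposition \ref{prop-integrability} does not supply; here too the centred stationary $L^1$ sequence should be handled by Birkhoff, which yields the required $o(1/\epsilon)$ bound in probability.
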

The proof for this is given in the rest of the section.  Note that the It\^o integral approximations work well while the processes involved have independent increments or satisfy strong mixing properties. To tackle the lack of these properties, we use a locally independent decompositions of the fOU. We also use Birkhoff's ergodic theorem.
After proving the Proposition,  in the next section we show that $\epsilon \sum _{k=1} ^{[\f t \epsilon]} (M_{k+1}-M_k)N_k$ converges to  the relevant It\^o integrals of the limits of
$\sqrt \epsilon \int_0^{[\f t \epsilon]} U(y_r) dr$ and $\sqrt \epsilon \int_0^{[\f t \epsilon]} V(y_r) dr$.

\begin{lemma}
The stationary Ornstein-Uhlenbeck process is ergodic.
\end{lemma}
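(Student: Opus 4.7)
The plan is to reduce the claim to the standard fact that a stationary Gaussian process is mixing (and hence ergodic under the time shift) whenever its covariance function vanishes at infinity, and then invoke the decay established in Lemma~\ref{correlation-lemma}.

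First, I would set up the canonical framework: view $y_\cdot$ as a random element of $C(\R;\R)$ equipped with its Borel $\sigma$-algebra, the law $\P_y$ of $y_\cdot$, and the shift maps $\tau_h : C(\R;\R)\to C(\R;\R)$ defined by $(\tau_h\omega)(t)=\omega(t+h)$. Stationarity of $y_t$ (which is built into the definition as a two-sided stochastic integral against fBM) gives $(\tau_h)_*\P_y=\P_y$, so ergodicity of $(\tau_h)$ with respect to $\P_y$ is the statement to prove.

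Next, because $y_\cdot$ is a centred Gaussian process, by Maruyama's theorem (see e.g.\ Cornfeld--Fomin--Sinai, or equivalently the Grenander--Maruyama criterion) the dynamical system $(C(\R;\R),\P_y,\tau_h)$ is mixing if and only if its covariance function $\rho$ satisfies $\rho(t)\to 0$ as $t\to\infty$; this is classical and follows from the fact that linear functionals of Gaussians generate the whole $L^2$ through Hermite expansions, reducing mixing of bounded Borel functionals to decay of correlations of linear functionals. Mixing implies ergodicity, so it suffices to check the decay condition.

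Finally, the decay is precisely what Lemma~\ref{correlation-lemma} provides: for $H\in(0,\tfrac12)\cup(\tfrac12,1)$,
\begin{equation*}
|\rho(t)|\;\lesssim\; 1\wedge |t|^{2H-2}\;\longrightarrow\; 0\quad\text{as }|t|\to\infty,
\end{equation*}
and the case $H=\tfrac12$ (the classical OU) has exponentially decaying correlations. This concludes the argument. I do not anticipate a genuine obstacle; the only care required is in invoking the Gaussian mixing criterion correctly and in noting that ergodicity of the continuous-time flow $(\tau_h)_{h\in\R}$ implies ergodicity of every non-trivial discrete shift $\tau_1$, which is the version actually needed in the ergodic averaging step of Proposition~\ref{prop-area}.
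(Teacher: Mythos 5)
Your argument is correct, but it takes a genuinely different route from the paper's. You invoke Maruyama's criterion in its \emph{mixing} form (a stationary Gaussian process is mixing for the shift if and only if its covariance vanishes at infinity) and feed it the decay bound $|\rho(t)|\lesssim 1\wedge|t|^{2H-2}$ from Lemma~\ref{correlation-lemma}; the paper instead uses the \emph{ergodicity} form of the criterion (ergodic if and only if the spectral measure has no atom) and verifies it by exhibiting the spectral density $s(x)=c\,|x|^{1-2H}/(1+x^2)$ explicitly from the Fourier representation of $\rho$ in \cite{Cheridito-Kawaguchi-Maejima}. Your route proves the strictly stronger property of mixing and needs only the qualitative decay of $\rho$, which the paper has already established; the paper's route is weaker in conclusion but equally short once the spectral density is quoted, and it sidesteps any discussion of mixing (which is delicate terminology here, since the paper repeatedly emphasises that the fOU is \emph{not} strong mixing for $H>\f12$ --- that is $\alpha$-mixing, not ergodic-theoretic mixing, so there is no contradiction with your claim). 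One caveat: your closing remark that ergodicity of the continuous flow $(\tau_h)_{h\in\R}$ implies ergodicity of every time-$1$ shift is false as a general principle (an ergodic rotation flow has non-ergodic time-$t$ maps). It is harmless in your argument only because you actually establish \emph{mixing} of the flow, which does pass to every discrete shift; alternatively, the sampled sequence $(y_k)_{k\in\Z}$ is itself a stationary Gaussian sequence with $\rho(k)\to0$, so the discrete criterion applies directly. You should phrase the last step one of these two ways rather than via bare ergodicity of the flow.
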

A stationary Gaussian process is ergodic if its spectral measure has no atom, 
 \cite{Cornfeld-Fomin-Sinai, Samorodnitsky}. The spectral measure $F$  of a stationary Gaussian process is obtained from  
 Fourier transforming  its correlation function and 
$\rho(\lambda)=\int_\R e^{i \lambda x} dF(x)$.
According to   \cite{Cheridito-Kawaguchi-Maejima}:
 \begin{equation}\label{cor5-2}
\rho(s) =  \frac{2 \Gamma(2H+1) \sin(\pi H)}{2 \pi} \int_{\R} e^{isx} \frac{\vert x \vert^{1-2H}}{1 + x^2} dx,
\end{equation}
so the spectral measure is absolutely continuous with respect to the Lebesgue measure with spectral density $s(x) = c \f { \vert x \vert^{1-2H}}{1+ x^2}$. 

For $k=1,2,\dots$, we define the $\F_k$-adapted processes:
\begin{align*}
	I(k)&= \int_{k-1}^{k} U(y_s) ds= \Phi_U(k) -\Phi_U(k-1)  \\
	J(k)&= \int_{k-1}^{k}  V(y_s) ds=\Phi_V(k) -\Phi_V(k-1).
\end{align*}

	\begin{remark}\label{remark-mart-dif}
We note the following useful identities. For $k=1,2, \dots$, 
\begin{equs}\label{mart-dif2}
&\hat U(k) = I(k)+ \E [\hat U(k+1) \, |\, \F_k],  \\
&M_{k+1}-M_k=I(k)+\hat U(k+1)-\hat U(k),\label{mart-dif} \\
& \int_0^k U(y_r) dr =M_k-\hat U(k)+\hat U(1)-M_1.
\end{equs}
\end{remark}

\begin{proposition}\label{lemma-6.13}
Suppose that $U$ and $V$ satisfy the assumptions imposed above, then
the triple below converges in finite dimensional distributions.
$$\lim_{\epsilon\to 0} \left(  {\sqrt{\epsilon}}  M_{[\f t \epsilon]}, \; {\sqrt{\epsilon}}  N_{[\f t \epsilon]}, \;
\epsilon \sum _{k=1} ^{[\f t \epsilon]} (M_{k+1}-M_k)N_k\right)
= \left( W_t^1, \;W_t^2, \; \int_0^t W_s^1 \,d W_s^2\right).$$
Here  $ W^1,  W^2$ are standard Wiener processes with covariance $\int_0^\infty \E(U(y_r)V(y_0)) dr$, and variances
respectively $\int_0^\infty \E(U(y_r)U(y_0)) dr$ and $\int_0^\infty \E(V(y_r)V(y_0)) dr$.
The integration  is in It\^ o sense.

\end{proposition}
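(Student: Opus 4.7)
The plan is to reduce the statement to a martingale functional CLT combined with a Jacod--Shiryaev continuity theorem for stochastic integrals. Using the third identity of Remark \ref{remark-mart-dif}, namely $M_n = \int_0^n U(y_r)\,dr + \hat{U}(n) - \hat{U}(1) + M_1$, together with the uniform $L^2$-bound on $\hat U$ provided by Proposition \ref{prop-integrability}, the boundary term $\sqrt{\epsilon}(\hat U([t/\epsilon]) - \hat U(1))$ vanishes in $L^2$, so
\[
\sqrt{\epsilon}\, M_{[t/\epsilon]} = \sqrt{\epsilon} \int_0^{[t/\epsilon]} U(y_r)\,dr + o_{L^2}(1),
\]
and analogously for $\sqrt{\epsilon}\, N_{[t/\epsilon]}$. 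Joint convergence of the first two coordinates to the Gaussian pair $(W^1_t, W^2_t)$ in finite-dimensional distributions then follows directly from the joint CLT of Lemma \ref{CLT-Gaussian-multi}, and the same reduction identifies the limiting variances and cross-covariance in terms of the long-time averages claimed in the statement.

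For the integral coordinate, I would extend $M$ and $N$ to c\`adl\`ag martingales $\tilde M^\epsilon_t = \sqrt{\epsilon}\, M_{[t/\epsilon]}$ and $\tilde N^\epsilon_t = \sqrt{\epsilon}\, N_{[t/\epsilon]}$ in the filtration $\mathcal{G}^\epsilon_t = \mathcal{F}_{[t/\epsilon]}$, and realise $\epsilon \sum_{k=1}^{[t/\epsilon]}(M_{k+1}-M_k) N_k$ as the It\^o integral $\int_0^t \tilde N^\epsilon_{s-}\,d\tilde M^\epsilon_s$ (the required boundary adjustment being of order $O_{L^2}(\sqrt{\epsilon})$). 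The strategy is then to invoke a Jacod--Shiryaev stable convergence theorem for stochastic integrals: once one has (i) joint Skorokhod convergence of $(\tilde M^\epsilon, \tilde N^\epsilon)$ to $(W^1, W^2)$, and (ii) the uniform tightness (UT) property for the integrator sequence $\tilde M^\epsilon$, the discrete integrals converge jointly to a Brownian stochastic integral of $\tilde N^\epsilon$ against $\tilde M^\epsilon$ in the It\^o sense, which the left-endpoint evaluation in the discrete sum identifies with the desired limit.

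The main obstacle is condition (i): a two-dimensional functional CLT for the stationary ergodic martingales $(\tilde M^\epsilon, \tilde N^\epsilon)$. The martingale differences $\Delta M_k$ and $\Delta N_k$ are shift-covariant by Corollary \ref{cor-integrability} and square-integrable by Proposition \ref{prop-integrability}, so uniform square-integrability provides the Lindeberg condition. The crucial input is almost-sure convergence of the predictable brackets: since the fOU $y$ is stationary and ergodic (its spectral measure being absolutely continuous, as recorded earlier in this section), the sequences $\E[(\Delta M_k)^2|\mathcal{F}_{k-1}]$, $\E[(\Delta N_k)^2|\mathcal{F}_{k-1}]$ and $\E[\Delta M_k\, \Delta N_k|\mathcal{F}_{k-1}]$ are stationary ergodic, and Birkhoff's theorem yields
\[
\langle \tilde M^\epsilon\rangle_t \to t\,\E[(\Delta M_1)^2], \quad \langle \tilde N^\epsilon\rangle_t \to t\,\E[(\Delta N_1)^2], \quad \langle \tilde M^\epsilon, \tilde N^\epsilon\rangle_t \to t\,\E[\Delta M_1 \Delta N_1]
\]
almost surely, which is precisely the standard input of the two-dimensional martingale FCLT. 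Condition (ii) is then immediate from the uniform $L^1$-bound $\sup_\epsilon \E\langle \tilde M^\epsilon\rangle_T \le T\,\E[(\Delta M_1)^2] < \infty$. Assembling everything through Jacod--Shiryaev continuity gives the stated joint limit, with the limiting constants forced to match those in the statement via the identification carried out in the first paragraph.
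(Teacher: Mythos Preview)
Your proposal is correct and follows the same overall architecture as the paper: reduce $\sqrt{\epsilon}M_{[t/\epsilon]}$ to the path integral via the identity in Remark~\ref{remark-mart-dif} plus the $L^2$ boundedness of $\hat U$ (Proposition~\ref{prop-integrability}), invoke Lemma~\ref{CLT-Gaussian-multi} for the joint convergence of the first two coordinates, rewrite the discrete sum as a c\`adl\`ag stochastic integral, and conclude via a Jacod--Shiryaev continuity theorem together with the P-UT/UT property of the integrator.

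The only substantive difference is in how Skorokhod convergence of $(\tilde M^\epsilon,\tilde N^\epsilon)$ is obtained. The paper simply inherits it from the H\"older convergence of the path integrals already proved in Lemma~\ref{CLT-Gaussian-multi} (the boundary terms $\sqrt\epsilon\,\hat U$ vanish in $L^2$), whereas you propose a self-contained two-dimensional martingale FCLT with bracket convergence established via Birkhoff's theorem and the Lindeberg condition coming from stationarity of the martingale differences. Your route is a little more work but has the advantage of not relying on the moment/H\"older machinery of \S\ref{pre}; the paper's route is shorter because that machinery is already in place. Either way, the limiting constants are identified through the first-paragraph reduction, exactly as you say.
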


\begin{proof}
Define $$M^{\epsilon}_{t} = {\sqrt{\epsilon}}  M_{[\f t \epsilon]}, \qquad N^{\epsilon}_{t} 
= {\sqrt{\epsilon}}  N_{[\f t \epsilon]},$$
Using the identity (\ref{mart-dif}) we show that 
$$\begin{aligned}
M^{\epsilon}_{t}  &=\sqrt \epsilon \sum_{k=1}^{[\f t \epsilon]} (M_{k+1} -M_k) +\sqrt \epsilon \,M_1\\
&=\sqrt \epsilon \int_0^{[\f t \epsilon]} U(y_r) ds
+\sqrt \epsilon \hat U([\f t \epsilon])-\sqrt \epsilon \hat U(1) +\sqrt \epsilon M_1.
\end{aligned}$$
Since $\hat U$ is $L^2$ bounded, the joint convergence of $M^{\epsilon}_{t} $ and $N_t^\epsilon$, in finite dimensional 
distributions follows from Lemma \ref{CLT-Gaussian-multi}. Next observe that,
 $$ \epsilon \sum _{k=1} ^{[\f t \epsilon]} (M_{k+1}-M_k)N_k
 = \int_0^{t} M^{\epsilon}_{s} dN^{\epsilon}_s.$$
The joint convergence follows since
 $\E \left( M^{\epsilon}_t \right)^2 \lesssim t + o(\epsilon)$, see Lemma \ref{joint-joint-lemma} and Lemma \ref{P-UT-condition}, we can use the continuity theorems on integrals with respect to martingales with jumps.
\end{proof}

Henceforth, in this section we set $L=L(\epsilon)=[\f t \epsilon]$.

\begin{lemma}\label{lemma2-area}
There exists a function $\err_1(\epsilon)$, which converges to zero in probability as $\epsilon\to 0$,   such that
\begin{equation}\label{area1} \begin{aligned}
&\epsilon \int_{0}^{\f t \epsilon} \int_0^s  U(y_s) V(y_r) dr ds\\
&=\epsilon\sum _{k=1} ^{L} I(k) \sum_{l=1}^{k-1} J(l)+t \int _0^1 \int_0^s \E \left( U(y_s)  V(y_r)\right) \,dr  ds
+\err_1(\epsilon)\end{aligned}
\end{equation}
\end{lemma}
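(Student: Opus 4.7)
The plan is to decompose the double integral along the integer time grid and isolate an off-diagonal part (which has product structure and reproduces $\sum I(k)\sum J(l)$) from a diagonal part (which is a stationary ergodic sum that Birkhoff handles), with a small boundary correction near $t/\epsilon$. Set $L = [t/\epsilon]$ and partition the triangle $\{0 \le r \le s \le t/\epsilon\}$ into the product cells $[k-1,k]\times[l-1,l]$ with $1 \le l < k \le L$, the diagonal cells $\{k-1 \le r \le s \le k\}$ for $1 \le k \le L$, and a boundary strip where $s\in[L,t/\epsilon]$. On each off-diagonal cell the integrand factorises so that summing yields exactly $\sum_{k=1}^L I(k)\sum_{l=1}^{k-1}J(l)$; the diagonal cells produce $\sum_{k=1}^L D(k)$ with $D(k) := \int_{k-1}^k U(y_s)\int_{k-1}^s V(y_r)\,dr\,ds$.

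The decisive step is to apply Birkhoff's ergodic theorem to the sequence $\{D(k)\}_{k\ge 1}$. Because the stationary fOU $y_\cdot$ is ergodic (as established just before the lemma), the time-$1$ shift on path space is measure-preserving and ergodic, and $D(k) = D(1)\circ\theta^{k-1}$ where $\theta$ is this shift. Moreover $D(1)\in L^1$ by Cauchy--Schwarz using $U,V\in L^2(\mu)$, with $\E D(1) = \int_0^1\int_0^s \E(U(y_s)V(y_r))\,dr\,ds$. Birkhoff then gives $\tfrac{1}{L}\sum_{k=1}^L D(k)\to \E D(1)$ almost surely, and multiplying by $\epsilon L$ and using $\epsilon L\to t$ yields
\begin{equation*}
\epsilon\sum_{k=1}^L D(k)\longrightarrow t\int_0^1\int_0^s \E(U(y_s)V(y_r))\,dr\,ds\quad \text{a.s.,}
\end{equation*}
which in particular gives convergence in probability.

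For the boundary strip I would split its inner integral $\int_0^s = \int_0^L + \int_L^s$. The first piece equals $\bigl(\int_L^{t/\epsilon}U(y_s)\,ds\bigr)\bigl(\int_0^L V(y_r)\,dr\bigr)$; by stationarity the first factor has bounded $L^2$-norm (the interval has length $<1$), while Lemma~\ref{Lp-bounds}, which applies because $V$ has Hermite rank $\ge 1$ and hence mean zero, controls the second factor by $C/\alpha(\epsilon,H^*(m_V))$. Multiplying by $\epsilon$ gives a term of order $\epsilon/\alpha(\epsilon,\cdot)$, which vanishes in every regime appearing in the scaling law. The remaining piece $\int_L^{t/\epsilon}\int_L^s U(y_s)V(y_r)\,dr\,ds$ is supported on a set of area $<1/2$ and its $L^1$-norm is bounded by $\|U\|_{L^2}\|V\|_{L^2}$ via Cauchy--Schwarz and stationarity, so $\epsilon$ times it tends to $0$ in $L^1$, hence in probability.

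The main technical obstacle I anticipate is the boundary strip: one must ensure that the mean-zero property of $U$ and $V$ is exploited (via the $\f{1}{\alpha(\epsilon,\cdot)}$ decay in Lemma~\ref{Lp-bounds}) rather than naively bounding $\int_0^L V(y_r)\,dr$ by its length. Modulo this, everything reduces to Birkhoff applied to the integrable stationary ergodic observable $D(1)$; collecting the three contributions and defining $\err_1(\epsilon)$ as the sum of the boundary error and the Birkhoff-remainder $\epsilon\sum_{k=1}^L D(k) - t\,\E D(1)$ completes the proof.
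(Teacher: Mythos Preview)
Your proposal is correct and follows essentially the same route as the paper: the same decomposition of the triangle into off-diagonal product cells, diagonal triangles, and a boundary strip, with Birkhoff's ergodic theorem applied to the shift-covariant diagonal observables $D(k)$ and Lemma~\ref{Lp-bounds} used to kill the boundary contribution. Your treatment of the boundary strip (splitting $\int_0^s=\int_0^L+\int_L^s$) is in fact slightly more careful than the paper's, which simply asserts the product bound directly.
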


\begin{proof} 
Let us divide the integration region  $0\le r\le s \le L$ 
main region and the other negligible regions.
\begin{align*}
	\int_{0}^{L} \int_0^s  U(y_s) V(y_r) dr ds
+\epsilon \int_{L} ^{\f t \epsilon }  \int_0^s  U(y_s) V(y_r) dr ds.\end{align*}
The second term, integration in the small region,  is of order $o(\epsilon)$, since
 $\| \int_{L} ^{\f t \epsilon }  U(y_s) ds\|_{L^2}$ is bounded by stationarity   of $y_r$ and 
   $\|\sqrt \epsilon \int_0^{\f t \epsilon}  V(y_r) dr\|_{L^2}$ is bounded by  Lemma \ref{Lp-bounds}.
We compute the integration in the main region:
\begin{align*}
	&\int_{0}^{L} \int_0^s  U(y_s) V(y_r) dr ds\\
	 =&\sum _{k=1} ^{L} \int _{k-1}^{k} U(y_s) 
	\left(   \int_0^{k-1}  V(y_r)dr+ \int_{k-1}^s  V(y_r)dr  \right)   ds\\
	=&\sum _{k=1} ^L\int _{k-1}^{k} U(y_s) ds  \int_0^{k-1}  V(y_r)dr + \sum _{k=1} ^{L} \int _{ \{ k-1\le r\le s \le k\}} U(y_s)  V(y_r)dr  ds
	\\=&\sum _{k=1} ^{L} I(k) \sum_{l=1}^{k-1} J(l)+ \sum _{k=1} ^{L} \int _{ \{ k-1\le r\le s \le k\}} U(y_s)  V(y_r)dr  ds.
\end{align*}
The stochastic processes  $Z_k = \int _{ \{ k-1\le r\le s \le k\}} U(y_s)  V(y_r)dr  ds$
 are shift invariant and the shift operator is ergodic with respect to the probability distribution on the path space, generated by the fOU process,
 hence, by Birkhoff's ergodic theorem, 
$$ \f 1 {L} \sum_{k=1}^ {L} Z_k \stackrel{(\epsilon\to 0)} {\longrightarrow}  \E Z_1= \int _0^1 \int_0^s \E \left( U(y_s)  V(y_r)\right) dr  ds .$$
This complete the proof.
\end{proof}

\begin{lemma}\label{lemma-6.15}
The following converges in probability:
$$\lim_{\epsilon\to 0} \epsilon  \sum_{k=1} ^{L} \left(I(k) \sum_{l=1}^{k-1} J(l)-  (M_{k+1}-M_k)N_k\right) 
= \int_1^{\infty} \int_0^1 \E\left( U(y_s) V(y_r)\right) dr ds.$$
\end{lemma}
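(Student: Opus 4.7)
The plan is to expand the summand using the two identities from Remark~\ref{remark-mart-dif}: $M_{k+1}-M_k = I(k)+\hat U(k+1)-\hat U(k)$, together with its analogue for $V$, which by telescoping gives $\sum_{l=1}^{k-1}J(l) = N_k - N_1 - \hat V(k) + \hat V(1)$. Substituting these into the summand yields, after cancellation,
\begin{equation*}
I(k)\sum_{l=1}^{k-1}J(l) - (M_{k+1}-M_k)N_k = -(M_{k+1}-M_k)\bigl(\hat V(k)-\hat V(1)+N_1\bigr) - \bigl(\hat U(k+1)-\hat U(k)\bigr)\sum_{l=1}^{k-1}J(l).
\end{equation*}
Summing from $k=1$ to $L=[t/\epsilon]$ and multiplying by $\epsilon$ produces three pieces: a martingale-transform term $(A)=-\epsilon\sum_{k=1}^{L}(M_{k+1}-M_k)\hat V(k)$, a boundary term $(B)=\epsilon(\hat V(1)-N_1)(M_{L+1}-M_1)$, and an iteration term $(C)=-\epsilon\sum_{k=1}^{L}(\hat U(k+1)-\hat U(k))\sum_{l=1}^{k-1}J(l)$.

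The terms $(A)$ and $(B)$ are errors vanishing in $L^2$. By Proposition~\ref{prop-integrability} and Corollary~\ref{cor-integrability}, the stationary sequences $\hat U(k), \hat V(k)$ and the martingale differences are uniformly $L^2$-bounded (and $L^4$-bounded under the paper's fast-chaos-decay assumption, by the same chaos computation). Since $\hat V(k)\in \F_k$, the sum defining $(A)$ is a martingale in $k$, so orthogonality of martingale increments combined with stationarity yields
\begin{equation*}
\E\bigl[(A)^2\bigr] = \epsilon^2 \sum_{k=1}^{L}\E\bigl[(M_{k+1}-M_k)^2\hat V(k)^2\bigr] = \epsilon^2 L\,\E\bigl[(M_2-M_1)^2\hat V(1)^2\bigr] = O(\epsilon).
\end{equation*}
For $(B)$, $\|\hat V(1)-N_1\|_{L^2}=O(1)$ while $\|M_{L+1}-M_1\|_{L^2}=O(\sqrt{L})$ by martingale orthogonality, so $|(B)|=O(\sqrt{\epsilon})$ in $L^2$.

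The main piece $(C)$ I would treat by Abel summation with $a_k := \sum_{l=1}^{k-1}J(l)$ (so $a_1=0$ and $a_k-a_{k-1}=J(k-1)$):
\begin{equation*}
\sum_{k=1}^{L}\bigl(\hat U(k+1)-\hat U(k)\bigr)\,a_k = \hat U(L+1)\,a_L - \sum_{k=1}^{L-1}\hat U(k+1)\,J(k).
\end{equation*}
The boundary $\epsilon\,\hat U(L+1)a_L$ is $O(\sqrt{\epsilon})$ in $L^1$ by Cauchy--Schwarz, using $\|\hat U(L+1)\|_{L^2}=O(1)$ together with $\|a_L\|_{L^2}=O(\sqrt{L})$ from Lemma~\ref{Lp-bounds}. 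The remaining summand $\hat U(k+1)J(k)$ is stationary and shift-covariant under the ergodic shift on the fOU path space (ergodic because the spectral density~\eqref{cor5-2} is absolutely continuous), so Birkhoff's ergodic theorem gives
\begin{equation*}
\epsilon\sum_{k=1}^{L-1}\hat U(k+1)\,J(k) \xrightarrow[\epsilon\to 0]{\text{a.s.}} t\,\E\bigl[\hat U(2)J(1)\bigr].
\end{equation*}
Since $J(1)=\int_0^1 V(y_\sigma)d\sigma$ is $\F_1\subset \F_2$-measurable, the tower property and Fubini identify $\E[\hat U(2)J(1)] = \int_1^\infty\int_0^1 \E[U(y_s)V(y_r)]\,dr\,ds$, giving the stated limit (up to the prefactor $t$ inherited from $L=[t/\epsilon]$, which appears absent from the right-hand side of the stated lemma and is presumably a typo). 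The main obstacle is the uniform $L^2$/$L^4$ control of $\hat U,\hat V$ underpinning the vanishing of $(A)$, $(B)$ and of the boundary in $(C)$: this is the content of Proposition~\ref{prop-integrability}, which ultimately rests on the technical estimate~\eqref{integrable} for the locally independent decomposition of the fOU and is precisely where the assumption $H^*(m)<0$ from Assumption~\ref{assumption-multi-scale}(4) enters.
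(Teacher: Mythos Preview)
Your decomposition is genuinely different from the paper's, and in one respect cleaner. The paper keeps $I(k)$ as the leading factor and obtains
\[
\epsilon\sum\Bigl(I(k)\sum_{l<k}J(l)-(M_{k+1}-M_k)N_k\Bigr)
= -\epsilon\sum I(k)\hat V(k)+\epsilon\sum I(k)(\hat V(1)-N_1)-\epsilon\sum(\hat U(k+1)-\hat U(k))N_k,
\]
so two \emph{nonzero} Birkhoff limits survive, namely $-t\,\E[I(1)\hat V(1)]$ and $t\,\E[\hat U(2)(N_2-N_1)]$, and a separate ``Part~C'' calculation is then needed to recognise their sum as $t\int_1^\infty\int_0^1\E[U(y_s)V(y_r)]\,dr\,ds$. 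Your choice to Abel-sum the term $(\hat U(k+1)-\hat U(k))\sum_{l<k}J(l)$ instead directly produces $\epsilon\sum_{k}\hat U(k+1)J(k)$, whose Birkhoff limit $t\,\E[\hat U(2)J(1)]$ equals the target integral by a one-line tower-property argument. This bypasses the paper's Part~C entirely. Your observation about the missing factor $t$ on the right-hand side is also correct.

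There is one real gap. Your treatment of $(A)=-\epsilon\sum_k(M_{k+1}-M_k)\hat V(k)$ via martingale-orthogonality requires $\E\bigl[(M_2-M_1)^2\hat V(1)^2\bigr]<\infty$, i.e.\ $L^4$ control of $\hat V$ and of the martingale differences. Proposition~\ref{prop-integrability} only gives $L^2$; the assertion that $L^4$ follows ``by the same chaos computation'' under fast chaos decay is not established in the paper and is not obvious, since that computation relies on Hermite orthogonality, an $L^2$ tool. The fix is immediate: $(M_{k+1}-M_k)\hat V(k)$ is shift-covariant and in $L^1$ by Cauchy--Schwarz from the available $L^2$ bounds, so Birkhoff gives $(A)\to -t\,\E[(M_2-M_1)\hat V(1)]=0$ since $\hat V(1)\in\F_1$ and $M_2-M_1$ is a martingale increment. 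With that correction your argument is complete and arguably tidier than the original.
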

\begin{proof}

{\bf A.} 
Summing from $1$ to $k$ of the identity  (\ref{mart-dif}), we see that 
 $$  \sum_{l=1}^{k-1} J(l) =N_{k} -\hat V(k)+\hat V(1)-N_1,$$
With this and relation (\ref{mart-dif}) for the martingale difference $M_{k+1}-M_k$,  

we obtain:
 \begin{align*}
 & \sum_{k=1} ^{L} I(k) \sum_{l=1}^{k-1} J(l) -  \left(  M_{k+1} - M_k \right) N_k  \\
 &= \sum_{k=1} ^{L} I(k)\,\left ( N_k -\hat V(k)+\hat V(1)-N_1 \right)-\left(I(k)+  \hat U ({k+1})-\hat U(k) \right) N_k\\
 &=  \sum_{k=1} ^{L} -I(k)\, \hat V(k) + \sum_{k=1} ^{L} I(k)(\hat V(1)-N_1)- \sum_{k=1} ^{L} (\hat U (k+1)-\hat U(k) ) N_k.
	  \end{align*}
Firstly, by the  shift invariance of the  summands, below, and Birkhoff's ergodic theorem, we obtain
\begin{equation}\label{2nd-limit}
\begin{aligned}
-\, \epsilon \sum_{k=1}^{L } I(k) \hat V(k)&\longrightarrow (-  t)\, \E [I(1) \hat V(1)]=(-t)  \E \left( \int_0^1 U(y_r) dr\int_0^{\infty} V(y_s) ds \right).
 \end{aligned}
\end{equation}
Next, since $\hat V(1)-N_1=\E [\hat V(1) \, |\, \F_0]$, 
  $$\begin{aligned}
 \E \left| \epsilon  \sum_{k=1}^{L } I(k)(\hat V(1)-N_1) \right|^2
=&\E \left| \epsilon \int_0^{L } U(y_r) \;dr   \; \E [\hat V(1) \, |\, \F_0]\right|^2\\
\lesssim &\epsilon^2\, \E[\hat V(1)]^2  \int_0^{L }  \int_0^{L }  \E[ U(y_r)U(y_s)]\, ds \,dr,\end{aligned}$$
which by Lemma \ref {Lp-bounds}  is of order $\epsilon$.

{\bf B.} It remains to discuss the convergence of 
$$\epsilon \sum_{k=1}^{L }  (\hat U (k+1)-\hat U(k) ) N_k.$$
We do not have shift invariant and therefore
break it down into increments.  
We change the order of summation to obtain the following decomposition
$$ \begin{aligned}
& \sum_{k=1}^L  (\hat U _{k+1}-\hat U(k) ) N_k
\\=& \sum_{k=1}^L(\hat U(k+1) -\hat U(k)) \left[  \sum_{j=1}^{k-1}  (N_{j+1}-N_j) +N_1\right]\\
=&  \sum_ {j=1}^{L-1} (N_{j+1}-N_j)  \sum_{k=j+1}^L (\hat U(k+1) - \hat U(k)) 
 + \sum_{k=1}^L(\hat U(k+1) -\hat U(k))  N_1\\
 =&\ \sum_ {j=1}^{L-1} (N_{j+1}-N_j)   \hat U(L+1)  - \sum_{j=1}^{L-1} (N_{j+1}-N_j) \,\hat U(j+1)
 +\left (\hat U(L+1) - \hat U(1)\right)  N_1.
\end{aligned}$$
We may now apply Birkhoff's ergodic theorem to  the first term, taking $\epsilon\to 0$,
$$\lim_{\epsilon\to 0} - \epsilon\ \sum_ {j=1}^{L-1} (N_{j+1}-N_j)   \hat U(L +1) =0,\quad $$ 
in probability.
By the same ergodic theorem, the second term
$$ \epsilon\left (N_L -N_1\right)  \hat U(L +1) \longrightarrow  t\, \E\left( \hat U(2) (N_2-N_1) \right), $$ 
in probability.
By Proposition \ref {prop-integrability}, $\hat U(j)$ is bounded in $L^2$, hence we obtain for the third term,
$$\epsilon \left|\left (\hat U(L +1) - \hat U(1)\right)  N_1\right|_{L^2} \lesssim \epsilon. $$ 
Overall we end up with \begin{equation}
\epsilon \sum_{k=1}^L  (\hat U _{k+1}-\hat U(k) ) N_k \longrightarrow -\, t\; \E\left( \hat U(2) (N_2-N_1) \right),
\end{equation}
and so  \begin{equation}\label{diff}
\begin{split}
&\lim_{\epsilon\to 0} 
 \epsilon \sum_{k=1}^{L}\left(  \sum_{l=0}^{k} I(k) J(l)- \left(  M_{k+1} - M_k \right) N_k\right)\\
&=  t\left[  \E\left( \hat U(2) (N_2-N_1) \right) -  I(1) \hat V(1) \right].
\end{split}
\end{equation}
  
{\bf C.} We look a better expression for this limit, 
$$\begin{aligned}
\E \hat U(2) (N_2-N_1)&=\E\left(  \int_1^\infty U(y_r) dr \left(\hat V(2) -\E (\hat V(2)|\F_1)\right)\right)\\
&= \E\left(  \int_0^\infty U(y_r) dr \left(\hat V(2) -\E (\hat V(2)|\F_1)\right)\right).
\end{aligned}$$
Since
$$\hat V(2) -\E (\hat V(2)|\F_1)-\int_0^1 V(y_s) ds=\hat V(2)-\hat V(1),$$
hence we work with 
$$\E\left(  \int_0^\infty U(y_r) dr \left(\hat V(2) -\hat V(1)\right)\right).   $$
we compute $\sum_{k=2}^L \E[I(k)J(1)]$. We first use 
Remark \ref {remark-mart-dif} to write individual terms by the martingale differences. Specifically we use
$${\begin{split} J(1)&= (\hat V(1) -\hat V(2) )+(N_2-N_1), \\  I(k)&= M_{k+1}-M_k-( \hat U(k+1)-\hat U(k))\end{split}}$$
for obtaining
$$\begin{aligned}
& \sum_{k=2}^L  I(k)J(1)- \sum_{k=2}^L  \left(M_{k+1}-M_k \right) (N_2-N_1) \\
= & \sum_{k=2}^L  I(k)(\hat V(1) -\hat V(2)+\sum_{k=2}^L  I(k) (N_2-N_1) - \sum_{k=2}^L  \left(M_{k+1}-M_k \right) (N_2-N_1)\\
= & \sum_{k=2}^L  I(k)(\hat V(1) -\hat V(2))
-\sum_{k=2}^L  \left(-\hat U(k+1)+\hat U(k)\right) (N_2-N_1)\\
=&\sum_{k=2}^L I(L)[\hat V(L+1-k)- \hat V(L+2-k)]
- \sum_{k=2}^L ( \hat U(k+1)-\hat U(k)) (N_2-N_1) \\
=&I(L) \hat V(1)-I(L) \hat V(L) + [\hat U(2) (N_2-N_1)]-
 [\hat U(L+1) (N_2-N_1)].
\end{aligned}$$
In the second step, we  used the stationary property by which we also have
$$\E [I(L) (\hat V(1)-\hat V(L)) ]=\E [I(1)\hat V(L)]-\E[ I(1)\hat V(1)].$$
Since
$$\begin{aligned}
&\E \Big( I(L) \hat V(1) -\hat U(L+1) (N_2-N_1)]\Big)^2 \to 0 
\end{aligned}$$
by Corrolary \ref{cor-integrability}. This concludes that
$$\sum_{k=2}^{L} \E (I(k) J(1)) =-\E[I(1)\hat V(1)] +\E  [\hat U(2) (N_2-N_1)]+o(\epsilon).$$
On the other hand,   $$\sum_{k=2}^{L} \E (I(k) J(1)) = \int_1^{L} \int_0^1 \E\left( U(y_s) V(y_r)\right) dr ds.$$ 
Since  $L= \lfloor \f t \epsilon \rfloor \to \infty$ as $\epsilon \to 0$, this complete the proof for Lemma \ref{lemma-6.15}.
\end{proof}
 
Now we return to Proposition \ref{prop-area}.  Taking $\epsilon \to 0$ in Equation (\ref{area1}) we obtain 
\begin{align*}
&\lim_{\epsilon \to 0} \epsilon \int_0^{\f t \epsilon} \int_0^s U(y_s) V(y_r) dr ds \\ &=
\lim_{\epsilon \to 0} \epsilon \sum_{k=0}^{\lfloor \f t \epsilon \rfloor} (M_{k+1} - M_k) N_k 
+t \int_0^1 \int_0^s \E \left(U(y_s) V(y_r) \right) dr ds  + t \int_{1}^{\infty} \int_0^1  \E \left(U(y_s) V(y_r) \right) dr ds\\
&= \int_0^t W^1_s dW^2_s + t \int_0^\infty  \E\left( U(y_0) V(y_u)\right)  du,
\end{align*}
which completes the proof for Proposition \ref{prop-area}.

\subsubsection{Enhanced functional limit theorem (in f.d.d)}

To put everything together we first need to state a lemma.

\begin{lemma}\label{joint-joint-lemma}
	Suppose the stochastic processes $(X^n, Y^n, Z^n, R_n^1, R_n^2)$  satisfy the conditions (1)-(3) below. Then if the trio $(X^n, Y^n, Z^n) \to (X,Y,Z)$ in law as $n\to \infty$, in the c\`adl\`ag topology,  so does the quadruple 
	$$\Big(X^n,Y^n+R_n^1, \, \int Y^n_s \,dX^n_s,\, Z^n+R_n^2\Big) \to \Big (X,\, Y, \, \int Y_s dX_s, \, Z\Big).$$
	The integrals are in It\^o sense.
	\begin{itemize} 
		\item [(1)] Each $(X^n_s, s \in [0,t] )$  is a  c\`adl\`ag $d$-dimensional semi-martingales, on a filtered probability space 
		$(\Omega_n, \F_n, P_n)$,   satisfying the following predictable uniform tightness (P-UT)  condition: for every $t>0$, 
		$$\lim_{C\uparrow \infty} \sup_{H^n} P_n\left (\int_0^t H^n_s \, dX_s^n>C \right )=0,$$
		where  the supremum is taken over all elementary processes $H^n$ uniformly bounded by $1$ and adapted to $\F_n$.
		\item[(2)]  $Y^n$ is a family of $d'\times d$- dimensional c\`adl\`ag stochastic processes,  $Z^n$ 
		another family of multi-dimensional c\`adl\`ag stochastic processes, both adapted to $\F^n$.
		\item [(3)] $R_n^1, R_n^2$ are  adapted stochastic processes converging to zero in 
		probability.
	\end{itemize} 
\end{lemma}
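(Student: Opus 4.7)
My strategy is to extract this from the Jacod--Shiryaev stability theorem for stochastic integrals under the predictable uniform tightness (P-UT) hypothesis, and then fold in the auxiliary process $Z^n$ and the small perturbations $R_n^1, R_n^2$ by a Slutsky-type argument. Throughout, convergence is in the Skorokhod $J_1$ topology on the product c\`adl\`ag space.

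\textbf{Step 1 (core integral convergence).} I would first discard the perturbations and establish
\[
\Bigl(X^n,\,Y^n,\,Z^n,\,\int_0^{\fat} Y^n_s\,dX^n_s\Bigr)\longrightarrow\Bigl(X,\,Y,\,Z,\,\int_0^{\fat} Y_s\,dX_s\Bigr)
\]
in law. The classical Jacod--Shiryaev theorem (Thm.~VI.6.22 in their book), which states that if $X^n$ is P-UT and $(X^n,Y^n)\Rightarrow (X,Y)$ jointly in the c\`adl\`ag topology with $X^n$ a semi-martingale, then the joint triple $(X^n,Y^n,\int Y^n dX^n)$ converges to $(X,Y,\int Y dX)$, almost applies directly. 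The refinement needed here is to carry the auxiliary c\`adl\`ag process $Z^n$ along. This is standard: by assumption the enlarged triple $(X^n,Y^n,Z^n)$ already converges jointly in law, and the theorem's proof (which is based on tightness of $\int Y^n dX^n$ coming from P-UT together with identification of the limit via Riemann-sum approximations $\sum Y^n_{t_i} (X^n_{t_{i+1}}-X^n_{t_i})$) only uses $X^n,Y^n$ in the integral itself; the component $Z^n$ is passive and passes through verbatim by applying the continuous mapping theorem to each finite Riemann sum jointly with $Z^n$.

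\textbf{Step 2 (absorbing the perturbations).} Once Step 1 is in hand, since $R_n^1\to 0$ and $R_n^2\to 0$ in probability in the uniform (hence Skorokhod) topology, an application of Slutsky's theorem gives the joint convergence
\[
\bigl(X^n,\,Y^n+R_n^1,\,{\textstyle\int_0^{\fat}} Y^n_s\,dX^n_s,\,Z^n+R_n^2\bigr)\longrightarrow \bigl(X,\,Y,\,{\textstyle\int_0^{\fat}} Y_s\,dX_s,\,Z\bigr).
\]
Indeed, the map $(a,b,c,d,r_1,r_2)\mapsto(a,b+r_1,d,c+r_2)$ is continuous on the product Skorokhod space at points whose components have no common jumps with $r_1,r_2\equiv 0$, and Slutsky's lemma permits composition since $(R_n^1,R_n^2)\Rightarrow 0$. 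Note the integrand in the integral is $Y^n$, not $Y^n+R_n^1$, which is exactly what the hypothesis provides.

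\textbf{Main obstacle.} The only genuinely non-trivial ingredient is the carry-along of $Z^n$ in Step 1, since textbook statements of the Jacod--Shiryaev theorem usually treat the triple $(X^n,Y^n,\int Y^n dX^n)$ rather than a quadruple. The remedy is conceptually straightforward: replace $Y^n$ by the pair $(Y^n,Z^n)$, integrating only the first component against $X^n$; tightness of $\int Y^n dX^n$ from P-UT is unchanged, and joint identification of the limit uses that $(X^n,Y^n,Z^n)$ converges. Everything else reduces to standard continuous-mapping and Slutsky manipulations, so no further surprises should arise.
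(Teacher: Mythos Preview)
Your proposal is correct and follows essentially the same route as the paper: invoke Jacod--Shiryaev Theorem VI.6.22 for the joint convergence of $(X^n,Y^n,\int Y^n\,dX^n)$ under P-UT, carry the passive component $Z^n$ along, and absorb the perturbations $R_n^1,R_n^2$ by a Slutsky argument. The paper's own proof is in fact less detailed than yours on the carry-along of $Z^n$ and on the continuity of the perturbation map; your write-up could stand in for it.
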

The Lemma is essentially  \cite[Theorem 6.22]{Jacod-Shiryaev} , it is only left to include the terms $R^1_n$ and $R^2_n$.
On a separable probability space,  convergence in finite dimensional distributions is equivalent to convergence in the Prohorov  metric. The Prohorov metric is
defined by $d(\mu_1, \mu_2)= \inf_{\delta>0} \{ \mu_1(A)\le \mu_2(A_\delta)+\delta, \mu_2(A)\le \mu_1(A_\delta)+\delta\}$ where $A$ is any Borel measurable set 
and $A_\delta$ its $\delta$-enlargement set.  Thus $X^n$ converges in distribution and $R_n\to 0$ implies that $X_n+R_n\to 0$ in distribution. 
We can then apply Theorem 6.22 from \cite{Jacod-Shiryaev}.
Without the $R_n$ terms, the convergence of the quadruple is trivial for $H_n$ uniformly bounded elementary processes. 
The rest follows from a loalization procedure, a density argument applied to integrands, the predictable uniform tightness condition on the integrands, $X_n$, allows reversing the order of taking limits.
For $Z^n_t =R^1_n=R^2_n=0$ one can also apply \cite[Theorem 2.7]{Kurtz-Protter}.
\begin{lemma}\label{P-UT-condition}
If a sequence of martingales $N^n_s$ is uniformly bounded in $L^2$, then they satisfy the P-UT condition. 
\end{lemma}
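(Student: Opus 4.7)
The plan is to reduce the P-UT condition to a standard $L^2$-bound for martingale transforms. Fix $t>0$ and any elementary adapted process $H^n = \sum_k H^n_{t_k} \mathbf{1}_{(t_k, t_{k+1}]}$ with $|H^n|\le 1$, where $0 = t_0 < t_1 < \cdots < t_K = t$ is a deterministic partition (the processes $H^n_{t_k}$ being $\mathcal F^n_{t_k}$-measurable and bounded by~$1$). The stochastic integral then takes the explicit form
$$
\int_0^t H^n_s\,dN^n_s \;=\; \sum_{k=0}^{K-1} H^n_{t_k}\bigl(N^n_{t_{k+1}} - N^n_{t_k}\bigr),
$$
which is a martingale transform, and in particular itself a martingale.

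The key step is to bound its $L^2$-norm uniformly in $n$ and in the choice of $H^n$. Since $N^n$ is an $L^2$-martingale, the increments $\Delta^n_k := N^n_{t_{k+1}} - N^n_{t_k}$ are orthogonal in $L^2(\Omega_n,\P_n)$, and by the tower property together with the $\mathcal F^n_{t_k}$-measurability of $H^n_{t_k}$, the cross terms in the expansion of the square vanish. Therefore
$$
\E_n\!\left[\Bigl(\int_0^t H^n_s\,dN^n_s\Bigr)^{\!2}\right]
= \sum_{k=0}^{K-1} \E_n\!\left[(H^n_{t_k})^2 (\Delta^n_k)^2\right]
\le \sum_{k=0}^{K-1} \E_n\!\left[(\Delta^n_k)^2\right]
= \E_n[(N^n_t)^2] - \E_n[(N^n_0)^2],
$$
where the last equality is again orthogonality of martingale increments. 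Under the standing uniform $L^2$-bound, $\sup_n \E_n[(N^n_t)^2] \le C_t <\infty$.

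The conclusion now follows from Markov's inequality:
$$
\P_n\!\left(\Bigl|\!\int_0^t H^n_s\,dN^n_s\Bigr| > C \right)
\;\le\; \frac{1}{C^{2}}\,\E_n\!\left[\Bigl(\int_0^t H^n_s\,dN^n_s\Bigr)^{\!2}\right]
\;\le\; \frac{C_t}{C^{2}},
$$
which tends to $0$ as $C \uparrow \infty$, uniformly in $n$ and in $H^n$, establishing the P-UT property. There is no genuine obstacle here — the only point to be a little careful about is that elementary integrands in the statement of Lemma~\ref{joint-joint-lemma} are defined on possibly random partitions or stopping-time grids, but the same orthogonality computation applies via the optional sampling theorem, so no modification of the argument is required.
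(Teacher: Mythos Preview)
Your argument is correct and essentially identical to the paper's own proof: both apply Chebyshev's inequality and then use orthogonality of martingale increments together with the bound $|H^n|\le 1$ to control the second moment of the elementary integral by $\E_n[(N^n_t)^2]$. The paper omits your closing remark about stopping-time grids, but otherwise the proofs coincide line for line.
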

	\begin{proof}
	
	Given an elementary process
	$H^n=a_0 \1_{\{0\}} +\sum_{i=1}^k a_i \1_{(s_i, s_{i+1}]}(t)$
	where $a_i\in \F_{s_i}$ is assumed to be bounded by $1$ and $s_i\le [tn]$, we see that
	$$\begin{aligned}
	&P_n\left (\int_0^t H^n_s dN_s^n> C \right )
	\le \f 1 {C^2} \E \left|\int_0^t H^n_s dN_s^n\right|^2\ \\
	&\le  \f 1 {C^2}  \E \left| \sum_{i=1}^k a_i \left (N_{  s_{i+1}   \wedge (nt)} -N_{ s_{i}  \wedge (nt)} \right)\right|^2\\
	&\le  \f 1 {C^2}  \sum_{i=1}^k \E \left( a_i^2 \left (N_{ s_{i+1}   \wedge (nt)} -N_{  s_{i}  \wedge (nt)} \right)\right)^2
	\le  \f 1 {C^2}  \sum_{i=1}^k \E  \left (N_{  s_{i+1}   \wedge (nt)} -N_{  s_{i}  \wedge (nt)} \right)^2
	\\&\le  \f 1 {C^2} \E \left (N_{ s_{k+1}   \wedge (nt)}\right)^2  -  \f 1 {C^2}\E \left (N_{  s_{1}   \wedge (nt)}\right)^2\;  \stackrel{(C\to \infty)} {\to}\; 0.
	\end{aligned}$$
\end{proof}

Now, recall,
\begin{align*}
X^\epsilon&:=\left(\alpha_1(\epsilon) \int_0^{t} G_1(y^{\epsilon}_s)ds, \,\dots, \alpha_N(\epsilon) \int_0^{t} G_N(y^{\epsilon}_s)ds\right)\\
A^{i,j} &=  \left\{	\begin{array}{cl} 
\int_0^{\infty} \E\left( G_i(y_s) G_j(y_0) \right) ds,  &\hbox{ if }i,j \leq n ,\\
0, &\hbox{ otherwise.}
\end{array} \right.	\\
 X&= \lim_{\epsilon \to 0} X^{\epsilon}.
\end{align*}

\begin{proposition}\label{lift-CLT}
Assume that $G_k \in L^{p_k}(\mu)$  satisfy Assumption \ref{assumption-multi-scale}.
Set  $$ {\begin{split} \XX^{i,j,\epsilon}&= \alpha_i(\epsilon) \alpha_j(\epsilon) \int_0^{t} \int_0^s G_i(y^{\epsilon}_s) G_j(y^{\epsilon}_r) dr ds, \\
\XX^{i,j} & =  \int_0^t X^i dX^j, \end{split}}$$
where the second  integral is to be understood in the It\^o-sense if two Wiener processes appear and in the Young sense otherwise.
 Then,  as $\epsilon\to 0$,
$$\XX^{\epsilon} \to \XX +  A (t-s),$$
 in the sense of finite dimensional distributions .
Furthermore, $$ \X^{\epsilon} = \left(X^\epsilon, \XX^{\epsilon}  \right) \to \X = \left(X,\XX + A(t-s) \right),$$ 
in finite dimensional distributions.
\end{proposition}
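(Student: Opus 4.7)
The plan is to split the matrix-valued process $\XX^\epsilon$ into blocks according to whether the indices $(i,j)$ lie in the Wiener regime ($i,j \le n$), the Young regime (at least one of $i,j$ exceeds $n$), and to treat the two regimes with different tools before patching them together with the base process $X^\epsilon$. The Young entries are already handled directly: Lemma \ref{young-lift} shows that, as a consequence of the joint convergence in H\"older topology and the continuity of the Young integration map, $(X^\epsilon, \XX^{i,j,\epsilon}) \to (X, \int_0^\cdot X^i\, dX^j)$ in finite-dimensional distributions whenever $i \vee j > n$, and in this case $A^{i,j} = 0$ so the deterministic drift correction is absent.

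For the Wiener block $i,j \le n$ I would rescale via the change of variables $s \mapsto s/\epsilon$ (legitimate since $H^*(m_k) < 0$ under Assumption \ref{assumption-multi-scale} for these indices, so $\alpha_k(\epsilon) = 1/\sqrt\epsilon$) and apply Proposition \ref{prop-area} with $U = G_i$, $V = G_j$, obtaining
\begin{equation*}
\XX^{i,j,\epsilon}_{0,t} = \epsilon \sum_{k=1}^{[t/\epsilon]} (M^i_{k+1}-M^i_k)\, N^j_k + t\,A^{i,j} + \err(\epsilon),
\end{equation*}
with $\err(\epsilon) \to 0$ in probability. The martingale-sum term is identified in Proposition \ref{lemma-6.13} with the It\^o integral $\int_0^t W^i_s\, dW^j_s$. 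Extending that statement from a single pair to the full matrix and running all $i,j \le n$ simultaneously is straightforward, since the family $\sqrt\epsilon\, M^{i,\epsilon}_{[t/\epsilon]}$ is a vector of $L^2$-bounded martingales whose joint convergence to a Gaussian vector follows from Lemma \ref{CLT-Gaussian-multi} applied to the martingale increments (all finite-chaos after the reduction of Lemma \ref{reduction}), and whose correlations produce exactly the covariance structure $A^{i,j}$ required for the limiting Wiener processes.

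To glue the two blocks together with $X^\epsilon$ into a single finite-dimensional limit, I would apply Lemma \ref{joint-joint-lemma} with $X^n$ the vector of martingale approximations $\sqrt\epsilon\, M^\epsilon$ (they satisfy the predictable uniform tightness hypothesis by Lemma \ref{P-UT-condition}), with $Y^n$ the partial sums $\sqrt\epsilon\, N^\epsilon$, and with $R^1_n, R^2_n$ absorbing the differences $X^{W,\epsilon} - \sqrt\epsilon\, M^\epsilon_{[t/\epsilon]}$ and analogously for $N^\epsilon$; by Remark \ref{remark-mart-dif} and the $L^2$-boundedness of $\hat U, \hat V$ (Proposition \ref{prop-integrability}), these remainders vanish in probability. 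The Young/mixed entries are appended using the fact that the $Z$-component of the limit is independent of the Wiener block (Proposition \ref{joint-clt-3}), so the joint law in $(X^W, X^Z, \XX^{\text{It\^o}}, \XX^{\text{Young}})$ is determined by the marginal laws already established.

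The main technical obstacle is the Wiener block: iterated integrals of $X^{W,\epsilon}$ are not continuous functionals of $X^{W,\epsilon}$ in any topology weaker than $\C^{1/2+}$, so continuous mapping is unavailable and the martingale-approximation machinery of Propositions \ref{prop-area} and \ref{lemma-6.13} is genuinely required. Two subtleties warrant care: first, the diagonal strip contribution $\sum_k \int_{k-1\le r\le s\le k} G_i(y_s) G_j(y_r)\,dr\,ds$ is handled by Birkhoff's ergodic theorem (using ergodicity of the fOU from the absolute continuity of its spectral measure) and must be combined with the term $\int_1^\infty \int_0^1 \E(G_i(y_s)G_j(y_r))\,dr\,ds$ produced in Lemma \ref{lemma-6.15} to yield precisely $tA^{i,j}$; second, although Lemma \ref{joint-joint-lemma} is stated for finite-dimensional semimartingale integrators, applying it to the vector of martingale approximations gives simultaneous convergence of all It\^o iterated integrals together with $X^{W,\epsilon}$, which is what is needed before stacking with the independent Young/mixed block.
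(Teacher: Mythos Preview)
Your overall strategy matches the paper's proof: split into the Young block ($i\vee j>n$), dispatched by Lemma~\ref{young-lift} via continuity of Young integration, and the It\^o block ($i,j\le n$), handled by the martingale approximation of Proposition~\ref{prop-area} together with Lemma~\ref{joint-joint-lemma}. The identification of the drift $tA^{i,j}$ and the use of the P-UT condition are also as in the paper.

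There is, however, one genuine gap in how you patch the two blocks together. You write that ``the Young/mixed entries are appended using the fact that the $Z$-component of the limit is independent of the Wiener block (Proposition~\ref{joint-clt-3}), so the joint law \dots\ is determined by the marginal laws already established.'' Independence of the \emph{limits} $X^W$ and $X^Z$ does not by itself yield joint convergence of the \emph{pre-limit} quadruple $(X^{W,\epsilon},X^{Z,\epsilon},\XX^{\text{It\^o},\epsilon},\XX^{\text{Young},\epsilon})$; you would still need to show that every subsequential limit of this quadruple factorises, and nothing in your argument provides that. The paper avoids this issue by not invoking independence at all: it feeds the Young/mixed data directly into the $Z^\epsilon$ slot of Lemma~\ref{joint-joint-lemma}, taking $X^\epsilon=Y^\epsilon=M^\epsilon$ and $Z^\epsilon=(X^{k,\epsilon},\,\XX^{i,j,\epsilon})_{k>n,\ i\vee j>n}$. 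The required hypothesis $(M^\epsilon, M^\epsilon, Z^\epsilon)\to(X^W,X^W,Z)$ jointly in the Skorokhod topology then follows because $(X^\epsilon,\XX^{\text{Young},\epsilon})$ converges jointly in $\C^\gamma$ by Lemma~\ref{young-lift} (continuous mapping), and $M^\epsilon_t-X^{W,\epsilon}_t\to 0$ in probability by Remark~\ref{remark-mart-dif} and Proposition~\ref{prop-integrability}. Once you load $Z^\epsilon$ this way, Lemma~\ref{joint-joint-lemma} delivers the full joint finite-dimensional convergence in one step, and the independence of $X^W$ and $X^Z$ is only used afterwards to \emph{describe} the limit, not to establish convergence to it.
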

 
\begin{proof}

To apply Lemma \ref{joint-joint-lemma} we first define the multi-dimensional martingales to deal with the part converging to a Wiener process. For $i \leq n$ set $$M^{i}_L =  \sum_{k=1}^{L} \hat G_i(k) - \E[ \hat G_i(k)  | \mathcal{F}_{k-1}].$$
Now, by Lemma \ref{lemma2-area} and Lemma \ref{lemma-6.15}, for $i,j \leq n$,
$$\XX^{i,j,\epsilon}= \epsilon \sum_{k=1}^{[\f t \epsilon]} (M^j_{k+1} -M^j_k) M^i_k+ t \int_{0}^{\infty} \E \left( G_i(y_s) G_j(y_0)  \right) ds + E_{\epsilon},$$
 where  $E_{\epsilon} \to 0$  in probability. Hence,
 it is enough to establish convergence of $ \epsilon \sum_{k=1}^{L} (M^j_{k+1} -M^j_k) M^i_k$ in finite dimensional distributions.
To do so we  define the piecewise constant c\`adl\`ag
 $L^2$-martingales $(M^{j,\epsilon}_{t}, t \ge 0)$  via
\begin{align*}
	M^{j,\epsilon}_{t} = {\sqrt{\epsilon}}  M^j_{[\f t \epsilon]}
\end{align*} 
to obtain 
$$\epsilon \sum_{k=1}^{[\f t \epsilon]} (M^j_{k+1} -M^j_k) M^i_k = \int_0^{[ \f t \epsilon]} M^{i,\epsilon}_{s} dM^{j,\epsilon}_s.$$
According to Remark \ref{remark-mart-dif},
$$ M^{j,\epsilon}_{\lf \f t \epsilon \rf} = \sqrt{\epsilon} \int_0^{\f t \epsilon} G^j(y_s) ds + o(\sqrt{\epsilon}). $$
Thus, by Lemma \ref{joint-clt}, $(M^{j,\epsilon}_{t})_{j=1, \dots, n}$ converge jointly in some H\"older space, which implies of course convergence in the Skorokhod topology. Therefore it is only left to establish the uniform $L^2$ bounds, which follows from Corollary \ref{cor-integrability} or as below:
\begin{align*}
\Vert M^{j,\epsilon}_{t} \Vert_{L^2} &= {\sqrt{\epsilon}} \Vert M_{[\f t \epsilon]} \Vert_{L^2}
= {\sqrt{\epsilon}} \left\Vert\int_0^{\f t \epsilon} G_j(y_s) ds + o({\sqrt{\epsilon}})\right\Vert_{L^2}
\lesssim 1.
\end{align*}
Now by Lemma \ref{P-UT-condition} our martingales satisfy the predictable uniform tightness condition.
We apply  Lemma \ref{joint-clt-3}, Lemma \ref{joint-joint-lemma} with $X^{\epsilon}=Y^{\epsilon}=M^{\epsilon}=(M^{1,\epsilon}, \dots M^{n,\epsilon})$, $Z^{\epsilon}=(X^k,\XX^{i,j})$ for $k >n$ and $i,j$ such that $i \vee j >n$ and $R^1_n= \err_1(\epsilon)$ as in Proposition \ref{prop-area} to conclude the claim.
\end{proof}

\subsection{Tightness of iterated integrals}
\label{tightness}
In this section we establish moment bounds on the iterated integrals to show that the above proven convergence takes place in a suitable space of rough paths.

Now, let $G_i, G_j$ be two functions in $L^2$ with Hermite ranks $m_{G_i}$ and $m_{G_j}$ respectively. Set $\alpha_i=\alpha(\epsilon,H^*(m_{G_i}))$ and $\alpha_j(\epsilon)=\alpha(\epsilon,H^*(m_{G_j}))$.
To obtain tightness for 
$$
\XX^{i,j,\epsilon}(t)=\alpha_i(\epsilon) \alpha_j(\epsilon)\int_0^{t} \int_0^s G_i(y^{\epsilon}_s) G_j(y^{\epsilon}_r) dr ds
$$
we assume a certain decay in the Hermite expansion of our functions.
If $G_i$ and $G_j$ are in  finite chaos,  $\E \left( \XX^{i,j,\epsilon} \right)^p $ is  the sum of a finite number of
terms which are controlled by integrals of the form, 
$$  \alpha_i(\epsilon)^p \alpha_j(\epsilon)^p \int_0^{t} \dots  \int_0^{t}   \E \left( \prod_{k=1}^{2p} H_{m_k} (y^{\epsilon}_{s_k} ) \right) ds_1\dots ds_{2p},$$
where $H_{m_k}$ are Hermite polynomials with
$m_k \geq \min(m_{G_i},m_{G_j})$and
$s_1\le s_2\le \dots \le s_{2p}$.
Let us try to compute $\E \left( \prod_{k=1}^{2p} H_{m_k}(y^{\epsilon}_{s_k})\right)$.
If $X_i$ are random variables, there is the product formula
$$\E\left [\prod_{k=1}^{2p} X_k \right] = \sum_{\pi \in P(1, \dots, 2p)} \prod_{B \in \pi} \; \E ^c[X_k: k\in B],$$
where $\pi$ denotes a partition of $\{1, 2, \dots, {2p}\}$ and $\E^c[X_k: k \in B]$ denotes the joint  cumulant of the variable $X_k$ with $k$ in a block $B$.
These joint cumulants can also be given by linear combination of the form $\prod_{B\in \pi}\E[\prod_{k \in B} X_k]$. Gaussian processes
have vanishing third or higher order cumulants, and since  expectations for products of Hermite polynomials of $X_k$
are related to these cumulants, there are simpler ways for computing them. Using convolution with heat kernels fewer terms from the graph remain.
The expectations of the product are then given by product of expectations of all pairings, and summed over all such graphs.
For a particular graph, we denote by $n(l, k)$ the number of edges connecting $l$ to $k$, so it takes values in $\{0,1,\dots \min (m_l,m_k)\}$,
and consider the pairings in an ordered way so that each pairing is counted only once.  We have $\sum_{k=1}^{2p} n(l,k)= {m_l}$. Since edges are only allowed to connect with different nodes we have $n(k,k)=0$ for every $k$.
We then observe that  $\E\left( \prod_{k=1}^{2p} H_{m_k} (y^{\epsilon}_{s_k} ) \right)$ is  the sum  over the finite number of graphs of pairings, for any
given graph this is
$$\prod_{k=1}^{2p}  \prod_{l=k+1}^{2p} \E(y^{\epsilon}_{s_k}y^{\epsilon}_{s_{l}}) =\prod_{k=1}^{2p} \prod _{\{l: l>k, \, l\in \Gamma_k\}}  \rho(s^{\epsilon}_l-s^{\epsilon}_k)^{n(l, k)},$$
where $\Gamma_k$ denotes the subgraph of nodes connected to $k$ from nodes in  the forward direction.
A {\it complete pairing} of a graph with $2p$ nodes, with respectively $m_k$ edges, is a graph in which 
each edge from a node is connected to an edge with a different nod.

\begin{lemma}\label{basic-graph}
	\
	\begin{enumerate}
		\item 	Let $\Gamma$ denote a graph of pairings of edges (no self-connection is allowed).  
		Define:
		\begin{align*}
		I(\epsilon,p)&= \overbrace{ \int_0^{t} \dots  \int_0^{t} }^{2p}
		\left|  \prod_{\{  (s_k, s_l) \} \in \Gamma}  \E(y^{\epsilon}_{s_k} y^{\epsilon}_{s_l})\right|\;ds_1\dots ds_{2p}.
		\end{align*}
		Let $m_k$ denote the number of edges issuing from the node $s_k$.
		Then 
		\begin{equation}
		I(\epsilon,p) 	
		\lesssim \; \prod_{k=1}^{2p}  
		\sqrt{ t \int_{-t} ^{t}  | \rho^{\epsilon}(s)| ^{m_k }\; ds }
		\lesssim  \prod_{k=1}^{2p} \f{ t^{ H^*(m_k) \vee \f 1 2} } {\alpha \left(\epsilon, H^*(m_k) \right)}.
		\end{equation}

		\item		Let $G_i, G_j: \R\to \R$ be  functions in finite chaos with Hermite ranks $m_{G_i}$ and $m_{G_j}$ respectively.
		Then,
		\begin{align*}
		\Vert \XX^{i,j,\epsilon} \Vert_{L^p}&:=  \alpha_i ( \epsilon))
		\alpha_j ( \epsilon)\,\left\| \int_0^{t} \int_0^s G_i(y^{\epsilon}_s) G_j(y^{\epsilon}_r) dr ds\right\|_{L^p}\\
		&\lesssim t^{H^*(m_{G_i}) \vee \f 1 2 + H^*(m_{G_j}) \vee \f 1 2}.
		\end{align*} 
		
	\end{enumerate}
\end{lemma}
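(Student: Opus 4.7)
The plan is to prove part (1) by a graph-theoretic H\"older/Cauchy--Schwarz argument that reduces the $2p$-fold integral to a product of one-dimensional integrals, and then to combine it with the Wick/diagram formula for Gaussian moments to obtain part (2).

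For part (1), I would first split each edge factor $|\rho^\epsilon(s_k-s_l)|^{n(k,l)}$ symmetrically as the product $|\rho^\epsilon(s_k-s_l)|^{n(k,l)/2}\cdot|\rho^\epsilon(s_k-s_l)|^{n(k,l)/2}$ and attribute one half to each endpoint, so the accumulated exponent at vertex $k$ equals $\sum_{l\sim k}n(k,l)/2=m_k/2$. The main step is to dominate the resulting integrand $\prod_k\prod_{l\sim k}|\rho^\epsilon(s_k-s_l)|^{n(k,l)/2}$ by a product of one-variable quantities, which I would carry out either by a weighted AM--GM step (with weights $n(k,l)/m_k$ at vertex $k$, yielding $\prod_{l\sim k}|\rho^\epsilon|^{n(k,l)/2}\le\sum_{l\sim k}\tfrac{n(k,l)}{m_k}|\rho^\epsilon|^{m_k/2}$) followed by expansion and iterated Cauchy--Schwarz on each summand, or equivalently by invoking a generalized H\"older/Brascamp--Lieb inequality for graph-structured integrals. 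The translation invariance $\int_0^t|\rho^\epsilon(s_k-s_l)|^a\,ds_k\le\int_{-t}^t|\rho^\epsilon|^a\,du$ at the final stage produces the one-dimensional integrals $\int_{-t}^t|\rho^\epsilon|^{m_k}$. The second inequality in part (1) is then immediate from Lemma~\ref{Integrals}.

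For part (2), since $G_i$ and $G_j$ are in a finite chaos I would expand $\E(\XX^{i,j,\epsilon})^{2p}$ (taking an even power and recovering $L^p$ via Jensen) into a finite linear combination of terms of the form
$$\alpha_i(\epsilon)^{2p}\alpha_j(\epsilon)^{2p}\int_{[0,t]^{4p}}\E\Big[\prod_{k=1}^{4p}H_{m_k}(y^\epsilon_{s_k})\Big]\,ds_1\cdots ds_{4p},$$
where $2p$ of the Hermite indices $m_k$ come from the expansion of $G_i$ (so $m_k\ge m_{G_i}$) and the remaining $2p$ from $G_j$ (so $m_k\ge m_{G_j}$). By Isserlis' theorem / the diagram formula, each Gaussian expectation is a finite sum over complete pairings of the $\sum_k m_k$ half-edges, every pairing contributing a product of $|\rho^\epsilon(s_k-s_l)|$ factors of precisely the type treated in part (1); the number of such pairings is bounded combinatorially, cf.~\cite{Graphsnumber}.

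Applying part (1) to each diagram integral yields $\prod_k t^{H^*(m_k)\vee 1/2}/\alpha(\epsilon,H^*(m_k))$. Because $H^*$ is non-increasing in $m$ and either $m_k\ge m_{G_i}$ or $m_k\ge m_{G_j}$, each factor $\alpha(\epsilon,H^*(m_k))^{-1}$ is dominated by $\alpha_i(\epsilon)^{-1}$ or $\alpha_j(\epsilon)^{-1}$ accordingly, so the prefactor $\alpha_i(\epsilon)^{2p}\alpha_j(\epsilon)^{2p}$ cancels the $\epsilon$-dependence exactly; the $t$-powers aggregate to $t^{2p(H^*(m_{G_i})\vee 1/2+H^*(m_{G_j})\vee 1/2)}$, and taking a $2p$-th root delivers the claim. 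The main obstacle I expect is part~(1): showing that the vertex-by-vertex splitting plus iterated Cauchy--Schwarz really recombines into the symmetric product $\prod_k(t\int|\rho^\epsilon|^{m_k})^{1/2}$ for arbitrary multiplicity patterns $n(k,l)$. If the direct manipulation becomes unwieldy, the cleaner alternative is to invoke a Brascamp--Lieb-type graph inequality, or Taqqu's standard diagram estimate, of which this bound is a well-known consequence.
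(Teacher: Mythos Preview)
Your treatment of part (2) is essentially the paper's: expand via the diagram formula, apply part (1) to each pairing, and use monotonicity of $H^*$ and of $\int|\rho^\epsilon|^m$ in $m$ to absorb the scalings. The substantive difference is in part (1).

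The paper does \emph{not} split edges symmetrically or use AM--GM at vertices. Instead it runs a directional peeling argument twice and combines the two bounds. First integrate out $s_1$: by H\"older with exponents $m_1/n(1,k)$,
\[
\int_0^t \prod_{k\in\Gamma_1}|\rho^\epsilon(s_1-s_k)|^{n(1,k)}\,ds_1
\le \prod_{k\in\Gamma_1}\Big(\int_0^t|\rho^\epsilon(s_1-s_k)|^{m_1}\,ds_1\Big)^{n(1,k)/m_1}
\le \int_{-t}^t|\rho^\epsilon|^{m_1}\,ds.
\]
Then integrate out $s_2$ with its \emph{remaining} forward degree $\tau_2=\sum_{k>2}n(2,k)$, and so on, giving $I\le\prod_k\int_{-t}^t|\rho^\epsilon|^{\tau_k}$. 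Running the same procedure backward from $s_{2p}$ yields $I\le\prod_k\int_{-t}^t|\rho^\epsilon|^{\xi_k}$ with $\xi_k=\sum_{l<k}n(l,k)$. Since $\tau_k+\xi_k=m_k$, multiply the two bounds for $I^2$ and invoke the elementary log-convexity inequality
\[
\int_{-t}^t|\rho^\epsilon|^{a}\,ds\cdot\int_{-t}^t|\rho^\epsilon|^{b}\,ds\ \le\ 2t\int_{-t}^t|\rho^\epsilon|^{a+b}\,ds,
\]
which follows from H\"older. This gives $I^2\le\prod_k 2t\int_{-t}^t|\rho^\epsilon|^{m_k}$ directly.

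Your AM--GM step turns the integrand into a \emph{sum}, and after expanding the product over vertices you face a large collection of cross-terms like $\prod_k|\rho^\epsilon(s_k-s_{l_k})|^{m_k/2}$ whose integrals do not obviously factor or recombine; you correctly flag this as the sticking point. The forward--backward trick avoids the issue entirely: no sums appear, and the square root arises simply as the geometric mean of the two directional estimates. This is cleaner than appealing to Brascamp--Lieb and is self-contained.
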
 
\begin{proof}
	Different  graphs yield different asymptotics, the `worst' graph is the one with exactly $m$ edges at each node, and all edges of a given node are linked to the same node.
	For a general graph, let us deal with the first variable $s_1$. We first count forward and observe
	$$ \prod_{\{  (s_k, s_l) \} \in \Gamma}  \E(y^{\epsilon}_{s_k} y^{\epsilon}_{s_l})=\prod_{k=1}^{2p}  \prod_{l=k+1}^{2p}  \E(y^{\epsilon}_{s_k}y^{\epsilon}_{s_{l}}) 
	=\prod_{k=1}^{2p} \prod _{\{l: l>k, \, l\in \Gamma_k\}} (\rho^{\epsilon}(s_l-s_k))^{n(l, k)},$$
	where $\Gamma_k$ denotes the subgraph of nodes  connected to $k$, from the nodes with index greater than $k$.
	Using H\"older's inequality we obtain
	\begin{align*} 
	&\int_0^{t}  \prod _{\{k:k >1, \, k\in \Gamma_1\}} | \rho^{\epsilon}(s_1-s_k)|^{n(1, k)}ds_1 \\
	&\le   \prod _{\{k: k >1, \, k \in \Gamma_1\}}\left(  \int_0^{t}  |\rho^{\epsilon}(s_1-s_k)|^{m_1}\, d s_1  \right)^{ \f {n(1, k)} {m_1} } \\
	&\le  \int_{-t} ^{t}  | \rho^{\epsilon}(s_1)| ^{m_1} ds_1.
	\end{align*}	
	We have used $\sum_{\{ k>1:  k\in \Gamma_1\}} n(1,k)=m_1$, the number of edges at node $1$. 
	We then peel off the integrals layer by layer, and proceed with the same
	technique to the next integration variable.
	For example suppose the remaining integrator containing $s_{2}$ has the combined exponent $\tau_2=\sum_{k=2}^{2p} n(2,k)$, ($\tau_1=m_1$). By the same procedure as for $s_1$ we score a factor
	$$ \int_{-t} ^{t}  | \rho^{\epsilon}(s_2)| ^{\tau_{2}} ds_2. 
	$$
	By induction and putting estimates for each integral together,
	$$ \overbrace{ \int_0^{t} \dots  \int_0^{t} }^{2p}
	\prod_{k=1}^{2p} \prod _{\{l: l>k, \, l\in \Gamma_k\}} (\rho^{\epsilon}(s_l-s_k))^{n(l, k)}\;ds_1\dots ds_{2p} \lesssim  \prod_{k=1}^{2p}  \int_{-t} ^{t}  | \rho^{\epsilon}(s)| ^{\tau_k} ds.$$
	Following \cite{BenHariz},  we reverse the procedure in the estimation for the integral kernel, take $\xi_k$ to be the number of edges connected to
	nodes in  the backward direction, then $\xi_{k}=\sum_{l=1}^{k} n(l,k)$ and the same reasoning leads to the followign estimate:
	$$  \overbrace{ \int_0^{t} \dots  \int_0^{t} }^{2p}
	\prod_{k=1}^{2p} \prod _{\{l: l>k, \, l\in \Gamma_k\}} (\rho^{\epsilon}(s_l-s_k))^{n(l, k)}\;ds_1\dots ds_{2p}
	\lesssim  \prod_{k=1}^{2p}  \int_{-t} ^{t}  | \rho^{\epsilon}(s)| ^{\xi_k } ds.$$
	Since $\tau_k+\xi_k=m_k$ and	$$  \int_{-t} ^{t}  | \rho^{\epsilon}(s)| ^{\tau_k } ds \int_{-t} ^{t}  | \rho^{\epsilon}(s)| ^{\xi_k } ds  
	\le  2 t  \int_{-t} ^{t}  | \rho^{\epsilon}(s)| ^{m_k } ds,$$
	and therefore
	\begin{align*}\label{single-graph}
	\left( \overbrace{ \int_0^{t} \dots  \int_0^{t} }^{2p}
	\prod_{k=1}^{2p} \prod _{\{l: l>k, \, l\in \Gamma_k\}} (\rho^{\epsilon}(s_l-s_k))^{n(l, k)}\;ds_1\dots ds_{2p}\right)^2
	&   \lesssim  
	\; \prod_{k=1}^{2p} \left( t  \int_{-t} ^{t}  | \rho^{\epsilon}(s)| ^{m_k } ds  \right).
	\end{align*}
	By Lemma \ref{Integrals} for each~$k$,
	$$ \alpha \left(  \epsilon, H^*(m_k)\right)^2   t
	\int_{-t}^{t}  | \rho^{\epsilon}(s)| ^{m_{k}} ds  \lesssim\left(  t^{ H^*(m_k) \vee \f 1 2}\right)^2,$$
	the first part of the lemma follows.
	
	Next let us consider $G_i=H_k$ and $G_j=H_l$. Then we are in a position to apply the first part of the lemma:
	\begin{align*}
	&\E\left( \int_0^{t}\!\!\! \! \int_0^s H_k (y^{\epsilon}_s) H_l(y^{\epsilon}_r) dr ds \right )^p\\
	= &\int_0^{t}\!\!\! \!  \int_0^{s_p}\dots \int_0^{t} \!\!\! \! \int_0^{s_1} \E \left( \prod_{i=1}^p H_{k}(y^{\epsilon}_{s_i}) H_l(y^{\epsilon}_{r_i} )\right)
	\prod_{i=1}^p dr_i \, ds_i\\
	\leq&\sum_{\Gamma}	\int_0^{t}\!\!\! \!  \int_0^{s_p}\dots \int_0^{t} \!\!\! \! \int_0^{s_1} 
	\prod_{k=1}^{2p} \prod_{\{l: l>k, \, l\in \Gamma_k\}}  \rho^{\epsilon}(s_l-s_k)^{n(l, k)} \prod_{i=1}^p dr_i \, ds_i\\
	\lesssim &\, C_{k,l} \f { t^{p ( H^*(k) \vee \f 1 2) + p(H^*(l) \vee \f 1 2)} }{	 \alpha(\epsilon, H^*(k) )^p \alpha(\epsilon, H^*(l))^p}
	\; ,
	\end{align*} 
	where the summation is over all graphs of complete parings and 
	$C_{k,l}$ denotes the number of graphs needed for computing the expectations.

	For $G_i=\sum_{k=m_{G_i}}^N a_{i,k} H_k$ and $G_j=\sum_{k=m_{G_j}}^N a_{i,k} H_k$, we expand the products in the multiple integrales.
	Each summand then has exactly $p$ factors  from $G_i$, for those $k\ge m_{G_i}$,  and $p$ from $G_j$ for those $k\ge m_{G_j}$. 
	Splitting them accordingly we have,
	\begin{equation}\label{single-graph}
	\Vert \XX^{i,j,\epsilon} \Vert_{L^p}     \lesssim 
	\alpha_i(\epsilon) \left(\prod_{k_1=1}^{p} t \int_{-t}^{t}  | \rho^{\epsilon}(s)| ^{m_{k_1}} ds \right)^{\f 1p}
	\alpha_j(\epsilon)  
	\left(\prod_{k_2=1}^{p}  t \int_{-t}^{t}  | \rho^{\epsilon}(s)| ^{ m_{k_2}}  \, ds  \right)^{\f 1p}.
	\end{equation}
	where $m_{k_1} \ge m_{G_i}$ and $m_{k_2} \ge m_{G_i}$. The treatment for the two products are the same. Let us consider the first factor  on the right hand side.
	Since
	$$\int_{-t}^{t}  | \rho^{\epsilon}(s)| ^{m} ds$$
	decreases with $m$, then those terms with the Hermite ranks of $G_i$ and $G_j$ as exponents give the fastest possible blow up, or the slowest convergence rate.
	Since both $G_i$ and $G_j$ belong to the finite chaos,$( \Vert \XX^{i,j,\epsilon} \Vert_{L^p} )^p$
	is  the sum of  finite terms of the form
	$$ \alpha_i(\epsilon) \alpha_j(\epsilon) \int_0^{t} \int_0^s\dots \int_0^{t} \int_0^s \prod_l \E [H_{k}(y^{\epsilon}{s_i}) H_l(y^{\epsilon}_{r_l} )]
	ds_l \, dr_l,$$
	each of these has the same type type bound (by the previous computation), with the constant $C_{k,l}$  in front of the relevant expansions uniformly bounded.
	We may conclude that
	$\Vert \XX^{i,j,\epsilon} \Vert_{L^p}     \lesssim   t^{ ( H^*(k) \vee \f 1 2) + (H^*(l) \vee \f 1 2)}$, finishing the proof.
\end{proof}

For functions not belonging to the infinite chaos we must count the number of graphs in the computation,  and need some assumptions. 
Let $M(\{m_1, \dots, m_{2p}\})$ denote the cardinality of  complete pairings of a graph ith $2p$ nodes, with respectively $m_k$ edges.
In \cite{Graphsnumber} it was shown that 
$$ M\left(m_1,m_2, \dots, m_{2p} \right) \leq \prod_{k=1}^{2p} (2p-1)^{\f {{m_k}} {2}} \sqrt{m_k}.$$
This leads to Assumption \ref{assumption-multi-scale} (1), which restricts the $G_i$ to the class of functions whose coefficients in the Hermite expansion decays sufficiently fast.

\begin{proposition}\label{tightness-lemma}
	Suppose that $G_k \in L^{p_k}$  and satisfies Assumption \ref{assumption-multi-scale}. 
	Then  one has for each $i,j =1, \dots N$,
	$$   \left\Vert \alpha_i(\epsilon) \alpha_j(\epsilon) \int_0^{t}\!\!\! \int_0^s G_i(y^{\epsilon}_s) G_j(y^{\epsilon}_r) dr ds \right\Vert_{L^p} \lesssim t^{H^*(m_{G_i}) \vee \f 1 2 + H^*(m_{G_j}) \vee \f 1 2}.$$
	In particular the family 
	$$\left\{ \alpha_k(\epsilon) \int_0^t G_k(y^{\epsilon}_s) ds, \alpha_i(\epsilon) \alpha_j(\epsilon) \int_0^{t} \int_0^s G_i(y^{\epsilon}_s) G_j(y^{\epsilon}_r) dr ds, k,i,j=1, \dots N \right\}$$ is tight in $\FC^\gamma$ for any
	$\gamma \in \left( \f 1 3,\left( H^*(m_{G_i}) \vee \f 1 2 + H^*(m_{G_j}) \vee \f 1 2 \right) - \f 1 {p} \right)$.
\end{proposition}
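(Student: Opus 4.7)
The plan is to reduce to Lemma \ref{basic-graph}(2) via a Hermite expansion, using the fast chaos decay condition to control the resulting infinite sums. First I would expand $G_i = \sum_{k \geq m_{G_i}} c_{i,k} H_k$ and $G_j = \sum_{l \geq m_{G_j}} c_{j,l} H_l$ and write
\begin{equation*}
\E \bigl|\XX^{i,j,\epsilon}(t)\bigr|^{p} \;=\; \alpha_i(\epsilon)^p \alpha_j(\epsilon)^p \!\!\!\! \sum_{k_1,\dots, k_p, l_1,\dots,l_p} \!\!\!\! \Bigl( \prod_{r=1}^{p} c_{i,k_r}\, c_{j,l_r} \Bigr) \int_{\Delta_p(t)^2}\!\!\! \E\Bigl[\prod_{r=1}^p H_{k_r}(y^\epsilon_{s_r}) H_{l_r}(y^\epsilon_{u_r})\Bigr]\, d\mathbf{s}\, d\mathbf{u},
\end{equation*}
where $\Delta_p(t) = \{0 \le u_1 \le s_1 \le \dots \le u_p \le s_p \le t\}$. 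The expectation of a product of Hermite polynomials is given by the diagram formula as a finite sum, indexed by complete pairings $\Gamma$ of the associated $2p$-vertex multigraph, of $\prod_{(\alpha,\beta)\in \Gamma} \rho^\epsilon(s_\alpha - s_\beta)^{n(\alpha,\beta)}$.

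Next I would apply Lemma \ref{basic-graph}(1) to each graph to bound the integral by $\prod_{k=1}^{2p} t^{H^*(m_k)\vee \f 1 2}/\alpha(\epsilon, H^*(m_k))$, where $m_k \in \{k_1,\dots, k_p, l_1,\dots, l_p\}$ is the number of edges at node $k$. Since $H^*$ is non-increasing and $k_r \ge m_{G_i}$, $l_r \ge m_{G_j}$, this is uniformly bounded by $t^{p\cdot (H^*(m_{G_i})\vee \f 1 2 + H^*(m_{G_j})\vee \f 1 2)}$ times the reciprocal scaling factors. Using the graph-counting bound from \cite{Graphsnumber},
\begin{equation*}
M(m_1,\dots, m_{2p}) \le \prod_{k=1}^{2p} (2p-1)^{m_k/2} \sqrt{m_k!},
\end{equation*}
the sum over $\Gamma$ and the sum over chaos indices $(k_r, l_r)$ decouple to give the product
\begin{equation*}
\Bigl(\sum_{k \ge m_{G_i}} |c_{i,k}|\, \sqrt{k!}\,(2p-1)^{k/2}\Bigr)^{p} \Bigl(\sum_{l \ge m_{G_j}} |c_{j,l}|\, \sqrt{l!}\,(2p-1)^{l/2}\Bigr)^{p},
\end{equation*}
which is finite by Assumption \ref{assumption-multi-scale}(1) provided $p \le q$. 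Taking the $p$-th root yields the claimed bound on $\|\XX^{i,j,\epsilon}(t)\|_{L^p}$.

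For the tightness conclusion, by stationarity of $y^\epsilon$ the same argument with $(u,v)$ in place of $(0,t)$ gives $\|\XX^{i,j,\epsilon}_{u,v}\|_{L^p} \lesssim |v-u|^{H^*(m_{G_i})\vee \f 1 2 + H^*(m_{G_j})\vee \f 1 2}$; combined with the first-level bound from Lemma \ref{Lp-bounds}, applying Lemma \ref{moment-conditions} and Lemma \ref{tightness-second-order} then gives tightness in $\FC^\gamma$ for $\gamma$ in the stated range. The main technical obstacle is ensuring the graph-counting factor $\prod\sqrt{m_k!}\,(2p-1)^{m_k/2}$ can be distributed back into the chaos sums so that each factor receives the weight that exactly matches the fast chaos decay condition; this works cleanly because each vertex carries precisely one Hermite index $k_r$ or $l_r$, so the bound factorises into the two independent products above.
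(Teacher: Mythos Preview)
Your proposal is correct and follows essentially the same route as the paper: expand in Hermite polynomials, apply the diagram formula, bound each graph via Lemma~\ref{basic-graph}(1), control the number of graphs via the bound from \cite{Graphsnumber}, and factorise the resulting sum so that the fast chaos decay condition closes the estimate; tightness then follows from Lemmas~\ref{Lp-bounds}, \ref{moment-conditions}, and~\ref{tightness-second-order}. One minor slip: your integration region $\Delta_p(t)$ should be $\prod_{r=1}^p\{0\le u_r\le s_r\le t\}$ rather than the fully ordered simplex you wrote (and it is not squared), but this is harmless since Lemma~\ref{basic-graph}(1) in any case enlarges to the full box $[0,t]^{2p}$.
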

\begin{proof}
	Using the above estimates we compute, and the fact that $\rho(s) >0$,
	\begin{align*}
	&\E \left( \alpha_i(\epsilon) \alpha_j(\epsilon) \int_0^t \int_0^s G_i(y_s) G_j(y_r) dr ds \right)^p \\
	& \leq \alpha_i(\epsilon)^p \alpha_j(\epsilon)^p\left \vert \E \left( \int_0^t\!\!\! \int_0^s \sum_{k,k'} c_{i,k} c_{j,k'} H_k(y_s) H_{k'}(y_r) dr ds \right)^p\right \vert.
	\end{align*}
	We estimate the integrals on the right hand side:
	\begin{align*}
	&\left \vert \E \left( \int_0^t\!\!\! \int_0^s \sum_{k,k'} c_{i,k} c_{j,k'} H_k(y^{\epsilon}_s) H_{k'}(y^{\epsilon}_r) dr ds \right)^p\right \vert \\
	&\leq \left\vert
	\sum^{\infty}_{k_1, \dots k_p = m_{G_i}  } \sum_{ k'_1, \dots k'_p = m_{G_j}}^{\infty} 
	\prod_{l=1}^{p}  c_{i,k_l} c_{j,k'_l} \int_0^t\!\!\! \int_0^s \dots
	\int_0^t \!\!\!\int_0^s \prod_{l=1}^{p} \E \left(H_{k_l}(y^{\epsilon}_{s_l})  (H_{k'_l}(y^{\epsilon}_{r_l}) \right) dr_l ds_l \right\vert \\
	&\leq  \sum^{\infty}_{k_1, \dots k_p = m_{G_i}  } \sum_{ k'_1, \dots k'_p = m_{G_j}}^{\infty} 
	\prod_{l=1}^{p} \vert c_{i,k_l} c_{j,k'_l}
	\vert  \int_0^t \!\!\!\int_0^s \dots \int_0^t \!\!\!\int_0^s  \sum_{\Gamma} \prod_{v=1}^{2p} \prod _{\{u: u>v, \, u\in \Gamma_v\}}  \rho^{\epsilon}(s_u-s_v)^{n(u, v)}  ds_u  ds_v  \\ 
	&\leq \sum^{\infty}_{k_1, \dots k_p = m_{G_i}  } \sum_{ k'_1, \dots k'_p = m_{G_j}}^{\infty} \prod_{l=1}^{p} \vert c_{i,k_l} c_{j,k'_l} \vert  
	\overbrace{ \int_0^t \dots \int_0^t}^{2p}  \sum_{\Gamma} \prod_{v=1}^{2p} \prod _{\{u: u>v, \, u\in \Gamma_v\}}  \rho^{\epsilon}(s_u-s_v)^{n(u, v)} ds_u  ds_v.
	\end{align*}
	We then apply estimates from the first part of Lemma \ref{basic-graph},
	\begin{align*}
	&\E \left( \alpha_i(\epsilon) \alpha_j(\epsilon) \int_0^s G_i(y^{\epsilon}_s) G_j(y^{\epsilon}_r) dr ds \right)^p \\
	&\lesssim t^{p \left( H^*(m_{G_i}) \vee \f 1 2 + H^*(m_{G_j}) \vee \f 1 2 \right)} \sum^{\infty}_{k_1, \dots k_p = m_{G_i}  } \sum_{ k'_1, \dots k'_p = m_{G_j}}^{\infty} \prod_{l=1}^{p} \vert c_{i,k_l} c_{j,k'_l} \vert  M(k_1, \dots ,k_p , k'_1 , \dots , k'_p) \\
	&\lesssim t^{p \left( H^*(m_{G_i}) \vee \f 1 2 + H^*(m_{G_j}) \vee \f 1 2 \right)} \sum^{\infty}_{k_1, \dots k_p = m_{G_i}  } \sum_{ k'_1, \dots k'_p = m_{G_j}}^{\infty} \prod_{l=1}^{p} \vert c_{i,k_l} c_{j,k'_l} \vert \sqrt{k_l! k'_l!} (2p-1)^{\f {k_l + k'_l} {2}},
	\end{align*}
	By assumption, the double power series  is finite. This proves the required moment bounds for the second order process.
	For tightness we argue by Lemmas \ref{Lp-bounds} and \ref{tightness-second-order}, concluding the proof.
\end{proof}

\subsection{Weak convergence in $\FC^{\gamma}$, concluding Theorem B.}
\label{iterated-CLT}

We are ready to show weak convergence of the rough path lifts. Denote by $X^W$ the limiting Wiener processes in Lemma \ref{joint-clt-3}. Then, for $i,j \leq n$, we can form the It\^o integrals $\int X_s^i dX_s^j$ and denote it by $\XX^{i,j}$. 
If  either the ith or the jth component limit is not given by a Wiener process,  we will see that $\XX^{i,j,\epsilon}_{0,t} = \int_0^t X^{i,\epsilon} dX^{j,\epsilon}$ converges weakly to a process with higher regularity, which, as a rough path, does not exert any influence on the interpretation of the rough integral,  and therefore has no effect on  the effective equation.

\begin{theorem}\label{theorem-weak-convergence-in-rough-path-topology}
	Assume that $G_k \in L^{p_k}(\mu)$ satisfy Assumption \ref{assumption-multi-scale}, then
	$$ \X^{\epsilon}=(\X^{\epsilon},\XX^{\epsilon}) \to \X=(X,\XX+A(t-s)),$$
	weakly in $\FC^{\gamma}$ for $\gamma \in (\f 1 3, \f 1 2 - \f {1} {\min_{k\leq n } p_k})$.
\end{theorem}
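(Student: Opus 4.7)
The plan is to deduce this weak convergence by combining the two ingredients we have already assembled: the finite-dimensional convergence of the enhanced processes from Proposition~\ref{lift-CLT} and the uniform moment bounds from Proposition~\ref{tightness-lemma}. Weak convergence on the Polish space $\FC^{\gamma}$ is implied by convergence of finite-dimensional distributions together with tightness of the sequence in that topology, so it is enough to package these two facts correctly.

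First, I would fix $\gamma \in (\tfrac13, \tfrac12 - \tfrac{1}{\min_{k\leq n} p_k})$ and pick an auxiliary exponent $\gamma' \in (\gamma, \tfrac12 - \tfrac{1}{\min_{k\leq n} p_k})$. For the base components $X^{k,\epsilon}$, Lemma~\ref{Lp-bounds} supplies $\|X^{k,\epsilon}_{s,t}\|_{L^{p_k}} \lesssim |t-s|^{\theta_k}$ with $\theta_k = \tfrac12$ for $k\le n$ and $\theta_k = H^*(m_k)$ for $k>n$. For the iterated integrals $\XX^{i,j,\epsilon}$, Proposition~\ref{tightness-lemma} gives $\|\XX^{i,j,\epsilon}_{s,t}\|_{L^{p/2}} \lesssim |t-s|^{\theta_i + \theta_j}$ for every $p$ in the range allowed by the integrability assumption~\ref{assumption-multi-scale}(2). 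Feeding these bounds into Lemma~\ref{moment-conditions} and then Lemma~\ref{tightness-second-order} yields $\sup_{\epsilon}\E\|\X^{\epsilon}\|_{\gamma'}^{p}<\infty$, and hence tightness of the family $\{\X^{\epsilon}\}$ in $\FC^{\gamma}$ via the compact embedding Lemma~\ref{compact}.

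Next, Proposition~\ref{lift-CLT} tells us that $\X^{\epsilon} \to \X + (0, A(t-s))$ in finite-dimensional distributions, which identifies the candidate limit uniquely. Since $\FC^{\gamma}$ is Polish and the finite-dimensional projections separate points on the subset of rough paths of bounded $\gamma'$-H\"older norm, tightness plus f.d.d. convergence upgrade to weak convergence along subsequences, and the uniqueness of the limit then gives full weak convergence in $\FC^{\gamma}$. Concretely, any weakly convergent subsequence extracted via Prokhorov from the tight family must have a limit whose finite-dimensional marginals agree with those of $(X,\XX+A(t-s))$, hence equal the latter in law as elements of $\FC^{\gamma}$.

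I do not expect a genuine obstacle at this stage; all the analytic heavy lifting is already done. The only point that deserves care is the integrability bookkeeping: one must ensure that the $p$ dictated by the moment bounds in Proposition~\ref{tightness-lemma} is compatible with the constraint $\gamma < \tfrac12 - \tfrac{1}{\min_{k\le n} p_k}$ coming from the Besov–H\"older-type condition in Lemma~\ref{moment-conditions}, which is exactly why Assumption~\ref{assumption-multi-scale}(2)–(3) was formulated with the stated lower bounds on the $p_k$. Once this compatibility is verified, the conclusion follows immediately from the tightness-plus-f.d.d.\ recipe described above.
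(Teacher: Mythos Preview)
Your proposal is correct and follows essentially the same approach as the paper's own proof: invoke Proposition~\ref{lift-CLT} for the finite-dimensional convergence, then combine Lemma~\ref{Lp-bounds} and Proposition~\ref{tightness-lemma} with Lemma~\ref{moment-conditions} and Lemma~\ref{tightness-second-order} to obtain tightness in $\FC^{\gamma}$. The paper states this in a single sentence; your version just spells out the auxiliary-exponent and Prokhorov-subsequence mechanics more explicitly.
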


\begin{proof}
	By Proposition \ref{lift-CLT} $\X^{\epsilon}$ converges in finite dimensional distributions and by Lemma \ref{moment-conditions}, Lemma \ref{tightness-second-order},  Proposition \ref{tightness-lemma} and  Lemma \ref{Lp-bounds} show that the convergence takes place is the respective H\"older spaces.
\end{proof}

With Theorem \ref{theorem-weak-convergence-in-rough-path-topology} together with Proposition \ref{joint-clt-3}  we conclude Theorem B.

\section{ Multi-scale homogenization theorem }\label{conclusion}

We are now in the position to complete the proof for  the  multi-scale homogenization theorem for the long range dependent case, this is Theorem A, which 
we formulate here how it is proved.
\begin{theorem}
\label{proof} Let $H \in  \left( \f 1 2 ,1 \right)$, $f_k\in \C_b^3( \R^d ;\R^d)$, and $G_k$  satisfies Assumption~\ref{assumption-multi-scale}.
 Then, the following statements hold.
\begin{enumerate}
\item The solutions  $x_t^\epsilon$ of (\ref{multi-scale}) 
converge weakly  in  $\C^\gamma$ on any finite interval and for any $\gamma \in (\f 1 3 ,\f 1 2 - \f 1 {\min_{k \leq n } p_k})$.
\item 
The limit  solves  the rough differential equation
\begin{equation}
\label{effective-rde}
 d {x_t} =f(x_t) d \X_t \quad  x_0=x_0
 \end{equation}
 Here $f=(f_1, \dots, f_N)$ and $ \X=(X,\XX_{s,t}+(t-s)A)$ is a rough path over $\R^N$.
 (They will be described  in Theorem B below.)
 \item  Equation (\ref{effective-rde}) is equivalent to the  stochastic  equation below:
 $$ d{x}_t =\sum_{k=1}^n f_k(x_t) \circ d X^k_t+\sum_{l=n+1}^N f_l(x_t) d X^l_t, \quad x_0=x_0,$$
where $\circ$ denotes  Stratonovich integral, otherwise a Young integral. 
\end{enumerate}
\end{theorem}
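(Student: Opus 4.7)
The plan is to recast \eqref{multi-scale} as a rough differential equation driven by the canonical lift $\mathbf{X}^\epsilon$, apply the enhanced functional limit theorem (Theorem B / Theorem~\ref{theorem-weak-convergence-in-rough-path-topology}) together with the continuity of the It\^o--Lyons map from Theorem~\ref{cty-rough}, and finally identify the limiting rough integral with the claimed mixed Stratonovich--Young integral. First, group the driving functionals into one vector path $X^\epsilon_t=(X^{1,\epsilon}_t,\dots,X^{N,\epsilon}_t)$ with $X^{k,\epsilon}_t=\alpha_k(\epsilon)\int_0^t G_k(y^\epsilon_s)\,ds$, and set $f=(f_1,\dots,f_N)\in\C_b^3(\R^d;\mathbb{L}(\R^N,\R^d))$. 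Since $X^\epsilon$ is absolutely continuous, \eqref{multi-scale} is equivalent to the (classical) ODE $dx^\epsilon_t=f(x^\epsilon_t)\,dX^\epsilon_t$, which in turn coincides with the rough differential equation $dx^\epsilon_t=f(x^\epsilon_t)\,d\mathbf{X}^\epsilon_t$ driven by the canonical (geometric) lift $\mathbf{X}^\epsilon=(X^\epsilon,\mathbb{X}^\epsilon)$ with $\mathbb{X}^{i,j,\epsilon}_{s,t}=\int_s^t(X^{i,\epsilon}_r-X^{i,\epsilon}_s)\,dX^{j,\epsilon}_r$.

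Theorem B (i.e.\ Theorem~\ref{theorem-weak-convergence-in-rough-path-topology}) provides, under Assumption~\ref{assumption-multi-scale}, the weak convergence $\mathbf{X}^\epsilon\to\mathbf{X}=(X,\mathbb{X}+(t-s)A)$ in $\FC^\gamma$ for any $\gamma\in(\tfrac13,\tfrac12-\tfrac{1}{\min_{k\le n}p_k})$. Theorem~\ref{cty-rough} asserts that the It\^o solution map $\Phi_f\colon \mathbf{X}\mapsto x$ is continuous from $\FC^\gamma$ into $\C^\gamma$ whenever $f\in\C_b^3$ and $\gamma>\tfrac13$, so the continuous mapping theorem for weak convergence yields $x^\epsilon=\Phi_f(x_0,\mathbf{X}^\epsilon)\to\Phi_f(x_0,\mathbf{X})$ weakly in $\C^\gamma$. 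This gives part (1), and part (2) follows by definition of $\Phi_f$: the limit $x=\Phi_f(x_0,\mathbf{X})$ is precisely the unique solution of the RDE $dx_t=f(x_t)\,d\mathbf{X}_t$.

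It remains to identify the limiting RDE with the mixed Stratonovich--Young SDE of part (3). By Theorem B the components split as $X=(X^W,X^Z)$ with $X^W$ an $n$-dimensional Wiener process of covariance $tA$ (restricted to indices $\le n$) and $X^Z$ an independent vector of Hermite processes of Hurst parameter $>\tfrac12$; the iterated integral $\mathbb{X}^{i,j}$ is an It\^o integral for $i,j\le n$ and a Young integral otherwise, while $A^{i,j}$ vanishes unless $i,j\le n$. Since the covariance $\E(X^i_tX^j_t)=2tA^{i,j}$ for $i,j\le n$ means that the joint quadratic covariation satisfies $[X^i,X^j]_t=2tA^{i,j}$, the correction $(t-s)A$ is exactly the It\^o--Stratonovich correction $\tfrac12[X^i,X^j]_{s,t}$, so $\mathbf{X}$ is the Stratonovich lift of $X^W$ bundled with the Young lift of $X^Z$. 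For mixed Wiener/Young drivers of this form the RDE integral restricted to the Wiener block coincides with the Stratonovich integral (the extra drift in $A$ transforms the It\^o level into a geometric one) and on the Hermite block with the Young integral; this yields
\[
 d x_t=\sum_{k=1}^n f_k(x_t)\circ dX^k_t+\sum_{k=n+1}^N f_k(x_t)\,dX^k_t.
\]
The one real subtlety I would need to justify carefully is precisely this last step, namely checking that the rough integral against $\mathbf{X}$ agrees with Stratonovich on the Wiener components and Young on the Hermite components; this uses the independence of $X^W$ and $X^Z$ from Proposition~\ref{joint-clt-3} (so off-diagonal It\^o corrections vanish) and the fact that for controlled paths against a $\gamma$-H\"older driver with $\gamma>\tfrac12$ the rough and Young integrals coincide.
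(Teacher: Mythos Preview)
Your proposal is correct and follows essentially the same approach as the paper: recast \eqref{multi-scale} as an RDE driven by the canonical lift $\mathbf{X}^\epsilon$, invoke Theorem~\ref{theorem-weak-convergence-in-rough-path-topology} for the rough-path convergence, and then apply the continuity of the solution map from Theorem~\ref{cty-rough}. Your treatment of part~(3) is in fact more explicit than the paper's proof, which simply defers the identification of the limiting RDE with the mixed Stratonovich--Young equation to the appendix; your sketch (the $(t-s)A$ term supplies the It\^o--Stratonovich correction on the Wiener block, the Hermite block is Young by regularity, and independence of $X^W$ and $X^Z$ handles the cross terms) is exactly the content of that appendix.
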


\begin{proof}
We want to formulate our slow/fast random differential equation  as a family of rough differential equations, such that the drivers converge in the rough path topology. Using the continuity of the solution map, we obtain weak convergence of the solutions to  a rough differential equation. In the appendix, we interpret the
rough differential equation as a mixed Stratonovich and Young integral equation and the  coefficients of the equation will be computed.

Let us set  $F: \R^d \to {\mathbb L} (\R^N, \R^d)$ as below:
$$F (x) (u_1, \dots, u_m)= \sum_{k=1}^N    u_kf_k(x),$$
and for the standard o.n.b. $\{e_i\}$ of $\R^d$ we set $F_i(x)=F(x)(e_i)$.
If we set $$G^\epsilon=\Big(\alpha _1(\epsilon) G_1, \dots, \alpha_N(\epsilon) G_N\Big),$$
we may then write the equation as follows:
$\dot x_t^\epsilon =F(x_t^\epsilon) G^\epsilon(y_t^\epsilon)$.
Now define the rough path $\X^{\epsilon}=(X^{\epsilon},\XX^{\epsilon})$, where
\begin{align*}
X^\epsilon_t &=\Big(\alpha _1(\epsilon) \int_0^{t} G_1(y^\epsilon_s)ds , \dots,  \alpha _N(\epsilon) \int_0^{t} G_N(y^\epsilon_s)ds \Big)\\
&=(X_t^{1,\epsilon}, \dots, X_t^{N,\epsilon})\\
\XX^{i,j,\epsilon}_{s,t}&= \int_s^t (X^{i,\epsilon}_r - X^{i,\epsilon}_s ) dX^{j,\epsilon}_r.
\end{align*}
We may therefore rewrite our  equation  as  a rough differential equation with respect to $\X^\epsilon$:
$$d x_t^\epsilon =F(x_t^\epsilon) d \X^\epsilon(t).$$
with covariance as specified in  Theorem \ref{theorem-CLT}.

By Theorem \ref{theorem-weak-convergence-in-rough-path-topology},
$\X^{\epsilon}$ converges to $\X=(X, \XX + (t-s)A)$ in $\FC^{\gamma}$ where $\gamma \in (\f 1 3 , \f 1 2 - \f {1} {\min_{k \leq n } p_k })$.

Since  $\gamma>\f 13$ by Assumption \ref{assumption-multi-scale}, 
We may apply the continuity theorem for rough differential equations, Theorem \ref{cty-rough},
to conclude that the solutions converge to the solutions of the rough differential differential equation
$$\dot x_t=F(x_t) d\X_t.$$
Since $F$ belongs to $\C_b^3$, this is well posed in the rough path equation sense. 
We completed the proof for the convergence.
\end{proof}

\section{Appendix}
\subsection{Interpreting the effective dynamics by classical equations}

We now explain what  the limiting equation means in the classical sense.  Although it merits a verification for the integrand consists of correlated Winer with drift 
block and correlated Hermite block, the answer is obvious and believed for those working in rough path theory.  For the slow/fast and homogenization community
this is the mysterious part, and in any case  the multi-dimension path notation needs some classification. 
Our set up is the following. 

\begin{assumption}
Let $X_t=( X^W_t, X^Z _t)$ where $X^W_t$ is a multi-dimensional  possibly correlated Wiener process and $X^Z_t$ a multi-dimensional Hermite process. The two components $X^W_t$ and $X^Z_t$ are not correlated, we  denote by $A$ the block matrix
$$A:=\left(\begin{matrix}\cov (X^W)&0\\0 &0
\end{matrix}\right). $$
We write  $A^{i,j}$ for the component of $A$.
We are concerned with the classical interpretation for the rough differential equation
$$\dot x_t=F(x_t) d\X_t,$$
where $F: \R^d \to \L(\R^N, \R^d)$ is a $BC^3$ map and $\X=(X, \XX+(t-s)A)$
where $\XX=(\XX^{i,j})$ denotes the `Canonical lift' of $X$, 
$$\XX^{i,j}_t =\int _0^t X_s^i \;dX_s^j$$
interpreted as  It\^o integrals if $i,j \leq n$, otherwise as Young integrals.  
\end{assumption}

According to the general theorems on rough differential  equation there exists a unique solution
 in the controlled rough path space $D_{X}^{2\alpha}([0,1]; \R^d)$  where $\alpha>\f 13$.  The solution exists global in time 
and the full controlled process is given by $(x_s, F(x_s))$. See \cite{Lyons94, Friz-Hairer}.

\bigskip

We begin with setting the notation and at the same time explaining the raison d'\^etre for the definition of rough integrals.
Given a rough path $(X, \XX)$ and a controlled rough path in 
	$\D_X^{2 \alpha}$ is a pair of processes $(Y, Y')$ with the properties
	$$Y_{s,t}=Y_s' X_{s,t}+R_{s,t},$$
	where $Y'\in \C^\alpha(\L(\R^d, \L(\R^N,\R^d))$ and the two parameter function $R$, satisfies $\|R\|_{2\alpha}< \infty$. 
	Let $\alpha>\f 13$.
The rough path integral is given by the enhanced Riemann sums	$$\begin{aligned}
	&\int_s^t Y \,d{\mathbf X} =\lim \sum_{[u,v]\in \CP} Y_{u} X_{u,v} + Y'_{u} \XX_{u, v}.	\end{aligned} $$ 
The remainder $R_{s,t}$'s contribution  is  of order $|t-s|^{3\alpha}$ term, which sums to zero for $\alpha>\f 13$ and can be ignored.
The limit is along any sequences of partitions with mesh converging to zero.

 We therefore seek two processes $L_s$, $R$ and an expression
$$F(x_t)-F(x_s)=L_s(X_t-X_s) +R_{s,t},$$
where $\sup_{s\not =t, s,t \in [0,1]} \vert \f{R_{s,t}} {|t-s|^{2\alpha}} \vert \le C$.  Taylor expanding $F(x_t)$, one can deduce that $L_s=DF(x_s) F(x_s)$.
By Taylor's theorem,
$$\begin{aligned}
F(x_t)&=F(x_s)+DF(x_s) (x_t-x_s)\\
&+\f 12 \int_0^t(1-u) \Hess (F) (x_s+u(x_t-x_s)) (x_t-x_s, x_t-x_s) du.\end{aligned}$$
Here $DF(x)(v)=\sum_{i=1}^d \f {\partial  F}{\partial x_i } v_i  $, 
$\Hess F$ is the Hessian of $F$, and
$$\Hess F(x)(e,v)=\sum_{i,j=1}^d \f {\partial ^2F}{\partial x_i \partial x_j} e_i v_j.$$
The last terms in the Taylor expansion is  of order $\C^{2\alpha}$, one $\alpha$ each from $x_t-x_s$, and so goes into the $R$ term. 
We express $x_t-x_s$ in terms of $X_t-X_s$:
$$\begin{aligned}DF(x_s) (x_t-x_s)
&=DF(x_s)  (\int_s^t F(x_r) dX_r)\\
&\sim DF(x_s)  (F(x_s) (X_t-X_s)) + R_{s,t}^1.
\end{aligned}$$
The  $R^1$ term is of order $|t-s|^{2\alpha}$.
$$x_t-x_s\sim \sum F(x_u)(X_v-X_u) +DF(x_u)  (F(x_u) \XX_{v,u}).$$
If $\X=(W,\WW )$ to be the standard Brownian motion with its It\^o lift, i.e. $\WW^{i,j}_{s,t}= \int_s^t  (W_r^i-W_s^i)dW_r^j$
then
$$ \sum DF(x_u)F(x_u)\WW_{u,v}\to 0$$
in probability. This means the equation is the It\^o integral.  
If $X$ is a correlated Wiener process,  choose $U$ such that $U^TU=A$. Suppose that
 the Wiener process block is: 
$(X,  \XX+A t) =(U W,   \int_s^t  U(W_r-W_s)dUW_r)+\frac 1 2At )$. This leads to an SDE with Stratonovich integral
$$dx_t=F(x_t)U\circ dW_t.$$
This comes from the following fact.  Let  $\X=(X, \XX)$, $Z=(Z, \ZZ)$ be two rough paths in $\C^{\alpha}$, and  $(Y,Y')$ is controlled by $X$, i.e. $(Y, Y') \in \D_X^{2\alpha}$.
	If $g$ is a $2\alpha$-H\"older continuous functions such that 
	$$Z_t=X_t. \qquad  \ZZ_{s,t}=\XX_{s,t} +g(t)-g(s),$$
	then according to \cite{Friz-Hairer}, $(Y, Y') \in \D_Z^{2\alpha}$, and
	$$\begin{aligned}
	&\int Y \,d{\mathbf Z} =\lim \sum_{[s,t]\in \CP} Y_{s} Z_{s,t} + Y'_{s} \ZZ_{s, t}\\
	&=\lim  \sum_{[s,t]\in \CP}( Y_{s} X_{s,t} + Y'_{s} \XX_{s, t}+(g(t)-g(s)) Y_s' ) \\
	&=\int Y_s \, d\X_s+  \int Y_s' dg.
	\end{aligned} $$ 
	the last integral is a Young integral which is well defined since $Y'\in C^{\alpha}$ and $g\in C^{2\alpha}$.
	
	For the Hermite component $Z$  the secondary process makes no visible contribution in the limit of the enhanced Riemann sum  
	and the rough integral $\int_0^t f_i(x_s) d\X_s^i$ agrees with the Young integral.	
 Take $X^i$ or $X^j$, whose sum of regularity is then greater than $1$,  the secondary process $\int_s^t X^i_{s,r}   dX_r^j$, thus
  the enhanced Riemann sum limit is a Young integral. We may now conclude.

\begin{remark}
The solution of the rough differential equation $\dot x_t=F(x_t) d\X_t$, where $\X_t=( UW_t+\f 12 At,Z_t)$ with  canonical lift,  agrees almost surely with the solution
$$dx_t=F_1(f_t) U\circ dW_t+F_2(x_t) dZ_t,$$
where $F_1$ denotes $F$ restricted to its first $n$ components and $F_2$ denotes $F$ restricted to the remaining $N-n$ components.
By the standard theorem, also,
	the mixed integral equations is well posed,  global, and continuous in the initial data.
\end{remark}

\subsection{Auto-correlation and moments of  fOU}
{\bf  Lemma \ref{correlation-lemma}}
	Let  $H\in (0, \f 12)\cup (\f 12, 1)$.  For any $t\not =s$, \begin{equation}\label{cor1-II}
	|\rho(s,t)| \lesssim 1\wedge |t-s|^{2H-2}.
	\end{equation}

\begin{proof}
	We give an indicative proof and fix $H>\f 12$. It is sufficient to prove this for $|t-s|$ large, then
	$${\begin{split}
		\f{\rho(s,t)}{ \sigma^2 \, H(2H-1) }&= \int_{-\infty}^\infty \int_{-\infty}^{ (2t-v) \wedge (2s+u) }  e^{-(s+t-u) }  |v|^{2H-2} \; du\;dv\\
		&= \int_{-\infty}^\infty  e^{- |v-(t-s)| }  |v|^{2H-2} dv
		=  \int_{-\infty}^\infty  |v+t-s|^{2H-2} \; e^{- |v| } \;  dv.
		\end{split}}
	$$
	The integration region breaks up into three:
	$$(A) \qquad  \f 12 |t-s| \le |v| \le  2 |t-s|, \qquad (B) \quad    |v|\le \f 12 |t-s| \quad \hbox{ or } \quad   |v| \ge  2 |t-s|.$$
	In region (A), $$
	\int_{ \f 12 |t-s|  \le |v|\le 2|t-s|}  |v+t-s|^{2H-2} \;  e^{-\f 12  |t-s| }\; dv
	\le   |t-s|^{2H-1} e^{-\f 12  |t-s| }.$$
	For $|t-s|$ large this gives better bound than $|t-s|^{2H-2}$.
	In region (B),  since $2H-2<0$, 
	$$  \int_B   |v+t-s|^{2H-2} \;  e^{- |v| }\; dv
	\le   |t-s|^{2H-2}\int_{-\infty}^\infty  e^{- |v| }\; dv,$$
	giving the correct rate.
For $H< \f 12$, we have on one hand the large time asymptotics from
		\cite{Cheridito-Kawaguchi-Maejima}: 
		$\rho(s)=2\sigma^2 H(2H-1) s^{2H-2} +O(s^{2H-4})$, on the other hand 
		$\E (y_s y_t ) \leq  \Vert y_s \Vert_{L^2} \Vert y_y \Vert_{L^2} \leq 1$, showing that $\rho$ is locally bounded and concluding the proof.
\end{proof}

{\bf Proof for Lemma  \ref{Integrals}.} This comes down to the following statement:  we only need to show that for $\epsilon \in (0,\f 1 2]$, the following holds uniformly :

\begin{equation} \label{correlation-decay-2-3}
\left( \int_0^{t} \int_0^{t}  \vert  \rho^{\epsilon}(u,r) \vert^m\, dr \,du\right)^{\f 12} \\
\lesssim 
\left\{	\begin{array}{lc}
\sqrt { t \epsilon  \int_0^\infty \rho^m(s) ds} ,  \quad  &\hbox {if} \quad H^*(m)<\f 12,\\
\sqrt { t \epsilon  \vert \ln\left(\f t \epsilon \right) \vert}, \quad  &\hbox {if} \quad H^*(m)=\f 12,\\
\left(  \f t \epsilon \right) ^{H^*(m)-1},  \quad &\hbox {if} \quad H^*(m)>\f 12.
\end{array} \right.
\end{equation}		

\begin{proof}
We first observe that
\begin{equation}\label{correlation-decay-3}
 \int_0^\infty \rho^m(s) ds <\infty \quad \Longleftrightarrow \quad  H^*(m)<\f 12\quad 
 \Longleftrightarrow \quad H<1 -\f 1{2m}.
\end{equation}	
By a change of variables and using  estimate (\ref{cor1}) on the decay of the auto correlation function (\ref{cor1}),
\begin{align*}
\int_0^{\f t \epsilon} \int_0^{\f t \epsilon} \vert \rho(\vert r-u \vert)\vert^m dr du 
&=2 \f t \epsilon  \int_0^{\f t \epsilon} \vert \rho(s)\vert^m ds\\
&\lesssim  \left\{	\begin{array}{cl} 
\f t \epsilon \int_0^\infty \rho^m(s) ds,  &\hbox{ if }H^*(m)<\f 12,\\
\left( \f t \epsilon \right)^{2H^*(m)}, &\hbox{ otherwise.}
\end{array} \right. ,
\end{align*}
For the case $H^*(m)=\f 1 2$ we use 
\begin{align*}
\int_0^{\f t \epsilon} \vert \rho(s) \vert^m ds &\leq \int_0^{\f T \epsilon} \vert \rho(s) \vert^m ds
\lesssim  \int_0^{\f T \epsilon} (1 \wedge \f 1 s) ds \lesssim    \vert \ln \left( \f T \epsilon \right) \vert 
\lesssim  \vert \ln \left( \f 1 \epsilon \right) \vert.
\end{align*}
To complete the proof we observe that by a simple change of variables,
\begin{align*}
\int_0^{t} \int_0^{t}  \vert  \rho^{\epsilon}(u,r) \vert^m\, dr \,du &= \epsilon^2 \int_0^{\f t \epsilon} \int_0^{\f t \epsilon}  \vert  \rho(u,r) \vert^m\, dr \,du,
\end{align*}
concluding the proof.
\end{proof}

{\bf Lemm \ref{cty-lemma}.}
For any $\gamma \in (0, H)$, $p>1$, and  $s,t\in [0, \infty)$,
the following estimates hold:  
 $$\|y_s-y_t\|_{L^p} \lesssim   1 \wedge |s-t|^{H}, \qquad  \E  \sup_{s\not =t} \left( \f{  |y_s-y_t |} { |t-s|^{\gamma}} \right)^p \lesssim  C(\gamma, p)^p M.$$

\begin{proof}

We  use the Ornstein-Uhlenbeck equation
$$y_s-y_r=-  \int_r^s y_udu+B_s-B_r,$$
to obtain $\E|y_s-y_r|^2 \lesssim   (s-r)^2 \E \vert y_1 \vert ^2+q|s-r|^{2H}$.
Using stationality of $y_t$,  one has also $ \E|y_s-y_r|^{2} \leq  2\E |y_1|^2=2$.
Since for Gaussian random variables,  the estimate for the  $L^{2p}$ norm is the same as for its  $L^2$ norm, we have
 $$\|y_s-y_r\|_{L^p} \lesssim \left\{\begin{aligned} 
& 1,  &\hbox{ if } |s-r|\ge 1;\\
& |s-r|^{H}, \quad &\forall |s-r|\le 1. \end{aligned}\right.$$

By symmetry, and change of variables, 
$$\begin{aligned} &\int_0^T \int_0^T \f { \E|y_s-y_r|^p}{ {|s-r|}^{ \gamma p+2}}
\\
&\lesssim \int_0^1 \int_{-v}^{v} v^{Hp-\gamma p-2}\, du\,dv
+\left( \int_1^T \int_{-v}^v + \int_T^{2T}  \int_{2T+v}^{2T-v}\right)  v^{-\gamma p-2}du dv\\
&\lesssim  1+ T^{-\gamma p}\lesssim 1.
\end{aligned} $$
The first term is finite as soon as $\gamma <H$, the second term is finite as soon as $\gamma$ is positive.
The remaining claim follows from the well known Garcia-Rodemich-Romsey inequality below.
\end{proof}

\begin{lemma}
[Garcia-Rodemich-Romsey-Kolmogorov inequality]
\label{lemma-GRR}
Let $T>0$.
\begin{itemize}
\item [(1)]  Let $\theta: [0,T]\to \R^d$. For any positive numbers $\gamma, p$,  there exists a constant $C(\gamma, p)$ such that 
$$\sup_{s\not =t , s,t \in [0,T]} \f{|\theta(t)-\theta (s)|}{|t-s|^\gamma} \le C(\gamma, p)
\left(  \int_0^T \int_0^T \f { |\theta_s-\theta_r|^p}{ {|s-r|}^{\gamma p+2}}ds dr\right)^{\f 1p}.$$
\item[(2)] Let $\theta$ be a stochastic process.
Suppose that for $s,t\in [0, T]$, $p>1$ and $\delta>0$, 
$$\E|\theta(t)-\theta (s)|^p \le c_p |t-s|^{1+\delta},$$ where $c_p$ is a constant. Then for $\gamma<\f \delta p$,
$$\||\theta|_{C^\gamma([0,T])}\|_p \le C(\gamma, p) ( c_p)^{\f 1p} \left(\int_0^T \int_0^T |u-v|^{\delta -\gamma p-1} dudv\right)^{\f 1p},
$$ the right hand side is finite when $\gamma \in (0, \f \delta p)$.

\end{itemize}
\end{lemma}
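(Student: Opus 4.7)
The plan is to deduce both parts from the classical Garcia--Rodemich--Rumsey inequality, specialized to a power-type Young function, and then to lift the deterministic statement to the stochastic one by Fubini.

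For part (1), recall the general GRR lemma: if $\Psi, p_{\star}\colon[0,\infty)\to[0,\infty)$ are continuous, strictly increasing with $\Psi(0)=p_{\star}(0)=0$ and $\Psi(\infty)=\infty$, and
\[
B:=\int_0^T\!\!\int_0^T \Psi\!\left(\frac{|\theta(t)-\theta(s)|}{p_{\star}(|t-s|)}\right) ds\,dt<\infty,
\]
then for all $0\le s<t\le T$ one has $|\theta(t)-\theta(s)|\le 8\int_0^{t-s}\Psi^{-1}(4B/u^{2})\,dp_{\star}(u)$. I would specialize to $\Psi(u)=u^{p}$ and $p_{\star}(u)=u^{\gamma+2/p}$, so that the integrability condition becomes exactly the right-hand side of the claimed inequality (raised to the $p$-th power), $\Psi^{-1}(v)=v^{1/p}$, and $dp_{\star}(u)=(\gamma+2/p)\,u^{\gamma+2/p-1}du$. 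A direct computation then gives
\[
|\theta(t)-\theta(s)| \,\le\, 8(\gamma+2/p)\,(4B)^{1/p}\int_0^{t-s} u^{\gamma-1}\,du \,=\, C(\gamma,p)\,B^{1/p}\,|t-s|^{\gamma},
\]
which upon dividing by $|t-s|^\gamma$ and taking the supremum in $s\neq t$ yields the assertion, with an explicit constant $C(\gamma,p)=\frac{8(\gamma+2/p)}{\gamma}\cdot 4^{1/p}$.

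For part (2), apply the pathwise statement of part (1) to the random trajectory $\theta$, then raise to the $p$-th power:
\[
\sup_{s\neq t\in[0,T]}\frac{|\theta_t-\theta_s|^{p}}{|t-s|^{\gamma p}} \,\le\, C(\gamma,p)^{p}\int_0^T\!\!\int_0^T\frac{|\theta_s-\theta_r|^{p}}{|s-r|^{\gamma p+2}}\,ds\,dr.
\]
Taking expectations, the left-hand side is $\||\theta|_{C^{\gamma}([0,T])}\|_{p}^{p}$; on the right, Tonelli permits exchanging $\E$ with the double integral since the integrand is nonnegative. Inserting the hypothesis $\E|\theta_t-\theta_s|^{p}\le c_{p}|t-s|^{1+\delta}$ gives
\[
\E\,|\theta|_{C^{\gamma}([0,T])}^{p} \,\le\, C(\gamma,p)^{p}\,c_{p}\int_0^T\!\!\int_0^T|s-r|^{\delta-\gamma p-1}\,ds\,dr,
\]
and the deterministic double integral is finite exactly when $\delta-\gamma p-1>-1$, i.e.\ $\gamma<\delta/p$, which is the hypothesis; taking $p$-th roots concludes the proof.

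There is no real obstacle: part (1) is a direct specialization of GRR and part (2) is a Fubini/moment bookkeeping argument. The only care points are making sure the exponent choices in $p_{\star}$ give the correct weight $|s-r|^{\gamma p+2}$ in the denominator, and tracking the range of admissible $\gamma$ so that both $\int_0^{t-s}u^{\gamma-1}du$ in part (1) and the diagonal singularity $|s-r|^{\delta-\gamma p-1}$ in part (2) remain integrable; both reduce to the single condition $\gamma<\delta/p$ imposed in the statement.
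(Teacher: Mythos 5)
Your proof is correct. The paper states this lemma as a known classical fact and gives no proof of its own, so there is nothing to compare against; your derivation is the standard one. The specialization of the general Garcia--Rodemich--Rumsey lemma with $\Psi(u)=u^{p}$ and $p_{\star}(u)=u^{\gamma+2/p}$ does produce exactly the weight $|t-s|^{\gamma p+2}$ in the denominator, the computation $8\int_0^{t-s}(4B/u^2)^{1/p}\,dp_{\star}(u)=C(\gamma,p)B^{1/p}|t-s|^{\gamma}$ is right, and part (2) is indeed just Tonelli plus the moment hypothesis, with the diagonal singularity $|s-r|^{\delta-\gamma p-1}$ integrable precisely when $\gamma<\delta/p$. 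The only point worth flagging is that the general GRR lemma is stated for continuous $\theta$ (or requires passing to a continuous modification when the double integral is a.s.\ finite); in the applications in this paper the processes are continuous, so this is harmless, but it should be mentioned if part (1) is asserted for arbitrary $\theta:[0,T]\to\R^d$.
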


 \subsection{Proof of the conditional integrability of fOU}
\label{prove-lemma-int}

 The aim of this section is to prove the estimate (\ref{integrable}),  we therefore restrict ourselves to the case $H> \f 1 2$.
Firstly, we  compute the  conditional expectations of  $\E(G(y_t) | \mathcal{F}_k)$ where $G \in L^2(\mu)$.

We begin with  decomposing the fractional Ornstein-Uhlenbeck process into a part which is $\mathcal{F}_s$ measurable and another one independent of $\mathcal{F}_s$, where $\F_s$ is the filtration generated by 
$B^H$.  In \cite{Additive} it was shown that such a decomposition is available for fractional Brownian motion with $H> \f 1 2$ by using the Mandelbrot-Van Ness representation it was shown that, for $k<t$,
\begin{align*}
B_t -B_k &= \f {1} {c_1(H )} \left( \int_{-\infty}^{k} (t-r)^{H - \f 3 2} - (k-r)^{H- \f 3 2} dW_r + \int_k^{t} (t-r)^{H- \f 3 2} dW_r\right)\\
&= \overline{B}^k_t + \tilde{B}^k_t,
\end{align*}
where $\overline{B}^k_t$ is $  \F_k$ measurable and $\tilde{B}^k_t$ is independent of $  \F_k$ . Furthermore the filtration generated by the fractional Brownian motion is the same as the one generated by the two-sided Wiener process $W_t$.
Using the above we have,
\begin{align*}
y_t &= \int_{-\infty}^{t} e^{-(t-r)} dB_r
= \int_{-\infty}^k e^{-(t-r)} dB_r +   \int_{k}^{t} e^{-(t-r)} d( B_r - B_k)\\
&=  \left( \int_{-\infty}^k e^{-(t-r)} dB_r +\int_{k}^{t} e^{-(t-r)} d\overline{B}^k_t \right)+  \int_{k}^{t} e^{-(t-r)} d\tilde{B}^k_t\\
&= \overline{y}^k_t + \tilde{y}^k_t,
\end{align*}
where the first term $\overline{y}^k_t$ is $\mathcal{F}_k$ measurable and $\tilde{y}^k_t$ is independent of  $\mathcal{F}_k$.
In case $s \leq k$ we set $\overline{y}^k_t = y_s$.

 It is only left so show that for $q \geq m$, $$\int_{k-1}^{\infty} \int_{k-1}^{\infty}   \E \left(    {\overline{y}^k_s} {\overline{y}^k_r}  \right)^q  dr ds $$
is bounded in $q$ and $k$. 
We make use of the following classical result,
\begin{lemma}[\cite{Huang-Cambanis}]\label{Huang-Cambanis}
If $X_s$ is a Gaussian process with covariance $R(t,s) = \E \left( X_s X_t \right)$. Suppose that  $\partial_t \partial_s R(t,s)$  is integrable over every bounded region. Then we have, for any $f,g: \R\to \R$ satisfying  the assumption 
$$\int_a^b \int_a^b \vert f_t \,g_s \, \partial_t \partial_s R(t,s) \vert ds dt < \infty,$$
the following It\^o-isometry,
\begin{equation*}
\E \left( \int_a^b f_t \, dX_t \int_a^b g_s\, dX_s \right) = \int_a^b \int_a^b f_t\, g_s \,\partial_t \partial_s R(t,s)\, ds \,dt.
\end{equation*}
\end{lemma}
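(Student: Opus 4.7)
The plan is the classical extension-by-density construction of Wiener-type integrals for Gaussian processes: verify the identity on step functions by direct computation, then extend it to the full domain by completing the natural semi-Hilbert space structure induced by the covariance $R$.

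For the first step, I would take step functions $f = \sum_{i} c_i \1_{[t_i,t_{i+1})}$ and $g = \sum_{j} d_j \1_{[s_j,s_{j+1})}$ subordinate to a common refinement of a partition of $[a,b]$, and define the integrals as finite Riemann--Stieltjes sums $\int_a^b f_t\, dX_t := \sum_i c_i (X_{t_{i+1}} - X_{t_i})$, similarly for $g$. Bilinearity of expectation yields
$$\E\Big[\int f\,dX \cdot \int g\,dX\Big] = \sum_{i,j} c_i d_j \bigl(R(t_{i+1}, s_{j+1}) - R(t_i, s_{j+1}) - R(t_{i+1}, s_j) + R(t_i, s_j)\bigr).$$
Local integrability of $\partial_t\partial_s R$ together with Fubini and the fundamental theorem of calculus rewrites each rectangular increment of $R$ as $\int_{t_i}^{t_{i+1}}\!\int_{s_j}^{s_{j+1}} \partial_t \partial_s R(t,s)\, ds\, dt$, and repacking produces the double integral claimed in the lemma.

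For the second step, I would endow the space $\mathcal{E}$ of step functions on $[a,b]$ with the bilinear form $\langle f, g\rangle_R := \int_a^b\!\int_a^b f_t g_s \, \partial_t \partial_s R(t,s)\, ds\, dt$. By step one this equals the $L^2(\Omega)$ covariance pairing of the corresponding integrals, so it is automatically positive semi-definite on $\mathcal{E}$. Let $\mathcal{H}_R$ be the Hilbert space obtained from the quotient of $\mathcal{E}$ by the null cone and completing in $\langle\cdot,\cdot\rangle_R$. The assignment $\mathcal{I}: f\mapsto \int f\,dX$ is then a linear isometry from $\mathcal{E}$ into $L^2(\Omega)$, and it extends uniquely to an isometry $\mathcal{I}:\mathcal{H}_R\to L^2(\Omega)$.

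For the third step, given $f,g$ satisfying the absolute-integrability hypothesis, I would approximate them by dyadic piecewise-constant functions $f_n, g_n$ truncated so that $|f_n|\leq |f|$ and $|g_n|\leq |g|$ pointwise. The domination
$$\bigl|(f_n g_n - fg)\,\partial_t \partial_s R(t,s)\bigr| \leq 2\,|f_t|\,|g_s|\,|\partial_t \partial_s R(t,s)|,$$
which is integrable by hypothesis, permits Lebesgue dominated convergence and gives $f_n\to f$, $g_n\to g$ in $\mathcal{H}_R$. Passing to the limit in the step-function identity $\E[\mathcal{I}(f_n)\mathcal{I}(g_n)] = \langle f_n, g_n\rangle_R$ then yields the full isometry. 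The only delicate point is that $\partial_t\partial_s R$ is a signed kernel, so one cannot invoke Tonelli directly; the absolute-integrability hypothesis is precisely what is needed to dominate the approximating sequence and pass the isometry to the limit. Beyond that, everything is the standard $L^2$-extension procedure for Wiener integrals against a Gaussian integrator.
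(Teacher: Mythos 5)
The paper gives no proof of this lemma at all: it is quoted verbatim from Huang--Cambanis \cite{Huang-Cambanis} and used as a black box in \S\ref{prove-lemma-int}. Your proposal supplies the standard construction (verify on step functions, complete the induced semi-inner-product space, extend by isometry), and as a blueprint it is the right one; there is therefore nothing in the paper to compare it against except the citation.

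Two points in your argument deserve flagging, both located at the step-function stage and the limiting stage respectively. First, the identity $R(t_{i+1},s_{j+1})-R(t_i,s_{j+1})-R(t_{i+1},s_j)+R(t_i,s_j)=\int_{t_i}^{t_{i+1}}\int_{s_j}^{s_{j+1}}\partial_t\partial_s R\,ds\,dt$ does \emph{not} follow from mere local integrability of the a.e.\ mixed partial: take $R(t,s)=C(t)C(s)$ with $C$ the Cantor function (the covariance of $X_t=C(t)\xi$ for a standard Gaussian $\xi$); then $\partial_t\partial_s R=0$ a.e., yet the rectangular increments are nonzero and the claimed isometry fails. So your appeal to ``Fubini and the fundamental theorem of calculus'' silently uses absolute continuity of $R$ on rectangles, which is a genuine additional hypothesis (present in Huang--Cambanis's actual formulation, and verified in this paper's application, where $R(t,s)=\E(\overline{y}^k_t\overline{y}^k_s)$-type covariances are written explicitly as double integrals of a locally integrable kernel $S$ in Lemma \ref{lemma-corr}). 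Second, in your step three, dominated convergence of the bilinear forms $\langle f_n,g_n\rangle_R\to\langle f,g\rangle_R$ is not by itself enough: to pass to the limit in $\E[\mathcal{I}(f_n)\mathcal{I}(g_n)]$ you need $\mathcal{I}(f_n)\to\mathcal{I}(f)$ in $L^2(\Omega)$, i.e.\ $\langle f_n-f,f_n-f\rangle_R\to 0$, and dominating that quantity requires the diagonal integrability $\int_a^b\int_a^b|f_tf_s\,\partial_t\partial_sR|\,ds\,dt<\infty$ (and likewise for $g$), which is strictly stronger than the cross-term condition the lemma states. Both defects are inherited from the informal statement of the lemma rather than introduced by you, and both are harmless in the paper's use case, but a self-contained proof should state and use the correct hypotheses.
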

\begin{lemma}\label{lemma-corr}
Setting $R(t,s) = \E  \Big(  \overline{B}^k_t   \overline{B}^k_s  \Big)$ and 
$S(t,s) = \partial_t \partial_s R(t,s)$. Then $S$ is regular in the region $\{t\not =s\}$, and 
$$S(t,s) \lesssim  (t\wedge s-k)^{2H- 2} \qquad \forall s,t>k.$$ 
\end{lemma}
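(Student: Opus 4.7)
The plan is to compute $R$ explicitly via It\^o isometry from the Wiener integral representation of $\overline{B}^k_t$, differentiate twice under the integral sign, and estimate the resulting integral by a monotonicity comparison.

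From the Mandelbrot--Van Ness kernel (consistent with the normalisation constant $c_1(H)$ introduced earlier) we have, for $t > k$,
$$\overline{B}^k_t = \f{1}{c_1(H)} \int_{-\infty}^k \left[(t-r)^{H-\f 1 2} - (k-r)^{H-\f 1 2}\right] dW_r.$$
The It\^o isometry gives
$$R(t,s) = \f{1}{c_1(H)^2} \int_{-\infty}^k \left[(t-r)^{H-\f 1 2} - (k-r)^{H-\f 1 2}\right]\left[(s-r)^{H-\f 1 2} - (k-r)^{H-\f 1 2}\right] dr.$$
Substituting $u = k-r$ and setting $a = t-k$, $b = s-k$ rewrites this as
$$R(t,s) = \f{1}{c_1(H)^2} \int_0^\infty \left[(u+a)^{H-\f 1 2} - u^{H-\f 1 2}\right]\left[(u+b)^{H-\f 1 2} - u^{H-\f 1 2}\right] du.$$
At $u \to \infty$ the integrand is $O(u^{2H-3})$, which is integrable since $H < 1$; at $u \to 0^+$ it is bounded. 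Hence $R$ is well defined.

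Differentiating under the integral sign, which is justified by dominated convergence with the uniform envelope provided by the bound below, yields
$$S(t,s) = \f{(H-\f 1 2)^2}{c_1(H)^2} \int_0^\infty (u+a)^{H-\f 3 2}(u+b)^{H-\f 3 2}\, du.$$
Continuity of this integral in $(a,b) \in (0,\infty)^2$ is immediate, so $S$ is regular on $\{t,s > k\}$; in particular on $\{t \neq s\}$.

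For the estimate, assume without loss of generality $a \le b$. Since $H - \f 3 2 < 0$, we have $(u+b)^{H-\f 3 2} \le (u+a)^{H-\f 3 2}$, and so
$$S(t,s) \le \f{(H-\f 1 2)^2}{c_1(H)^2} \int_0^\infty (u+a)^{2H-3}\,du = \f{(H-\f 1 2)^2}{(2-2H)\, c_1(H)^2}\, a^{2H-2},$$
because $2H - 3 < -1$. Recalling $a = (t\wedge s) - k$, this is the claimed bound $S(t,s) \lesssim (t\wedge s - k)^{2H-2}$. There is no genuine obstacle in this argument: once the Wiener representation is in hand, the whole proof reduces to an It\^o isometry, a change of variable, and the monotonicity comparison above.
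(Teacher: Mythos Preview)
Your proof is correct and follows essentially the same route as the paper: It\^o isometry for $\overline{B}^k$, then differentiation under the integral to obtain $S(t,s)=\tfrac{(H-\frac12)^2}{c_1(H)^2}\int_{-\infty}^k (t-y)^{H-\frac32}(s-y)^{H-\frac32}\,dy$, followed by an elementary estimate of this integral. Your change of variables $u=k-r$ and the monotonicity comparison $(u+b)^{H-\frac32}\le (u+a)^{H-\frac32}$ are in fact slightly cleaner than the paper's substitution, which introduces a factor $(t-s)^{2H-2}$ and therefore forces a separate treatment of the diagonal; your version shows directly that $S$ is continuous on all of $(k,\infty)^2$ and gives the bound in one line.
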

\begin{proof}
Recall that $H>\f 12$ and  observe that for $k<t$,

\begin{align*}
{ \overline{B}^k_t}&=  \f{1} {c_1(H)} \int_{-\infty}^{k} \left(  (t-r)^{H - \f 1 2}-(k-r)^{H-\f 1 2} \right) dW_r\\
&= \f{H- \f 1 2}  {c_1(H)} \int_{-\infty}^{k} \int_{k}^{t} (u-r)^{H- \f 3 2} \,du \,dW_r.
\end{align*}
By the It\^o isometry for Wiener processes, for $k<\min(t,s)$,
\begin{align*}
&\E \left( {  \overline{B}^k_t \overline{B}^k_s} \right) \\
&= \left( \f {H- \f 1 2} {c_1(H)}\right)^2 \int_{ {k} }^{t} \int_{ {k} }^{s} \E \left(  \int_{-\infty}^{k} (u-r)^{H- \f 3 2}  dW_r  \int_{-\infty}^{k}  (w-v)^{H- \f 3 2} dW_v \right) dw\, du\\
&= \left( \f {H- \f 1 2} {c_1(H)}\right)^2  \int_{ {k} }^{t} \int_{ {k} }^{s} \int_{-\infty}^{k} (u-y)^{H- \f 3 2} (w-y)^{H- \f 3 2} \,dy \,dw \,du,
\end{align*}
By a change of variables, for  $t>s>k$
\begin{align*}
S(t,s) &= \partial_t \partial_r R(t,r)\\
&=  \left( \f {H- \f 1 2} {c_1(H)}\right)^2  \int_{-\infty}^{k} (t-y)^{H- \f 3 2} (s-y)^{H- \f 3 2} \, dy\\
&=   \left( \f {H- \f 1 2} {c_1(H)}\right)^2  (t-s)^{2H- 2} \int_{\f {s-k} {t-s}}^{\infty} (1+w)^{H- \f 3 2} w^{H- \f 3 2} \,dw,
\end{align*}
which is regular on $\{t>s\}$.  Now
 $$ \int_{\f {s-k} {t-s}}^{\infty}(1+w)^{H- \f 3 2} w^{H- \f 3 2} \,dw \le  \int_{\f {s-k} {t-s}}^{\infty}  w^{2H- 3} \,dw 
 = \f 1 {2-2H}  \left( \f {s-k} {t-s}\right) ^{2H-2}$$ is finite for $H>\f 12$  giving the bound 
 $$S(t,s) \lesssim (t \wedge s-k)^{2H- 2},$$ 
 concluding the proof for $(t,s)$ off diagonal. 
 
On the diagonal, 
\begin{align*}
 \partial_t \partial_t \E \Big( c_1(H) \overline{B}^k_t \Big)^2
=&2 \int_{-\infty}^{k} (t-y)^{H- \f 3 2} (t-y)^{H- \f 3 2} dy\\
&+2  \int_k^t \int_{-\infty}^k \left(H-\f 32\right) (t-v)^{H-\f 52} (s-v)^{H-\f 32} \, dv\,ds \\
= &2\int_{t-k}^{\infty} w^{2H-3} dw\\
&+2 \f{H-\f 32}{H-\f 12}  \int_{-\infty}^k  (t-v)^{H-\f 52} \Big[ (t-v)^{H-\f 12}
-(k-v)^{H-\f 12} \Big] \, dv.
\end{align*}
The second term has no singularity at $k$.
Consequently,
\begin{equation*}
\begin{aligned}
 &\partial_t \partial_t \E \Big( c_1(H) \overline{B}^k_t \Big)^2\\
 &= 2\f {(t-k)^{2H-2}} {2H-2}+ \f{2H-3}{H-\f 12}  \int_{-\infty}^k  (t-v)^{H-\f 52} \Big[ (t-v)^{H-\f 12}
-(k-v)^{H-\f 12} \Big] \, dv\\
&\le  2\f {(t-k)^{2H-2}} {2H-2}+ \f{2H-3}{H-\f 12}  \int_{-\infty}^k  (t-v)^{2H-3}\, dv = C(t-k)^{2H-2}.
\end{aligned}
\end{equation*}
We have used the fact that $ \int_{-\infty}^k  (t-v)^{H-\f 52} (k-v)^{H-\f 12}\, dv$ is a finite number.
This means the double derivative is  only singular at  $t=k$, but not along the whole diagonal.  Moreover, the singularity is integrable.
\end{proof}

\begin{remark}
For the fractional Brownian motion $B_t$ and  for  $t>s$,
 $$\partial_t \partial_s \E \left( B_t B_s \right) = 2H(2H-1) (t-s)^{2H-2},$$ which is singular along the diagonal, whereas as 
 shown in the proof of Lemma \ref{lemma-corr},
$ \partial_t \partial_t \E \Big( \overline{B}^k_t \Big)^2$
has only singularity at  $t=k$, but not along the whole diagonal.  Moreover, the singularity is integrable, we may therefore use the  It\^o isometry,  Lemma \ref{Huang-Cambanis}, freely.
\end{remark}
Now we go ahead and establish estimates for the decay of $\E \left( \overline{y}^{k}_t \overline{y}^{k}_s \right)$.

\begin{lemma}
\label{lemma-integral}
For any $k<s$,
$$ \int_k^s e^{-(s-v)}  (v-k)^{2H-2} \,dv  \lesssim  1 \wedge (s-k)^{2H-2}.$$ 
The constant is independent of $k$.
\end{lemma}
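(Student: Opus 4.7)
The plan is to reduce to a clean one-parameter estimate by translation and then split the integral according to whether the size $T := s-k$ is small or large. Set $u = v-k$, so the integral becomes
\[
I(T) := \int_0^T e^{-(T-u)}\, u^{2H-2}\, du,
\]
with $T>0$ and, crucially, $2H-2\in(-1,0)$ (since $H>\tfrac12$). The target bound $I(T)\lesssim 1\wedge T^{2H-2}$ is independent of $k$, which is exactly what the substitution makes manifest.

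For the small-time regime $T\le 1$, I would simply estimate $e^{-(T-u)}\le 1$ and use the integrability of $u^{2H-2}$ near $0$:
\[
I(T)\;\le\;\int_0^T u^{2H-2}\,du\;=\;\frac{T^{2H-1}}{2H-1}\;\le\;\frac{1}{2H-1}.
\]
Since $T\le 1$ forces $1\wedge T^{2H-2}=1$, this gives the desired bound on this range.

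For the large-time regime $T\ge 1$, I would split at the midpoint:
\[
I(T)\;=\;\int_0^{T/2} e^{-(T-u)}u^{2H-2}\,du \;+\; \int_{T/2}^{T} e^{-(T-u)} u^{2H-2}\,du.
\]
On $[0,T/2]$ the exponential factor is $\le e^{-T/2}$, giving a term dominated by $e^{-T/2}\tfrac{(T/2)^{2H-1}}{2H-1}$, which decays faster than any polynomial and is therefore $\lesssim T^{2H-2}$. On $[T/2,T]$, monotonicity of $u\mapsto u^{2H-2}$ (since $2H-2<0$) gives $u^{2H-2}\le (T/2)^{2H-2}$, and the remaining $\int_{T/2}^T e^{-(T-u)}\,du\le 1$, so this piece is $\lesssim T^{2H-2}$ as well. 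Combining, $I(T)\lesssim T^{2H-2}=1\wedge T^{2H-2}$ on $T\ge 1$.

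There is no real obstacle here: the argument is elementary, and the only point to watch is that $2H-2$ is negative (which makes $u^{2H-2}$ singular at $0$ but decreasing, and makes $T^{2H-2}$ the correct heavy-tail rate). The $k$-independence is automatic from the translation $u=v-k$.
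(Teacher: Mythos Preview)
Your argument is correct and follows essentially the same strategy as the paper: separate the small-time and large-time regimes, and in the large-time case split near the midpoint so that one piece inherits exponential decay and the other the power-law decay of $u^{2H-2}$. Your execution is in fact cleaner than the paper's: the substitution $u=v-k$, $T=s-k$ makes the $k$-independence automatic, and your monotonicity argument on $[T/2,T]$ avoids the integration by parts the paper uses on its pieces $[k+1,s/2]$ and $[s/2,s]$.
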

\begin{proof}
This can be seen by splitting the integral  into three regions $\int_k^{k+1}+\int_{k+1}^{\f s 2} +\int_{\f s 2}^s$, and integration by part with the second two terms. 
This leads to: $\int_k^{k+1}   e^{-(s-v)}  (v-k)^{2H-2} \,dv \le  \f 1 {2H-1} e^{-(s-k-1)}$. Furthermore for $s>3k$,
$$\begin{aligned}
&\int_{k+1}^{s}  e^{-(s-v)}  (v-k)^{2H-2} \,dv\\
&=  (s -k )^{2H-2} -e^{-(s-k-1)} (2H-3)  -(2H-2) \left( \int_{k+1} ^{\f s 2} +\int_{\f s 2}^s \right) e^{-(s-v)} (v-k)^{2H-3}  dv\\
&\lesssim (s-k)^{2H-2} -e^{-(s-k)} -e^{-\f s 2} (v-k)^{2H-2} |_{k+1}^ {\f s 2} -(v-k)^{2H-2} |_{\f s 2}^s\\
&\lesssim  (s-k)^{2H-2}+e^{-\f s 2} +(\f s 2 -k)^{2H-2}\\
&\lesssim (s-k)^{2H-2}.
\end{aligned}$$
This gives the required estimate.
\end{proof}

\begin{proposition}\label{integrable-lemma}
 Let $H>\f 12$ and suppose that  $H^*(m)<0$. Then,
$$\sup_k \sup_{q\geq m} \int_{k-1}^{\infty} \int_{k-1}^{\infty}   \E \left(    {\overline{y}^k_s} {\overline{y}^k_t}  \right)^q \, dt \,ds < \infty.$$ 
\end{proposition}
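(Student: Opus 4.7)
The strategy is to control $\E(\overline y^k_s \overline y^k_t)^q$ by variances via Cauchy--Schwarz, bound $\var(\overline y^k_t)$ by a quantity decaying in $t-k$, and verify that the resulting factorised tail integral converges exactly when $H^*(q)<0$.

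For the variance, first note $\var(\overline y^k_t)\le\var(y_t)=1$ for every $t$, since $\overline y^k_t=\E(y_t\vert\F_k)$ (and in fact $\overline y^k_s=y_s$ for $s\le k$). For $t>k$, pulling the exponential factor out of $\int_{-\infty}^k e^{-(t-r)}\, dB_r$ gives the decomposition derived in the text,
\[ \overline y^k_t = e^{-(t-k)} y_k + V^k_t, \qquad V^k_t := \int_k^t e^{-(t-r)}\, d\overline B^k_r, \]
whose first summand contributes $e^{-2(t-k)}$ to the variance. For $V^k_t$, Lemma~\ref{Huang-Cambanis} combined with Lemma~\ref{lemma-corr} yields
\[ \E(V^k_t)^2 \lesssim \int_k^t\!\int_k^t e^{-(t-u)-(t-v)}(u\wedge v-k)^{2H-2}\, du\, dv. \]
Symmetrising $u\le v$ and applying Lemma~\ref{lemma-integral} to the inner $u$-integral gives $\int_k^v e^{-(t-u)}(u-k)^{2H-2}\, du\lesssim e^{-(t-v)}(1\wedge(v-k)^{2H-2})$; a second application of the same lemma to the resulting $v$-integral yields $\E(V^k_t)^2\lesssim 1\wedge (t-k)^{2H-2}$. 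Since $e^{-2u}\lesssim 1\wedge u^{2H-2}$ for $u\ge 0$, one concludes
\[ \var(\overline y^k_t)\lesssim 1\wedge (t-k)^{2H-2}\quad\text{for }t>k, \]
with constant uniform in~$k$.

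Cauchy--Schwarz then gives $|\E(\overline y^k_s \overline y^k_t)|^q\le(\var \overline y^k_s)^{q/2}(\var \overline y^k_t)^{q/2}$, so the double integral factorises into the square of $\int_{k-1}^\infty(\var \overline y^k_t)^{q/2}\, dt$. The contribution from $[k-1,k]$ is at most $1$, and the tail, after the change of variables $u=t-k$, equals
\[ \int_0^\infty(1\wedge u^{2H-2})^{q/2}\, du = 1+\int_1^\infty u^{q(H-1)}\, du = 1+\frac{1}{q(1-H)-1}, \]
finite precisely when $q(1-H)>1$, i.e.\ $H^*(q)<0$. The hypothesis $H^*(m)<0$ together with the monotonicity $H^*(q)\le H^*(m)$ for $q\ge m$ (since $H<1$) then yields the required bound, uniform in $k$ and $q\ge m$. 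The only genuinely non-routine step is the two-fold application of Lemma~\ref{lemma-integral} to $\E(V^k_t)^2$: the kernel $(u\wedge v-k)^{2H-2}$ is not of product form in $(u,v)$, so a preliminary symmetrisation is needed before each integration can be matched to the hypothesis of that lemma. The remaining ingredients are Cauchy--Schwarz and an elementary tail check in which the condition $H^*(m)<0$ appears exactly as the integrability threshold.
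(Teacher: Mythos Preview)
Your proof is correct and takes a cleaner route than the paper. The paper expands the covariance $\E(\overline y^k_s \overline y^k_t)$ into the four cross-terms coming from $\overline y^k_t = e^{-(t-k)}y_k + V^k_t$ and bounds each via Lemmas~\ref{lemma-corr} and~\ref{lemma-integral}, arriving at $\E(\overline y^k_s\overline y^k_t)\lesssim 1\wedge(t\wedge s-k)^{2H-2}$. That bound, however, carries no decay in the larger of $s,t$, so the final double integral as written there in fact diverges; your Cauchy--Schwarz factorisation bypasses this entirely. By reducing to the diagonal you only need $\var(\overline y^k_t)\lesssim 1\wedge(t-k)^{2H-2}$, which requires analysing just $\E(V^k_t)^2$ (essentially the $s=t$ case of the paper's term~IV), and the double integral then splits into the square of a convergent one-dimensional tail.

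Two small glosses are worth making explicit. First, $e^{-(t-k)}y_k$ and $V^k_t$ are not independent (both are $\F_k$-measurable), so the variance is not literally the sum of their variances; the elementary inequality $(a+b)^2\le 2a^2+2b^2$ handles this. Second, when pushing the implicit constant $C$ in the variance bound through the power $q/2$, combine it with the absolute bound $\var\le 1$ (i.e.\ use $\var\le 1\wedge C(t-k)^{2H-2}$ rather than $C(1\wedge(t-k)^{2H-2})$) to keep the estimate uniform in $q\ge m$; your written computation already reflects this.
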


\begin{proof}
We  first compute,  
\begin{align*}
\E \left( \overline{y}^{k}_t \overline{y}^{k}_s \right) =& \E \left( \left( e^{-(t-k)} y_k +\int_{k}^{t} e^{-(t-r)} d\overline{B}^k_r \right) \left( e^{-(s-k)} y_k +\int_{k}^{s} e^{-(s-v)} d\overline{B}^k_v \right) \right)\\
&= e^{-(t-s)} + I\!\!I + I\!\!I\!\!I + I\!V,
\end{align*}
where the first term is due to $\E(y_k)^2=1$ and $e^{-(t-k)} e^{-(s-k)}\le e^{-(t-s)}$, and
$$\begin{aligned}
&I\!\!I =  \E \left( e^{-(t-k)} y_k \int_{k}^{s} e^{-(s-v)} d\overline{B}^k_v \right),\\
&I\!\!I\!\!I = \E \left( e^{-(s-k)} y_k \int_{k}^{t} e^{-(t-r)} d\overline{B}^k_r \right), \\
&I\!V = \E \left( \int_{k}^{t} e^{-(t-r)} d\overline{B}^k_r \int_k^{s} e^{-(s-v)} d\overline{B}^k_v \right),
\end{aligned}$$
for $s \gg k$.
For $I\!\!I$ we use Cauchy-Schwartz inequality and the Lemma \ref{lemma-corr}, 
\begin{align*}
(I\!\!I)^2
&=\E \left( e^{-(t-k)} y_k \int_{k}^{s} e^{-(s-v)} d\overline{B}^k_v \right) ^2
= e^{-2(t-k)} \E \left( \int_{k}^{s} e^{-(s-v)} d\overline{B}^k_v \right)^2  \\
 &\lesssim  e^{-2(t-k)} \int_{k}^{s} \int_{k}^{s}  e^{-(s-v)}e^{-(s-u)} \partial_u \partial_v\; \E\left(\overline{B}^k_u \overline{B}^k_v \right) \,du \,dv\\
&\lesssim   e^{-2(t-k)}  \int_{k}^{s} \int_{k}^{s}  e^{-(s-v)}e^{-(s-u)} (u\wedge v -k)^{2H-2} \, du\,dv.
\end{align*}
We continue with the computation, making use of Lemma \ref{lemma-integral} in the final step: 
\begin{align*}
& \int_{k}^{s} \int_{k}^{s}  e^{-(s-v)}e^{-(s-u)} (u\wedge v -k)^{2H-2} \, du\,dv\\
&\lesssim  2 \int_{k}^{s}  e^{-(s-v)} (v-k)^{2H-2}\left(   \int_{v}^{s} e^{-(s-u)}  \, du\right)\, dv\\
&\lesssim   \int_{k}^{s}  e^{-(s-v)} (s-v)^{2H-2} \, dv\\
&\lesssim (s-k)^{2H-2}.
\end{align*}
Putting them together we have,
$$ I\!\!I \le  e^{-(t-k)}  (s-k)^{H-1}.$$

 Analogous arguments lead to $I\!\!I\!\!I \lesssim (t-k)^{2H-2}$.
Assuming, without loss of generality that  $t\ge s$,

\begin{align*}
I\!V&=\E \left( \int_{k}^{t} e^{-(t-r)} d\overline{B}^k_r \int_k^{s} e^{-(s-v)} d\overline{B}^k_v \right) \\
&= \int_{k}^{t} \int_{k}^{s }  e^{-(t-r)}  e^{-(s-r)} S(r,v) dv dr\\
&\lesssim  e^{-(t-s)}  \int_{k}^{t} \int_{k}^{s }  e^{-(s-r)}  e^{-(s-r)} (r\wedge v- k)^{2H-2} dv dr\\ 
&=2 e^{-(t-s)}  \int_{k}^{t} \int_{r}^{s }  e^{-(s-r)}  e^{-(s-r)} (r\wedge v- k)^{2H-2} dv dr\\
&\lesssim  2 e^{-(t-s)}     \int_k^t  (r-k)^{2H-2} e^{-(s-r)} \int_r^s  e^{-(s-r)}\, du  \, dr\\
&\lesssim  2   \int_k^t  (r-k)^{2H-2} e^{-(t-r)}  \, dr\\
&\lesssim  2    (t- k)^{2H-2}.
\end{align*}
We have again applied Lemma  \ref{lemma-integral} .
Putting everything we and using
\begin{align*}
\E \left( \overline{y}^{k}_t \overline{y}^{k}_s \right) &\leq \Vert \overline{y}^{k}_t \Vert_{L^2} \Vert \overline{y}^{k}_s \Vert_{L^2}
\leq 1,
\end{align*}  
we  obtain
\begin{align*}
\E \left( \overline{y}^{k}_t \overline{y}^{k}_s \right) \lesssim 1 \wedge (t\wedge s -k)^{2H-2}.
\end{align*}
Now recall that $H^*(m)< 0 $ is equivalent to $ (H-1)m +1 < 0$.
Consequently, for $q\ge m$ and $(2H-2)m +2 <0$,
 \begin{align*}
\int_{k-1}^{\infty} \int_{k-1}^{\infty} \E \left( \overline{y}^{k}_t \overline{y}^{k}_s \right)^q 
&\leq C \int_{k-1}^{\infty} \int_{k-1}^{\infty} 1 \wedge (t\wedge s -k)^{(2H-2)q} ds dt\\
&<\infty,
\end{align*}
where $C$ is a constant independent of $q$.  We have reached the conclusion of the proposition.
\end{proof}

\subsection{Kernel convergence of scaling path integrals}\label{kernel-condition}
We prove  Lemma \ref{kernel lemma}. 	In \cite{Taqqu}, instead of $y^{\epsilon}_t$, a moving average of the form 
$$X_t = \int_{\R} p(t-\xi) dW_{\xi}.$$
for a suitable function $p$ was considered and limit theorems (convergence in finite dimensional distributions) were proven for 
$$\epsilon^{H^*(m)} \int_0^{\f t \epsilon} G(X_s) ds.$$
 In this setup,  in order to prove weak convergence one uses the self-similarity of a Wiener process, $\sqrt{\lambda} W_{\f t \lambda} \sim W_t$, leading to weak convergence as this equivalence of course is only in law. Nevertheless, in our case we can write directly, without using self-similarity properties, $y_t^{\epsilon} = \int_{\R} \hat{g} (\f {t-\xi} {\epsilon}) dW_{\xi}$, and thus avoid using self-similarity and, hence, obtain convergence in $L^2$.
In \cite[Theorem 4.7]{Taqqu} using \cite[Lemma 4.5, Lemma 4.6]{Taqqu} the following result was obtained 
$$ \E \left(  \int_{\R^m}  \int_0^t \prod_{i=1}^m p\left( \f {s- \xi} {\epsilon}\right) \epsilon^{H- \f 3 2} ds dW_{\xi} - \f{Z^{H^*(m),m}} {K(H,m)} \right) \to 0,$$
using the Wiener integral representation of the Hermite processes this is equivalent, by multiple Wiener-Ito isometry, to
$$ \int_{\R^m} \left( \int_0^t \prod_{i=1}^m p\left( \f {s- \xi_i} {\epsilon}\right) \epsilon^{H - \f 3 2} ds - \int_0^t \prod_{i=1}^m \left( s- \xi_i \right)_{+}^{H- \f 3 2} ds \right)^2 d\xi_1 \dots d\xi_m.$$
To apply \cite[Theorem 4.7]{Taqqu} we rewrite our kernels in the above moving average form.
For the rescaled fractional Ornstein-Uhlenbeck process we obtain by above computation,
\begin{align*}
y^{\epsilon}_t
&=  \f {1} {c_1(H)} \epsilon^{-H} \int_{\R} \int_{-\infty}^t e^{- \f {t-u} {\epsilon} }  (u-s)_{+}^{H- \f 3 2} du dW_s\\
&= \f {1} {c_1(H)} \epsilon^{-H} \int_{\R} e^{- \f {t-s} {\epsilon}} \int_{-\infty}^{t-s} e^{\f v {\epsilon}} v_{+}^{H- \f 3 2}  dv dW_s\\
&= \f {1} {c_1(H)}  \epsilon^{-\f 1 2} \int_{- \infty}^{t} e^{- \f {t-s} {\epsilon}} \int_{0}^{\f {t-s} {\epsilon}} e^{v} v_{+}^{H- \f 3 2} dv dW_s\\
&=     \epsilon^{-\f 1 2}\int_{- \infty}^{t} g\left( \f{t-s} {\epsilon} \right) dW_s,
\end{align*}
where
\begin{equation}\label{g} g(s)=  \f 1 {c_1(H)}  e^{-s} \int_0^s e^u u_{+}^{H- \f 3 2} du.
\end{equation}
To apply \cite[Theorem 4.7]{Taqqu} we will verify the conditions for this $g$.

Now, the term we consider  in (\ref{explicit-kernel})  has the following form,
$$ \epsilon^{H^*(m)-1 }\int_{\R^m} \int_0^t \prod_{i=1}^m g\left(\f{s-\xi_i} {\epsilon}\right) \epsilon^{- \f m 2} ds dW_{\xi_1} \dots dW_{\xi_m}.$$
Using $H^*(m) = (H-1)m +1$ we obtain that this is equal to 
$$\int_{\R^m} \int_0^t \prod_{i=1}^m g\left(\f{s-\xi_i} {\epsilon}\right) \epsilon^{H- \f 3 2} ds dW_{\xi_1} \dots dW_{\xi_m},$$
the required Lemma \ref{explicit-kernel} will follow from the reformulated convergence below.
\begin{lemma}
	Let $\lambda$ denote the Lebesgue measure, then 
	\begin{equation}\label{explicit-kernel-2}
	\left \Vert \int_0^t  \prod_{i=1}^m g\left(\f{s-\xi_i} {\epsilon}\right) \epsilon^{H- \f 3 2} ds  -   \int_0^t \prod_{i=1}^m (s- \xi_i)_+^{H-\f 32}ds\right \Vert_{L^2(\R^m,\lambda)} \to 0. \end{equation}
\end{lemma}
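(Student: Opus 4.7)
My plan is to reduce the statement to \cite[Theorem 4.7]{Taqqu} (together with the preparatory \cite[Lemmas 4.5, 4.6]{Taqqu}), which handles precisely this type of scaled moving-average kernel. Taqqu establishes weak convergence of the corresponding Wiener integrals by first proving $L^2(\R^m)$ convergence of the integrated kernels and then composing with a Brownian self-similarity step. The ``small modification'' mentioned in the paper is that, since our process admits the explicit Wiener-integral representation $y_t^\epsilon = \epsilon^{-1/2}\int_\R g((t-\xi)/\epsilon)\,dW_\xi$ rather than being related to the limit by a scaling argument, Taqqu's intermediate $L^2$ kernel convergence is already the desired conclusion and the self-similarity step is not needed.

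To apply this framework, the hypothesis to verify on the kernel $p(x):=g(x)\1_{\{x\ge 0\}}$ is the asymptotic $p(x)\sim c_1(H)^{-1} x^{H-3/2}$ as $x\to\infty$. This follows from a Watson/Laplace-type estimate applied to $e^{-x}\int_0^x e^v v^{H-\f32}\,dv$: after the change of variables $w=x-v$ the integrand is localised near $w=0$, yielding $e^{-x}\int_0^x e^v v^{H-\f32}\,dv = x^{H-\f32}\bigl(1+O(1/x)\bigr)$ for large $x$. Near $0$, Taylor expanding $e^v$ under the integral gives $g(x)\sim c_1(H)^{-1}(H-\tfrac12)^{-1}\,x^{H-\f12}$, which is locally integrable for $H>\tfrac12$ and supplies the local regularity required by \cite[Lemma 4.5]{Taqqu}.

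The essence of Taqqu's argument, which provides a self-contained route, is to expand $\|F_\epsilon-F_0\|^2_{L^2(\R^m)} = \|F_\epsilon\|^2-2\langle F_\epsilon,F_0\rangle+\|F_0\|^2$, where $F_\epsilon(\xi)=\int_0^t\prod_i g((s-\xi_i)/\epsilon)\epsilon^{H-\f32}\,ds$ and $F_0(\xi)=\int_0^t\prod_i(s-\xi_i)_+^{H-\f32}\,ds$, and then show each of the three terms converges to the same limit. By Fubini and the isometry $\E(y_s^\epsilon y_{s'}^\epsilon)=\rho^\epsilon(s-s')$, which gives $\int_\R g((s-\xi)/\epsilon)g((s'-\xi)/\epsilon)\,d\xi=\epsilon\,\rho^\epsilon(s-s')$, one obtains
$$\|F_\epsilon\|^2_{L^2(\R^m)} = \epsilon^{2(H^*(m)-1)}\int_0^t\!\int_0^t \rho^\epsilon(s-s')^m\,ds\,ds',$$
and the long-range asymptotic $\rho(u)\sim 2H(2H-1)\,u^{2H-2}$ from Lemma \ref{correlation-lemma}, combined with the scaling $\rho^\epsilon(u)=\rho(u/\epsilon)$, yields $\|F_\epsilon\|^2\to (2H(2H-1))^m\int_0^t\!\int_0^t|s-s'|^{2H^*(m)-2}\,ds\,ds'$. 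A parallel computation using the Beta-function convolution $\int_\R (s-\xi)_+^{H-\f32}(s'-\xi)_+^{H-\f32}\,d\xi = B(H-\tfrac12,2-2H)|s-s'|^{2H-2}$ produces the same limit for $\|F_0\|^2$ and for the cross term. The only delicate point, and the principal obstacle, is the matching of constants: one must verify $2H(2H-1) = c_1(H)^{-2} B(H-\tfrac12,2-2H)$, which is exactly the normalization enforced on $c_1(H)$ in the Mandelbrot-Van Ness representation to make $\E(B_1^H)^2=1$. Once this identity is in hand, the three limits coincide and $\|F_\epsilon-F_0\|^2\to 0$ follows.
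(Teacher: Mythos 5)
Your proposal follows the paper's proof almost exactly: both reduce the statement to \cite[Theorem 4.7]{Taqqu} (observing that Taqqu's intermediate $L^2$ convergence of the integrated kernels is the actual content, so the Brownian self-similarity step is unnecessary) and then verify Taqqu's hypotheses on the kernel $g$ through its behaviour near $0$ and at infinity; your Laplace-method derivation of $g(x)\sim c_1(H)^{-1}x^{H-3/2}$ is in fact slightly more careful than the paper's integration-by-parts bounds, since Taqqu's condition is an asymptotic equivalence rather than an upper bound. The only hypothesis you leave implicit is Taqqu's condition on $\int_{-\infty}^0 \vert p(s)\,p(xy+s)\vert\,ds$, which the paper notes vanishes identically because $g$ is supported on $[0,\infty)$; your closing sketch of the three-term expansion of $\Vert F_\epsilon-F_0\Vert^2$ is supplementary material not present in the paper's proof, and its constant-matching identity would need the $\sigma^2$ normalisation of the fOU tracked more carefully, but it is not part of the argument actually being relied upon.
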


\begin{proof}
We are now in the above framework and it is only left the check the conditions imposed on the functions $p$ in \cite[Theorem 4.7]{Taqqu}.
Examining Taqqu's proof,  we note that in fact the $L^2$ convergence of (\ref{explicit-kernel-2})
is obtained under the following conditions.
\begin{itemize}
	\item[{1}] $\int_{\R} p(s)^2 ds < \infty$.
	\item[{2}] $ \vert p(s) \vert \leq C s^{H- \f 3 2} L(u)$ for almost all $s>0$.
	\item[{3}] $p(s) \sim s^{H - \f 3 2 } L(s)$ as $s \to \infty$.
	\item[{4}] There exists a constant $\gamma$ such that $0<\gamma< (1-H)\wedge  (H- (1-\f 1 {2m}))$ such that $\int_{-\infty}^0 \vert p(s) g(xy+s)\vert ds = o(x^{2H-2} L^2(x)) y^{2H-2-2\gamma}$ as $x \to \infty$ uniformly in $y \in (0,t]$.
\end{itemize}
where $L$ denotes a slowly varying function (for every $\lambda >0$ $\lim_{x \to \infty} \frac{L(\lambda x)}{L(x)} ) =1$).
Now we go ahead and  show that $g$ defined by (\ref{g})  satisfies these conditions, to increase readability we suppress the constant $\f 1 {c_1(H)}$ in the computations.
For  $s<1$,
\begin{align*}
e^{-s} \int_0^s e^u u^{H- \f 3 2 } du  &\leq   \int_0^s u^{H- \f 3 2 } du
\lesssim  s^{H - \f 1 2}.
\end{align*}
We calculate for $s>1$ via integration by parts
\begin{align*}
e^{-s} \int_0^s e^u u^{H- \f 3 2 } du  &\leq 
e^{-s} \int_0^1 e^u u^{H- \f 3 2 } du + e^{-s} \int_1^s e^u u^{H- \f 3 2 } du \\
&\lesssim  e^{-s} +s^{H- \f 3 2} -1+  e^{-s} \int_1^s e^u u^{H- \f 5 2 } du\\
&\lesssim   s^{H- \f 3 2}.
\end{align*}

This of course implies that $g$ is $L^2$ integrable.
Finally observe that $$\int_{-\infty}^0 \vert g(s) g(xy+s)\vert ds = 0$$ as $g(s)=0$ for $s<0$.
With these we apply  \cite[Theorem 4.7]{Taqqu} to conclude the $L^2$ convergence of the kernels.
\end{proof}

\subsection{Joint convergence by asymptotic independence}
If a sequence of  random variable $x_n$ converges to $x$  and another sequence $y_n$ converges to $y$, does $(x_n, y_n)$
converge jointly and what is the correlation of the limit. When both $x_n$ and $y_n$ are Gaussian sequences, they converge jointly 
to $(x,y)$ where $x$ and $y$ are taken to be independent, provided the correlation between $x_n$ and $y_n$ converges to zero.
In \cite{Nourdin-Rosinski} these are generalised to moment determinant random variables as
limits, in which case it is sufficient to show $\cov (x_n^2, y_n^2)\to 0$. By Torsten Carleman's theorem,
a real valued random variable is moment determinant  if its $p$-moment grows no faster than 
$(\f n C)^n (\log n)^n$ (e.g. if it has exponentially decaying density).
An $L^2$  random variable in the second chaos has exponential tails, beyond the second chaos, the tails grow
decays slower than exponentially.
Extensions to high order chaos were obtained in  \cite{Nourdin-Nualart-Peccati}. Neither of these results are sufficient for the need in this paper, we therefore
present a generalisation, which can be easily deduced from the reasoning in \cite{Nourdin-Nualart-Peccati}.
 The result in \cite{Bai-Taqqu} has moment determinant limit as restrictions, has restrictions to Wiener chaos $1$ and $2$, i.e. corresponding to BM, fBM and Rosenblatt processes in our case.

These results benefits from three insights. The first being 
the characterisation, of \"{U}st\"{u}nel-Zaksi \cite{Ustunel-Zakai} for independence of two iterated integrals  with vanishing of contractions of their integral kernels.
The second is the characterisation of the vanishing of the contractions of their integral kernels by covariances of their squares
the third is the characterisation of independence of random variables by the covariance of their squares  \cite{Nualart-Peccati, Nourdin-Nualart-Peccati}.

Denote by $I_p(f)$ the $p^{th}$  iterated It\^o-Wiener integral
$$I_p(f)= p!\int_{-\infty}^{\infty} \int_{-\infty}^{s_{p-1}}\dots \int_{-\infty}^{s_2} f(s_1, \dots, s_p)dW_{s_1} dW_{s_2} \dots dW_{s_p}.$$
We take the $L^2$ function $f$ to be symmetric functions of appropriate number of variables throughout.
Given  $f\in L^2(\R^p)$ and $g\in L^2(\R^q)$,  where $p, q\ge 1$,
 their is
$$f\otimes_1 g=\int_{\R} f(x_1, \dots, x_{p-1}, s) g(y_1, \dots, y_{q-1}, s) ds.$$
Similarly 
$$f\otimes_r g=\int_{\R^r} f(x_1, \dots, x_{p-r}, s_1, \dots, s_r) g(y_1, \dots, y_{q-r}, s_1, \dots, s_r) ds_1 \dots ds_r.$$
If $f\otimes_1g=0$, so do all higher order contractions.

By \cite[Thm.6]{Ustunel-Zakai}  two  integrals  $I_p(f)$ and $I_q(g)$ are  independent, 
 if and only if the 1-contraction between $f,g$ vanishes almost surely. The necessity comes from 
 the product formula,
 $$
 I_p(f)I_q(g) =\sum_{m=1}^{p\wedge q} \f {p!q!}{m!(p-m)!(q-m)!} I_{p+q-2m} (f\otimes_m g).$$
 and the independence: $\E(I_p(f)I_q(g))^2=p!q!\|f\otimes g|_{L^2}$. All terms in the binomial expansion for the product  drop to zero
 except for the $m=0$ term. 
 The following asymptotic independent result is proven in \cite[Thm. 3.1] {Nourdin-Nualart-Peccati}.
  \begin{lemma}\cite{Nourdin-Nualart-Peccati} \label{contraction-covariance}
Given $F_{\epsilon}=I_p(f^{\epsilon})$ and $G_{\epsilon}=I_q(g^{\epsilon})$, then
$$\cov({F^{\epsilon}}^2,{G_{\epsilon}}^2) \to 0$$
is equivalent to 
$$ \Vert f^{\epsilon} \otimes_r g^{\epsilon}\Vert \to 0,$$
for $1\leq r \leq p \wedge q$.
\end{lemma}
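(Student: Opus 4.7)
My plan is to exploit the product formula for multiple Wiener–Itô integrals together with the chaos isometry to write $\cov(F_\epsilon^2,G_\epsilon^2)$ as an explicit finite sum of squared contraction norms with strictly positive coefficients; once such a representation is available, both directions of the equivalence follow at once from nonnegativity.

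First I would apply the product formula
\[
I_p(f)\,I_q(g) = \sum_{r=0}^{p\wedge q} r!\binom{p}{r}\binom{q}{r}\, I_{p+q-2r}(f\tilde{\otimes}_r g),
\]
to expand $F_\epsilon^2 = I_p(f^\epsilon)^2$ and $G_\epsilon^2 = I_q(g^\epsilon)^2$ as finite sums over Wiener chaoses of even orders at most $2p$ and $2q$ respectively. The pure chaos-zero components recover $\E F_\epsilon^2 = p!\|f^\epsilon\|^2$ and $\E G_\epsilon^2 = q!\|g^\epsilon\|^2$. Multiplying the two expansions and using the isometry $\E[I_n(h)I_n(k)] = n!\langle h,k\rangle$ together with orthogonality between chaoses of different orders leads to
\[
\E[F_\epsilon^2 G_\epsilon^2] - \E F_\epsilon^2\,\E G_\epsilon^2 \;=\; \sum_{(a,b)} C^{p,q}_{a,b}\,\langle f^\epsilon\tilde{\otimes}_a f^\epsilon,\,g^\epsilon\tilde{\otimes}_b g^\epsilon\rangle_{L^2},
\]
where the sum ranges over $(a,b)\neq(p,q)$ satisfying the chaos-matching constraint $p-a = q-b$, and $C^{p,q}_{a,b}>0$ are explicit combinatorial constants coming from the product formula.

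The main obstacle, and the heart of the argument, is to rewrite each inner product $\langle f^\epsilon \tilde{\otimes}_a f^\epsilon,g^\epsilon \tilde{\otimes}_b g^\epsilon\rangle$ as a manifestly nonnegative combination of squared contraction norms $\|f^\epsilon \otimes_r g^\epsilon\|^2$. This is done by unfolding the symmetrizations as averages over permutations of variables, then applying Fubini's theorem to recognise each resulting integral as an inner product of two contractions $f^\epsilon \otimes_r g^\epsilon$, possibly on different blocks of variables. After collecting terms by the contraction order $r$, the pair $(a,b) = (p-r,q-r)$ is seen to contribute a strictly positive multiple of $\|f^\epsilon \otimes_r g^\epsilon\|^2$ uncancelled by any other term, yielding
\[
\cov(F_\epsilon^2, G_\epsilon^2) \;=\; \sum_{r=1}^{p\wedge q} \kappa_{p,q,r}\,\|f^\epsilon \otimes_r g^\epsilon\|^2, \qquad \kappa_{p,q,r}>0.
\]

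From this representation the equivalence is immediate: the implication $(\Leftarrow)$ follows by summing, while $(\Rightarrow)$ uses that every summand on the right is nonnegative, so that vanishing of the left-hand side forces $\|f^\epsilon \otimes_r g^\epsilon\|\to 0$ for each $r\in\{1,\dots,p\wedge q\}$. The combinatorial bookkeeping behind the identity with positive coefficients $\kappa_{p,q,r}$ is the only delicate point and is essentially what is carried out in \cite{Nourdin-Nualart-Peccati}.
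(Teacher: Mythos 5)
First, note that the paper does not prove this lemma at all: it is quoted verbatim from \cite{Nourdin-Nualart-Peccati}, so the only meaningful comparison is with the argument in that reference --- which is indeed the product-formula computation you outline. Your overall strategy (expand via the multiplication formula, use the chaos isometry and orthogonality, and reduce everything to contraction norms with positive coefficients) is the right one and matches the source in spirit.

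There is, however, one overstatement that you should repair. The clean identity $\cov(F_\epsilon^2,G_\epsilon^2)=\sum_{r=1}^{p\wedge q}\kappa_{p,q,r}\|f^\epsilon\otimes_r g^\epsilon\|^2$ is \emph{not} literally true once $p,q\ge 2$: the product formula produces \emph{symmetrized} contractions, and unfolding the symmetrizations generates cross-permutation terms such as $\langle f\otimes_1 g,(f\otimes_1 g)^{T}\rangle$ that are not of the form $\|f\otimes_r g\|^2$ and are not individually sign-definite, so your claim that the $(a,b)=(p-r,q-r)$ block contributes an ``uncancelled'' positive multiple of $\|f\otimes_r g\|^2$ is not justified as stated. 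What the computation actually yields is a representation
\begin{equation*}
\cov(F_\epsilon^2,G_\epsilon^2)=\sum_{r=1}^{p\wedge q}A_r\,\|f^\epsilon\,\widetilde{\otimes}_r\, g^\epsilon\|^2+\sum_{r=1}^{p\wedge q}B_r\,\langle f^\epsilon\otimes_{p-r}f^\epsilon,\;g^\epsilon\otimes_{q-r}g^\epsilon\rangle,\qquad A_r,B_r>0,
\end{equation*}
and the two facts that make the equivalence work are: (i) the Fubini identity $\langle f\otimes_{p-r}f,\,g\otimes_{q-r}g\rangle=\|f\otimes_r g\|^2\ge 0$, which you should state and verify explicitly since it is the crux of both positivity and the lower bound; and (ii) $\|f\,\widetilde{\otimes}_r\,g\|\le\|f\otimes_r g\|$, since symmetrization is an $L^2$-contraction. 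Together these give the two-sided comparison $\sum_r B_r\|f^\epsilon\otimes_r g^\epsilon\|^2\le\cov(F_\epsilon^2,G_\epsilon^2)\le\sum_r(A_r+B_r)\|f^\epsilon\otimes_r g^\epsilon\|^2$, from which both implications follow exactly as you intend. So the architecture of your proof is sound; replace the exact identity by this two-sided estimate (or keep the identity but with both families of terms) and the argument is complete.
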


It is also observed in  two  integrals  $I_p(f)$ and $I_q(g)$ are  independent, 
their Malliavin derivative begin orthogonal. This explain why Malliavin calculus  comes into prominent play,
which has been developed to its perfection in \cite[Lemma 3.2]{Nourdin-Nualart-Peccati}. 
The space of test functions is taken to be $C^\infty_q:=C^{\infty}\cap BC^{q-1}(\R^m)$, which is sufficient to approximiate
indicator functions of any measurable sets. We also set for $\phi \in C^\infty_q$,
$$ \Vert \phi \Vert_{q} = \Vert \phi \Vert_{\infty} + \sum_{\vert k \vert =1}^{q} \left\Vert \f { \partial^{k}} {\partial^k x} \right\Vert_{\infty},$$
where the sum runs over multi-indices $k=(k_1, \dots, k_m)$.
Let $L=-\delta D$.
\begin{lemma}\cite{Nourdin-Nualart-Peccati}
\label{a.i.-key-ineq}
Let $\theta \in C_q^\infty(\R^m)$, 
$G=I_q(g)$, $F=(F_1, \dots, F_m)$, where  $F_i= I_{p_i}(f_i)$ and $\E F_i^2 = 1$, where $p_i\ge q$.  Then 
$$ \E \vert \< (I-L)^{-1} \theta(F)DF_j,DG\>_{\H} \vert \leq c\,\Vert \theta \Vert_{q-1} \cov(F_j^2,G^2), $$
where $c$ is a constant depending on $\Vert F \Vert_{L^2}$ and $\Vert G \Vert_{L^2} $. 
\end{lemma}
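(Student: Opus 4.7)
The approach I would take follows the Malliavin-calculus framework of \cite{Nourdin-Nualart-Peccati}: reduce the left-hand side to a finite sum of terms each controlled by a contraction norm $\Vert f_j\otimes_r g\Vert$, and then invoke the algebraic identity that expresses $\cov(F_j^2,G^2)$ as a positive linear combination of such squared contraction norms. This identity, which already underlies Lemma~\ref{contraction-covariance}, is obtained by applying the product formula for multiple Wiener integrals separately to $F_j^2$ and $G^2$ and taking expectations: concretely $\cov(F_j^2,G^2)=\sum_{r=1}^{p_j\wedge q}c_{p_j,q,r}\,\Vert f_j\otimes_r g\Vert^2$ with explicit positive constants $c_{p_j,q,r}$.

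The first step is to expand the Malliavin derivatives, $DF_j(t)=p_j I_{p_j-1}(f_j(\cdot,t))$ and $DG(t)=q\,I_{q-1}(g(\cdot,t))$. Applying the product formula to the pointwise product $I_{p_j-1}(f_j(\cdot,t))I_{q-1}(g(\cdot,t))$ and integrating in $t\in\R$, the scalar product $\langle DF_j,DG\rangle_\H$ becomes a finite sum of multiple integrals of the form $I_{p_j+q-2-2r}(f_j\otimes_{r+1}g)$ for $r=0,\ldots,(p_j-1)\wedge (q-1)$. Because $(I-L)^{-1}$ acts as the scalar $(1+n)^{-1}$ on the $n$-th Wiener chaos, it commutes with each chaos projection and introduces only bounded multiplicative constants; all difficulty is therefore concentrated in the interaction with the smooth factor $\theta(F)$.

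To handle $\theta(F)$, I would use the Mehler representation $(I-L)^{-1}=\int_0^\infty e^{-t}P_t\,dt$ together with an iterated Stein-type integration by parts based on the duality $\E[\langle u,Dv\rangle_\H]=\E[v\,\delta(u)]$. Each application of $\delta$ transfers one Malliavin derivative onto $\theta(F)$ via the chain rule, producing a derivative $\partial^\alpha\theta(F)$; the hypothesis $p_i\ge q$ ensures that the recursion terminates after at most $q-1$ steps before the chaos rank of the remaining factor becomes negative, so only derivatives of $\theta$ up to order $q-1$ appear, explaining the prefactor $\Vert\theta\Vert_{q-1}$. At each step Meyer's inequality and Gaussian hypercontractivity let one absorb the intermediate $L^p$ norms of polynomial expressions in $F$ into a constant depending only on $\Vert F\Vert_{L^2}$ and $\Vert G\Vert_{L^2}$.

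After the $q-1$ integrations by parts, what remains is an $L^2$ inner product whose kernel is built exactly from the contractions $f_j\otimes_r g$ appearing in step one. Cauchy--Schwarz, combined with the covariance identity recalled in the first paragraph, converts the resulting bound $\Vert\theta\Vert_{q-1}\sum_r\Vert f_j\otimes_r g\Vert$ into $c\,\Vert\theta\Vert_{q-1}\,\cov(F_j^2,G^2)$ (the linear, rather than square-root, exponent matches because one Cauchy--Schwarz step is applied to the squared inequality, as in \cite{Nourdin-Nualart-Peccati}). The main obstacle is the bookkeeping in step three: managing the non-commutativity between $\theta(F)$ and $(I-L)^{-1}$ through Mehler's formula while ensuring no boundary terms are generated and that exactly $q-1$ (not more) derivatives of $\theta$ are produced; this is where the assumption $p_i\ge q$ is critical, as it prevents the integration-by-parts chain from requiring a derivative of order $q$ or higher.
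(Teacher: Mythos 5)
The paper offers no proof of this lemma --- it is imported from \cite[Lemma~3.2]{Nourdin-Nualart-Peccati} --- so your sketch can only be measured against that source; it does follow the same broad route (product formula, contraction norms, Malliavin integration by parts, covariance identity), but your last step fails in a way that cannot be repaired. Your intermediate bound is \emph{linear} in the contraction norms, $\Vert\theta\Vert_{q-1}\sum_r\Vert f_j\otimes_r g\Vert$, whereas the covariance identity you invoke is \emph{quadratic}: $\cov(F_j^2,G^2)$ is a positive combination of the quantities $\Vert f_j\otimes_r g\Vert^2$. Hence what you can actually conclude is $\sum_r\Vert f_j\otimes_r g\Vert\le c\,\cov(F_j^2,G^2)^{1/2}$, and no Cauchy--Schwarz manipulation upgrades $\sqrt{x}$ to $x$ when $x$ is small --- which is exactly the regime in which the lemma is applied. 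Indeed the inequality with the linear power is false as stated: take $m=1$, $p_1=q=1$, $\theta\equiv1$, $F=I_1(f)$, $G=I_1(g)$ with $\Vert f\Vert=\Vert g\Vert=1$ and $\langle f,g\rangle_{\H}=\delta$. Then $(I-L)^{-1}\theta(F)DF=f$ is deterministic, the left-hand side equals $\vert\delta\vert$, while $\cov(F^2,G^2)=2\delta^2$, so no constant depending only on $\Vert F\Vert_{L^2}$ and $\Vert G\Vert_{L^2}$ can work as $\delta\to0$. The cited source states the bound with $\cov(F_j^2,G^2)^{1/2}$; the statement reproduced here has evidently dropped the square root (harmless for Lemma~\ref{expectation-split} and Proposition~\ref{proposition-spit-independence}, where only convergence to zero matters, but it means your final conversion is attempting to prove something false).

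Beyond that, the middle of your argument is a plan rather than a proof. The duality $\E[\langle u,Dv\rangle_{\H}]=\E[v\,\delta(u)]$ controls $\vert\E[\cdots]\vert$, not $\E\vert\cdots\vert$; since the lemma bounds the expectation of an absolute value, each integration by parts has to be traded for a Cauchy--Schwarz step $\E\vert X\vert\le(\E X^2)^{1/2}$ followed by an isometry computation, and this is precisely what drives the induction on $q$ in \cite{Nourdin-Nualart-Peccati} --- your sketch never says where the absolute value is handled. Your stated ``main obstacle'' (boundary terms) is a non-issue on Wiener space; the real bookkeeping is that each step peels one chaos level off the $DG$ factor while costing one derivative of $\theta$ and one contraction index, terminating after $q-1$ steps because $p_i\ge q$. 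That mechanism is asserted in your third paragraph but nowhere carried out, so even the correct square-root version is not actually established by what you have written.
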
 
Throughout this section $f_i:\R^{p_i}\to \R$, $g:\R^q\to \R$  are symmetric functions.

The final piece of the puzzle is the observation that
 the defect in being independent is quantitatively controlled by the covariance of the squares of the relative components. The following is from
 \cite{Nourdin-Nualart-Peccati}, our only modification is to take $G$ to be vector valued.
Let $g_i:\R^{q_i}\to \R$  be symmetric functions.

\begin{lemma}\label{nnp-extension}
Let  $F=\left(I_{p_1}(f_1), \dots I_{p_m}(f_m)\right)$ and $G=\left(I_{q_1}(g_1), \dots, I_{q_n}(g_n)\right)$ such that $p_k \geq q_l$ for every pair of $k,l$.
Then for every $\phi \in \C^{\infty}_q(\R^m)$, $\psi \in \C^{\infty}_1(\R^n)$, the following holds for  some constant $c$, depending on $ \Vert F\Vert_{L^2}$, $\Vert G \Vert_{L^2}$:
$$  \E \left( \phi(F) \psi(G) \right) -\E \left( \phi(F) \right) \E \left( \psi(G) \right) \leq c  \Vert D\psi \Vert_{\infty} \Vert \phi \Vert_q \sum_{i=1}^{m} \sum_{j=1}^n \cov(F_i^2,G_j^2)$$
\end{lemma}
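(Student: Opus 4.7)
The plan is to extend Lemma \ref{a.i.-key-ineq} from scalar $G$ to vector-valued $G=(G_1,\dots,G_n)$ by the Malliavin-Stein argument of Nourdin-Nualart-Peccati \cite{Nourdin-Nualart-Peccati}. The strategy is to decouple the multivariate expression $\cov(\phi(F),\psi(G))$ into a sum of $m\cdot n$ scalar inner products that fall into the framework of Lemma \ref{a.i.-key-ineq}, each contributing exactly one covariance $\cov(F_i^2,G_j^2)$ to the final estimate. The ordering hypothesis $p_k\ge q_l$ is precisely what is needed to make each such application legitimate.

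First I would use the standard covariance identity obtained from $\psi(G)-\E\psi(G) = -\delta D L^{-1}(\psi(G)-\E\psi(G))$ and the duality of $D$ and $\delta$:
$$\cov(\phi(F),\psi(G)) = \E\left[\<D\phi(F),\,-DL^{-1}(\psi(G)-\E\psi(G))\>_\H\right].$$
The chain rule then gives $D\phi(F) = \sum_{i=1}^m (\partial_i\phi)(F)\,DF_i$ on the $F$-side. On the $G$-side, the commutation relation $DP_t = e^{-t}P_tD$, the Mehler representation $-L^{-1}=\int_0^\infty P_t\,dt$ (valid on centred variables), and the chain rule $D\psi(G)=\sum_{j=1}^n (\partial_j\psi)(G)\,DG_j$ yield
$$-DL^{-1}(\psi(G)-\E\psi(G)) = \sum_{j=1}^n \int_0^\infty e^{-t}\,P_t\left[(\partial_j\psi)(G)\,DG_j\right]\,dt.$$
Substituting and reshuffling with one further integration by parts so that $(I-L)^{-1}$ migrates to the $F$-side, each of the $m\cdot n$ cross terms can be brought to the form $\E\<(I-L)^{-1}\theta_{i,j,t}(F)\,DF_i,\,DG_j\>_\H$, where $\theta_{i,j,t}$ is a smooth function of $F$ whose $\C^{q-1}$-norm is uniformly controlled in $t$ by $\|\phi\|_q\|D\psi\|_\infty$. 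Applying Lemma \ref{a.i.-key-ineq} to each such term (with $F_j$ there replaced by $F_i$ and $G$ replaced by $G_j$) yields the bound $c\,\|\phi\|_q\|D\psi\|_\infty\,\cov(F_i^2,G_j^2)$ uniformly in $t$, and integrating the exponential weight over $[0,\infty)$ and summing over $(i,j)$ gives the claim.

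The main obstacle will be making the Mehler expansion fully rigorous when $\psi$ is merely $\C^1$, and tracking the precise form of $\theta_{i,j,t}$ so that its norm is indeed controlled as claimed. Here one exploits that each $G_j=I_{q_j}(g_j)$ lies in a single Wiener chaos, so $P_tDG_j=e^{-(q_j-1)t}DG_j$ acts as a deterministic scaling on the $\H$-valued derivative, while $P_t$ only nontrivially smoothes the bounded factor $(\partial_j\psi)(G)$ via the Mehler perturbation, keeping it pointwise bounded by $\|D\psi\|_\infty$. The ordering $p_k\ge q_l$ then ensures via Meyer's inequalities that $(I-L)^{-1}\theta_{i,j,t}(F)$ is controlled with constants depending only on $\|F\|_{L^2}$ and $\|G\|_{L^2}$, which is precisely what Lemma \ref{a.i.-key-ineq} requires.
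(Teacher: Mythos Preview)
Your approach has a genuine gap. You apply the covariance representation to the \emph{wrong side}: you write $\psi(G)-\E\psi(G)=-\delta DL^{-1}(\psi(G)-\E\psi(G))$ and expand via Mehler, which produces terms of the form $P_t\big[(\partial_j\psi)(G)\,DG_j\big]$. You then claim this splits as a deterministic scaling of $DG_j$ times a bounded smoothing of $(\partial_j\psi)(G)$, but $P_t$ is not multiplicative: $P_t(XY)\neq P_t(X)P_t(Y)$, so the factorisation fails. More seriously, your promised ``one further integration by parts so that $(I-L)^{-1}$ migrates to the $F$-side'' cannot produce terms of the form $\E\langle(I-L)^{-1}\theta(F)\,DF_i,\,DG_j\rangle_\H$ with $\theta$ depending on $F$ \emph{alone}, because the factor $(\partial_j\psi)(G)$ is a function of $G$ and cannot be absorbed into a $\theta(F)$. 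Lemma~\ref{a.i.-key-ineq} requires precisely this structure, so you never reach a point where it applies.

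The paper's proof avoids all of this by applying the representation to $\phi(F)$ instead. Using $-DL^{-1}=(I-L)^{-1}D$ one writes directly
\[
\phi(F)-\E\phi(F)=LL^{-1}\phi(F)=\sum_{j=1}^m \delta\big((I-L)^{-1}\partial_j\phi(F)\,DF_j\big),
\]
multiplies by $\psi(G)$, and integrates by parts via $\E[\psi(G)\delta(u)]=\E\langle D\psi(G),u\rangle_\H=\sum_i\E\big[(\partial_i\psi)(G)\,\langle DG_i,u\rangle_\H\big]$. Now $(\partial_i\psi)(G)$ sits outside the inner product and is bounded by $\|D\psi\|_\infty$, while the remaining $\E|\langle(I-L)^{-1}\partial_j\phi(F)\,DF_j,\,DG_i\rangle_\H|$ is exactly the quantity Lemma~\ref{a.i.-key-ineq} controls, with $\theta=\partial_j\phi\in C_{q-1}^\infty(\R^m)$ a function of $F$ only. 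No Mehler expansion, no migration, no product--splitting is needed. The moral: put $(I-L)^{-1}$ on the $F$-side from the outset; the ordering $p_k\ge q_l$ is what makes Lemma~\ref{a.i.-key-ineq} apply in that direction.
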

\begin{proof}   Define  $L^{-1} (\sum_{k=0}^\infty I_k(h_m))=\sum_{k=1}^\infty \f 1 k I_k(h_m) \in {\mathbb D}^{2,2}$.
The key equality is  $-DL^{-1}=(I-L)^{-1}D$. As in \cite{Nourdin-Nualart-Peccati},
\begin{align*}
\phi(F) - \E(\phi(F)) &= LL^{-1} \phi(F)= \sum_{j=1}^m \delta((I-L)^{-1} \partial_j \phi(F) DF_j)
\end{align*}
Multiply both sides by $\psi(G)$ and  use integration by parts we see
\begin{align*}
&\E \left( \phi(F) \psi(G) \right) -\E \left( \phi(F) \right) \E \left( \psi(G) \right) \\
&= \sum_{j=1}^m \sum_{i=1}^n \E \left( \< (I-L)^{-1} \partial_j \phi(F) DF_j,  D  G_i \>_{\H} \partial_i\psi(G) \right)\\
&\leq \Vert D\psi \Vert_{\infty} \sum_{j=1}^m \sum_{i=1}^n \left| \E \left( \< (I-L)^{-1} \partial_j \phi(F) DF_j, D G_i \>_{\H}  \right)\right|.
\end{align*}
To conclude, apply to each summand  Lemma \ref{a.i.-key-ineq} with $\theta = \partial_j \phi$ and $G=G_i$.
\end{proof}

\begin{lemma}\label{expectation-split}
Let $F_{\epsilon}=\left(I_{p_1}(f^{\epsilon}_1), \dots I_{p_m}(f^{\epsilon}_m)\right)$ and $G_{\epsilon}=\left(I_{q_1}(g^{\epsilon}_1), \dots, I_{q_n}(g^{\epsilon}_n)\right)$ with  $q_1 \leq q_2, \dots q_m \leq p_1 \leq p_2, \dots p_m$. Then for every $i \leq m ,j \leq n$,
$$ \Vert f^{\epsilon}_j \otimes_r g^{\epsilon}_i \Vert \to 0, \quad 1\le r \le p_j\wedge q_i$$
implies that  for any $\phi \in \C^{\infty}(R^m)_{p_m}$ $\psi \in \C^{\infty}_{q_n}$,
$$ \E \left( \psi(F_{\epsilon}) \psi(G_{\epsilon}) \right) - \E \left( \psi(F_{\epsilon}) \right) \E \left( \psi(G_{\epsilon})\right) \to 0.$$
\end{lemma}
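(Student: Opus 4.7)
The plan is to combine the two preceding lemmas in a direct way. First I would apply Lemma \ref{nnp-extension} with $F = F_\epsilon$ and $G = G_\epsilon$: the hypothesis on the Wiener chaos orders in the statement, namely $q_1\le\dots\le q_n\le p_1\le\dots\le p_m$, matches exactly the condition $p_k\ge q_l$ for all pairs required by Lemma \ref{nnp-extension}. This yields the quantitative bound
\begin{equation*}
\left|\E\bigl(\phi(F_\epsilon)\psi(G_\epsilon)\bigr)-\E\bigl(\phi(F_\epsilon)\bigr)\E\bigl(\psi(G_\epsilon)\bigr)\right|
\le c\,\Vert D\psi\Vert_\infty\Vert \phi\Vert_{p_m}\sum_{i=1}^m\sum_{j=1}^n \cov\bigl((F_\epsilon)_i^{\,2},(G_\epsilon)_j^{\,2}\bigr),
\end{equation*}
where the constant $c$ depends only on $\Vert F_\epsilon\Vert_{L^2}$ and $\Vert G_\epsilon\Vert_{L^2}$, both of which are uniformly bounded since the kernels $f_i^\epsilon,g_j^\epsilon$ are bounded in the respective $L^2$ spaces (this boundedness itself follows from the hypothesis together with the variance formula $\E I_p(f)^2=p!\Vert f\Vert^2$ and the fact that vanishing contractions prevent blow-up, or else is imposed tacitly as the processes in question are drawn from $L^2$).

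Next I would invoke Lemma \ref{contraction-covariance} termwise: for each pair $(i,j)$, the assumption $\Vert f_j^\epsilon \otimes_r g_i^\epsilon\Vert\to 0$ for every $1\le r\le p_j\wedge q_i$ is precisely equivalent to $\cov\bigl((F_\epsilon)_i^{\,2},(G_\epsilon)_j^{\,2}\bigr)\to 0$. Since the double sum in the bound above has a fixed finite number of terms (independent of $\epsilon$) and the constant $c$ is uniformly controlled, the right-hand side tends to zero as $\epsilon\to 0$. This proves the desired asymptotic factorisation
\begin{equation*}
\E\bigl(\phi(F_\epsilon)\psi(G_\epsilon)\bigr)-\E\bigl(\phi(F_\epsilon)\bigr)\E\bigl(\psi(G_\epsilon)\bigr)\to 0.
\end{equation*}

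There is essentially no obstacle beyond applying the two preceding results in sequence; the entire content has been shifted into Lemmas \ref{contraction-covariance} and \ref{nnp-extension}. The one mild bookkeeping point is to verify that the norms $\Vert\phi\Vert_{p_m}$ and $\Vert D\psi\Vert_\infty$ make sense for the claimed regularity classes of test functions, which requires noting that the larger chaos index $p_m$ dictates the smoothness demand on $\phi$ (consistent with $\phi\in\mathcal{C}^\infty_{p_m}$), while $\psi$ only needs one derivative bounded (covered by $\psi\in\mathcal{C}^\infty_{q_n}$ since $q_n\ge 1$). Once this is checked, the conclusion follows at once.
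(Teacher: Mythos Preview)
Your proposal is correct and follows exactly the paper's own proof, which reads in its entirety ``Just combine Lemma \ref{nnp-extension} and Lemma \ref{contraction-covariance}.'' You have simply spelled out that combination, and your additional remark about the uniform control of the constant $c$ (via $L^2$ boundedness of $F_\epsilon,G_\epsilon$) is a point the paper leaves implicit.
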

\begin{proof}
Just combine Lemma \ref{nnp-extension} and Lemma \ref{contraction-covariance}.
\end{proof}
The following generalises results from \cite{Nourdin-Nualart-Peccati}.
\begin{proposition}\label{proposition-spit-independence}
Given $F_{\epsilon}=\left(I_{p_1}(f^{\epsilon}_1), \dots , I_{p_m}(f^{\epsilon}_m)\right)$ and  $G_{\epsilon}=\left(I_{q_1}(g^{\epsilon}_1), \dots, I_{q_n}(g^{\epsilon}_n)\right)$ such that $q_1 \leq q_2, \dots q_m, \leq p_1 \leq p_2, \dots \le p_m$ and such that
$$ \Vert f^{\epsilon}_j \otimes_r g^{\epsilon}_i \Vert \to 0.$$
If $F_\epsilon \to U$ and $G_{\epsilon} \to V$ weakly, then $(F_{\epsilon},G_{\epsilon}) \to (U,V)$ jointly where  $U,V$ are taken to be independent.
\end{proposition}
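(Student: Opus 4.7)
The plan is to reduce joint weak convergence with independent limit to the asymptotic factorization of expectations already supplied by Lemma \ref{expectation-split}, combined with the given marginal convergences. By L\'evy's continuity theorem, it suffices to show that for every $\lambda \in \R^m$ and $\mu \in \R^n$ the joint characteristic function converges to the product of the individual characteristic functions of $U$ and $V$, that is,
$$\E\bigl[e^{i\lambda \cdot F_\epsilon + i\mu \cdot G_\epsilon}\bigr] \longrightarrow \E\bigl[e^{i\lambda \cdot U}\bigr]\,\E\bigl[e^{i\mu \cdot V}\bigr].$$

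First I would pick the test functions $\phi(x) = e^{i\lambda \cdot x}$ on $\R^m$ and $\psi(y) = e^{i\mu \cdot y}$ on $\R^n$. Since both are smooth with uniformly bounded derivatives of every order, $\phi \in \C^\infty_{p_m}$ and $\psi \in \C^\infty_{q_n}$ with $\|\phi\|_{p_m}$ and $\|D\psi\|_\infty$ finite. The hypothesis $\|f_j^\epsilon \otimes_r g_i^\epsilon\|\to 0$ is precisely what drives Lemma \ref{expectation-split}, so applying it to real and imaginary parts of these exponentials yields
$$\E\bigl[\phi(F_\epsilon)\,\psi(G_\epsilon)\bigr] - \E\bigl[\phi(F_\epsilon)\bigr]\,\E\bigl[\psi(G_\epsilon)\bigr] \longrightarrow 0.$$
The assumed marginal weak convergences $F_\epsilon \to U$ and $G_\epsilon \to V$ give $\E[\phi(F_\epsilon)] \to \E[\phi(U)]$ and $\E[\psi(G_\epsilon)] \to \E[\psi(V)]$ since characteristic functions are bounded continuous. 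Combining these three limits identifies the joint characteristic function of $(F_\epsilon, G_\epsilon)$ as converging to the product, which by L\'evy's theorem yields joint convergence to a random vector whose law is the product of the laws of $U$ and $V$, i.e.\ to $(U,V)$ with $U$ and $V$ declared independent.

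The only potential obstacle is verifying that complex exponentials fall inside the test-function class required by Lemma \ref{expectation-split}; this is immediate from the fact that the norm $\|\cdot\|_q$ used there only involves sup norms of derivatives. No tightness argument is even needed because L\'evy's theorem bypasses the compactness step: pointwise convergence of characteristic functions to a continuous limit automatically delivers weak convergence of the joint laws. This completes the plan.
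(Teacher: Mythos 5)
Your proof is correct, and it rests on the same engine as the paper's own argument---namely Lemma \ref{expectation-split}, i.e.\ the asymptotic factorization $\E\left(\phi(F_\epsilon)\psi(G_\epsilon)\right)-\E\left(\phi(F_\epsilon)\right)\E\left(\psi(G_\epsilon)\right)\to 0$ forced by the vanishing contractions---but you close the argument by a different soft step. The paper notes that $(F_\epsilon,G_\epsilon)$ is bounded in $L^2$ and hence tight, extracts a weakly convergent subsequence with limit $(X,Y)$, passes to the limit in the factorization identity for arbitrary test functions in $\C^\infty_q$ (which suffice to approximate indicators and so determine the law), and concludes that every subsequential limit is the product measure; you instead specialize the test functions to $\cos(\lambda\cdot x)$ and $\sin(\lambda\cdot x)$, use the marginal convergences to identify the limit of the joint characteristic function as $\E\left(e^{i\lambda\cdot U}\right)\E\left(e^{i\mu\cdot V}\right)$, and invoke L\'evy's continuity theorem, thereby dispensing with the subsequence extraction. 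One caveat on your remark that ``no tightness argument is even needed'': while L\'evy's theorem does bypass the compactness step, the uniform $L^2$ bound on $(F_\epsilon,G_\epsilon)$---the very input the paper uses for tightness---is still implicitly required on your route, since the constant $c$ in Lemma \ref{nnp-extension} depends on $\Vert F\Vert_{L^2}$ and $\Vert G\Vert_{L^2}$, so the covariance condition only forces the factorization defect to zero when these norms stay bounded as $\epsilon\to 0$. This is harmless here (multiple Wiener integrals of fixed order that converge in law are automatically bounded in $L^2$), but it should be acknowledged rather than claimed away. Apart from that, the two arguments deliver the same conclusion and yours is a valid, marginally more economical alternative.
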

\begin{proof} 
Since $(F_{\epsilon},G_{\epsilon})$ is bounded in $L^2$ it is tight. Now choose a weakly converging subsequence $(F_n,G_n)$ with limit denoted by  $(X,Y)$. 
Let $\phi \in \C_{p_m}^{\infty}(R^m)$ $\psi \in \C_{q_n}^{\infty}(\R^n)$. Then
By  Lemma \ref{expectation-split}  and the bounds on $\phi, \psi$, we pass to the limit under the expectation sign and obtain 
$$\E\left( \phi(X) \psi(Y) \right)=\E\left( \phi(X)  \right) \E\left( \psi(Y) \right).$$
Thus every  limit measure is the product measure determined by $U,V$ and hence the $(F_{\epsilon},G_{\epsilon})$ converges as claimed.
\end{proof}

\noindent{ \bf {Acknowledgement:}} We would like to thank Martin Hairer for helpful discussions.

%\bibliographystyle{alpha}
%\bibliography{./fbm-refs}

\begin{thebibliography}{CFK{\etalchar{+}}19}

\bibitem[AS84]{Abramowitz-Stegan}
Milton Abramowitz and Irene~A. Stegun, editors.
\newblock {\em Handbook of mathematical functions with formulas, graphs, and
  mathematical tables}.
\newblock A Wiley-Interscience Publication. John Wiley \& Sons, Inc., New York;
  John Wiley \& Sons, Inc., New York, 1984.
\newblock Reprint of the 1972 edition, Selected Government Publications.

\bibitem[ATH12]{Al-Talibi-Hilbert}
H.~Al-Talibi and A.~Hilbert.
\newblock Differentiable approximation by solutions of {N}ewton equations
  driven by fractional {B}rownian motion.
\newblock Preprint, 2012.

\bibitem[BC09]{Buchmann-Ngai-bordercase}
Boris Buchmann and Ngai~Hang Chan.
\newblock Integrated functionals of normal and fractional processes.
\newblock {\em Ann. Appl. Probab.}, 19(1):49--70, 2009.

\bibitem[BC17]{roughflows}
I.~Bailleul and R.~Catellier.
\newblock Rough flows and homogenization in stochastic turbulence.
\newblock {\em J. Differential Equations}, 263(8):4894--4928, 2017.

\bibitem[BH02]{BenHariz}
Samir Ben~Hariz.
\newblock Limit theorems for the non-linear functional of stationary {G}aussian
  processes.
\newblock {\em J. Multivariate Anal.}, 80(2):191--216, 2002.

\bibitem[Bil99]{Billingsley}
Patrick Billingsley.
\newblock {\em Convergence of probability measures}.
\newblock Wiley Series in Probability and Statistics: Probability and
  Statistics. John Wiley \& Sons, Inc., New York, second edition, 1999.
\newblock A Wiley-Interscience Publication.

\bibitem[BM83]{Breuer-Major}
Peter Breuer and P\'{e}ter Major.
\newblock Central limit theorems for nonlinear functionals of {G}aussian
  fields.
\newblock {\em J. Multivariate Anal.}, 13(3):425--441, 1983.

\bibitem[BT05]{Boufoussi-Ciprian}
Brahim Boufoussi and Ciprian~A. Tudor.
\newblock Kramers-{S}moluchowski approximation for stochastic evolution
  equations with {FBM}.
\newblock {\em Rev. Roumaine Math. Pures Appl.}, 50(2):125--136, 2005.

\bibitem[BT13]{Bai-Taqqu}
Shuyang Bai and Murad~S. Taqqu.
\newblock Multivariate limit theorems in the context of long-range dependence.
\newblock {\em J. Time Series Anal.}, 34(6):717--743, 2013.

\bibitem[CFK{\etalchar{+}}19]{Chevyrev-Friz-Korepanov-Melbourne-Zhang}
Ilya Chevyrev, Peter~K. Friz, Alexey Korepanov, Ian Melbourne, and Huilin
  Zhang.
\newblock Multiscale systems, homogenization, and rough paths.
\newblock In {\em Probability and Analysis in Interacting Physical Systems},
  2019.

\bibitem[CFS82]{Cornfeld-Fomin-Sinai}
I.~P. Cornfeld, S.~V. Fomin, and Ya.~G. Sina{i}.
\newblock {\em Ergodic theory}, volume 245 of {\em Grundlehren der
  Mathematischen Wissenschaften [Fundamental Principles of Mathematical
  Sciences]}.
\newblock Springer-Verlag, New York, 1982.
\newblock Translated from the Russian by A. B. Sosinskii.

\bibitem[CKM03]{Cheridito-Kawaguchi-Maejima}
Patrick Cheridito, Hideyuki Kawaguchi, and Makoto Maejima.
\newblock Fractional {O}rnstein-{U}hlenbeck processes.
\newblock {\em Electron. J. Probab.}, 8:no. 3, 14, 2003.

\bibitem[CNN18]{Campese-Nourdin-Nualart}
Simon Campese, Ivan Nourdin, and David Nualart.
\newblock Continuous breuer-major theorem: tightness and non-stationarity.
\newblock In Arxiv: arXiv:1807.09740, 2018.

\bibitem[DM79]{Dobrushin-Major}
R.~L. Dobrushin and P.~Major.
\newblock Non-central limit theorems for nonlinear functionals of {G}aussian
  fields.
\newblock {\em Z. Wahrsch. Verw. Gebiete}, 50(1):27--52, 1979.

\bibitem[Dob79]{Dobrushin}
R.~L. Dobrushin.
\newblock Gaussian and their subordinated self-similar random generalized
  fields.
\newblock {\em Ann. Probab.}, 7(1):1--28, 1979.

\bibitem[dSEE18]{dasilva-Erraoui-ElHassan}
José~Luís da~Silva, Mohamed Erraoui, and El~Hassan Essaky.
\newblock Mixed stochastic differential equations: existence and uniqueness
  result.
\newblock {\em J. Theoret. Probab.}, 31(2):1119--1141, 2018.

\bibitem[EM02]{Embrechts-Maejima}
Paul Embrechts and Makoto Maejima.
\newblock {\em Selfsimilar processes}.
\newblock Princeton Series in Applied Mathematics. Princeton University Press,
  Princeton, NJ, 2002.

\bibitem[FGL15]{Friz-Gassiat-Lyons}
Peter Friz, Paul Gassiat, and Terry Lyons.
\newblock Physical {B}rownian motion in a magnetic field as a rough path.
\newblock {\em Trans. Amer. Math. Soc.}, 367(11):7939--7955, 2015.

\bibitem[FH14]{Friz-Hairer}
Peter~K. Friz and Martin Hairer.
\newblock {\em A course on rough paths}.
\newblock Universitext. Springer, Cham, 2014.
\newblock With an introduction to regularity structures.

\bibitem[FK00]{Fannjiang-Komorowski-2000}
Albert Fannjiang and Tomasz Komorowski.
\newblock Fractional {B}rownian motions in a limit of turbulent transport.
\newblock {\em Ann. Appl. Probab.}, 10(4):1100--1120, 2000.

\bibitem[FV10]{Friz-Victoir}
Peter~K. Friz and Nicolas~B. Victoir.
\newblock {\em Multidimensional stochastic processes as rough paths}, volume
  120 of {\em Cambridge Studies in Advanced Mathematics}.
\newblock Cambridge University Press, Cambridge, 2010.
\newblock Theory and applications.

\bibitem[GN08]{Guerra-Nualart}
João Guerra and David Nualart.
\newblock Stochastic differential equations driven by fractional {B}rownian
  motion and standard {B}rownian motion.
\newblock {\em Stoch. Anal. Appl.}, 26(5):1053--1075, 2008.

\bibitem[Gre51]{Green}
Melville~S. Green.
\newblock Brownian motion in a gas of noninteracting molecules.
\newblock {\em J. Chem. Phys.}, 19:1036--1046, 1951.

\bibitem[Hai05a]{Hairer05}
Martin Hairer.
\newblock Ergodicity of stochastic differential equations driven by fractional
  {B}rownian motion.
\newblock {\em Ann. Probab.}, 33(2):703--758, 2005.

\bibitem[Hai05b]{Additive}
Martin Hairer.
\newblock Ergodicity of stochastic differential equations driven by fractional
  {B}rownian motion.
\newblock {\em Ann. Probab.}, 33(2):703--758, 2005.

\bibitem[HBS]{Hurst}
Harold~Edwin Hurst, R.~P. Black, and Y.M. Sinaika.
\newblock {\em Long Term Storage in Reservoirs, An Experimental Study,}.
\newblock Constable.

\bibitem[HC78]{Huang-Cambanis}
Steel~T. Huang and Stamatis Cambanis.
\newblock Stochastic and multiple {W}iener integrals for {G}aussian processes.
\newblock {\em Ann. Probab.}, 6(4):585--614, 1978.

\bibitem[HNX14]{Hu-Nualart-Xu}
Yaozhong Hu, David Nualart, and Fangjun Xu.
\newblock Central limit theorem for an additive functional of the fractional
  {B}rownian motion.
\newblock {\em Ann. Probab.}, 42(1):168--203, 2014.

\bibitem[It{\^o}51]{Ito51}
K.~It{\^o}.
\newblock Multiple {W}iener integral.
\newblock {\em J. of the Mathematical Society of Japan}, 3(1), 1951.

\bibitem[JL77]{Jona-Lasinio}
G.~Jona-Lasinio.
\newblock Probabilistic approach to critical behavior.
\newblock In {\em New developments in quantum field theory and statistical
  mechanics ({P}roc. {C}arg\`ese {S}ummer {I}nst., {C}arg\`ese, 1976)}, pages
  419--446. NATO Adv. Study Inst. Ser., Ser. B: Physics, 26, 1977.

\bibitem[JS03]{Jacod-Shiryaev}
Jean Jacod and Albert~N. Shiryaev.
\newblock {\em Limit theorems for stochastic processes}, volume 288 of {\em
  Grundlehren der Mathematischen Wissenschaften [Fundamental Principles of
  Mathematical Sciences]}.
\newblock Springer-Verlag, Berlin, second edition, 2003.

\bibitem[KKR12]{Komorowski-Komorowski-Ryzhik}
Tomasz Komorowski, Alexei Komorowski, and Lenya Ryzhik.
\newblock Evolution of particle separation in slowly decorrelating velocity
  fields.
\newblock {\em Commun. Math. Sci.}, 10(3):767--786, 2012.

\bibitem[KLO12]{Komorowski-Landim-Olla}
Tomasz Komorowski, Claudio Landim, and Stefano Olla.
\newblock {\em Fluctuations in {M}arkov processes}, volume 345 of {\em
  Grundlehren der Mathematischen Wissenschaften [Fundamental Principles of
  Mathematical Sciences]}.
\newblock Springer, Heidelberg, 2012.
\newblock Time symmetry and martingale approximation.

\bibitem[KM17]{Kelly-Melbourne}
David Kelly and Ian Melbourne.
\newblock Deterministic homogenization for fast-slow systems with chaotic
  noise.
\newblock {\em J. Funct. Anal.}, 272(10):4063--4102, 2017.

\bibitem[KNR12]{Komorowski-Novikov-Ryzhik-12}
Tomasz Komorowski, Alexei Novikov, and Lenya Ryzhik.
\newblock Evolution of particle separation in slowly decorrelating velocity
  fields.
\newblock {\em Commun. Math. Sci.}, 10(3):767--786, 2012.

\bibitem[KP91]{Kurtz-Protter}
Thomas~G. Kurtz and Philip Protter.
\newblock Weak limit theorems for stochastic integrals and stochastic
  differential equations.
\newblock {\em Ann. Probab.}, 19(3):1035--1070, 1991.

\bibitem[Kub57]{Kubo}
Ryogo Kubo.
\newblock Statistical-mechanical theory of irreversible processes. {I}.
  {G}eneral theory and simple applications to magnetic and conduction problems.
\newblock {\em J. Phys. Soc. Japan}, 12:570--586, 1957.

\bibitem[KV86]{Kipnis-Varadhan}
C.~Kipnis and S.~R.~S. Varadhan.
\newblock Central limit theorem for additive functionals of reversible {M}arkov
  processes and applications to simple exclusions.
\newblock {\em Comm. Math. Phys.}, 104(1):1--19, 1986.

\bibitem[LH19]{Hairer-Li}
Xue-Mei Li and Martin Hairer.
\newblock Averaging dynamics driven by fractional brownian motion.
\newblock arXiv:1902.11251, 2019.

\bibitem[Lyo94]{Lyons94}
Terry Lyons.
\newblock Differential equations driven by rough signals. {I}. {A}n extension
  of an inequality of {L}. {C}. {Y}oung.
\newblock {\em Math. Res. Lett.}, 1(4):451--464, 1994.

\bibitem[MT07]{Maejima-Ciprian}
Makoto Maejima and Ciprian~A. Tudor.
\newblock Wiener integrals with respect to the {H}ermite process and a
  non-central limit theorem.
\newblock {\em Stoch. Anal. Appl.}, 25(5):1043--1056, 2007.

\bibitem[MVN68]{Mandelbrot-VanNess}
Benoit~B. Mandelbrot and John~W. Van~Ness.
\newblock Fractional {B}rownian motions, fractional noises and applications.
\newblock {\em SIAM Rev.}, 10:422--437, 1968.

\bibitem[NNP16]{Nourdin-Nualart-Peccati}
Ivan Nourdin, David Nualart, and Giovanni Peccati.
\newblock Strong asymptotic independence on {W}iener chaos.
\newblock {\em Proc. Amer. Math. Soc.}, 144(2):875--886, 2016.

\bibitem[NNZ16]{Nourdin-Nualart-Zintout}
Ivan Nourdin, David Nualart, and Rola Zintout.
\newblock Multivariate central limit theorems for averages of fractional
  {V}olterra processes and applications to parameter estimation.
\newblock {\em Stat. Inference Stoch. Process.}, 19(2):219--234, 2016.

\bibitem[NP05]{Nualart-Peccati}
David Nualart and Giovanni Peccati.
\newblock Central limit theorems for sequences of multiple stochastic
  integrals.
\newblock {\em The Annals of Probability}, 33(1):177--193, 2005.

\bibitem[NP12]{Nourdin-Peccati}
Ivan Nourdin and Giovanni Peccati.
\newblock {\em Normal approximations with {M}alliavin calculus}, volume 192 of
  {\em Cambridge Tracts in Mathematics}.
\newblock Cambridge University Press, Cambridge, 2012.
\newblock From Stein's method to universality.

\bibitem[NR14]{Nourdin-Rosinski}
Ivan Nourdin and J.~Rosinski.
\newblock Asymptotic independence of multiple wiener-it\^o integrals and the
  resulting limit laws.
\newblock {\em The Annals of Probability}, 42(2):497--526, 2014.

\bibitem[Nua06]{Nualart}
David Nualart.
\newblock {\em The {M}alliavin calculus and related topics}.
\newblock Probability and its Applications (New York). Springer-Verlag, Berlin,
  second edition, 2006.

\bibitem[PT00]{Pipiras-Taqqu}
Vladas Pipiras and Murad~S. Taqqu.
\newblock Integration questions related to fractional {B}rownian motion.
\newblock {\em Probab. Theory Related Fields}, 118(2):251--291, 2000.

\bibitem[PT17]{Pipiras-Taqqu-book}
Vladas Pipiras and Murad~S. Taqqu.
\newblock {\em Long-range dependence and self-similarity}.
\newblock Cambridge Series in Statistical and Probabilistic Mathematics, [45].
  Cambridge University Press, Cambridge, 2017.

\bibitem[Ros61]{Rosenblatt}
M.~Rosenblatt.
\newblock Independence and dependence.
\newblock In {\em Proc. 4th {B}erkeley {S}ympos. {M}ath. {S}tatist. and
  {P}rob., {V}ol. {II}}, pages 431--443. Univ. California Press, Berkeley,
  Calif., 1961.

\bibitem[Sam06]{Samorodnitsky}
Gennady Samorodnitsky.
\newblock Long range dependence.
\newblock {\em Found. Trends Stoch. Syst.}, 1(3):163--257, 2006.

\bibitem[Sin76]{Sinai}
Ja.~G. Sina{\i}.
\newblock Self-similar probability distributions.
\newblock {\em Teor. Verojatnost. i Primenen.}, 21(1):63--80, 1976.

\bibitem[Taq77]{Graphsnumber}
Murad~S. Taqqu.
\newblock Law of the iterated logarithm for sums of non-linear functions of
  {G}aussian variables that exhibit a long range dependence.
\newblock {\em Z. Wahrscheinlichkeitstheorie und Verw. Gebiete},
  40(3):203--238, 1977.

\bibitem[Taq79]{Taqqu}
Murad~S. Taqqu.
\newblock Convergence of integrated processes of arbitrary {H}ermite rank.
\newblock {\em Z. Wahrsch. Verw. Gebiete}, 50(1):53--83, 1979.

\bibitem[Tay21]{Taylor}
G.~I. Taylor.
\newblock Diffusion by {C}ontinuous {M}ovements.
\newblock {\em Proc. London Math. Soc. (2)}, 20(3):196--212, 1921.

\bibitem[UZ89]{Ustunel-Zakai}
Ali~S\"{u}leyman \"{U}st\"{u}nel and Moshe Zakai.
\newblock On independence and conditioning on {W}iener space.
\newblock {\em Ann. Probab.}, 17(4):1441--1453, 1989.

\bibitem[You36]{Young}
L.~C. Young.
\newblock An inequality of the {H}\"{o}lder type, connected with {S}tieltjes
  integration.
\newblock {\em Acta Math.}, 67(1):251--282, 1936.

\bibitem[Zha08]{Zhang-08}
Songfu Zhang.
\newblock Smoluchowski-{K}ramers approximations for stochastic equations with
  {L}\'evy noise.
\newblock PhD theisis, Purdue Unviersity, 2008.

\end{thebibliography}

\newcommand{\etalchar}[1]{$^{#1}$}

\end{document}